\documentclass[reqno]{amsart}

\usepackage{mathrsfs}
\usepackage[all]{xy}
\usepackage{mathtools}
\usepackage{mathbbol}
\usepackage{wasysym}
\usepackage{amssymb}

\usepackage{MnSymbol}
\usepackage{comment}
\usepackage{enumerate}
\usepackage{xcolor}
\usepackage{enumitem}
\usepackage{ushort}

\usepackage{changepage}
\usepackage{algorithm}
\usepackage[noend]{algpseudocode}
\makeatletter
\def\BState{\State\hskip-\ALG@thistlm}
\makeatother

\usepackage{ifpdf}
\ifpdf %if using pdfLaTeX in PDF mode
  \usepackage[pdftex]{graphicx}
  \DeclareGraphicsExtensions{.pdf,.png,.jpg,.jpeg,.mps}
  \usepackage{pgf}
\else %if using LaTeX or pdfLaTeX in DVI mode
  \usepackage{graphicx}
  \DeclareGraphicsExtensions{.eps,.bmp}
  \usepackage{pgf}
  \usepackage{pstricks}
\fi
\usepackage{epic,bez123}
\usepackage{wrapfig}% package for wrapfigure environment

\usepackage{soul}
\usepackage{url}
\usepackage{tikz}
\usepackage{tikz-cd}
\usetikzlibrary{arrows.meta}

\usepackage[margin=1.2in]{geometry}
\newtheorem{thm}{Theorem}[section]
\newtheorem{prop}[thm]{Proposition}
\newtheorem{cor}[thm]{Corollary}
\newtheorem{lem}[thm]{Lemma}

\newtheorem{mainthm}{}

\theoremstyle{definition}
\newtheorem{defn}[thm]{Definition}

\theoremstyle{remark}

\newtheorem{rem}[thm]{Remark}
\newtheorem{example}[thm]{Example}

\newtheorem{claim}[thm]{Claim}
\newtheorem*{ack}{Acknowledgments}

\newcommand{\U}{X}

\newcommand{\act}{\curvearrowright}

\newcommand{\f}{\mathfrak F}

\newcommand{\PC}{\mathcal P_K(\f)}
\newcommand{\LS}{\ell^{\mathrm{std}}}
\newcommand{\po}{\mathfrak o}
\newcommand{\ax}{\mathrm{Ax}}

\newcommand{\proj}{\textbf{d}}

\newcommand{\len }{\mathrm{Len}}

\usepackage[bookmarks=true, pdfauthor={DING Lihuang}]{hyperref}

\usepackage[capitalize]{cleveref} 

\makeatletter
\newcommand{\nameditem}[2]{%
  \item[(#1)]%
  \def\@currentlabel{(#1)}%
  \label{#2}%
}
\makeatother

\numberwithin{equation}{section}
\title[Growth Gaps and Exponential Genericity of WPD Elements]{Growth Gaps and Exponential Genericity in Acylindrically Hyperbolic Groups}
 \author{Lihuang Ding}	
 \address{Beijing International Center for Mathematical Research\\
Peking University\\
 Beijing 100871, China
P.R.}
 \email{shanquan2@stu.pku.edu.cn}
 
 \author{Wenyuan Yang}

\thanks{W.Y. was supported by National Key R \& D Program of China (2025YFA1017500) and  NSFC (no. 12131009, no.12326601).}

\subjclass[2000]{Primary 20F65, 20F67, 37D40}

\keywords{Morse elements, acylindrically hyperbolic groups, WPD elements, exponential genericity, growth gaps}

\date{July 24, 2026}

\address{Beijing International Center for Mathematical Research\\
Peking University\\
 Beijing 100871, China
P.R.}
\email{wyang@math.pku.edu.cn}

\begin{document}

\begin{abstract} 
We prove that, for every finite generating set of an acylindrically hyperbolic
group, the set of non-WPD elements has strictly smaller exponential growth rate.
Equivalently, WPD elements are exponentially generic.  As applications, we
prove growth tightness and cogrowth tightness for acylindrically hyperbolic
groups.  
\end{abstract}
 
\maketitle
 
\section{Introduction}

We study growth gaps and genericity phenomena in acylindrically hyperbolic groups from the viewpoint of word-metric counting. Let $G$ be a finitely generated group, and let $S$ be a finite symmetric generating set. We denote by $\mathrm{Cay}(G,S)$ the Cayley graph equipped with the word metric $d_S$, and write
\[
B_n=\{g\in G:d_S(1,g)\le n\}
\]
for the ball of radius $n$.

For a subset $A\subseteq G$, define its exponential growth rate with respect to $S$ by
\[
\omega(A,S)=\limsup_{n\to\infty}\frac{\log |A\cap B_n|}{n}.
\]
Then $\omega(A,S)\le \omega(G,S)$. We say that $A$ has a \emph{growth gap}, or is \emph{growth tight}, if
\[
\omega(A,S)<\omega(G,S).
\]
Let $\mu_n$ denote the uniform probability measure on $B_n$. A subset $A\subseteq G$ is \emph{exponentially generic} in balls if there exist constants $C>0$ and $0<\lambda<1$ such that for all $n\ge 1$,
\[
\mu_n(A)=\frac{|A\cap B_n|}{|B_n|}\ge 1-C\lambda^n.
\]
Equivalently, $A$ is exponentially generic in balls if and only if its complement has a growth gap.

Acylindrically hyperbolic groups, introduced by Osin \cite{Osin6}, form a broad class of groups exhibiting negatively curved behavior. This class includes non-elementary hyperbolic and relatively hyperbolic groups, mapping class groups of finite-type surfaces, $\mathrm{Out}(F_n)$ for $n\ge 2$, and many groups acting on CAT(0) spaces; see \cite{OsinICM} for an overview. By definition, such a group admits a non-elementary acylindrical action on a hyperbolic space. Work of Dahmani--Guirardel--Osin \cite{DGO} and Osin \cite{Osin6} gives several equivalent characterizations in terms of hyperbolically embedded subgroups and loxodromic WPD elements. The WPD condition, introduced by Bestvina--Fujiwara \cite{BF2}, is a weak properness condition along the orbit of an element; see Definition~\ref{defn:WPD}. In this paper, we work more generally with strongly contracting WPD elements for actions on geodesic metric spaces.

Another natural model for genericity is given by random walks. Fix a probability measure $\nu$ on $G$. One may then ask which properties hold generically with respect to the distribution $\nu^{*n}$ after $n$ steps of the walk. A vast body of work establishes exponential genericity of loxodromic elements for non-elementary group actions on hyperbolic spaces, including pseudo-Anosov elements in mapping class groups, fully irreducible elements in $\mathrm{Out}(F_n)$, and of weakly contracting elements in general metric spaces \cite{Rivin,Maher2,MT,CM15,Sisto,KMPT22,Gou22,ChoiRW25II}.  A prominent feature of this model is that sample paths typically make linear progress and sublinearly track geodesics in the relevant hyperbolic space.

The word metric counting model replaces $\nu^{*n}$ by the uniform measure $\mu_n$ on balls in the Cayley graph. This model is considerably more rigid and much less understood, because the linear progress estimates that drive random walk arguments do not directly apply to uniform counting in balls. Our main technical contribution is a word metric analogue of linear progress. Using this counting estimate, we prove exponential genericity of WPD elements for every finite generating set.

\begin{mainthm}[Exponential genericity of  WPD elements]
\label{MainThmWPDGeneric}
Let $G$ be a finitely generated group acting by isometries on a geodesic metric
space $X$. Suppose that the action admits two independent strongly contracting WPD
elements. Then, for every finite symmetric generating set $S$ of $G$, the set of
strongly contracting WPD elements is exponentially generic in balls. 

Equivalently, there
exist constants $C>0$ and $0<\lambda<1$ such that
\[
\mu_n\bigl(\{g\in G:g\text{ is a strongly contracting WPD element for }G\curvearrowright X\}\bigr)
\ge 1-C\lambda^n
\]
for all $n\ge 1$.
\end{mainthm}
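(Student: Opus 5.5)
We plan to show that the set $N$ of elements that are \emph{not} strongly contracting WPD has a growth gap, $\omega(N,S)<\omega(G,S)$; exponential genericity then follows from the equivalence noted in the introduction, since $|B_n|\ge e^{n\omega(G,S)}$ by submultiplicativity.

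\textbf{Step 1: a barrier/contracting family.} From the two independent strongly contracting WPD elements, build (by standard ping-pong and extension arguments) a finite set $F\subset G$ of strongly contracting WPD elements with uniformly contracting, pairwise independent axes. Fix a basepoint $o\in X$. Define an element $g$ to have an \emph{$(\epsilon,F)$-barrier} at a point of a geodesic word $[1,g]$ if some translate $h\,\ax(f)$, with $f\in F$ and $h$ a prefix of the word, is fellow-travelled by the orbit path $h o\to hfo$ up to constant $\epsilon$. The standard extension lemma then says: for any $g_1,g_2\in G$ there is $f\in F$ with $g_1 f g_2$ uniformly close to a geodesic in both the word metric and in $X$.

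\textbf{Step 2: word-metric linear progress (main obstacle).} We would prove that the set $V_\theta$ of $g\in G$ whose geodesic word does not contain at least $\theta |g|$ disjoint barriers (in the orbit, i.e.\ spaced so that they make linear progress in $X$) has a growth gap for small $\theta>0$. The first approach is the usual counting: the elements without barriers in any long enough window form a growth-tight set, by injecting $V\times V \times F^{\text{tiny}}$-style words to show that if barrier-free words were exponentially as numerous as $G$, inserting elements of $F$ would produce an injective map (injectivity up to bounded multiplicity from contraction and WPD) into $B_{n+O(1)}$, contradicting the growth rate. Then a Chernoff/subadditivity combinatorial argument upgrades "barrier-free in every window of length $L$" to "fewer than $\theta n$ barriers overall" still having growth gap, by splitting geodesic words into blocks of length $L$ and counting choices of the block pattern (binomial entropy $\le H(\theta)$ can be made smaller than the gap). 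This step is the analogue of linear progress for random walks and is where the technical difficulty lies, particularly in controlling multiplicities of the insertion map.

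\textbf{Step 3: many barriers force contracting WPD, up to a small conjugacy defect.} For $g\notin V_\theta$, the orbit path of $g$ passes close to $\ge\theta|g|$ contracting segments with linear displacement in $X$. Using a cyclic permutation argument: write $g=ab$ with $|a|,|b|$ balanced; consider $g$ as a cyclic word and pass to a conjugate $bga b^{-1}$... More concretely, we restrict to elements whose words also have barriers near both ends (the complement again has a growth gap by Step 2 applied to prefixes and suffixes). Then the concatenation $\cdots g\cdot g\cdot g\cdots$ has its barriers aligning across the junctions, so the powers $g^k o$ form a quasi-geodesic tracking a chain of contracting segments, giving that $g$ is strongly contracting (its axis is a uniform neighborhood of such contracting pieces). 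WPD for $g$ follows from WPD of elements of $F$: an element moving two far points of $\ax(g)$ a bounded amount must almost fix a translate of some $\ax(f)$ along a long piece, and WPD of $f$ bounds those. Thus $N\subset V_\theta\cup(\text{end-defect set})$, both growth tight, proving the theorem.
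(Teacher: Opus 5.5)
Your Step~2 is where the whole difficulty lies, and the sketch does not contain the idea that resolves it. Both of your counting schemes implicitly assume that barriers seen along a word geodesic persist on $[o,go]$ (or on $\f_K[\po,g\po]$) and are localized in blocks of the word. This fails because word geodesics need not shadow quasi-geodesics in $X$.

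Concretely, insert $m=\delta n$ elements of $F$ into $g=h_0h_1\cdots h_m$. The inserted axes do appear in order on $\f_K[\po,h\po]$ for the image $h$, by admissibility in $X$ (Lemma~\ref{lem:CA_in_std_path}). To bound the multiplicity, however, you must count the $m$-subsets of that standard path that could be the inserted axes. Its length is only bounded by $\sum_i\LS(\po,h_i\po)+O(m)$, and a priori this sum is merely $O(n)$ by the Lipschitz bound. That gives multiplicity about $\binom{cn}{m}$, which, together with the cost $(M|S|^l)^m$, wipes out the gain from $\binom{n-1}{m}$.

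The missing ingredient is a no-backtracking estimate for word geodesics: $\LS_Q(g\po,h\po)\le\LS(g\po,h\po)+\delta\, d_S(g,h)+E|Q|$ for geodesically anchored $Q$ (Proposition~\ref{prop:word_geod_on_proj_cplx}). The paper proves it by induction on $d_S(g,h)+|Q|$, using short decompositions, $K$-guards, and the divergence of WPD cosets in the word metric (Lemma~\ref{lem:cut_end_div}). With it the multiplicity is at most $M^m\binom{2\varepsilon n+Em}{m}$ (Lemma~\ref{lem:size_of_preimage}), which yields Theorem~\ref{thm:grow_tight_PC}. Relatedly, your Step~1 claim that $g_1fg_2$ is close to a word geodesic is not available, since WPD elements need not be contracting in $\mathrm{Cay}(G,S)$. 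The paper uses admissibility only in $X$, together with the trivial bound $d_S(1,\Phi(g,I))\le n+ml$.

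Step~3 is false as stated: barriers near both ends of a geodesic word do not make the barriers of $g\cdot g$ align at the junction. Take $G=\langle x,y\rangle*\langle t\mid t^2\rangle$ acting on its own Cayley graph, with $x\in F$. Let $g=ata^{-1}$, where $a$ is a long reduced word in $x,y$ beginning with a high power of $x$. Then $g$ is represented by a geodesic word with barriers at both ends, yet $g^2=1$.

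What has to be controlled is the backtracking at the junction, i.e.\ the Gromov product $\langle g^{-1}\po,g\po\rangle_\po^{\mathrm{std}}$. If it is at most $\LS(\po,g\po)/2-7$, then $g$ is loxodromic on $\PC$ (Lemma~\ref{lem:stable_axis}) and hence strongly contracting WPD (Lemma~\ref{lem:lift_wpd}). If it is at least a small multiple of $d_S(1,g)$, the paper transports the common prefix of $\f_K[\po,g^{-1}\po]$ and $\f_K[\po,g\po]$ back to the word geodesic through doubly good guarded blocks. Bad blocks are rare by divergence (Lemma~\ref{lem:good_block}), and this yields $g=uvu^{-1}$ with $d_S(1,u)+d_S(1,v)\le(1-\varepsilon/1000)d_S(1,g)+M$ (Lemma~\ref{lem:split_into_conjugacy}). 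Such conjugates form a growth-tight set by a direct count (Lemma~\ref{lem:Bgrowthtight}). Your exceptional set has no counterpart of this conjugacy-shortened set, so it does not contain all non-WPD elements.
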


%Equivalently, the set of non-WPD elements has growth gap for every finite generating set. 
Since strongly contracting WPD elements are Morse in the
word metric by a theorem of Sisto \cite{Sisto16}, ~\ref{MainThmWPDGeneric}
also gives exponential genericity of Morse elements.
\medskip

The proof gives more than WPD genericity. It shows that generic elements have
almost maximal stable length, both in the word metric and in the auxiliary
space. For $g\in G$, let
\[
\tau_S(g)=\lim_{n\to\infty}\frac{d_S(1,g^n)}{n},
\qquad
\tau_X(g)=\lim_{n\to\infty}\frac{d(o,g^n o)}{n}
\]
where $o\in X$ is a base point. The quantity $\tau_X(g)$ is independent of the
choice of $o$.

\begin{mainthm}[Stable length large deviations]
\label{MainThmStableLength}
Under the assumptions of ~\ref{MainThmWPDGeneric}, for every
$\varepsilon>0$, the set of elements $g\in G$ satisfying
\[
\tau_S(g)\ge (1-\varepsilon)d_S(1,g)
\quad\text{and}\quad
\tau_X(g)\ge (1-\varepsilon)d(o,go)
\]
is exponentially generic in balls.
\end{mainthm}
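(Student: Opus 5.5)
We will deduce Theorem~\ref{MainThmStableLength} from the word-metric analogue of linear progress that underlies Theorem~\ref{MainThmWPDGeneric}. We treat the two inequalities separately and show that, for each, the exceptional set has a growth gap. A finite union of sets with growth gaps still has a growth gap, so this suffices.

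\emph{Step 1: reduction to a conjugation-free statement.} For $g\in G$ we look for an element $f$ in a fixed finite set $F$ of pairwise independent strongly contracting WPD elements such that the concatenation $g f$ is a ``$(L,\tau)$-admissible'' word. Concretely, the orbit path of $(gf)^n$ should have the following shape: it consists of $n$ copies of a geodesic $[o,go]$, and consecutive copies are separated by translated axes of $f$ of length at least $L$, with projection of the adjacent geodesic onto each axis bounded by $\tau$. The extension lemma for contracting elements (the Yang extension lemma, which we take as available from the preliminaries) guarantees such an $f\in F$ for every $g$. The standard admissible-path argument then shows that the orbit path is a quasi-geodesic with additive loss bounded by a constant $D$ per period. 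This gives
\[
\tau_X(gf)\ge d(o,go)-D.
\]
An analogous statement in the word metric follows from Sisto's Morse property: the same admissible concatenation is fellow-travelled by a word geodesic, so $\tau_S(gf)\ge d_S(1,g)-D'$.

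\emph{Step 2: passing from $gf$ to $g$.} We cannot simply append $f$, because genericity concerns $g$ itself. Instead we use the linear-progress counting estimate. For all $g\in B_n$ outside an exponentially small set, $g$ admits a decomposition
\[
g=a\,h\,b,
\]
where $|a|,|b|\le\varepsilon n$, and the middle piece $h$ contains a ``barrier'' (a translate of a long axis segment of some $f\in F$) near each end. This is the counting-version of the statement that typical geodesic words contain many contracting subwords. The exceptional words, those avoiding barriers in an initial or terminal segment of length $\varepsilon n$, are counted by the usual barrier-free growth gap (Arzhantseva--Lysenok / Yang style). Up to exponentially small error, we then have that $bg b^{-1}=bah$ is a cyclically admissible word, so its powers behave as in Step 1. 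Since stable length is conjugation invariant, we obtain
\[
\tau_X(g)=\tau_X(bgb^{-1})\ge d(o,go)-O(\varepsilon n),
\]
and similarly $\tau_S(g)\ge d_S(1,g)-O(\varepsilon n)$.

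\emph{Step 3: converting additive losses into multiplicative ones.} Elements of $B_n$ with $d_S(1,g)\le(1-\varepsilon)n$ form an exponentially small proportion, because $|B_n|$ grows like $e^{\omega n}$ up to a constant; this holds for non-elementary groups via the growth-tight sphere estimates. Hence $d_S(1,g)\asymp n$ generically, and the $\varepsilon n$ loss becomes a factor $(1-O(\varepsilon))$. For the $X$-inequality we additionally need $d(o,go)\ge c\,n$ generically, with $c>0$. This is exactly the linear-progress counting estimate: exponentially many barriers along a typical geodesic each contribute a definite amount of displacement in $X$. Rescaling $\varepsilon$ completes the argument.

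\emph{Main obstacle.} The hardest step is Step 2. We must ensure that barriers appear within $\varepsilon n$ of \emph{both} ends of a typical word geodesic, and do so with exponential genericity uniform in $S$. Only then do the conjugated powers align without cancellation. Our first attempt is a ``surgery'' injection: delete a barrier-free prefix of length $\varepsilon n$ and insert a barrier. This shows that the barrier-free set maps with bounded multiplicity into a set of growth rate strictly less than $\omega(G,S)$. The cancellation at the junction $b\cdot a$ is then controlled by the bounded projection property of independent contracting axes.
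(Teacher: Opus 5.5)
\textbf{Gap 1: the junction in Step~2.} The decisive gap is the claim that, off an exponentially small set, $bgb^{-1}=bah$ is cyclically admissible, with the junction ``controlled by the bounded projection property of independent contracting axes.'' Bounded projection concerns two \emph{distinct} axes of $\f$. It says nothing about whether the barrier ending one period and the translated barrier starting the next are distinct. Nor does it bound the projection of the arbitrary connector $ba$ onto them.

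Take a geodesic word $g=wcw^{-1}$ with $d_S(1,w)\ge n/4$. Since $gw=wc$, the path labelled by $g\cdot g$ runs back and forth along $g\cdot[1,w]$ at the junction, and $bah$ contains $w^{-1}w$ at each junction. So whatever barriers you find within $\varepsilon n$ of the ends are retraced in both directions. Indeed $\tau_S(g)=\tau_S(c)\le d_S(1,c)\le n/2$. Such elements are exponentially rare, but your argument never excludes them, and no projection estimate can.

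The missing idea is a dichotomy between alignment of powers and conjugacy shortening. In the paper, Lemma~\ref{lem:split_into_conjugacy} shows that one of two things happens. Either $\langle g^{-1}\po,g\po\rangle_\po^{\mathrm{std}}<\varepsilon d_S(1,g)$; then, since $\LS(\po,g\po)\ge\varepsilon d_S(1,g)$ generically by Theorem~\ref{thm:grow_tight_PC}, Lemma~\ref{lem:stable_axis} concatenates the translates of $\f_K[v,gu]\cdot\f_K[gu,gv]$ into a $g$-invariant bi-infinite standard path. Or $g=xyx^{-1}$ with $d_S(1,x)+d_S(1,y)\le(1-\varepsilon/1000)d_S(1,g)+M$, and this set has a growth gap by the elementary count of Lemma~\ref{lem:Bgrowthtight}.

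Producing that conjugator in the word metric needs linearly many guard axes in the common part of the tripod. Only then does one doubly good axis survive, after discarding bad blocks, at a definite distance from both endpoints. The barriers used afterwards must lie in $\f_K(v,gu)$: beyond the backtracking zone, yet inside the initial $\varepsilon$-fraction (Lemmas~\ref{lem:generic_prefix} and~\ref{lem:generic_many_axes_stable}). Their existence near the start is fine if you derive it, as there, from Theorem~\ref{thm:grow_tight_PC} applied to the prefix together with Proposition~\ref{prop:word_geod_on_proj_cplx}. A direct surgery argument in $\mathrm{Cay}(G,S)$ lacks the contraction needed to control multiplicity.

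\textbf{Gap 2: the word metric.} The word-metric half has a second, independent gap. Sisto's theorem only makes $\langle f\rangle$ Morse in $\mathrm{Cay}(G,S)$. The fellow-travel property of admissible paths (Proposition~\ref{admisProp}) rests on strong contraction in the ambient space. The bounded-projection conditions supplied by Lemma~\ref{extend3} are measured in $X$, not in $d_S$. So neither $\tau_S(gf)\ge d_S(1,g)-D'$ nor the ``similarly'' for $\tau_S(g)$ is justified.

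The paper's substitute is divergence (Lemma~\ref{lem:WPDdivergent}): a $(K,R)$-bad guarded block has length at least $\kappa(r)$, where $r\to\infty$ as $R\to\infty$ (Lemma~\ref{lem:good_block}). This only helps if each period of $[1,g^{n+1}]_S$ carries at least $\delta d_S(1,g)$ candidate guard axes. Then, if all of them were bad, their blocks would outweigh the length available between neighbouring good blocks (Lemma~\ref{lem:good_block_in_each_copy}). With one barrier per period, a bad block costs only $\kappa(r)$, which is negligible against a period of length $d_S(1,g)$. So nothing forces the word geodesic to pass near the points $g^k$.

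Finally, to reach $1-\varepsilon$ rather than some constant $c>0$, these good axes must satisfy a further condition. The $X$-projection of $1$ to the corresponding coset must lie within $\varepsilon d_S(1,g)/4+R_0$ of $1$ in $d_S$. This is why they are taken among good guard axes of the prefix $[1,h]_S$ in the proof of Theorem~\ref{thm:stable_in_cay}.
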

We regard this  as a manifestation of a large deviation principle, which has been well established in the random walk model by many authors \cite{BMSS,Gou22,ChoiRW25II}.

One may also consider exponential genericity in \emph{spheres} instead of balls by replacing $B_n$ with $S_n=B_n\setminus B_{n-1}$. It is easy to see that these two formulations are equivalent.
One may then derive from \ref{MainThmStableLength} a Birkhoff-average type statement for the stable
word length:
\[
\lim_{n\to\infty}
\frac{1}{|S_n|}
\sum_{g\in S_n}
\frac{\tau_S(g)}{n}
=1.
\]
Thus generic elements have almost maximal stable word length.  

\medskip

\noindent{\textbf{Applications to mapping class groups}.}
In \cite{ChoiPAExp}, Choi
proved exponential genericity of pseudo-Anosov elements for
infinitely many generating sets obtained from random walks. Ding--Mart\'inez--Granado--Zalloum \cite{DMGZ} obtained exponential genericity for another large
class of generating sets arising from injective spaces. More recently, Choi
\cite{Choi25} proved that pseudo-Anosov elements are  generic in every Cayley
graph of the mapping class group at a polynomial convergence rate, but exponential genericity for arbitrary
finite generating sets remained open.

\begin{cor}[Mapping class groups]
\label{CorMCGTranslationLength}
Let $\Sigma$ be a closed orientable surface of genus at least $2$, and let
$G\le \mathrm{Mod}(\Sigma)$ be a finitely generated subgroup containing two
independent pseudo-Anosov elements. Let $\mathcal C(\Sigma)$ be the curve
complex. Then, for every finite symmetric generating set $S$ of $G$, there exist constants
$c>0$, $C>0$, and $0<\lambda<1$ such that
\[
\mu_n\left(
\left\{
\phi\in G:
\phi\text{ is pseudo-Anosov and }
\tau_{\mathcal C(\Sigma)}(\phi)\ge c\,d_S(1,\phi)
\right\}
\right)
\ge 1-C\lambda^n
\]
for all $n\ge 1$.
\end{cor}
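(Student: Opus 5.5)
The plan is to deduce the corollary from Theorems~\ref{MainThmWPDGeneric} and~\ref{MainThmStableLength}, applied to the action $G\curvearrowright X=\mathcal C(\Sigma)$. Four steps are needed.

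\textbf{Step 1: verify the hypotheses.} By Masur--Minsky, $\mathcal C(\Sigma)$ is a geodesic hyperbolic space. An element of $\mathrm{Mod}(\Sigma)$ acts loxodromically on $\mathcal C(\Sigma)$ if and only if it is pseudo-Anosov. Bowditch showed that the action of $\mathrm{Mod}(\Sigma)$ on $\mathcal C(\Sigma)$ is acylindrical, and the restriction to the subgroup $G$ is still acylindrical. In a hyperbolic space, quasi-geodesic axes of loxodromic elements are strongly contracting. Under an acylindrical action every loxodromic element is WPD. Hence every pseudo-Anosov element of $G$ is a strongly contracting WPD element for $G\curvearrowright\mathcal C(\Sigma)$.

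Two pseudo-Anosov elements are independent in $\mathrm{Mod}(\Sigma)$ exactly when their fixed point pairs in $\mathcal{PML}$ are disjoint. By Klarreich's identification of $\partial\mathcal C(\Sigma)$, this is the same as having disjoint fixed points in $\partial \mathcal C(\Sigma)$, which is the notion of independence used in Theorem~\ref{MainThmWPDGeneric}. Conversely, a strongly contracting WPD element acts loxodromically on $\mathcal C(\Sigma)$, so it is pseudo-Anosov. Therefore the set in Theorem~\ref{MainThmWPDGeneric} coincides with the set of pseudo-Anosov elements of $G$.

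\textbf{Step 2: comparison of metrics.} Fix a base curve $o\in\mathcal C(\Sigma)$ and let $M=\max_{s\in S}d(o,so)$. The triangle inequality gives $d(o,\phi o)\le M\,d_S(1,\phi)$ for all $\phi\in G$. This bound points the wrong way for what we need. Theorem~\ref{MainThmStableLength} gives $\tau_{\mathcal C}(\phi)\ge(1-\varepsilon)d(o,\phi o)$ on a generic set, so it still remains to show that $d(o,\phi o)\ge c' d_S(1,\phi)$ for exponentially generic $\phi$.

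\textbf{Step 3: linear displacement in $\mathcal C(\Sigma)$.} This is the main obstacle. My plan is to use the counting estimate the paper advertises as its main technical contribution, namely the word-metric analogue of linear progress. I expect the proof of Theorem~\ref{MainThmStableLength} to establish that there exists $c'>0$ such that the set $\{\phi: d(o,\phi o)\ge c' d_S(1,\phi)\}$ is exponentially generic in balls.

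If that estimate is not available verbatim, I would argue directly. Let $\mathcal{F}$ be the set of elements $\phi$ with $d(o,\phi o)<c'|\phi|_S$. The aim is to show $\mathcal{F}$ has a growth gap for small $c'$. The argument would go as follows.
\begin{enumerate}
\item Choose a finite set $F$ of pseudo-Anosov elements, built from the two independent ones, each of which makes definite progress along the contracting axes.
\item Consider a word of length $n$ as roughly $n/L$ blocks of length $L$.
\item Insert elements of $F$ between consecutive blocks. This is injective up to exponentially small loss, and it produces elements whose $\mathcal C(\Sigma)$-displacement grows linearly in the number of blocks, since there is no backtracking along contracting segments.
\item Conclude via the standard extension-lemma/growth-gap argument that elements with sublinear displacement are exponentially rare.
\end{enumerate}

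\textbf{Step 4: conclusion.} Apply Theorem~\ref{MainThmStableLength} with $\varepsilon=1/2$. Intersect the resulting generic set with the set from Step~1 and the set from Step~3. A finite intersection of exponentially generic sets is exponentially generic. On this intersection $\phi$ is pseudo-Anosov and
\[
\tau_{\mathcal C(\Sigma)}(\phi)\ \ge\ \tfrac12\, d(o,\phi o)\ \ge\ \tfrac{c'}{2}\, d_S(1,\phi).
\]
The corollary follows with $c=c'/2$.
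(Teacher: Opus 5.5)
Your argument is essentially the paper's derivation. Pseudo-Anosov elements are exactly the loxodromic elements, hence strongly contracting WPD elements, for Bowditch's acylindrical action of $G$ on $\mathcal C(\Sigma)$, so Theorem~\ref{MainThmWPDGeneric} gives genericity. The linear bound then comes from combining Theorem~\ref{MainThmStableLength} with the short-displacement growth gap.

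That growth gap is exactly Theorem~\ref{MainThmShortElems}, stated in the introduction and invoked verbatim at the start of the proof of Theorem~\ref{thm:stable_trans_in_X}. Step~3 should simply cite it, so no hedge is needed.

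Your fallback sketch is unnecessary, and as written it would not work. Showing that inserted pieces make the image's displacement large does not make low-displacement elements rare. What the counting needs is an \emph{upper} bound on the image's standard-path length in terms of the original element's, so that the fibers of the insertion map are small (Lemma~\ref{lem:size_of_preimage}). That bound fails naively because word geodesics can backtrack in $X$, and it is precisely what the anchored length estimate (Proposition~\ref{prop:word_geod_on_proj_cplx}) supplies.

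A minor correction to Step~1. The paper's independence (no power of one element conjugate in $G$ to a power of the other) is strictly stronger than disjoint fixed points; for example, $\phi$ and $\psi\phi\psi^{-1}$ with $\psi\notin E(\phi)$ have disjoint fixed points but conjugate powers. This does not affect your conclusion, because a non-elementary acylindrical action always contains such independent pairs.
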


Thus, for the full mapping class group, Corollary~\ref{CorMCGTranslationLength}
answers \cite[Question~1.4]{ChoiAH} and \cite[Question~1.4]{DMGZ}
affirmatively. We emphasize that the statement applies more generally to every finitely generated subgroup containing two independent pseudo-Anosov elements. The analogous statement in the random walk model is well known. The genericity of pseudo-Anosovs in such subgroups, but without exponential rate, was recently proved by Choi \cite{ChoiAH}.  Our result here gives the stronger translation length conclusion for generic pseudo-Anosov elements on the curve complex.   

\medskip

\noindent{\textbf{Applications to $\mathrm{Out}(F_n)$}.} The free
factor complex $\mathcal{FF}_n$ is hyperbolic by Bestvina--Feighn
\cite{BF14}; fully irreducible elements in $\mathrm{Out}(F_n)$ act loxodromically on
$\mathcal{FF}_n$ and satisfy the WPD condition. Recently, Choi \cite{ChoiAH} proved genericity of fully irreducible elements in $\mathrm{Out}(F_n)$, but without an
exponential rate. His result also applies to general acylindrically hyperbolic groups.

\begin{cor}[$\mathrm{Out}(F_n)$]
\label{CorOutFnTranslationLength}
Let $n\ge 3$, and let $G\le \mathrm{Out}(F_n)$ be a finitely generated subgroup
whose action on the free factor complex $\mathcal{FF}_n$ contains two
independent fully irreducible elements.
Then, for every finite symmetric generating set $S$ of $G$, there exist constants
$c>0$, $C>0$, and $0<\lambda<1$ such that
\[
\mu_n\left(
\left\{
\phi\in G:
\phi\text{ is fully irreducible and }
\tau_{\mathcal{FF}_n}(\phi)\ge c\,d_S(1,\phi)
\right\}
\right)
\ge 1-C\lambda^n
\]
for all $n\ge 1$.
\end{cor}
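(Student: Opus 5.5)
The corollary will follow by applying Theorems~\ref{MainThmWPDGeneric} and \ref{MainThmStableLength} to the action $G\curvearrowright X=\mathcal{FF}_n$. Since $\mathcal{FF}_n$ is hyperbolic by Bestvina--Feighn \cite{BF14}, every loxodromic element is strongly contracting, because in a hyperbolic space quasi-geodesic axes are strongly contracting with uniform constants. Fully irreducible elements act loxodromically on $\mathcal{FF}_n$ and satisfy WPD. So the hypothesis that $G$ contains two independent fully irreducible elements provides two independent strongly contracting WPD elements for $G\curvearrowright\mathcal{FF}_n$. Here independence means that the fixed point pairs on $\partial\mathcal{FF}_n$ are disjoint, equivalently that the axes have bounded coarse intersection, which is the notion the theorem uses. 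Theorem~\ref{MainThmWPDGeneric} then says that the strongly contracting WPD elements are exponentially generic in balls for every finite symmetric $S$.

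Next I identify these generic elements. A strongly contracting WPD element is in particular loxodromic on $\mathcal{FF}_n$. By Bestvina--Feighn, $\phi\in\mathrm{Out}(F_n)$ acts loxodromically on $\mathcal{FF}_n$ if and only if $\phi$ is fully irreducible. Indeed, a non-fully-irreducible element has a power fixing a conjugacy class of proper free factor, so it has a bounded orbit. Hence the generic set consists of fully irreducible elements.

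For the translation length bound, I apply Theorem~\ref{MainThmStableLength} with $\varepsilon=1/2$. This shows that the set of $g$ with $\tau_X(g)\ge \tfrac12 d(o,go)$ is exponentially generic. This is not yet enough, because we need a lower bound in terms of $d_S(1,g)$, not $d(o,go)$. The key missing input is therefore a linear lower bound $d(o,go)\ge c' d_S(1,g)$ for exponentially generic $g$. This is the main obstacle, since the orbit map $G\to\mathcal{FF}_n$ is Lipschitz but very far from bi-Lipschitz.

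To handle it, I expect to invoke the word-metric analogue of linear progress that the paper advertises as its main technical contribution. Presumably it states that, for some $c'>0$, the set $\{g:d(o,go)<c'd_S(1,g)\}$ has a growth gap. Alternatively, I would extract this from the proof of Theorem~\ref{MainThmStableLength}, where generic elements are built from many disjoint translates of a fixed contracting element, each contributing a definite amount of progress in $X$.

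Finally, I intersect the three exponentially generic sets: the fully irreducible (WPD) elements, the set with $\tau_X\ge\frac12 d(o,go)$, and the linear progress set. A finite intersection of exponentially generic sets is exponentially generic, because the complements' growth rates stay below $\omega(G,S)$ and the counts add. On this intersection we get $\tau_{\mathcal{FF}_n}(\phi)\ge \tfrac{c'}{2}d_S(1,\phi)$, so the corollary holds with $c=c'/2$. The constants $C$ and $\lambda$ are adjusted to make the bound hold for all $n\ge1$, using the fact that for small $n$ the ratio is trivially at least $0$.
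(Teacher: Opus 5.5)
Your proposal is correct and is the paper's intended derivation: apply Theorem~\ref{MainThmWPDGeneric} to $G\curvearrowright\mathcal{FF}_n$, identify the generic strongly contracting WPD elements as fully irreducible, and combine Theorem~\ref{MainThmStableLength} with the linear bound $d(o,go)\ge c'\,d_S(1,g)$. That bound is exactly Theorem~\ref{MainThmShortElems}, the growth gap for short displacement, which the paper uses in the same way in the proof of Theorem~\ref{thm:stable_trans_in_X}, so nothing needs to be extracted from the proof of Theorem~\ref{MainThmStableLength}. One small correction: the paper's notion of independence (no powers conjugate, i.e.\ bounded projection for the whole $G$-orbit of axes) is stronger than disjointness of fixed points, but a non-elementary action with a loxodromic WPD element always contains such pairs by \cite{DGO}, so the argument goes through.
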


For the full group $\mathrm{Out}(F_n)$, this answers
\cite[Question~1.5]{ChoiAH} affirmatively by upgrading genericity of fully
irreducible elements to exponential genericity for every finite generating set.
Moreover, the linear lower bound answers the translation length question \cite[Question~1.6]{ChoiAH}.
\medskip

We now describe the main geometric mechanism. Let $G\curvearrowright X$ be as in
~\ref{MainThmWPDGeneric}. Fix a finite non-empty set $F$ of independent
strongly contracting WPD elements, and let
\[
\mathfrak F=\{g\ax(f):g\in G,\ f\in F\}
\]
be the associated $G$-invariant system of contracting axes. Let $\mathcal P_K(\mathfrak F)$ be the corresponding projection complex of Bestvina--Bromberg--Fujiwara \cite{BBF}, whose vertices are the axes in $\mathfrak F$. It is known that $\mathcal P_K(\mathfrak F)$ is an unbounded quasi-tree on which $G$ acts acylindrically \cite{BBF,BBFS}.

Choose a base axis $\mathfrak o\in \mathfrak F$ and a base point
$o\in \mathfrak o$. The orbit maps to $X$ and to the projection complex are
related by the following diagram:
\begin{equation}\label{dia:threespaces}
\begin{tikzcd}[column sep=large,row sep=large]
\mathrm{Cay}(G,S) \arrow[rr, "{g\mapsto go}"]
\arrow[dr, swap, "{g\mapsto g\mathfrak o}"]
& &
\bigcup_{Y\in\mathfrak F}Y\subseteq X \arrow[dl, "\Psi"] \\
& \mathcal P_K(\mathfrak F) &
\end{tikzcd}
\end{equation}
where $\Psi$ collapses each axis $Y\in\mathfrak F$ to the corresponding vertex
of the projection complex.

An arbitrary word geodesic need not project to a quasigeodesic in $X$. Nevertheless, we prove that, for a generic element counted in word metric balls, every word geodesic from $1$ to that element coarsely passes through linearly many WPD axes. Moreover, these axes occur in the order prescribed by a \emph{standard path} in the projection complex (see Definition \ref{def:stdpath}). %In this sense, this provides a counting analogue of linear progress for the word metric model.  

\begin{mainthm}[Positive-density WPD-axis recurrence]
\label{MainThmRecurrence}
Let $G\curvearrowright X$ be as in ~\ref{MainThmWPDGeneric}, and let
$\mathfrak F$ be the $G$-invariant system of strongly contracting WPD axes defined above.
For every finite symmetric generating set $S$, there exist constants $\varepsilon>0$ and
$R>0$, and an exponentially generic subset $G_{\mathrm{rec}}\subseteq G$, such
that the following holds.

For every $g\in G_{\mathrm{rec}}$ and every word geodesic $\gamma$ from $1$ to
$g$ in $\mathrm{Cay}(G,S)$, there are at least $\varepsilon d_S(1,g)$ distinct
vertices
\[
Y_1<Y_2<\cdots<Y_m
\]
on the standard path from a fixed point $\mathfrak o$ to $g\mathfrak o$ in
$\mathcal P_K(\mathfrak F)$ such that each $Y_i$ is represented by a coset
$g_iE(f_i)$, with $f_i\in F$, and
\[
d_S(g_i,\gamma)\le R.
\]
\end{mainthm}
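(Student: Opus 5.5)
Proving \ref{MainThmRecurrence} amounts to a counting argument of "barrier-free" type in the word metric: we show that the elements for which some word geodesic fails to pass near many axes of the standard path form a set with a growth gap. First fix the constants. By the contraction and WPD properties, there is a finite set $F$ of independent strongly contracting WPD elements and a constant $L$ such that, whenever a word geodesic segment from $h$ to $hf^{N}$ ($f\in F$, $N$ large) appears as a subword of $\gamma$, the projection of $\gamma$ to $X$ fellow-travels the axis $h\mathrm{Ax}(f)$ for a long stretch. The bounded geodesic image property of $\mathcal P_K(\mathfrak F)$ then forces $h\mathrm{Ax}(f)$ to lie on the standard path from $\mathfrak o$ to $g\mathfrak o$, provided the endpoints project far from the stretch on both sides. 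The key geometric lemma I would prove first is therefore the following. If a word geodesic $\gamma$ from $1$ to $g$ contains, at position $t$, a copy of a fixed "barrier" word $w_f$ representing a high power of $f\in F$, and neither endpoint's orbit point projects close to that axis, then the coset $g_iE(f)$ with $g_i=\gamma(t)$ is a vertex $Y_i$ of the standard path with $d_S(g_i,\gamma)=0\le R$. The ordering $Y_1<\dots<Y_m$ along the standard path then follows from the ordering of positions along $\gamma$ via projection estimates in $X$. Distinctness follows because the occurrences can be taken $D$-separated along $\gamma$ and WPD bounds how many group elements near a fixed axis can translate it to itself.

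Next comes the counting. Let $B$ be the set of $g$ that have some geodesic $\gamma$ along which fewer than $\varepsilon |g|$ of the disjoint length-$D$ windows are "good", meaning they contain a barrier realized as above. The standard tool is a growth-tightness estimate for geodesics avoiding a pattern. Arrange that, for every $x\in G$, some $f\in F$ can be inserted: $|x w_f y|\ge |x|+|y|+|w_f|-C$ and the insertion produces a barrier. This is the extension lemma for independent contracting elements, which uses the existence of two independent elements so that at least one choice avoids the bounded projection bad cases. Then build an injection-with-bounded-multiplicity from $\{$elements of length $\approx n$ having $\le\varepsilon n$ good windows$\}\times\{$choices of barrier insertions in $\ge (1-\varepsilon)n/D$ bad windows$\}$ into $B_{n+O(n/D)}$. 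This gives $|B\cap S_n|\,2^{cn/D}\lesssim e^{\omega (1+C'/D)n}$, and hence a growth gap once $D$ is large and $\varepsilon$ small. This is the Arzhantseva–Lysenok / Yang-style "elevating" argument, carried out with windows along a chosen geodesic. Passing from "some geodesic" to "every geodesic" is handled by taking a union over the finitely many window patterns: we count pairs $(g,\gamma)$ with $\gamma$ bad, which bounds the elements from above.

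The main obstacle is ensuring that an inserted barrier actually survives in \emph{every} word geodesic of the resulting element, and in the correct position. A word geodesic to $xw_fy$ need not read $w_f$ at all. To handle this I would not insist that the barrier appear literally. Instead I would use the contracting property of $h\mathrm{Ax}(f)$ in $X$ together with Sisto's Morse property, which says strongly contracting WPD elements are Morse in the word metric. Any word geodesic from $1$ to $xw_fy$ must then pass within $R$ of the Morse quasi-geodesic $\{xf^k\}$, so its point near $x$ gives $g_i$ with $d_S(g_i,\gamma)\le R$. Multiplicity of the decoding map is controlled because $g_i$ is determined up to an $R$-ball. The remaining delicate point is making the inserted barriers sufficiently deep, through long powers and the choice of $f$ via projection estimates, so that the corresponding axes are on the standard path and are pairwise distinct. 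This is where the independence of the elements of $F$ and the constant $K$ of the projection complex enter.
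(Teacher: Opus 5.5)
There is a genuine gap: the multiplicity of your insertion map is never controlled, and the counting as written does not close.

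\textbf{The arithmetic.} Deciding whether to insert in each of the $\approx n/D$ bad windows gains a factor $2^{cn/D}$. Each insertion adds about $|w_f|$ to the length, costing a factor $e^{\omega C'n/D}$ with $C'$ of the order of $|w_f|$. Both exponents scale like $n/D$, so enlarging $D$ changes nothing. You would need $c\log 2>\omega C'$, which goes the wrong way because $w_f$ must be a long power of $f$ to produce a deep axis.

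\textbf{The decoding.} A working insertion argument needs far more insertion patterns in the domain than patterns compatible with a given image, and your decoding step does not supply this. Knowing that $g_i$ is determined up to an $R$-ball only locates an inserted axis once you already know it was inserted. Given $h$, a word geodesic to $h$ and the standard path $\f_K(\po,h\po)$ may carry linearly many other $R$-close axes. Some are already present for $g$; they need not be literal barriers, so your definition of $B$ does not exclude them. Others arise because a word geodesic for $g$ can wander in $\PC$: the inserted barriers stop these detours from cancelling, so they reappear on the standard path of the image. Deciding which candidates are the inserted ones costs about as much as you gained.

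\textbf{What the paper uses instead.} The missing ingredient is the anchored length estimate, Proposition~\ref{prop:word_geod_on_proj_cplx}, proved by an induction driven by the divergence of WPD elements (Lemma~\ref{lem:cut_end_div}). The paper counts $A=\{g:\LS(\po,g\po)\le\varepsilon d_S(1,g)\}$ and inserts at $m=\delta n$ positions chosen anywhere in $\{1,\dots,n-1\}$. The estimate gives $\LS(\po,\Phi(g,I)\po)\le\LS(\po,g\po)+\delta n+Em$, and the inserted axes lie on $\f_K[\po,\Phi(g,I)\po]$. So the multiplicity is at most $M^m\binom{2\varepsilon n+Em}{m}$. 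Together with the cost $|S|^{ml}$, this is beaten by $\binom{n-1}{m}$ once $\varepsilon\ll (M|S|^l)^{-1}$ (Theorem~\ref{thm:grow_tight_PC}). Even if you redefined good windows by $R$-closeness to standard-path axes, you would still need this bound on the standard length of the image.

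\textbf{The every-geodesic requirement.} The paper handles this deterministically rather than by making barriers survive insertion. When $\LS(\po,g\po)\ge\varepsilon d_S(1,g)$, every word geodesic has a guard decomposition with linearly many blocks (Lemma~\ref{lem:block_decomp}). Divergence forces each bad block to have length at least $\kappa(r)$, so only a small fraction of blocks are bad (Lemma~\ref{lem:good_block}).

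\textbf{The Morse step.} Your claim that every word geodesic to $xw_fy$ passes within a uniform $R$ of $x\langle f\rangle$ does not follow from Sisto's Morse property. That property controls quasi-geodesics with endpoints on the Morse set, and $X$-admissibility from the extension lemma does not make the concatenation a word quasi-geodesic. What is available is that \emph{all but a small proportion} of the axes are $R$-close, which is all the theorem needs.
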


%Thus the proof does not require word geodesics to shadow quasigeodesics in the auxiliary space. Instead, it proves a positive density recurrence phenomenon: generic word geodesics make linearly many coarse returns to WPD axes. 
%This statistical recurrence replaces the weak contraction or shadowing hypotheses used in earlier approaches \cite{Choi25,Wiest,GTT}.

The recurrence theorem is  derived from a more basic growth gap
statement. It says that elements whose displacement in the auxiliary space is
too small compared with their word length form an exponentially negligible set.

\begin{mainthm}[Growth gap for short displacement]
\label{MainThmShortElems}
Let $G\curvearrowright X$ be as in ~\ref{MainThmWPDGeneric} and fix a base point $o\in X$. Then, for every finite symmetric generating set
$S$ of $G$, there exists $\varepsilon=\varepsilon(S,o)>0$ such that
\[
A=\{g\in G:d(o,go)\le \varepsilon d_S(1,g)\}
\]
has a growth gap in $G$.
\end{mainthm}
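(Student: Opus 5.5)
We plan to build an injective "insertion" map: for each element $g$ of $A$, of word length $n$, we produce exponentially many distinct elements of word length about $n$. This forces $|A\cap B_n|$ to be exponentially smaller than $|B_{n}|$. This is the standard growth-tightness mechanism (in the spirit of Arzhantseva--Lysenok and Yang's extension lemma), now driven by the auxiliary displacement $d(o,go)$ rather than by word geometry.

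\textbf{Step 1 (extension lemma).} Using the two independent strongly contracting WPD elements, we fix a finite set $F_0\subseteq G$ of three loxodromic elements with pairwise independent contracting axes. Given any $x,y\in G$, some $f\in F_0$ makes the concatenated path labelled $(x, f, y)$ an admissible path in $X$ with bounded projections. By the contraction property it is then a quasi-geodesic, so
\[
d(o,xfyo)\ge d(o,xo)+d(o,yo)-C_0 .
\]
We will also choose elements of $F_0$ of large $X$-translation, replacing $f$ by high powers, so that each inserted letter increases the displacement by at least $L\gg C_0$.

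\textbf{Step 2 (many insertions with controlled word length).} Take $g\in A$ with $|g|_S=n$, and fix a word geodesic $g=s_1\cdots s_n$. Pick $\theta>0$ small, and cut the geodesic into $k=\lfloor\theta n\rfloor$ consecutive blocks $g=h_1h_2\cdots h_k$. At a subset $I\subseteq\{1,\dots,k\}$ of cut points, insert the letters $f_i\in F_0$ given by Step 1. This yields an element $\Phi(g,I)$ of word length at most $n+M|I|$, where $M=\max_{f\in F_0}|f|_S$. The number of choices of $I$ is $2^{\theta n}$.

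\textbf{Step 3 (injectivity via displacement).} This is the main obstacle: we must show that $(g,I)\mapsto \Phi(g,I)$ is at most subexponentially-to-one on $A\times 2^{\{1..k\}}$. Here the hypothesis $d(o,go)\le\varepsilon n$ enters.

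The original $g$ contributes at most $\varepsilon n$ to $X$-displacement. Each block has $d(o,h_io)\le \lambda|h_i|$, where $\lambda=\max_{s\in S}d(o,so)$. The inserted contracting segments of length $L$ should appear as "barriers" along any geodesic from $o$ to $\Phi(g,I)o$. From these barriers we want to recover, up to bounded ambiguity, the positions $I$ and hence $g$. The danger is that the short blocks $h_i$ could fellow-travel inserted axes and destroy the barriers; the extension lemma with bounded projections is exactly what prevents this.

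Concretely, using Step 1 iteratively, the path $o, h_1o, h_1f_1o,\dots$ is an admissible path. Its saturation therefore contains the translated axes $h_1\cdots h_if_i\,\ax(f_i)$, which are linearly ordered along a geodesic. Fixing $\Phi(g,I)$, the sequence of these axes, seen as points in the projection complex $\mathcal P_K(\mathfrak F)$, is determined up to the WPD finiteness ambiguity (acylindricity there), given $|I|$ and positions up to $O(1)$ choices each. One then checks that this ambiguity is at most $e^{\delta n}$ with $\delta\to 0$ as $\varepsilon,\theta$ are scaled appropriately.

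\textbf{Step 4 (counting).} If $\omega(A)=\omega(G)=\omega$, Steps 2 and 3 give
\[
|A\cap S_n|\,2^{\theta n}e^{-\delta n}\lesssim |B_{n+M\theta n}|\approx e^{\omega(1+M\theta)n}.
\]
This does not immediately contradict anything, since we have gained $2^{\theta n}$ but lost $e^{\omega M\theta n}$. We fix this in the standard way: insert not arbitrary subsets but, at each chosen cut, one of $N$ distinct words from a free subsemigroup generated by large powers of the $F_0$-elements, each of word length $\le M'$. Independence gives $N^{\theta n}$ choices, and we take $N$ large enough that $N>e^{\omega M'}$. The resulting gain factor $(N e^{-\omega M'})^{\theta n}e^{-\delta n}$ grows exponentially, which contradicts $|B_{n'}|\le Ce^{\omega n'}$ up to subexponential factors after summing over $n$. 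This yields $\omega(A)<\omega(G)$ once $\varepsilon$ is chosen small relative to $\theta$. The delicate point remains Step 3: bounding the ambiguity of recovering the cut data from the product uniformly, which I would handle via the projection complex and the acylindricity of $G\curvearrowright\mathcal P_K(\mathfrak F)$ in place of properness.
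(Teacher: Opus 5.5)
Your architecture matches the paper's: insert WPD letters into a geodesic word via the extension lemma, locate the inserted axes on the standard path of the image in $\PC$, and recover prefixes by WPD finiteness. However, Step~3 omits the estimate that makes the count close. To bound the multiplicity of $(g,I)\mapsto h=\Phi(g,I)$, you must decide which vertices of $\f_K[\po,h\po]$ are the inserted axes. This costs $\binom{\LS(\po,h\po)}{|I|}$, so you need $\LS(\po,h\po)$ to be at most a small multiple of $n$ plus $O(|I|)$.

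Your hypothesis controls only the endpoint displacement of $g$, not $\sum_i d(o,h_io)$. A word geodesic may travel far from $o$ and come back, and after insertion the admissible structure forbids cancellation. So $\LS(\po,h\po)$ is, up to $O(|I|)$, the anchored length $\sum_i\LS(\po,h_i\po)$, which a priori can be of order $n$. Locating $m$ axes among $cn$ candidates then costs $\binom{cn}{m}$ and swallows the gain. Your claimed $e^{\delta n}$ bound on the ambiguity is exactly what has to be proved, and WPD or acylindricity alone does not give it.

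The paper supplies this through the anchored length estimate, Proposition~\ref{prop:word_geod_on_proj_cplx}. For every $\eta>0$ there is $E=E(\eta)$ such that, for anchors $Q$ on a word geodesic from $x$ to $y$, $\LS_Q(x\po,y\po)\le\LS(x\po,y\po)+\eta\,d_S(x,y)+E|Q|$. This is the paper's main new ingredient. It is proved by induction on $d_S(x,y)+|Q|$, and the key step uses the $(\kappa,N)$-divergence of WPD elements in the word metric to trim the ends of the geodesic near a guard axis (Lemma~\ref{lem:cut_end_div}); it is false for arbitrary paths. With it, $\LS(\po,\Phi(g,I)\po)\le 2\varepsilon n+Em$, and the multiplicity is at most $M^m\binom{2\varepsilon n+Em}{m}$ (Lemma~\ref{lem:size_of_preimage}). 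Here $M$ now denotes a WPD bound on $|\mathrm{Stab}_{3r}(o,fo)|$, not your $M$. The statement for $d(o,go)$ then follows from the $\LS$ version via the coarsely Lipschitz collapsing map, as you anticipated.

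Step~4's repair cannot work. If the inserted words are products of $j$ letters from three fixed large powers, then $N\le 3^j$. Their $X$-displacement is at least of order $jL$, so their word length is at least of order $jL/\lambda$. The requirement $N>e^{\omega M'}$ would then force $\log 3$ to exceed roughly $\omega L/\lambda$, which fails once the powers are as large as Step~1 requires.

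The gain must come from positional entropy instead. Insert exactly $m=\sigma n$ letters at positions chosen among the $n-1$ available ones; your $\theta n$ block boundaries would also do, with $\varepsilon\ll\theta$. Compare $|A\cap S_n|\binom{n-1}{m}$ with the image count $|B_n|\,|S|^{ml}$ times the multiplicity bound, where $l=\max_{f\in F}d_S(1,f)$. This gives $|A\cap S_n|/|B_n|\le\bigl(M|S|^l\frac{2\varepsilon+E\sigma}{1-\sigma}\bigr)^m$, which decays exponentially once $\varepsilon=1/(4|S|^lM)$ and $\sigma\le\min\{1/100,\varepsilon/E\}$.

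Summing over all subsets $I$, as in your Step~2, is the wrong normalization. The word-length cost and the multiplicity each grow by a constant factor per insertion. Only a small density of insertions relative to the available positions can beat them, since $\binom{n-1}{\sigma n}$ gains roughly $(e/\sigma)^{\sigma n}$.
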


The proof first establishes an analogous growth gap for short displacement in
the projection complex, and then transfers the estimate back to the original
action through Eq. (\ref{dia:threespaces}).
\medskip

\noindent{\textbf{Growth tightness}.}
We next record several consequences of \ref{MainThmShortElems} for growth problems. Following Grigorchuk and
de la Harpe \cite{GriH}, a finitely generated group $G$ is called
\emph{growth tight} if, for every finite generating set $S$ and every infinite
normal subgroup $H\lhd G$,
\[
\omega(G/H,\bar S)<\omega(G,S),
\]
where $\bar S$ denotes the image of $S$ in $G/H$. Growth tightness was first
established for non elementary hyperbolic groups by Arzhantseva--Lysenok
\cite{AL}. It was subsequently developed for many negatively curved actions and
spaces; see Sambusetti \cite{Sam2}, Dal'Bo--Peign\'e--Picaud--Sambusetti
\cite{DPPS}, Yang \cite{YANG6}, Arzhantseva--Cashen--Tao \cite{ACTao}, and
Ding--Mart\'inez-Granado--Zalloum \cite{DMGZ}.

For acylindrically hyperbolic groups we prove the following.

\begin{mainthm}[Growth tightness]
\label{MainThmGrowthTight}
Let $G$ be a finitely generated acylindrically hyperbolic group. Then $G$ is
growth tight. That is, for every finite symmetric generating set $S$ and every infinite
normal subgroup $H\lhd G$,
\[
\omega(G/H,\bar S)<\omega(G,S).
\]
\end{mainthm}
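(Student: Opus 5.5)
Throughout I fix a finite symmetric generating set $S$ and an infinite normal subgroup $H\lhd G$. The plan is to use Theorem~\ref{MainThmShortElems} to discard the elements with short displacement. The remaining elements are then used to inject a set of growth rate $\omega(G/H,\bar S)$, with a free insertion, into $G$.

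\textbf{Setup.} Choose a non-elementary acylindrical action $G\curvearrowright X$ on a hyperbolic space. Loxodromic elements of this action are WPD and strongly contracting, so ~\ref{MainThmWPDGeneric} applies. An infinite normal subgroup of an acylindrically hyperbolic group is itself acylindrically hyperbolic, and it contains loxodromic elements of every acylindrical action by Osin's theorem. So I fix a loxodromic element $h\in H$, and using independence I pick two elements $f_1,f_2$ of $H$ with independent axes. For each $\bar g\in G/H$ with $|\bar g|_{\bar S}=n$, choose a lift $g\in G$ with $d_S(1,g)=n$; such minimal-length lifts exist because $S$ projects onto $\bar S$. Write $L_n$ for the set of these lifts.

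\textbf{Main step.} I want to show $\omega(G/H,\bar S)<\omega(G,S)$. Suppose instead that equality holds.
\begin{enumerate}
\item By Theorem~\ref{MainThmShortElems}, all but exponentially few elements of $L_n$ satisfy $d(o,go)\ge \varepsilon n$. So there is a subset $L_n'\subseteq L_n$ with $|L_n'|\ge e^{(\omega-\delta)n}$ on a subsequence, and every $g\in L_n'$ has linear displacement in $X$.
\item Form products $g_1 f g_2 f\cdots g_k f$ with $g_i\in L'_{n}$ and $f\in\{f_1,f_2\}$ chosen to avoid backtracking. This is the standard extension lemma for contracting elements, and the choice of $f$ depends only on the projection data of the neighbouring factors.
\item The resulting words are uniform quasi-geodesics in $X$ passing near the translated axes of $f$. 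So the map $(g_1,\dots,g_k)\mapsto g_1f\cdots g_kf$ is injective up to bounded multiplicity. This uses the contracting-axis geometry: the $g_i$ are recovered by reading off consecutive axes.
\item Each such product has word length at most $k(n+c)$. Every product lies in the coset $g_1\cdots g_k H$, because $f_i\in H$ and $H$ is normal. Therefore different tuples give different elements, and this yields the extra counting room.
\end{enumerate}

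\textbf{Main obstacle: the counting contradiction.} The injectivity above only gives $\omega(G)\ge\omega(G/H)$ again. To get a strict gap, I also use the elements of $H$ themselves. At each insertion slot, replace $f$ by any element of a fixed exponentially large family $\{hf\}$ built from $H$; the family has growth at least some $\eta>0$, since $H$ contains a free subgroup. The cosets stay the same, so distinct tuples still give distinct elements, by injectivity through the recorded axes. This yields
\[
\omega(G,S)\ \ge\ \frac{k(\omega-\delta)n+k\eta m}{k(n+m+c)}.
\]
Choosing $m$ proportional to $n$ and $\delta$ small would give $\omega(G,S)>\omega(G/H)$ only if $\eta>\omega$, which is false. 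So instead I plan to exploit the freedom within the fibers: each element of $L_n'$ has many $H$-translates $gh$ at comparable length. Theorem~\ref{MainThmShortElems}, applied in the quotient-counting form, shows that the fiber over $\bar g$ inside $B_{n+m}$ contains at least $e^{\eta' m}$ elements. Summing these fibers gives $|B_{n+m}|\ge e^{\omega(G/H)n+\eta' m}$, and hence a strict gap. Verifying this fiber lower bound uniformly, through the extension lemma, is the main difficulty.
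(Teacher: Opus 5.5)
Your final step does not produce a contradiction, for the same reason you already identified in the $\{hf\}$ attempt. Suppose each fiber satisfies $|Hg\cap B_{n+m}|\ge e^{\eta' m}$. Summing over the disjoint cosets gives $|B_{n+m}|\ge e^{(\omega(G/H,\bar S)-o(1))n+\eta' m}$. Comparing with $|B_{n+m}|\le e^{(\omega(G,S)+o(1))(n+m)}$ for $m=\lambda n$ yields only
\[
\omega(G,S)-\omega(G/H,\bar S)\ \ge\ \lambda\bigl(\eta'-\omega(G,S)\bigr),
\]
which is vacuous unless $\eta'>\omega(G,S)$. A fiber bound with some small $\eta'>0$ therefore gives nothing, and Theorem~\ref{MainThmShortElems} does not supply any fiber bound anyway. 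Steps 2--4 are also a dead end, as you noticed. In addition, membership of $g_1f\cdots g_kf$ in $g_1\cdots g_kH$ does not separate tuples, and the extension lemma needs three independent elements, not two.

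The missing idea is to get entropy from the \emph{choice of insertion positions} inside a single minimal representative, not from extra length. Let $g$ be a minimal representative with $d_S(1,g)=n$, let $x$ be a prefix of a geodesic word for $g$, and let $f\in H$. Inserting $f$ after $x$ gives $xfx^{-1}g\in Hg$. Now suppose there are $m\ge\varepsilon n$ prefixes $x_1<\cdots<x_m$, each with a fixed inserted element of length at most $c$, chosen independently of $I$ as in~\eqref{eq:gkconjugates}. Suppose also that every subset $I\subseteq\{1,\dots,m\}$ of insertions labels an admissible path. Then fellow travelling makes $I\mapsto\Phi_g(I)$ injective, so
\[
|Hg\cap B_{n+cj}|\ \ge\ \binom{m}{j}\ \ge\ (m/j)^{j}.
\]
With $j=\eta m$, this is a fiber exponent of $\log(1/\eta)/c$ per unit of added length. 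That exceeds $\omega(G,S)$ once $\eta<e^{-c\,\omega(G,S)}$, which is exactly what your $\eta'$ lacked. Summing over cosets gives $\omega(G,S)-\omega(A,S)\ge\varepsilon\eta\bigl(\log(1/\eta)-c\,\omega(G,S)\bigr)>0$, where $A\subseteq[G/H]$ is the set of representatives admitting such positions. This is Lemma~\ref{CritGapLem} applied to the maps $\Phi_g$ of Definition~\ref{def:extMap}.

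The real work is showing that all but an exponentially negligible set of minimal representatives have such positions on \emph{every} word geodesic. The inserted axes must also have bounded projection to the pieces in between. Linear displacement in $X$ does not give this, because a word geodesic need not fellow travel $[o,go]$. In a hyperbolic space, a path of length $n$ is only guaranteed to pass within $O(\log n)$ of the geodesic, so the insertions would not be visible at bounded cost. The paper instead uses Theorem~\ref{thm:grow_tight_PC} and the bottleneck property of the quasi-tree $\PC$ to obtain the linearly recurrent elements of Lemma~\ref{lem:LRset_growth_tight}. It then runs the admissibility and injectivity arguments (Lemmas~\ref{lem:criterion12} and~\ref{lem:criterion3}) in $\PC$.
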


This extends the previous results to the full class of finitely generated acylindrically hyperbolic groups, with respect to arbitrary finite generating sets. The gap in ~\ref{MainThmGrowthTight} cannot be made uniform. Indeed, the
companion paper \cite{DY26A} constructs, for any finite generating set $S$, a
sequence of infinite normal subgroups $H_n\lhd G$ such that
\[
\omega(G/H_n,\bar S_n)\longrightarrow\omega(G,S),
\]
where $\bar S_n$ is the image of $S$ in $G/H_n$.

\begin{cor}
\label{CorMCGGrowthTight}
Mapping class groups of  finite type surfaces are growth tight with
respect to every finite symmetric generating set.
\end{cor}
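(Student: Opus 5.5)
The corollary will follow from Theorem~\ref{MainThmGrowthTight} once we check that every mapping class group $\mathrm{Mod}(\Sigma)$ of a finite type surface is finitely generated and acylindrically hyperbolic, and we handle separately the low-complexity surfaces where this fails.

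\emph{Finite generation.} This is classical: Dehn--Lickorish--Humphries for closed surfaces, with the standard extensions to punctured and bounded surfaces.

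\emph{Non-exceptional surfaces.} Suppose $\Sigma$ is not one of the sporadic low-complexity surfaces, i.e.\ $3g-3+p\ge 2$ (genus $g$, $p$ punctures or boundary components).
\begin{enumerate}
\item Masur--Minsky: the curve complex $\mathcal C(\Sigma)$ is hyperbolic.
\item Bowditch: the action of $\mathrm{Mod}(\Sigma)$ on $\mathcal C(\Sigma)$ is acylindrical.
\item Pseudo-Anosovs act loxodromically on $\mathcal C(\Sigma)$, and two pseudo-Anosovs with distinct fixed laminations are independent, so the action is non-elementary.
\item Hence $\mathrm{Mod}(\Sigma)$ is acylindrically hyperbolic (Osin \cite{Osin6}, or \cite{DGO}).
\end{enumerate}
Theorem~\ref{MainThmGrowthTight} then applies directly and gives $\omega(G/H,\bar S)<\omega(G,S)$ for every finite symmetric generating set $S$ and every infinite $H\lhd G$.

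\emph{Exceptional surfaces.} These need separate treatment and are the only real obstacle. I am not certain how the paper handles them.
\begin{enumerate}
\item For the once-punctured torus, the closed torus and the four-punctured sphere, $\mathrm{Mod}(\Sigma)$ is commensurable with $\mathrm{SL}_2(\mathbb Z)$. It is therefore virtually free, hence non-elementary hyperbolic, hence acylindrically hyperbolic.
\item For spheres with at most three punctures (and similar cases), $\mathrm{Mod}(\Sigma)$ is finite. Then every infinite normal subgroup condition is vacuous, and the statement holds trivially.
\end{enumerate}
If the paper's convention on ``finite type'' or on the inclusion of boundary differs, I would adjust these cases accordingly. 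For surfaces with boundary, the boundary twists generate a central $\mathbb Z^k$, and I would have to check that acylindrical hyperbolicity survives there.
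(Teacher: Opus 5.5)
Your argument is correct and follows the paper's route. The corollary is read off directly from Theorem~\ref{MainThmGrowthTight}, using that mapping class groups are acylindrically hyperbolic (Masur--Minsky hyperbolicity of $\mathcal C(\Sigma)$ plus Bowditch's acylindricity), and your separate handling of the sporadic surfaces (virtually free or finite mapping class groups) only makes explicit what the paper leaves implicit.

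On your remaining worry about boundary, the check you propose would fail rather than succeed. If boundary components are fixed pointwise, the boundary twists generate an infinite central subgroup, and an acylindrically hyperbolic group has finite center. The conclusion is then genuinely false: the annulus and the pair of pants have mapping class groups $\mathbb Z$ and $\mathbb Z^3$, which are not growth tight. So ``finite type'' must be read with punctures only, which is exactly the case you have already covered.
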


\begin{cor}
\label{CorOutFnGrowthTight}
For $n\ge 3$, the group $\mathrm{Out}(F_n)$ is growth tight with respect to
every finite symmetric generating set.
\end{cor}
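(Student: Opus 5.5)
The plan is to deduce Corollary~\ref{CorOutFnGrowthTight} directly from \ref{MainThmGrowthTight}: it suffices to show that $\mathrm{Out}(F_n)$, $n\ge 3$, is a finitely generated acylindrically hyperbolic group. Finite generation is classical (Nielsen), so the work is acylindrical hyperbolicity. For this I will use the action on the free factor complex $\mathcal{FF}_n$, which is hyperbolic by Bestvina--Feighn \cite{BF14}.

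The first step is to exhibit two independent loxodromic WPD elements. By \cite{BF14}, fully irreducible automorphisms act loxodromically on $\mathcal{FF}_n$ and satisfy WPD. Take a fully irreducible $\phi$ and a conjugate $\psi=h\phi h^{-1}$, with $h$ chosen so that the fixed points of $\psi$ on $\partial\mathcal{FF}_n$ are disjoint from those of $\phi$. Such an $h$ exists because the stabilizer of the fixed pair of $\phi$ is virtually cyclic (a consequence of WPD) while $\mathrm{Out}(F_n)$ is not. Then $\phi,\psi$ are independent, and $\mathrm{Out}(F_n)$ is not virtually cyclic.

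The second step is to conclude acylindrical hyperbolicity. By Osin's characterization \cite{Osin6}, a group that is not virtually cyclic and acts on a hyperbolic space with a loxodromic WPD element is acylindrically hyperbolic. Equivalently, in the framework of \ref{MainThmWPDGeneric}, loxodromic WPD elements on a hyperbolic space are strongly contracting. Then \ref{MainThmGrowthTight} gives $\omega(G/H,\bar S)<\omega(G,S)$ for every finite symmetric generating set $S$ and every infinite normal $H$.

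The only potential obstacle is citing correctly that fully irreducible elements are WPD on $\mathcal{FF}_n$; this is standard from \cite{BF14}, so no real difficulty is expected. The hypothesis $n\ge 3$ is needed because $\mathrm{Out}(F_2)\cong GL_2(\mathbb Z)$ behaves differently, although it is also virtually free and hence growth tight by other means.
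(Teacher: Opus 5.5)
Your proposal is correct and follows the paper's route: the corollary is an immediate consequence of the growth tightness theorem for finitely generated acylindrically hyperbolic groups, since fully irreducible elements act on $\mathcal{FF}_n$ as loxodromic WPD elements and hence $\mathrm{Out}(F_n)$ is acylindrically hyperbolic. One harmless inaccuracy: a conjugate $h\phi h^{-1}$ is never independent of $\phi$ in the paper's sense (no power of one is conjugate to a power of the other), but you do not need this step, because Osin's criterion only requires one loxodromic WPD element in a group that is not virtually cyclic.
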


These corollaries answer questions of Arzhantseva--Cashen--Tao for every finite generating set. More precisely, for mapping class groups, Corollary~\ref{CorMCGGrowthTight} settles the Cayley graph case of \cite[Question~3]{ACTao}; the same proof, with the word metric replaced by the corresponding orbit metric, also applies to the marking graph. Similarly, for $\mathrm{Out}(F_n)$, Corollary~\ref{CorOutFnGrowthTight} settles the Cayley graph case of \cite[Question~4]{ACTao}; the same argument also applies to the spine of Outer space. Ding--Mart'inez-Granado--Zalloum \cite{DMGZ} previously constructed infinitely many generating sets of mapping class groups with tight quotient growth, whereas the corresponding result for $\mathrm{Out}(F_n)$ appears to be entirely new.

\medskip

\noindent{\textbf{Co-growth tightness}.}
Combining ~\ref{MainThmGrowthTight} with the growth--cogrowth inequality
proved in the companion paper \cite{DY26A} gives a strict cogrowth bound. If
$H\lhd G$ is an infinite normal subgroup, then $\omega(H,S)$, computed using the
restriction of the word metric on $G$, is called the \emph{cogrowth} of $G/H$.
For free groups, Grigorchuk's cogrowth formula \cite{Gri77} relates this quantity to the
spectral radius of simple random walk on $G/H$. Grigorchuk
observed that
\[
\omega(H,S)>\frac{1}{2}\omega(G,S)
\]
for free groups and it was asked in \cite{GriH} whether the same inequality holds for non-elementary
hyperbolic groups. This was proved by Jaerisch--Matsuzaki--Yabuki
\cite{MYJ20}. Further generalizations were obtained by
Arzhantseva--Cashen \cite{AC20}, Coulon \cite{Coulon22}, Yang \cite{YANG22},
Choi--Gekhtman--Yang \cite{CGYZ}, and Ding--Mart\'inez-Granado--Zalloum
\cite{DMGZ}. In \cite{DY26A} we prove that, for every infinite normal
subgroup $H\lhd G$ of an acylindrically hyperbolic group,  
\[
\omega(H,S)+\frac{1}{2}\omega(G/H,\bar S)\ge \omega(G,S).
\]
We therefore obtain the following   corollary by ~\ref{MainThmGrowthTight}. 

\begin{cor}[Cogrowth tightness]
\label{CorCogrowthTight}
Let $G$ be a finitely generated acylindrically hyperbolic group. Then, for every
finite symmetric generating set $S$ and every infinite normal subgroup $H\lhd G$,
\[
\omega(H,S)>\frac{1}{2}\omega(G,S).
\]
\end{cor}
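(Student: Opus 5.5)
Corollary~\ref{CorCogrowthTight} follows by combining two inequalities, so the plan is short. Fix a finite symmetric generating set $S$ and an infinite normal subgroup $H\lhd G$. The first ingredient is the growth--cogrowth inequality from the companion paper \cite{DY26A}, which we may quote as stated in the introduction:
\[
\omega(H,S)+\tfrac12\,\omega(G/H,\bar S)\ \ge\ \omega(G,S).
\]
The second ingredient is growth tightness, \ref{MainThmGrowthTight}, which gives $\omega(G/H,\bar S)<\omega(G,S)$ for the same $S$ and $H$. Rearranging the first inequality and substituting the strict bound from the second gives
\[
\omega(H,S)\ \ge\ \omega(G,S)-\tfrac12\,\omega(G/H,\bar S)\ >\ \omega(G,S)-\tfrac12\,\omega(G,S)\ =\ \tfrac12\,\omega(G,S),
\]
which is the claim.

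Three points need checking, though none is a real obstacle.

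\emph{Finiteness of the growth rates.} All growth rates here are at most $\log|S|<\infty$, so the subtraction is legitimate.

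\emph{Matching hypotheses.} The two cited results must apply under the same assumptions. Both are stated for a finitely generated acylindrically hyperbolic group, an arbitrary finite symmetric generating set, and an infinite normal subgroup $H$, which is exactly the setting of the corollary.

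\emph{Matching conventions for $\omega(H,S)$.} In both places $\omega(H,S)$ must mean the growth of $H$ measured in the restricted word metric $d_S$ of $G$, not in an intrinsic metric on $H$. This is the convention fixed just before the statement.

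The only place where genuine work is hidden is in the two inputs, \ref{MainThmGrowthTight} and the inequality of \cite{DY26A}. Since both are available to us, the corollary itself is a one-line deduction.
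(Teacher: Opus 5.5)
Your proposal is correct and matches the paper's argument: the corollary is deduced by combining the growth--cogrowth inequality of \cite{DY26A} (Theorem~\ref{thm:growthcogrowth}) with the growth tightness theorem \ref{MainThmGrowthTight}. Theorem~\ref{thm:growthcogrowth} is stated for confined subgroups, so make explicit that an infinite normal subgroup admits a finite non-degenerate confining set, because $E(G)$ is finite.
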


Thus quotient growth and cogrowth fit into a single growth theoretic picture
for acylindrically hyperbolic groups. We point out that the growth and cogrowth tightness results
actually holds for a larger class of \textit{confined subgroups} (see Section \ref{SecGrowthTight} for definitions).

\subsection*{Previous work and comparison}

We place the results in context. As alluded to above, the genericity problem in the random walk model is well understood following the work of many authors. Random walk arguments use probabilistic tools such as linear progress, boundary convergence, and sublinear tracking. In the word metric model, by contrast, one counts uniformly in balls of a Cayley graph. The absence of a counting analogue of linear progress is a major obstruction to proving genericity in this setting.

The present paper overcomes this obstruction without assuming weak contraction in the Cayley graph, in the recent work of Choi \cite{Choi25}. Weak contraction is available in some important settings, such as mapping class groups, through subsurface projections and distance formulas. However, it is not available for acylindrically hyperbolic groups in general, and therefore cannot be assumed in the level of generality considered here. A more recent work of Choi \cite{ChoiAH} proves genericity of Morse elements in acylindrically hyperbolic groups, but with a weaker convergence rate.  

Instead, we use WPD axes for an auxiliary action and build a projection complex from them. The positive-density WPD-axis recurrence theorem shows that generic word geodesics encounter coarsely linearly many of these axes, both in the projection complex and in the Cayley graph. This statistical recurrence is a counting analogue of linear progress for the word metric model and replaces weak contraction in the Cayley graph.

Another approach, due to Wiest, uses geodesic automatic structures together with a shadowing hypothesis for actions on hyperbolic spaces \cite{Wiest}. This applies to certain special generating sets, for instance in Garside-type settings; see also \cite{CaWi17A,CaWi17B}. For hyperbolic groups, Gekhtman--Taylor--Tiozzo adapted ideas from random walks to regular geodesic languages and proved exponential genericity for arbitrary non-elementary actions on hyperbolic spaces \cite{GTT}; see also \cite{GTT20} for an axiomatized framework covering a broader class of groups. In our setting, statistical recurrence to WPD axes replaces the geodesic shadowing hypothesis, allowing the argument to apply beyond hyperbolic groups, to arbitrary acylindrically hyperbolic groups and arbitrary finite generating sets.

Finally, earlier growth-gap and genericity results for statistically convex-cocompact actions were obtained for orbit metrics associated to actions with strongly contracting elements \cite{YANG10,YANG11,GYANG}. However, strongly contracting elements are rare in Cayley graphs and in many natural cocompact spaces, and it remains open to what extent their existence is a quasi-isometry invariant \cite{ACGH2,RV21}. The present work replaces strong contraction in the counting metric with WPD axes in an auxiliary action. This allows the arguments to apply directly to word metrics, where strongly contracting elements need not be available.

\subsection*{Proof strategy and organization}

We now explain the main ingredients. The proof has three steps. First, we prove  an anchored length estimate in the projection complex. Second, we combine this
estimate with an insertion argument to show that elements with small projection
complex displacement have a growth gap. Third, for elements with large
projection complex displacement, we use a guard decomposition of word geodesics
to obtain positive density recurrence to WPD axes. A dichotomy between
recurrence and conjugacy shortening then gives the growth gap for non-WPD
elements.

We briefly describe these steps in more detail. Let
$\mathcal P_K(\mathfrak F)$ be the projection complex associated to the system
of contracting WPD axes with respect to the action of $G\act X$. See Figure~\ref{dia:threespaces} for an illustration of the relationships among the three spaces $\mathrm{Cay}(G,S)$, $X$, and $\mathcal P_K(\mathfrak F)$. 

\medskip

\noindent The first ingredient is an \textit{anchored length estimate}, proved in Section~\ref{sec:anchoredlength}. Let $\gamma$ be a word geodesic from $g$ to $h$, and let
\[
Q=\{a_1<\cdots<a_m\}\subseteq\gamma
\]
be a finite ordered set of anchor points. Setting $a_0=g$ and $a_{m+1}=h$, we define in Definition~\ref{def:anchored_set} the anchored length
\[
\LS_Q(g\mathfrak o,h\mathfrak o)
:=
\sum_{i=0}^{m}
\LS(a_i\mathfrak o,a_{i+1}\mathfrak o),
\]
where each summand is the length of the standard path from $a_i\mathfrak o$ to $a_{i+1}\mathfrak o$.

The key estimate, Proposition~\ref{prop:word_geod_on_proj_cplx}, compares this anchored length with the ordinary standard-path length. It asserts that, for every $\varepsilon>0$, there exists $E>0$ such that
\[
\LS_Q(g\mathfrak o,h\mathfrak o)
\le
\LS(g\mathfrak o,h\mathfrak o)
+\varepsilon d_S(g,h)+E|Q|.
\]
The proof combines the projection-complex machinery of Bestvina--Bromberg--Fujiwara~\cite{BBF}, its refinement in~\cite{BBFS}, divergence properties of WPD elements in the word metric~\cite{Sisto16,MS20,GS22}, and the theory of admissible paths and the extension lemma for strongly contracting elements~\cite{YANG6,YANG10}. These tools are recalled in Section~\ref{SecPrelim}.
\medskip

\noindent The second ingredient is a counting argument, given in Section~\ref{SecShortPC}. Suppose that
\[
A=\{g\in G:\ell^{\mathrm{std}}(\mathfrak o,g\mathfrak o)\le
\varepsilon d_S(1,g)\}.
\]
For $g\in A\cap S_n$, write $g=s_1s_2\cdots s_n$ as a geodesic word. Choose a
set of anchor positions $Q\subseteq\{1,\ldots,n\}$ of size proportional to $n$,
and insert long WPD pieces at these positions. This defines an \textit{insertion map}
from pairs $(g,Q)$ to new group elements in \textsection \ref{subsec:insertion_map}. The main point is to control the
multiplicity of this map, which is done in Lemma \ref{lem:size_of_preimage}. The fellow traveling properties of admissible paths
force the inserted WPD axes to appear in the standard path in the projection
complex, while the anchored length estimate controls the possible positions of
these axes. This gives a growth gap for elements with short projection complex
displacement, proving Theorem \ref{thm:grow_tight_PC} and hence ~\ref{MainThmShortElems}.

\medskip

\noindent The third ingredient treats elements with \textit{large projection complex displacement} and is carried out in Section~\ref{SecExpGenWPD}.
For such elements $g\in G\setminus A$, every word geodesic admits a guard decomposition with
linearly many guarded blocks. After discarding a small proportion of bad
blocks, good guarded blocks force coarse returns to WPD axes; see Lemma \ref{lem:good_block} for a precise statement. With Theorem \ref{thm:grow_tight_PC}, this proves \ref{MainThmRecurrence} on the
positive-density recurrence to WPD axes for generic elements. Finally, a dichotomy
is  proved in Lemma \ref{lem:split_into_conjugacy}: either many good blocks
survive and one detects WPD behavior, or one obtains a conjugacy shortening
\[
g=vuv^{-1}
\]
with a definite saving in word length. A standard counting argument shows that
the latter exceptional set has a growth gap. Therefore, strongly contracting WPD elements are exponentially generic, completing the proof of ~\ref{MainThmWPDGeneric}.

Section~\ref{SecWPDGen} further develops the good/bad guarded block decomposition and proves the stable length large deviation, ~\ref{MainThmStableLength}. Finally, Section~\ref{SecGrowthTight} clarifies the argument from the previous work \cite{DY24} and proves the growth tightness, ~\ref{MainThmGrowthTight}.

\ack
We thank Inhyeok Choi and Abdul Zalloum for helpful feedback.

\section{Preliminaries}\label{SecPrelim}

\subsection{Shortest projection maps}
Let $(X,d)$ be a complete metric space and $Y \subseteq X$ be a closed subset.  Given a point $x \in X$, the \textit{shortest projection} of $x$ to $Y$ is defined  as $$\pi_Y(x)=\{y\in Y: d(x, y)
= d(x, Y)\},$$ and  $\pi_Y(U) = \bigcup_{u \in U} \pi_Y(u)$ for a subset
$U \subseteq X$.  

For any subsets $U,V$ in $X$, denote $$\proj_{Y}^\pi(U,V) := \mathrm{diam}(\pi_{Y}(U) \cup \pi_{Y}(V))$$ The following triangle inequality holds 
$$
\proj_{Y}^\pi(U,W)\le \proj_{Y}^\pi(U,V)+\proj_{Y}^\pi(V,W).
$$

Let  $G$ be a group with a finite symmetric generating  set  $S$, i.e.  $S=S^{-1}$.  Let $\mathrm{Cay}(G,S)$ denote the Cayley graph of $G$ with the word metric $d_S$. Assume that $G$ acts by isometry  on $X$. Fix a basepoint $o\in X$ and the orbital map  is defined by $$\Pi:G\longrightarrow X,\qquad g\longmapsto go$$ Let $H$ be a subset in $G$.  We may define two projection maps $G\to H$. In word metric, for any $g\in G$, define the shortest projection map 
$$\pi_H(g)=\{h\in G:\, d_S(g,h)=d_S(g, H)\}.$$ 
We may also push back the shortest projection to $Ho$ in $X$.  Namely, for any $g\in G$, define  
$$\pi_H^X(g)=\{h\in G:\, d(go,ho)=d(go, Ho)\}.$$ 
This is called  $X$-projection in \cite[Definition 3.2]{GS22}.
Given $x,y\in G$, define $$\proj_H^\pi(x,y)=\mathrm{diam}_S(\pi_H(x)\cup\pi_H(y)),\qquad \proj_H^X(x,y)=\mathrm{diam}_S(\pi_H^X(x)\cup\pi_H^X(y))$$
where the two diameters are both taken in word metric.

A map $\Pi: X\to Y$ between two metric spaces is called \textit{$\lambda$-quasi-isometric embedding} for some $\lambda>1$ if 
$$
\forall x,y\in X:\quad \lambda^{-1} d_X(x,y)-\lambda\le d_Y(\Pi(x),\Pi(y))\le \lambda d_X(x,y)+\lambda
$$
If, in addition, $Y$ is contained in the $D$-neighborhood  $N_D(\Pi(X))$  for some $D>0$, then $\Pi$ is called \textit{$\lambda$-quasi-isometry} and $X$ is    \textit{quasi-isometric} to $Y$. If only the right-hand side of the inequality holds, we call $\Pi$ a  \textit{coarsely $\lambda$-Lipschitz} map. 

In general, a  path  in $(X,d)$ is a continuous map $p:I\subseteq \mathbb R\to X$. If $I=[a,b]$ for $a,b\in \mathbb R$, we denote by $p_-=p(a)$ and $p_+=p(b)$ the initial and terminal endpoints. The length of $p$ is denoted by $\ell(p)$ if it is rectifiable. If $\ell(p)\le \lambda d_X(p_-,p_+)+\lambda$  for some $\lambda\ge 1$, then $p$ is called \textit{coarsely $\lambda$-Lipschitz path}. A path $p$ is called \textit{$\lambda$-quasi-geodesic} if any subpath is coarsely $\lambda$-Lipschitz.  

As usual, we often refer to the \emph{image} of $p$ as a path and pick up two points $x,y$ (with  parameters $x=p(s), y=p(t)$ implicit  in context) on $p$ to specify the subpath from $x$ to $y$, denoted by $[x,y]_p$. When $X$ is a graph with combinatorial metric, we also understand a path $p$ as an ordered sequence of \textit{adjacent} vertices $(v_0,v_1,\cdots, v_n)$ on $p$.   

%Let $S_n=\{g\in G: d_S(1,g)=n\}$ and $B_{n}=\{g\in G: d_S(1,g)\le n\}$. Given $\Delta\ge 0$, the $\Delta$-thicken sphere is defined as follows $$S^n(\Delta)=\{g\in G: |d_S(1,g)-n|\le \Delta\}$$ If $\Delta=0$, then $S^n(\Delta)=S^n$.  

%Given an element $g\in G$, a \textit{product decomposition} means a sequence $(h_1, \cdots, h_m)$ where $1\ne h_i \in G$ for $1\le i\le m$ such that $g =  h_1 \cdots h_m$. Set $g_0=1$ and $g_i=s_1\cdots s_i$ for $1\le i\le m$. If $s_i\in S$, a product decomposition of $g$ produces a unique path in $\mathrm{Cay}(G,S)$ from any starting point. Conversely, reading  in order the labels on a path gives a word representing an element. If the path is a  geodesic, the word is called a geodesic word.   

\subsection{Strongly contracting elements}
\begin{defn}\label{SublinearContrDefn}
%Let $\kappa:\mathbb R_+\to\mathbb R_+$ be a sublinear function.
We say that  a subset $Y\subseteq X$ has \textit{strongly $C$-contracting} property if for any $x, y\in X$ with $d(x,y)\le d(x,Y)$,  $\proj_Y^\pi(x,y)\le C$.

A subset $Y$ is called \textit{Morse} if for any $\lambda\ge 1$, there exists $\sigma=\sigma(\lambda)$ so that any $\lambda$-quasi-geodesic with endpoints in $Y$ is contained in the $\sigma$-neighborhood of $Y$. 
%If $\kappa\equiv C$ is constant, then $Y$ is called \textit{strongly contracting}.  

A collection of strongly $C$-contracting subsets shall be referred to
as a $C$-\textit{contracting system}. 
\end{defn}
\begin{rem}\label{rem:basicfacts}
We record a few standard facts used later.
\begin{enumerate}
    \item A strongly contracting subset must be  Morse. 
    \item The strongly contracting property is preserved up to a finite Hausdorff distance.  
    \item Morse property is a quasi-isometric invariant by the very definition. 
\end{enumerate}

\end{rem}

\begin{lem}\label{BigFive}
Let $Y\subseteq X$ be a strongly $C$-contracting subset for $C>0$. Then 

\begin{enumerate}
%\item

%There exists $\sigma=\sigma(C)$ such that   $U$ is $\sigma$--Morse. 
%\item
%For any $r>0$, there exists $\hat C=\hat C(C, r)$ such that a subset $V\subseteq \U$ within $r$--Hausdorff distance to $U$ is $\hat C$--contracting. 

%\item
%If  a geodesic $\gamma$  intersects $N_r(U)$ only at the endpoint $\gamma^-$ for given $r\ge C$, then $$ \pi_U(\gamma^+)\subseteq B(\gamma^-, r+ C).$$
\item
For any  geodesic $\gamma$, we have $$\bigl |\proj_Y^\pi(\gamma^-,\gamma^+)- \mathrm{diam}(\pi_Y(\gamma))\bigr|\leq 4C.$$

\item
For any $y,z\in X$, $\proj_Y^\pi(y, z)\le d(y, z)+ 2C$.

\item 
For any $y, z\in X$, if $\proj^\pi_Y(y, z)>C$, then $\pi_Y(y)\subseteq N_{2C}([y, z])$.

\end{enumerate}
\end{lem}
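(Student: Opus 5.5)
We prove (3) first, since (1) and (2) follow from it by bookkeeping. The only input throughout is Definition~\ref{SublinearContrDefn}, applied at carefully chosen points. Under the hypothesis $d(x,y)\le d(x,Y)$ it gives $\proj^\pi_Y(x,y)\le C$. Since $\pi_Y$ is set-valued, each application costs at most $C$ in diameter.

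\emph{Step (3).} Fix $y,z$ with $\proj^\pi_Y(y,z)>C$, and parametrize $[y,z]$ from $y$. Let $x\in[y,z]$ be the infimum point such that $\proj^\pi_Y(y,x)>C$. This set is nonempty because it contains $z$; when the set is not closed we take points arbitrarily close to $x$ and pass to the limit. For points $x'$ just beyond $x$ we have $\proj^\pi_Y(y,x')>C$. The contrapositive of contraction, applied with centre $x'$, then gives $d(x',y)>d(x',Y)$. Letting $x'\to x$, we get $d(x,Y)\le d(x,y)$, so there is a point $q\in Y$ with $d(x,q)=d(x,Y)$. Next we apply contraction centred at $y$. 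Every point $u$ of the initial segment $[y,x)$ satisfies $\proj^\pi_Y(y,u)\le C$ by the choice of $x$. Choose $u\in[y,x]$ with $d(x,u)=d(x,Y)$; such a point exists because $d(x,Y)\le d(x,y)$. Contraction centred at $x$ then gives $\proj^\pi_Y(u,x)\le C$. Concatenating, the projections of $y$, $u$ and $x$ all lie in a set of diameter at most $2C$, and this set also contains $q$. The aim is to conclude that some point of $\pi_Y(y)$ lies within $2C$ of $x\in[y,z]$, and hence that all of $\pi_Y(y)$ lies in $N_{2C}([y,z])$. This last passage, from ``the projections are close to each other'' to ``$\pi_Y(y)$ is close to the geodesic'', is the main obstacle: the constant $2C$ is tight, so the choice of $x$ must be made precisely rather than up to a fudge. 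If the infimum argument above only yields $3C$, the fallback is to choose $x$ as the first point of $[y,z]$ with $d(x,Y)\le d(x,y)$ instead, and to recheck the constants.

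\emph{Step (2).} If $\proj^\pi_Y(y,z)\le C$ there is nothing to prove. Otherwise apply (3) twice, to $(y,z)$ and to $(z,y)$. This gives $p\in\pi_Y(y)$ within $2C$ of some $a\in[y,z]$, and $q\in\pi_Y(z)$ within $2C$ of some $b\in[y,z]$. By the closest-point property, $d(y,p)\le d(y,a)+2C$, and moreover $a$ must lie essentially before $b$ along $[y,z]$. The triangle inequality then gives $d(p,q)\le d(a,b)+4C$, and the extra $2C$ is recovered by using the minimality of $d(y,p)$ and $d(z,q)$ to compare with $d(y,z)=d(y,a)+d(a,b)+d(b,z)$. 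Finally, the diameters of $\pi_Y(y)$ and of $\pi_Y(z)$ are each at most $C$, by contraction with $x=y$ and $x=z$ respectively.

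\emph{Step (1).} The bound $\proj^\pi_Y(\gamma^-,\gamma^+)\le\mathrm{diam}\,\pi_Y(\gamma)$ is trivial. For the reverse inequality, take $w_1,w_2\in\gamma$ whose projections nearly realize $\mathrm{diam}\,\pi_Y(\gamma)$. If $\proj^\pi_Y(\gamma^-,w_i)>C$, then (3) places $\pi_Y(\gamma^-)$ and $\pi_Y(w_i)$ within $2C$ of the subgeodesic $[\gamma^-,w_i]$, and likewise for the endpoint $\gamma^+$. A monotonicity argument along $\gamma$ then shows that the projections of interior points lie between those of the endpoints up to $2C$ at each end, which gives the $4C$ error.
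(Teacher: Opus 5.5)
The paper records this lemma without proof, as a standard fact, so there is no argument to compare against. Judged on its own, your proposal has a genuine gap, at exactly the step you flag.

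In Step (3) your infimum construction gives two things. First, a point $x\in[y,z]$ with $d(x,Y)\le d(x,y)$. Second, points $x^-\in[y,x)$ with $\proj^\pi_Y(y,x^-)\le C$. Every point of $\pi_Y(x^-)$ lies at distance exactly $d(x^-,Y)$ from $x^-$, so this only yields $\pi_Y(y)\subseteq N_{C+d(x^-,Y)}(x^-)$. Nothing in your argument bounds $d(x,Y)$ by a constant: the inequality $d(x,Y)\le d(x,y)$ is not uniform.

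What you need is a point $x\in[y,z]$ that is \emph{simultaneously} within $C$ of $Y$ and satisfies $\mathrm{diam}(\pi_Y(y)\cup\pi_Y(x))\le C$. Then each point of $\pi_Y(y)$ is within $C$ of a point of $\pi_Y(x)$, hence within $2C$ of $x$. Separately, your inference from ``some point of $\pi_Y(y)$ is within $2C$ of $x$'' to ``all of $\pi_Y(y)$ lies in $N_{2C}([y,z])$'' loses a further $C$, since $\mathrm{diam}\,\pi_Y(y)$ is only bounded by $C$.

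Your fallback does not help. Let $X$ be a tree, $Y$ a line, and $y$ at height $D$ above its gate. The first point of $[y,z]$ with $d(x,Y)\le d(x,y)$ is then the midpoint of $[y,\pi_Y(y)]$, at distance $D/2$ from $Y$.

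The missing ingredient is the geodesic form of strong contraction: a geodesic segment disjoint from $N_C(Y)$ has projection of diameter at most $C$. Granting it, let $x$ be the first point of $[y,z]$ in the closed $C$-neighbourhood of $Y$. This point exists, since otherwise $\proj^\pi_Y(y,z)\le C$. The segment before $x$ projects to a set of diameter at most $C$, and $d(x,Y)\le C$. After a routine limiting argument this gives (3) with the constant $2C$.

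Steps (1) and (2) rest on (3), and each contains further unsupported claims.
\begin{itemize}
\item In (2), your estimates give only $d(p,q)\le d(y,z)+4C$, and that is before adding the diameters of $\pi_Y(y)$ and $\pi_Y(z)$. You do not justify the ordering of $a$ and $b$, and you do not carry out the ``recovery'' of $2C$.
\item In (1), the monotonicity argument is not given.
\end{itemize}

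Finally, the geodesic form does not follow from the ball condition of Definition~\ref{SublinearContrDefn} by applying it at one or two well-chosen points. What the ball condition gives cheaply is an overlapping-balls estimate. If $d(a,b)\le d(a,Y)+d(b,Y)$, some point of $[a,b]$ lies in both $B(a,d(a,Y))$ and $B(b,d(b,Y))$, so $\proj^\pi_Y(a,b)\le 2C$. Now take a short chain of points of $[y,z]$ whose consecutive projections are just over $2C$ apart, and sum the contrapositive along it. This forces an intermediate point uniformly close to $Y$, which is how a bottleneck statement can be derived from the ball formulation. The constants obtained this way are larger than the $2C$ and $4C$ in the statement; those are the constants natural to the geodesic formulation.
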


\begin{defn}\label{BddProj}
Let $\f$  be a family of subsets in $X$. We say that $\f$ has    \textit{bounded intersection property} if there exists a bounded intersection function $\sigma:\mathbb R_+\to \mathbb R_+$ so that for any  $U\ne V\in \f$,
$$
\forall r>0,\quad \mathrm{diam}\left(N_r(U)\cap N_r(V)\right)\le \sigma(r).
$$
If there exists a finite number $\theta>0$ so that $\mathrm{diam}\left(\pi_U(V)\right)\le \theta$ for any $U\ne V\in \f$, then $\f$ has    \textit{$\theta$-bounded projection property}. 
\end{defn}

These two properties are equivalent for strongly contracting subsets. 
\begin{lem}\cite{YANG6}
Assume that $\f$ is a $C$-contracting system of subsets in $X$ for a constant $C>0$. Then  the bounded intersection property of $\f$ is equivalent to the bounded projection property.   
\end{lem}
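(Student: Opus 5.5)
The converse direction is elementary, while the direction from bounded intersection to bounded projection needs the contraction (Morse) property of both $U$ and $V$. Throughout, $U\ne V\in\f$ are $C$-contracting, and by Remark~\ref{rem:basicfacts} both are Morse with a Morse function depending only on $C$.

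\emph{Bounded projection implies bounded intersection.} Assume $\f$ has the $\theta$-bounded projection property. Fix $r>0$ and $x,y\in N_r(U)\cap N_r(V)$. Choose $u,u'\in U$ with $d(x,u),d(y,u')\le r$, and $v,v'\in V$ with $d(x,v),d(y,v')\le r$. Then $d(v,U)\le d(v,u)\le 2r$, so for any $p\in\pi_U(v)$ we have $d(v,p)\le 2r$, and hence $d(u,p)\le 4r$. The same bound holds for $v'$ and $u'$. Therefore
\[
\mathrm{diam}\,\pi_U(V)\ \ge\ d(u,u')-8r\ \ge\ d(x,y)-10r,
\]
which gives $d(x,y)\le\theta+10r$. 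So $\sigma(r)=\theta+10r$ is a bounded intersection function. This direction uses nothing about contraction beyond the definition of $\pi_U$.

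\emph{Bounded intersection implies bounded projection.} Assume $\f$ has bounded intersection with function $\sigma$. Take $v,w\in V$ with $\proj_U^\pi(v,w)=D>C$; we want to bound $D$ in terms of $\sigma$ and $C$. The steps are as follows.

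First, by Lemma~\ref{BigFive}(3) applied to $(v,w)$ and to $(w,v)$, there are points $p,q\in[v,w]$ with $d(p,\pi_U(v))\le 2C$ and $d(q,\pi_U(w))\le 2C$. Choosing the projection points realizing the diameter up to a bounded error, we get $d(p,q)\ge D-4C$ up to an additive constant depending only on $C$.

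Second, the subsegment $[p,q]$ of $[v,w]$ is a geodesic whose endpoints lie in $N_{2C}(U)$. Since $U$ is Morse, moving the endpoints onto $U$ changes the path only by a uniformly bounded amount, so $[p,q]\subseteq N_{r_1}(U)$ for some $r_1=r_1(C)$.

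Third, $[v,w]$ is a geodesic with endpoints on $V$, which is also Morse, so $[v,w]\subseteq N_{r_2}(V)$ for some $r_2=r_2(C)$.

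With $r=\max(r_1,r_2)$, the points $p,q$ lie in $N_r(U)\cap N_r(V)$, so $D-4C-O(1)\le d(p,q)\le\sigma(r)$. Hence $\mathrm{diam}\,\pi_U(V)\le \max(C,\ \sigma(r)+4C+O(1))=:\theta$.

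The main obstacle is the second step: making sure the portion of $[v,w]$ between the two projection points genuinely fellow-travels $U$, rather than just having its endpoints near $U$. I would handle it with the Morse property of $U$ from Remark~\ref{rem:basicfacts}(1). If a cleaner constant is wanted, one can instead use the standard fact that a geodesic with large projection to a $C$-contracting set enters $N_C(U)$ near $\pi_U(v)$ and exits near $\pi_U(w)$.
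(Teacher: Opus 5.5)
Your argument is correct, and it is the standard proof of this fact (the paper itself gives no proof, only citing \cite{YANG6}): bounded projection gives bounded intersection by your elementary estimate $\sigma(r)=\theta+10r$. Conversely, Lemma~\ref{BigFive}(3) together with the Morse property of $V$ places two points $p,q\in[v,w]$ with $d(p,q)\ge \proj^\pi_U(v,w)-O(C)$ into $N_r(U)\cap N_r(V)$. Your second step, the one you flag as the main obstacle, is not needed: $p$ and $q$ already lie in $N_{2C}(U)$ by construction, and bounded intersection only bounds the diameter of the set $N_r(U)\cap N_r(V)$, so it suffices that the two points $p,q$ lie in it, not the whole segment $[p,q]$.
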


\begin{defn}\label{defn:contrelem}
An infinite order isometry $h$ on $X$ is called \textit{strongly contracting} if it acts by translation on a strongly contracting quasi-geodesic.   Equivalently, the orbital map $\mathbb Z\to X$ defined by $n\to g^n o$ for some $o\in X$ is a strongly contracting quasi-geodesic (Remark \ref{rem:basicfacts}).   
\end{defn}

%We now consider an isometric action of $G$ on a metric space $X$.  %be a Morse  element, which acts as a strongly contracting isometry on $X$. 

An infinite order element $g$ is called \textit{Morse} if the cyclic subgroup generated by $g$ is a Morse subset in some Cayley graph of $G$. In a proper action on a metric space, a strongly contracting element is a Morse element, i.e. the cyclic subgroup generated is a Morse subset. More generally, a strongly contracting WPD element on a metric space is a Morse element by \cite{Sisto} (see Lemma \ref{lem:WPDisMorse}). 

%Let $G$ be a finitely generated group. A subset $H$ is called \textit{Morse} if any $\lambda$-quasigeodesic with endpoints in $H$ is contained in a $\sigma$-neighborhood of $H$ in a fixed Cayley graph of $G$. Equivalently, the shortest projection to $H$ is sublinearly contracting.  A Morse   element $h$ means an infinite order element so that the subgroup generated by $h$ is Morse. By definition, Morse elements are preserved by quasi-isometry.

It is known that any Morse element $h$ in $G$ is contained in the following group
$$
E(h):=\{g\in G: \exists n\in \mathbb Z_+, (gh^n=h^ng)\lor (gh^n=h^{-n}g) \}
$$
which is the maximal elementary group containing $h$. Equivalently, it can be characterized as the coarse set-stabilizer
$$
E(h)=\{g\in G: d_{Haus}(g\langle h\rangle, \langle h\rangle)<\infty\}
$$
where $d_{Haus}$ denotes the Hausdorff distance between subsets in word metric.

Two infinite order elements $h,k\in G$ are called \textit{independent} if no power of $h$ is conjugate to some power of $k$ in $G$. It amounts to saying that $gE(h)g^{-1}\ne E(k)$ for any $g\in G$. 
\iffalse
\begin{cor}\label{EPlush}
For a Morse element $h\in G$, let $E^+(h)$ be the subgroup of orientation preserving elements in $E(h)$. Then $E^+(h)=\{g\in G: \exists n > 0,\; gh^ng^{-1}=h^n\}$ is of index at most two and contains all Morse elements in $E(h)$.
\end{cor}\fi

\subsection{Admissible paths}

In this subsection, let $\f$ be a $C$-contracting system of subsets in $X$ for a constant $C>0$. Assume that $\f$ has $\theta$-bounded projection for some $\theta>0$.

\begin{defn}[Admissible path]\label{AdmDef}
Given $L,\tau\geq 0$, a path $\gamma$ is called $(L,\tau)$-\textit{admissible} in $\U$ if $\gamma$ is a concatenation of geodesics $p_0q_1p_1\cdots q_np_n$ $(n\in\mathbb{N})$ such that the two endpoints of  $p_i$ for each $0\le i\le n$ lie in some $Y_i\in \f$, and   the following   \textit{Long Local} and \textit{Bounded Projection} properties hold:
\begin{enumerate}
\item[(LL)] For each $1\le i< n$, $p_i$  has length greater than $L$, while $p_0$ and $p_n$ could be trivial;
\item[(BP)] For each $0\le i< n$, we have $Y_i\ne Y_{i+1}$ and $$\max\left\{\mathrm{diam}(\pi_{Y_i}(q_i)),\mathrm{diam}(\pi_{Y_i}(q_{i+1}))\right\}\leq\tau,$$ where by convention $q_0:=\gamma_-$ and $q_{n+1}:=\gamma_+$.
\end{enumerate} 
The collection $\{Y_i: 0\le i\le n\}$ is referred to as the contracting subsets associated with the admissible path.
\end{defn}

\begin{rem}\label{ConcatenationAdmPath}
    The transitional geodesic $q_i$ is allowed to be trivial, in which  case the condition (BP) reduces to checking $Y_i\ne Y_{i+1}$. Admissible paths can be concatenated as follows: let $p_0q_1p_1\cdots q_np_n$ and $p_0'q_1'p_1'\cdots q_n'p_n'$ be $(L,\tau)$-admissible. If $p_n=p_0'$ has length greater than $L$, then the concatenation $(p_0q_1p_1\cdots q_np_n)\cdot (q_1'p_1'\cdots q_n'p_n')$ naturally inherits an $(L,\tau)$-admissible structure.  
\end{rem}

Let $p$ be a path with a natural orientation. We say that  a sequence of points $\{x_i\in p: 0\le i\le n\}$   is  \textit{ordered} if $x_{i+1}\in [x_i, p^+]_p$ for each $0\le i<n$. We write $\{x_0<x_1<\cdots <x_n\}$.

\begin{defn}[Fellow travel]\label{Fellow}
Let   $\gamma = p_0 q_1 p_1 \cdots q_n p_n$ be an $(L, \tau)-$admissible
path. We say $\gamma$ has \textit{$r$-fellow travel} property for some $r>0$   if for any geodesic  
$\alpha$  with the same endpoints as $\gamma$,   there exists a sequence of ordered points $z_i, w_i$ ($0 \le i \le n$) on $\alpha$ such that  
$$d(z_i, (p_{i})_-) \le r,\quad d(w_i, (p_{i})_+) \le r.$$
In particular,  $\mathrm{diam}(N_r(Y_i)\cap \alpha)\ge L-2r$ for each $Y_i\in \f$ associated with $\gamma$. 
\end{defn}
The following result ensures that a locally long admissible path enjoys the fellow travel property.

\begin{prop}\label{admisProp}\cite[Proposition 3.1, Corollary 3.7]{YANG6}
For any $\tau>0$, there exist $L,  r, c>0$ depending only on $\theta, \tau$ and $C$ such that every $(L, \tau)$-admissible path  $\gamma$ has $r$-fellow travel property and  is a $c$-quasi-geodesic. Moreover, if   $z_i,w_i$ denote a shortest projection point of $\gamma_-$ and $\gamma_+$ to $Y_i$ respectively then 
$$
d(z_i, (p_{i})_-) \le r,\quad d(w_i, (p_{i})_+) \le r $$
for any $0\le i \le n.$ 
\end{prop}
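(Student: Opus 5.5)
The proof of Proposition~\ref{admisProp} goes by induction on the number $n$ of contracting pieces. The key tool is the projection estimate in Lemma~\ref{BigFive}(3): if a geodesic has large projection to a strongly $C$-contracting subset, then it passes close to the projection points. I fix $\tau$ and choose $L\gg \tau+\theta+C$. The first aim is a \emph{projection lemma}: for an $(L,\tau)$-admissible path $\gamma=p_0q_1p_1\cdots q_np_n$ and each $i$, a shortest projection point $w$ of $\gamma_+$ to $Y_i$ lies within a bounded distance $r_0=r_0(\tau,\theta,C)$ of $(p_i)_+$. The symmetric statement, with $\gamma_-$ and $(p_i)_-$, follows by reversing orientation.

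I prove this by downward induction on $i$. For $i=n$ it is trivial, since $\gamma_+=(p_n)_+\in Y_n$. For the step from $i+1$ to $i$, I split $\gamma_+$'s projection to $Y_i$ into three parts:
\[
\proj^\pi_{Y_i}\bigl((p_i)_+,\gamma_+\bigr)\le \mathrm{diam}\,\pi_{Y_i}(q_{i+1})+\proj^\pi_{Y_i}\bigl((q_{i+1})_+,(p_{i+1})_+\bigr)+\proj^\pi_{Y_i}\bigl((p_{i+1})_+,\gamma_+\bigr).
\]
\begin{itemize}
\item The first term is at most $\tau$ by (BP).
\item The second term is at most $\theta$ up to additive $C$, because both points lie in $Y_{i+1}\neq Y_i$ and $\f$ has $\theta$-bounded projection.
\item For the third term, the induction hypothesis says $\pi_{Y_{i+1}}(\gamma_+)$ is $r_0$-close to $(p_{i+1})_+$. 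Since $\ell(p_{i+1})>L$, I get $\proj^\pi_{Y_{i+1}}((p_{i+1})_-,\gamma_+)>C$ once $L>r_0+2C+\tau$. Lemma~\ref{BigFive}(3) then makes a geodesic $[(p_{i+1})_-,\gamma_+]$ pass $2C$-close to a point near $(p_{i+1})_-$, and then travel along $Y_{i+1}$ for roughly $L$. The contracting property, together with bounded projection between $Y_i$ and $Y_{i+1}$, shows that the projection of this geodesic to $Y_i$ stays within $\theta+O(C)$ of $\pi_{Y_i}((p_{i+1})_-)$.
\end{itemize}
Adding these gives $d(\pi_{Y_i}(\gamma_+),(p_i)_+)\le \tau+2\theta+O(C)=:r_0$, which is independent of $i$. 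This closes the induction. The point to check carefully is that the constant does not grow with $n$. That is exactly where $L$ large is used: a long $p_{i+1}$ "resets" the projection.

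With the projection lemma in hand, fellow travel follows from Lemma~\ref{BigFive}(3) applied to a geodesic $\alpha=[\gamma_-,\gamma_+]$ and each $Y_i$. The projections of $\gamma_\pm$ to $Y_i$ are $r_0$-close to $(p_i)_\mp$, which lie at distance more than $L-2r_0>C$ from each other. Hence $\alpha$ passes $2C$-close to both projection points, giving $z_i,w_i\in\alpha$ with $d(z_i,(p_i)_-),d(w_i,(p_i)_+)\le r:=r_0+2C$.

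I expect ordering to be the main obstacle. To get $z_i<w_i<z_{i+1}$, I use that $d((p_i)_+,(p_{i+1})_-)$ is controlled relative to the lengths of the $p$'s, and that a backtrack along $\alpha$ would force the projection of a subsegment to $Y_i$ or $Y_{i+1}$ to contradict the bounds above. If $z_i$ and $w_i$ come out in the wrong order, that is harmless: since $L>4r$, it would contradict the fact that $\alpha$ is a geodesic.

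Finally, for the quasi-geodesic constant, I compare lengths. By fellow travel, $\ell(\alpha)\ge\sum_i(\ell(p_i)-2r)+\sum_i d(w_{i},z_{i+1})$. Also $\ell(q_{i+1})\le d(w_i,z_{i+1})+2r$, after replacing $q$'s by geodesics. Since $\ell(p_i)>L\ge 4r$ for interior $i$, the additive errors are absorbed multiplicatively, giving $\ell(\gamma)\le c\,d(\gamma_-,\gamma_+)+c$. Applying this to subpaths, which are again admissible after truncating the end pieces, gives the $c$-quasi-geodesic property.
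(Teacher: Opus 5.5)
The paper gives no proof of this proposition; it is quoted from \cite{YANG6}. Your architecture is the standard one: a uniform projection estimate proved by induction along the pieces, then fellow travel via Lemma~\ref{BigFive}(3), then a length count. Your first two terms are handled correctly.

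\textbf{The gap: the third term.} This is the only place where anything nontrivial happens, and it is not proved. You need $\proj^\pi_{Y_i}((p_{i+1})_+,\gamma_+)$ bounded independently of $i$. Your justification only controls the portion of $[(p_{i+1})_-,\gamma_+]$ that runs along $Y_{i+1}$. Nothing you say prevents the tail of that geodesic, after it leaves $N_{2C}(Y_{i+1})$ near $\pi_{Y_{i+1}}(\gamma_+)$, from coming close to $Y_i$ and picking up a large projection there. The phrase ``the contracting property, together with bounded projection'' restates the desired conclusion rather than proving it. What is needed is a Behrstock-type inequality, and this is exactly where $\ell(p_{i+1})>L$ and (BP) at $Y_{i+1}$ must enter.

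One way to supply it:
\begin{enumerate}
\item Pick $w'\in\pi_{Y_{i+1}}(\gamma_+)$. Since $w',(p_{i+1})_+\in Y_{i+1}$, we have $\proj^\pi_{Y_i}((p_{i+1})_+,w')\le\theta$.
\item Suppose $\proj^\pi_{Y_i}(w',\gamma_+)>C$. Lemma~\ref{BigFive}(3) gives $u\in\pi_{Y_i}(w')\subseteq Y_i$ and $y\in[w',\gamma_+]$ with $d(y,u)\le 2C$.
\item Because $y$ lies on a geodesic from $\gamma_+$ to a nearest point $w'$ of $Y_{i+1}$, we have $w'\in\pi_{Y_{i+1}}(y)$. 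So Lemma~\ref{BigFive}(2) puts $w'$ within $4C$ of $\pi_{Y_{i+1}}(u)\subseteq\pi_{Y_{i+1}}(Y_i)$.
\item The set $\pi_{Y_{i+1}}(Y_i)$ has diameter at most $\theta$ and contains $\pi_{Y_{i+1}}((p_i)_+)$. By (BP) for $q_{i+1}$, this lies within $\tau$ of $(p_{i+1})_-$. Hence $d(w',(p_{i+1})_-)\le 4C+\theta+\tau$.
\item Combined with $d(w',(p_{i+1})_+)\le r_0$, this yields $\ell(p_{i+1})\le r_0+4C+\theta+\tau$. That is impossible once $L$ exceeds this bound.
\end{enumerate}
Hence the third term is at most $\theta+C$, and $r_0:=\tau+2\theta+C$ closes the induction without growth in $i$.

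\textbf{Ordering.} Your claim that $d((p_i)_+,(p_{i+1})_-)$ is controlled by the lengths of the $p$'s is false. The $q_i$ are arbitrary geodesics; in Lemma~\ref{extend3} they are $[o,go]$ for arbitrary $g$. Likewise, $w_i$ preceding $z_i$ on $\alpha$ does not by itself contradict $\alpha$ being geodesic. The order has to come from projections. For example, choose the points backwards:
\begin{itemize}
\item Given $z_{i+1}$ near $(p_{i+1})_-$, (BP) and the projection lemma make $\proj^\pi_{Y_i}(\gamma_-,z_{i+1})$ large.
\item Lemma~\ref{BigFive}(3), applied to the subsegment $[\gamma_-,z_{i+1}]_\alpha$, places $w_i$ on that subsegment near $(p_i)_+$.
\item Applying it again to $[\gamma_-,w_i]_\alpha$ places $z_i$ before $w_i$, near $(p_i)_-$.
\end{itemize}
The errors stay uniform because each step uses only the uniform projection bounds.

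\textbf{Subpaths.} Truncated end pieces of a subpath need not have endpoints on any $Y_j$. An interior point of $p_j$ is only near $Y_j$, and a point of $q_j$ need not lie on any $Y$. For the quasi-geodesic claim you should note that your arguments use only (BP), together with endpoints lying on the $Y_j$ up to bounded error. For a point of $q_j$, for instance, (BP) puts its projection to $Y_j$ within $\tau$ of $(p_j)_-$.
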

 
%\begin{cor}\label{cor:close_to_proj}\cite[Corollary 3.7]{YANG6}
%\end{cor}

\subsection{Projection complex}
Let $\f = \{(Y, \rho_Y)\}$ be a collection of metric spaces. 
\begin{defn}\cite{BBFS}
We say that a family of functions
$\{\proj_Y:Y\in\f\}$, where
$$
\proj_Y:\f^2\setminus\{(Y,Y)\}\longrightarrow\mathbb{R}_{\ge 0},
$$
satisfies the \textit{strong projection axioms} with constant
$\theta>0$ if the following conditions hold:
\begin{enumerate}[label=\textnormal{(SP\arabic*)},leftmargin=*]
    \item\label{axiom:reflect}
    $\proj_Y(U,V)=\proj_Y(V,U)$.

    \item\label{axiom:tri_ineq}
    $\proj_Y(U,V)+\proj_Y(V,W)\ge\proj_Y(U,W)$.

    \item\label{axiom:strong_berstock}
    If $\proj_Y(U,V)>\theta$, then
    $\proj_V(U,W)=\proj_V(Y,W)$ for every
    $W\in\f\setminus\{V\}$.

    \item\label{axiom:bound_proj}
    $\proj_Y(U,U)<\theta$.

    \item\label{axiom:dist}
    The set
    $\{Y\in\f:\proj_Y(U,V)>\theta\}$ is finite for all
    $U,V\in\f$.
\end{enumerate}
The constant $\theta$ is called the \textit{projection constant}.
\end{defn}
\begin{rem}
The  axiom \ref{axiom:strong_berstock} with \ref{axiom:bound_proj} implies  the following axiom: 
\begin{enumerate}[label=\textnormal{(P3)},leftmargin=*]
    \item\label{axiom:berstock}
    For all pairwise distinct $U,Y,V\in\f$, if
    $\proj_Y(U,V)>\theta$, then
    $\proj_V(Y,U)\le\theta$.
\end{enumerate}
This is usually referred to as the Behrstock inequality \cite{Beh2}. The set of axioms with \ref{axiom:berstock} instead of \ref{axiom:strong_berstock} is introduced in  \cite{BBF}.
\end{rem}

%Let us assume that  a subset $\pi_V(U)$ is associated  for each pair of $U\ne V\in \f$. Define $$\proj_Y^{\pi}(U, V) = \diam{\pi_Y(U) \cup \pi_Y(V)}$$ %$$\proj_Y^S(U,V) =\diamS {\pi_Y(U) \cup \pi_Y(V)}$$
%It is clear that $\proj_Y^{\pi}$ satisfies (\ref{axiom:reflect}) and (\ref{axiom:tri_ineq}). 

An important refinement due to \cite{BBFS} shows that a family of projection functions $\proj_Y^{\pi}$ satisfying \ref{axiom:berstock} can be modified within uniformly bounded error so as to satisfy the strong Behrstock inequality \ref{axiom:strong_berstock}. The principal example arises from the shortest  projections associated with a system of strongly contracting subsets having uniformly bounded intersections; see Example~\ref{ex:projectionforcontracting}.

\begin{thm}\cite[Theorem 4.1]{BBFS}\label{thm:forcing_seq}
Assume that $\f = \{(Y, \rho_Y)\}$ is a family of metric spaces with $\{\proj_Y^\pi:
Y \in \f\}$ satisfying \ref{axiom:reflect}, \ref{axiom:tri_ineq}, \ref{axiom:berstock}, \ref{axiom:bound_proj}, \ref{axiom:dist}  with constant $\theta$. Then there exists suitable modified functions $\{\proj_Y: Y\in \f\}$ which satisfy the strong projection axioms for the constant $11\theta$ and
\begin{align}\label{eq:projdist}
\proj^{\pi}_Y - 2\theta\le  \proj_Y\le \proj^{\pi}_Y + 2\theta.    
\end{align}
    
Moreover, for each $U\ne  V\in \f$ there exists a subset of $V$ denoted as  $\widetilde \pi_V(U)$  such that the following holds:
    \begin{enumerate}
        \item $\widetilde \pi_V(U)) \subseteq N_\theta(\pi_V(U))$;
        \item $\proj_Y(U, V) = \mathrm{diam}\left(\widetilde \pi_Y(U)\cup \widetilde \pi_Y(V)\right)$ for any $Y\in \f$;
        \item If a group $G$ acts on $\f$ preserving the metrics and projections $\pi_V(U)$, then $G$ also preserves $\widetilde \pi_V(U)$.
    \end{enumerate}
\end{thm}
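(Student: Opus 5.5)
The plan is to follow the construction of Bestvina--Bromberg--Fujiwara--Sisto: enlarge each projection $\pi_Y(X)$ by the projections of all elements that lie ``between'' $X$ and $Y$. Then check that this enlargement is small, natural, and forces the strong Behrstock inequality.

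\textbf{Step 1: the modified projections.} For $X\ne Y$ in $\f$ I would set
\[
\mathcal H(X,Y)=\{W\in\f\setminus\{X,Y\}:\ \proj^\pi_W(X,Y)>2\theta\},
\]
which is finite by \ref{axiom:dist}. I would then define
\[
\widetilde\pi_Y(X)=\pi_Y(X)\cup\bigcup_{W\in\mathcal H(X,Y)}\pi_Y(W),
\]
and put $\proj_Y(U,V)=\mathrm{diam}(\widetilde\pi_Y(U)\cup\widetilde\pi_Y(V))$, so that property (2) holds by definition.

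\textbf{Step 2: the easy properties.} Item (1) comes from \ref{axiom:berstock}. If $W\in\mathcal H(X,Y)$, then $\proj^\pi_W(X,Y)>\theta$, so $\proj^\pi_Y(X,W)\le\theta$. Hence $\pi_Y(W)\subseteq N_\theta(\pi_Y(X))$, and so $\widetilde\pi_Y(X)\subseteq N_\theta(\pi_Y(X))$. Since $\pi_Y(X)\subseteq\widetilde\pi_Y(X)$, the diameters change by at most $2\theta$, which gives \eqref{eq:projdist}. Axioms \ref{axiom:reflect} and \ref{axiom:tri_ineq} are immediate for diameters of unions, up to the standard triangle inequality for diameter-type quantities. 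Axiom \ref{axiom:bound_proj} follows from \eqref{eq:projdist}, and \ref{axiom:dist} follows from \eqref{eq:projdist} together with \ref{axiom:dist} for $\proj^\pi$ at the threshold $11\theta-2\theta$. Item (3) is clear, because $\mathcal H$ and the $\pi$'s are defined naturally from data that $G$ preserves, so $g\widetilde\pi_Y(X)=\widetilde\pi_{gY}(gX)$.

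\textbf{Step 3: the strong Behrstock axiom \ref{axiom:strong_berstock} (main obstacle).} Suppose $\proj_Y(U,V)>11\theta$. I would aim to show that $\widetilde\pi_V(U)=\widetilde\pi_V(Y)$ as sets, which gives $\proj_V(U,W)=\proj_V(Y,W)$ for every $W\ne V$. By \eqref{eq:projdist}, $\proj^\pi_Y(U,V)>9\theta$, so $Y\in\mathcal H(U,V)$. The key combinatorial fact I would prove first is an ordering lemma in the style of \cite{BBF}: for $Y\in\mathcal H(U,V)$,
\[
\mathcal H(U,V)\ \text{``}\supseteq\text{''}\ \mathcal H(U,Y)\cup\{Y\}\cup\mathcal H(Y,V)
\]
in the sense that
\[
\mathcal H(U,V)\cup\{U\}=\mathcal H(Y,V)\cup\{Y\}\cup\bigl(\mathcal H(U,Y)\cup\{U\}\bigr),
\]
where elements with projections near the threshold may need to be absorbed by the slack between $2\theta$ and $11\theta$. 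This would be proved using \ref{axiom:berstock} repeatedly together with the triangle inequality: an element $W$ with $\proj^\pi_W(U,V)>2\theta$ sees either $U$ and $Y$, or $Y$ and $V$, as far apart, and Behrstock forces consistency of the order. Granting the lemma, both $\widetilde\pi_V(U)$ and $\widetilde\pi_V(Y)$ become the union of $\pi_V(Z)$ over the same set of $Z$, namely everything from $Y$ onward between $U$ and $V$. The projections of the remaining elements, which lie between $U$ and $Y$, are within $\theta$ of $\pi_V(Y)$, and these are already included.

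The delicate part is the borderline cases, where some $\proj^\pi_W$ is close to $2\theta$. I expect the bookkeeping there to determine the constant $11\theta$. If exact equality of sets fails at the borderline, my fallback is to shift the threshold in the definition of $\mathcal H$ (for instance to $3\theta$) and use the remaining slack.
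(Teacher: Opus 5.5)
The paper gives no proof of this statement; it simply quotes \cite{BBFS}. Your Step 2 is fine. Step 3, however, cannot be completed for the construction you chose, and raising the threshold (your fallback) does not help.

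\textbf{Why the construction fails.} Your $\widetilde\pi_V(U)$ always contains $\pi_V(U)$. But if $Y$ lies deep between $U$ and $V$, then \ref{axiom:berstock} gives $\proj^\pi_U(Y,V)\le\theta$. So $U\notin\mathcal H(Y,V)$, and nothing places $\pi_V(U)$ inside $\widetilde\pi_V(Y)$. The same holds for $\pi_V(W)$ with $W$ between $U$ and $Y$. Your sentence ``these are within $\theta$ of $\pi_V(Y)$, and these are already included'' conflates $\theta$-closeness with containment. Closeness only yields $|\proj_V(U,W)-\proj_V(Y,W)|\le 2\theta$. That is the coarse Behrstock inequality you started from, not the exact identity demanded by \ref{axiom:strong_berstock}.

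\textbf{A counterexample.} Let $\f=\{X,Y,Z\}$ with single-point projections, $\proj^\pi_Y(X,Z)=100\theta$ and $\proj^\pi_X(Y,Z)=\proj^\pi_Z(X,Y)=\theta/2$; all hypotheses hold. For any threshold in $[\theta,100\theta)$, your definition gives $\mathcal H(X,Z)=\{Y\}$ and $\mathcal H(Y,Z)=\mathcal H(X,Y)=\mathcal H(Z,Y)=\emptyset$. Hence $\proj_Y(X,Z)=100\theta>11\theta$. On the other hand, $\widetilde\pi_Z(X)=\pi_Z(X)\cup\pi_Z(Y)$ and $\widetilde\pi_Z(Y)=\pi_Z(Y)$, so $\proj_Z(X,Y)=\theta/2\neq 0=\proj_Z(Y,Y)$. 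This violates \ref{axiom:strong_berstock} with $W=Y$.

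\textbf{The decomposition of $\mathcal H(U,V)$.} Your intermediate identity is also not available with a single threshold. From $W\in\mathcal H(U,Y)$ and $Y\in\mathcal H(U,V)$, two applications of \ref{axiom:berstock} give only $\proj^\pi_W(Y,V)\le\theta$. Hence $\proj^\pi_W(U,V)>\theta$, so $W$ need not lie in $\mathcal H(U,V)$.

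\textbf{What is missing.} The modified projection must \emph{forget the far end}. $\widetilde\pi_V(U)$ should be assembled only from projections $\pi_V(W)$ of elements $W$ that come last on the way from $U$ to $V$. Then, whenever $\proj_Y(U,V)$ is large, the data defining $\widetilde\pi_V(U)$ and $\widetilde\pi_V(Y)$ literally coincide. Item (1) requires only $\widetilde\pi_V(U)\subseteq N_\theta(\pi_V(U))$, not $\pi_V(U)\subseteq\widetilde\pi_V(U)$. In the example, the correct choice is $\widetilde\pi_Z(X)=\pi_Z(Y)$ and $\widetilde\pi_X(Z)=\pi_X(Y)$.

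In the spirit of \cite{BBFS}, one builds $\widetilde\pi_V(U)$ from the terminal elements of chains (forcing sequences) leading from $U$ toward $V$. The real work is then a rerouting argument: when $Y$ lies deep between $U$ and $V$, chains from $U$ and chains from $Y$ must yield the same terminal projections. The threshold losses in that argument are what produce the constant $11\theta$. Your proposal supplies none of this, so the strong Behrstock axiom, which is essentially the whole content of the theorem, remains unproved.
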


The following  notion is fundamental in the theory of projection complex.
\begin{defn}[Standard path]\label{def:stdpath}
Fix a constant $K>3\theta$. Given $U\ne V\in \f$, define $$\f_K(U, V)=\bigl\{Y\in \f \backslash \{U, V\}: \proj_Y(U, V) > K\bigr\}$$ Let us write $$\f_K[U,V]=\f_K(U,V)\cup\{U,V\}$$ on which we define a partial relation $<$ as follows:
    \begin{enumerate}
        \item If $\proj_{Y_1}(U, Y_2) > \theta$ for $Y_1, Y_2\in \f_{K}(U, V)$, we declare $Y_1 < Y_2$;
        \item $U$ and $V$ are the least and greatest elements: $U < Y$ and $Y < V$ for any $Y\in \f_{K}(U, V)$.
    \end{enumerate}
By definition, if $0 < K_1 \le K_2$, then $\f_{K_2}(U, V) \subseteq \f_{K_1}(U, V)$.    
\end{defn}

\noindent\textbf{Convention.}
Unless otherwise mentioned, assume $K> 3\theta$ in the sequel. 

\medskip

The partial relation turns out to be a total order on $\f_K(U, V)$.
%Then we have the following characterization of $\f_K(X, Z)$ for  by \cite{BBF} .
\begin{prop}\label{prop:std_path}
    Given $U\ne V\in \f$, the relation $<$ defines a total order on $\f_{K}[U, V]$ with  the least element $U$ and  the greatest element $V$. Moreover,  
    \begin{enumerate}
        \item \label{ord_from_end} Let $Y_1, Y_2 \in \f_K(U, V)$. Then $$Y_1 < Y_2 
        \quad \Leftrightarrow \quad \proj_{Y_2}(Y_1, V) >\theta
        \quad \Leftrightarrow \quad \proj_{Y_1}(Y_2, V) \le \theta
        \quad \Leftrightarrow \quad \proj_{Y_2}(U, Y_1)\le \theta .$$
        %\item \label{tot_ord} The relation $<$ is a total order on $\f_{K}[U, V]$;
        \item \label{induce_ord} Let $K_1 \le K_2$ and $Y_1, Y_2\in \f_{K_2}(U, V)$. Then $$Y_1 < Y_2 \;\;\text{in}\;\; \f_{K_2}(U, V) \quad \Leftrightarrow \quad Y_1 < Y_2 \;\; \text{in}\;\; \f_{K_1}(U, V)$$
        \item \label{mid_proj} If $Y_1, Y_2, Y_3\in \f_K[U, V]$ and $Y_1 < Y_2 < Y_3$, then $$\proj_{Y_2}(Y_1, Y_3) = \proj_{Y_2}(U, V)$$
        \item \label{crit_in_Y} Let $Y_1, Y_2\in \f_K[U, V]$. If $\proj_Y(Y_1, Y_2) > K$ for some $Y\in\f$, then $Y\in \f_K(U, V)$.
    \end{enumerate}
\end{prop}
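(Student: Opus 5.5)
The statement to prove is Proposition~\ref{prop:std_path}: that $<$ is a total order on $\f_K[U,V]$, together with properties (1)--(4). The plan is to work only from the strong projection axioms \ref{axiom:reflect}--\ref{axiom:dist} with constant $\theta$ and $K>3\theta$, following Bestvina--Bromberg--Fujiwara. The basic tool is a ``trichotomy'' lemma. For distinct $Y_1,Y_2\in\f_K(U,V)$, exactly one of the following holds: $\proj_{Y_1}(U,Y_2)>\theta$, or $\proj_{Y_1}(Y_2,V)>\theta$. This follows because $\proj_{Y_1}(U,V)>K>2\theta$ and, by the triangle inequality \ref{axiom:tri_ineq}, $\proj_{Y_1}(U,Y_2)+\proj_{Y_1}(Y_2,V)>2\theta$, so at least one term exceeds $\theta$.

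\textbf{Step 1: exclusivity of the trichotomy.} Suppose $\proj_{Y_1}(U,Y_2)>\theta$. Then \ref{axiom:strong_berstock}, applied with $Y=Y_1$, gives $\proj_{Y_2}(U,W)=\proj_{Y_2}(Y_1,W)$ for all $W\neq Y_2$. Taking $W=V$, the left side is $>K$, so $\proj_{Y_2}(Y_1,V)>K>\theta$. Combining this with \ref{axiom:strong_berstock} again and \ref{axiom:bound_proj}, we get $\proj_{Y_1}(Y_2,V)=\proj_{Y_1}(Y_2,Y_2)<\theta$. This yields exclusivity. It also gives the chain of equivalences in (1) directly: $Y_1<Y_2$ holds iff $\proj_{Y_2}(Y_1,V)>\theta$, iff $\proj_{Y_1}(Y_2,V)\le\theta$, and the symmetric argument gives the equivalence with $\proj_{Y_2}(U,Y_1)\le\theta$.

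\textbf{Step 2: total order.} Antisymmetry and totality follow from the trichotomy and Step 1. For transitivity, take $Y_1<Y_2<Y_3$. From $Y_2<Y_3$ and (1) we have $\proj_{Y_3}(Y_2,V)>\theta$, so \ref{axiom:strong_berstock} gives $\proj_{Y_2}(Y_3,W)=\proj_{Y_2}(V,W)$. Similarly, $Y_1<Y_2$ gives $\proj_{Y_2}(Y_1,V)>\theta$ and hence $\proj_{Y_1}(U',Y_2)=\proj_{Y_1}(U',V)$-type identities. I would chase these identities to get $\proj_{Y_1}(U,Y_3)=\proj_{Y_1}(U,Y_2)>\theta$, using that $\proj_{Y_2}(Y_1,Y_3)$ is large so that $Y_3$ ``looks like'' $Y_2$ from $Y_1$. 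The extremal elements $U,V$ are handled by definition, so $<$ is a total order. Item (2) is then immediate, since the order is defined by a condition independent of $K$.

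\textbf{Step 3: items (3) and (4).} For (3), with $Y_1<Y_2<Y_3$, two applications of \ref{axiom:strong_berstock} replace $Y_1$ by $U$ and $Y_3$ by $V$ in $\proj_{Y_2}(\cdot,\cdot)$. Here $\proj_{Y_1}(U,Y_2)>\theta$ gives $\proj_{Y_2}(U,W)=\proj_{Y_2}(Y_1,W)$, and symmetrically for $Y_3$; when $Y_1=U$ or $Y_3=V$ the replacement is trivial. For (4), I would suppose $\proj_Y(Y_1,Y_2)>K$ but $Y\notin\f_K(U,V)$. Then, by the triangle inequality, $\proj_Y(U,Y_1)$ or $\proj_Y(Y_2,V)$ exceeds $\theta$ (the detailed comparison needs $K>3\theta$). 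Applying \ref{axiom:strong_berstock} at $Y$ transfers largeness to $\proj_{Y_1}(U,Y)$ or a similar quantity, and a Behrstock argument then contradicts $Y_1\in\f_K[U,V]$ together with the ordering of $Y_1,Y_2$.

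I expect the main obstacle to be this last item, (4). It requires a careful case analysis over the positions of $Y$ relative to $U,Y_1,Y_2,V$, and the margin $K>3\theta$ must be used precisely; I would try first to reduce it, via (3), to the case $Y_1=U$, $Y_2=V$.
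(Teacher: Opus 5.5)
Your Steps 1--3 are sound. They give a self-contained, axiom-level proof of the total order and of (1)--(3); the paper does not prove these but only cites them from BBFS. For transitivity, the identity to chase is $\proj_{Y_2}(Y_1,Y_3)=\proj_{Y_2}(Y_1,V)>K$, which follows from (SP3) at $Y_3$ since $Y_2<Y_3$. Then (SP3) at $Y_2$ gives $\proj_{Y_1}(U,Y_3)=\proj_{Y_1}(U,Y_2)>\theta$.

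The genuine gap is item (4).
Assume $\proj_Y(U,V)\le K$. The triangle inequality then only yields
$\proj_Y(U,Y_1)+\proj_Y(Y_2,V)\ge \proj_Y(Y_1,Y_2)-\proj_Y(U,V)>0$.
It does not show that either term exceeds $\theta$. No margin $K>3\theta$ helps, because both hypotheses use the same threshold $K$.

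More fundamentally, the triangle inequality combined with Behrstock-type inequalities loses an additive $2\theta$. The best it gives is $\proj_Y(U,Y_1),\proj_Y(Y_2,V)\le\theta$, hence $\proj_Y(U,V)>K-2\theta$. That places $Y$ in $\f_{K-2\theta}(U,V)$, not in $\f_K(U,V)$.

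Your proposed reduction via (3) to the case $Y_1=U$, $Y_2=V$ is also unavailable. Item (3) only concerns projections to vertices already known to lie on the standard path, which is exactly what is in question for $Y$. Moreover, the case $Y_1=U$, $Y_2=V$ is the tautological one.

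The missing idea is to use the \emph{equality} in (SP3) twice, directly rather than by contradiction. Order $Y_1<Y_2$; necessarily $Y_1\ne Y_2$ and $Y\ne Y_1,Y_2$.

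Since $\proj_Y(Y_1,Y_2)>K>\theta$, (SP3) at $Y$ gives $\proj_{Y_1}(Y,W)=\proj_{Y_1}(Y_2,W)$ and $\proj_{Y_2}(Y,W)=\proj_{Y_2}(Y_1,W)$.

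Suppose $Y_1\ne U$. Taking $W=U$ gives $\proj_{Y_1}(U,Y)=\proj_{Y_1}(U,Y_2)>\theta$, by the definition of $<$ (or by $Y_1\in\f_K(U,V)$ if $Y_2=V$). Then $Y\ne U$ by (SP4), and (SP3) at $Y_1$ gives $\proj_Y(U,W)=\proj_Y(Y_1,W)$ for all $W\ne Y$.

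Symmetrically, suppose $Y_2\ne V$. Then $\proj_{Y_2}(Y,V)=\proj_{Y_2}(Y_1,V)>\theta$ by (1) (or by $Y_2\in\f_K(U,V)$ if $Y_1=U$). So $Y\ne V$, and (SP3) at $Y_2$ gives $\proj_Y(V,W)=\proj_Y(Y_2,W)$.

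When $Y_1=U$ or $Y_2=V$, the corresponding identity and the condition $Y\ne U$ or $Y\ne V$ are automatic. Therefore
$\proj_Y(U,V)=\proj_Y(Y_1,V)=\proj_Y(Y_1,Y_2)>K$,
so $Y\in\f_K(U,V)$. There is no loss of constants; only $K>\theta$ is used.
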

\begin{rem}
The preceding statement is a repackaging of several results from \cite{BBFS}; we recall the details for the reader's convenience. The equivalence of the two descriptions of the order in (\ref{ord_from_end}) is proved in \cite[Lemma 2.2]{BBFS}. The assertion (\ref{induce_ord}) follows directly, since the relevant comparisons only involve the threshold $2\theta$, rather than the chosen value of $K$. The total order and the property (\ref{mid_proj}) are established for $\f_{2\theta}(U,V)$ in \cite[Proposition 2.3]{BBFS}, and the same argument applies to $\f_K(U,V)$ for every $K>3\theta$. Finally, (\ref{crit_in_Y}) is precisely \cite[Corollary 2.5]{BBFS}.

Equivalently, one may define the order on $\f_K(U,V)$ by declaring $Y_1<Y_2$ if
$\proj_{Y_1}(U,Y_2)>K,$
or, equivalently, if
$\proj_{Y_2}(Y_1,V)>K.$
Indeed, since $Y_1,Y_2\in \f_K(U,V)$, the middle projection property (\ref{mid_proj}) gives
\[
\proj_{Y_1}(U,Y_2)=\proj_{Y_1}(U,V)>K,
\qquad
\proj_{Y_2}(Y_1,V)=\proj_{Y_2}(U,V)>K
\]
whenever $Y_1<Y_2$ in the above order.   
\end{rem}
%By (\ref{induce_ord}), the total order $<$ is independent of $K$ when $K \ge 3\theta$.

The next two results clarify that the order on a standard path is inherited by its subpaths.
\begin{lem}\label{lem:std_path_prefix}
    Assume $\f_K(Y, Z) \neq \emptyset$ for  $Y, Z\in \f$. Let $W$ be the greatest element of $\f_K(Y, Z)$. Then $$\f_K(Y, Z) = \f_K(Y, W)\cup \{W\}.$$ Further, the order  on $\f_K(Y, W)$ agrees with the induced order on the subset $\f_K(Y, W)\subset\f_K(Y, Z)$. That is, for any two elements $U\ne V\in \f_K(Y, Z)$,  $U < V$ in $\f_K(Y, Z)$ if and only if either $U, V\in \f_K(Y, W)$ and $U < V$, or $U\in \f_K(Y, W)$ and $V = W$. 
    
    A similar  statement holds for the least element of $\f_K(Y, Z)$.
\end{lem}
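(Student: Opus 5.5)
We prove the two assertions separately: first the set equality $\f_K(Y,Z)=\f_K(Y,W)\cup\{W\}$, then the compatibility of the orders. Throughout we use only Proposition~\ref{prop:std_path} and the strong projection axioms.

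\textbf{The inclusion $\supseteq$.} Since $W\in\f_K(Y,Z)$ by assumption, it suffices to take $U\in\f_K(Y,W)$ and show $U\in\f_K(Y,Z)$. Both $Y$ and $W$ lie in $\f_K[Y,Z]$, and by hypothesis $\proj_U(Y,W)>K$. Proposition~\ref{prop:std_path}(\ref{crit_in_Y}) then gives $U\in\f_K(Y,Z)$.

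\textbf{The inclusion $\subseteq$.} Take $U\in\f_K(Y,Z)$ with $U\neq W$. Since $W$ is the greatest element, $U<W$. Apply Proposition~\ref{prop:std_path}(\ref{mid_proj}) to the chain $Y<U<W$ in $\f_K[Y,Z]$. This gives
\[
\proj_U(Y,W)=\proj_U(Y,Z)>K.
\]
Also $U\ne Y$, and $U\ne W$ by choice. Hence $U\in\f_K(Y,W)$.

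\textbf{Compatibility of the orders.} Let $U\ne V\in\f_K(Y,Z)$.

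If one of them equals $W$, say $V=W$, then $U<W$ in $\f_K(Y,Z)$ because $W$ is greatest. In $\f_K[Y,W]$ the element $W$ is also greatest, so the statement holds in this case.

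Otherwise $U,V\in\f_K(Y,W)$. The order on each standard path is defined by the same criterion, namely $U<V$ if and only if $\proj_U(V',\cdot)$-type inequality $\proj_{U}(Y,V)\le\theta$ holds, with the comparison involving only $Y$ and the two elements. To make this precise, use Proposition~\ref{prop:std_path}(\ref{ord_from_end}): $U<V$ if and only if $\proj_V(Y,U)\le\theta$. This characterization references only the common initial endpoint $Y$ and not the terminal endpoint. Therefore it gives the same answer whether it is evaluated in $\f_K(Y,Z)$ or in $\f_K(Y,W)$, so the two orders agree.

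\textbf{The least element.} The symmetric statement, with $W$ the least element of $\f_K(Y,Z)$, follows by exchanging the roles of $Y$ and $Z$. The order then reverses, and one uses the other characterization in Proposition~\ref{prop:std_path}(\ref{ord_from_end}), namely $\proj_{Y_2}(Y_1,V)>\theta$, which references only the terminal endpoint.

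The only step requiring care is confirming that the endpoint-only characterization in (\ref{ord_from_end}) applies to both families. It does, because both sets lie inside the relevant $\f_K$ families by the set equality proved above.
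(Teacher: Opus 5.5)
Your proof is correct and matches the paper's: the set equality uses Proposition~\ref{prop:std_path}(\ref{mid_proj}) for $\subseteq$ and Proposition~\ref{prop:std_path}(\ref{crit_in_Y}) for $\supseteq$ exactly as the paper does. Your order-compatibility argument, via the initial-endpoint criterion $U<V \iff \proj_V(Y,U)\le\theta$ from Proposition~\ref{prop:std_path}(\ref{ord_from_end}), correctly fills in a step the paper leaves implicit. One fix: delete the garbled preliminary sentence claiming $U<V$ iff $\proj_U(Y,V)\le\theta$, since it has the inequality backwards (by definition $U<V$ iff $\proj_U(Y,V)>\theta$).
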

 
\begin{proof}
    Let $A\in \f_K(Y, Z)$. If $A\neq W$, since $W$ is the greatest element of $\f_K(Y, Z)$, we have $A < W$. By Proposition \ref{prop:std_path}(\ref{mid_proj}), we have $\proj_A(Y, W) = \proj_A(Y, Z) > K$. Thus, $\f_K(Y, Z) \subseteq \f_K(Y, W)\cup \{W\}$.

    For the other direction, let $B\in \f_K(Y, W)$. By Proposition \ref{prop:std_path}(\ref{crit_in_Y}), since $W\in \f_K(Y, Z)$ and $\proj_B(Y, W) > K$, we have $B\in \f_K(Y, Z)$. Thus, $\f_K(Y, W) \subseteq \f_K(Y, Z)$. Since $W\in \f_K(Y, Z)$, $\f_K(Y, W)\cup \{W\} \subseteq \f_K(Y, Z)$ and the lemma holds.
\end{proof}
The following corollary follows by induction.
\begin{cor}\label{cor:sub_segment_std}
    Let $Y,Z\in \f$ and $U, V\in \f_K[Y,Z]$ with $U<V$. Then $$\f_K(U,V) = \{W: W\in \f_K(Y,Z), U<W<V\}.$$
\end{cor}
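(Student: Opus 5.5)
The proof is an induction on the number of elements of $\f_K[Y,Z]$ lying strictly below $U$ and strictly above $V$. Lemma~\ref{lem:std_path_prefix}, together with its symmetric version for the least element, removes the endpoints of the standard path one at a time. Each removal preserves both the vertex set between the new endpoints and the order on it.

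First I reduce to the case $V=Z$ by working from the top. Suppose $V\neq Z$. Then $V$ lies in $\f_K(Y,Z)$, or $V=Y$; the latter is impossible since $U<V$ forces $V\neq Y$. So $\f_K(Y,Z)$ is nonempty and has a greatest element $W$. Lemma~\ref{lem:std_path_prefix} gives $\f_K(Y,Z)=\f_K(Y,W)\cup\{W\}$, with the order on $\f_K(Y,W)$ equal to the restriction. Hence $\f_K[Y,W]=\f_K[Y,Z]\setminus\{Z\}$ as ordered sets, where $W$ is the top. If $V=W$ we are at the case where $V$ is the top endpoint. Otherwise $U,V\in\f_K[Y,W]$ and the chain above $V$ has shrunk by one. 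The chain above $V$ in $\f_K[Y,Z]$ is finite by axiom~\ref{axiom:dist}, so this terminates. At that point $V$ is the greatest element of some $\f_K[Y,V]$ whose elements and order agree with those of $\f_K[Y,Z]$ restricted to $\{W\le V\}$. In particular $U\in\f_K[Y,V]$.

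Next I apply the symmetric version of Lemma~\ref{lem:std_path_prefix}, for the least element, to $\f_K[Y,V]$. This strips elements from below until $U$ becomes the least element. The result is $\f_K[U,V]=\{W\in\f_K[Y,V]:U\le W\le V\}$, again with the induced order. Removing the endpoints and combining with the first step gives
$$\f_K(U,V)=\{W\in\f_K(Y,Z):U<W<V\}.$$
Here I use that every element strictly between $U$ and $V$ is distinct from $Y$ and $Z$, so it lies in $\f_K(Y,Z)$.

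The one delicate point is bookkeeping at each stripping step. When $W$ is the greatest element of $\f_K(Y,Z)$, the new standard path is $\f_K[Y,W]$, and its open part is $\f_K(Y,W)$, which excludes $W$ itself. The induced order on $\f_K[Y,W]$ must also match the old one, including the comparisons involving the endpoints $Y$ and $W$. These are both immediate from the "order agrees" clause of Lemma~\ref{lem:std_path_prefix} and the convention that the endpoints are the extreme elements.
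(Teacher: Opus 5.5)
Your proposal is correct and follows essentially the paper's approach: the paper states only that the corollary ``follows by induction'' from Lemma~\ref{lem:std_path_prefix} and its least-element analogue. That induction is exactly what you carry out: strip greatest elements until $V$ is the top, then strip least elements until $U$ is the bottom, with termination guaranteed by the finiteness of $\f_K(Y,Z)$ from \ref{axiom:dist}.
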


We define the projection complex following \cite{BBF}.
\begin{defn}\label{defn:projectioncplx}
    Fix $K> 3\theta$. The {\it projection complex} $\PC$ is defined to be the graph with the vertex set $\f$ and the edge set consisting of  edges between vertices $U$ and $V$ with $\f_K(U, V) = \emptyset$. We equip $\PC$ with the combinatorial metric denoted as $\rho$.
\end{defn}
It follows from  \cite[Proposition 3.7]{BBF} that $\PC$ is connected: $\f_K[U,V]=\f_K(U,V)\cup\{U,V\}$ gives a  path from $U$ to $V$, which we call  {\it standard path}.  Moreover, the standard paths have bottleneck property in the sense of Manning, so  the projection complex is a quasi-tree \cite[Theorem 3.5]{BBFS}.

%\begin{rem}\label{lem:std=dist_in_PC}
%    Note that $\rho$ agrees with $\LS$ on the small values. By definition, $\rho(U, V) = 1$ if and only if $\LS(U, V) = 1$. %Further, $\rho(U, V) = 2$ if and only if $\LS(U, V) = 2$. Indeed, $\rho(U, V) = 2$ if and only if $\f_K(U, V) \neq \emptyset$ and there exists $Y$ such that $\f_K(U, Y) = \f_K(V, Y) = \emptyset$, which is equivalent to $\LS(U, V) = 2$. %%CONTAINS GAP
%\end{rem}
%\begin{proof}
    %$\LS(X, Y) = 1$ if and only if $\f_K(X, Y) = \emptyset$, which is equivalent to $\rho(X, Y) = 1$. 
    %$\LS(X, Y) \le 2$ if and only if $|\f_K(X, Y)| \le 1$. On the other hand, $\rho(U, V) = 2$ if and only if $\f_K(U, V) \neq \emptyset$ and there exists $Y$ such that $\f_K(U, Y) = \f_K(V, Y) = \emptyset$, which is equivalent to $\LS(X, Y) = 2$.
%\end{proof}

\begin{lem}[Bottleneck property]\label{lem:bottleneck_in_PC_rho} 
    For  $K> 3\theta$, any path from $U$ to $V$ in $\PC$ intersects the $2$-neighborhood of $Y$ for every $Y\in \f_K(U, V)$. In particular, $\PC$ is a quasi-tree. 
\end{lem}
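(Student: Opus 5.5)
Fix $Y\in\f_K(U,V)$ and a path $U=X_0,X_1,\dots,X_n=V$ in $\PC$, i.e.\ $\f_K(X_{i-1},X_i)=\emptyset$ for each $i$. The plan is to find an index $i$ with $X_i=Y$ or $\rho(X_i,Y)\le 2$. Suppose no vertex of the path equals $Y$; otherwise we are done. Consider $\proj_Y(U,X_i)$. At $i=0$ it is $\proj_Y(U,U)<\theta$ by \ref{axiom:bound_proj}, and at $i=n$ it is $\proj_Y(U,V)>K$.

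First step: control jumps along the path. For consecutive $X_{i-1},X_i$ (both $\ne Y$) we have $Y\notin\f_K(X_{i-1},X_i)$, so $\proj_Y(X_{i-1},X_i)\le K$. By the triangle inequality \ref{axiom:tri_ineq}, the function $i\mapsto\proj_Y(U,X_i)$ increases by at most $K$ per step. This alone does not give a bottleneck, so we use a different quantity. Choose the first index $i$ such that $\proj_Y(U,X_i)>\theta$. Then $\proj_Y(U,X_{i-1})\le\theta$. Since $\proj_Y(U,V)>K>3\theta$, applying \ref{axiom:tri_ineq} gives $\proj_Y(X_{i-1},V)>2\theta$.

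Second step: show $\rho(X_{i-1},Y)\le 2$, or a similar bound for $X_i$. We aim to show $\f_K(X_{i-1},Y)$ is empty or has at most one element, and the same for $X_i$; then the standard path gives $\rho\le 2$. Take $W\in\f_K(X_{i-1},Y)$, so $\proj_W(X_{i-1},Y)>K>\theta$. By \ref{axiom:strong_berstock} with this $W$ playing the role of $Y$ there, we get $\proj_Y(X_{i-1},Z)=\proj_Y(W,Z)$ for all $Z\ne Y$. Applying the same for $X_i$, the consecutive pair $X_{i-1},X_i$ together with $W$ should give $\proj_W(X_{i-1},X_i)\le K$. Hence $\proj_W(X_i,Y)>K-K$, which is too weak. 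We repair this by comparing thresholds. Use Proposition \ref{prop:std_path}(\ref{crit_in_Y}) on $\f_K[X_{i-1},Y]$: any $W$ with large projection lies between them. By the Behrstock inequality \ref{axiom:berstock}, $\proj_Y(X_{i-1},W)\le\theta$. Now chain: $W\in\f_K(X_{i-1},Y)$ and $W\notin\f_K(X_{i-1},X_i)$ together force $\proj_W(X_i,Y)>K-K$, which is again useless. Instead we pick the index by the Behrstock dichotomy. Call $X_j$ \emph{left} if $\proj_Y(U,X_j)\le\theta$ and \emph{right} otherwise. Left vertices satisfy $\proj_Y(X_j,V)>2\theta$.

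Main obstacle. The key claim is: if $X_j$ is left, $X_{j+1}$ is right, and $W\in\f_K(X_j,Y)$, then the projections of $X_j$ and $X_{j+1}$ onto $W$ coincide via \ref{axiom:strong_berstock}. This would put $W\in\f_K(X_j,X_{j+1})$, a contradiction. If the claim holds, then $\f_K(X_j,Y)=\emptyset$, so $\rho(X_j,Y)=1\le 2$.

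To prove the claim, note that $\proj_W(X_j,Y)>\theta$, so \ref{axiom:strong_berstock} gives $\proj_Y(X_j,\cdot)=\proj_Y(W,\cdot)$. Since $\proj_Y(X_j,X_{j+1})$ is large (it exceeds roughly $\theta$ because of the left/right split), we get $\proj_Y(W,X_{j+1})>\theta$. Applying \ref{axiom:strong_berstock} again then gives $\proj_W(X_{j+1},Z)=\proj_W(Y,Z)$. Taking $Z=X_j$ yields $\proj_W(X_j,X_{j+1})=\proj_W(X_j,Y)>K$, the desired contradiction.

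The delicate point is securing $\proj_Y(X_j,X_{j+1})>\theta$. If it fails, we instead shift the split threshold to $2\theta$, using the room provided by $K>3\theta$. Finally, the quasi-tree conclusion follows from Manning's bottleneck criterion, as in \cite[Theorem 3.5]{BBFS}.
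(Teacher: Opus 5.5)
Your argument breaks exactly at the step you flag as delicate, and moving the threshold does not fix it. At the first crossing of any threshold $t$ (whether $t=\theta$ or $t=2\theta$) you only know $\proj_Y(U,X_j)\le t<\proj_Y(U,X_{j+1})$. That gives $\proj_Y(X_j,X_{j+1})>0$ and nothing more; the hypothesis $K>3\theta$ never enters.

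Worse, your own claim, read contrapositively, shows the following: if $\f_K(X_j,Y)\neq\emptyset$, $X_{j+1}\neq Y$ and $X_j,X_{j+1}$ are adjacent, then $\proj_Y(X_j,X_{j+1})\le\theta$. So along a path avoiding the closed $1$-ball of $Y$, every increment of $i\mapsto\proj_Y(U,X_i)$ is at most $\theta$. Nothing in your argument stops these small increments from accumulating from below $\theta$ to above $K$.

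What is missing is a rigidity (guard) statement: across an edge whose endpoints are both far from $Y$, the function $\proj_Y(\cdot,Z)$ does not change at all. The paper does not prove this lemma itself; it imports it from \cite{BBFS}, where the guard property (cf.\ \cite[Lemma 3.3]{BBFS}) is the key input. As you noticed, a single $W\in\f_K(X,Y)$ cannot give rigidity. If $\proj_W(X',Y)\le\theta$, you only learn $\proj_W(X,X')\in(K-\theta,K]$, which contradicts nothing.

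The repair uses two vertices, and this is where the radius $2$ comes from. Let $X,X'$ be adjacent with $\LS(X,Y)\ge 3$ and $\LS(X',Y)\ge 3$, and let $W_1<W_2$ be the last two elements of $\f_K(X,Y)$. Then $\LS(W_1,Y)=2$ and $\LS(W_2,Y)=1$, so neither equals $X'$. Suppose $\proj_{W_2}(X',Y)\le\theta$; then $\proj_{W_2}(X,X')>K-\theta>\theta$.
\begin{enumerate}
\item Since $\proj_{W_1}(X,W_2)>\theta$, \ref{axiom:strong_berstock} gives $\proj_{W_2}(W_1,X')=\proj_{W_2}(X,X')>\theta$.
\item Applying \ref{axiom:strong_berstock} again gives $\proj_{W_1}(X',X)=\proj_{W_1}(W_2,X)$.
\item By the middle projection property, $\proj_{W_1}(W_2,X)=\proj_{W_1}(X,Y)>K$.
\end{enumerate}
Hence $W_1\in\f_K(X,X')$, contradicting adjacency. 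Therefore $\proj_{W_2}(X',Y)>\theta$, and \ref{axiom:strong_berstock} yields $\proj_Y(X',Z)=\proj_Y(W_2,Z)=\proj_Y(X,Z)$ for every $Z\neq Y$.

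Now suppose every vertex of the path satisfied $\LS(X_i,Y)\ge 3$. Then $\proj_Y(U,V)=\proj_Y(V,V)<\theta$, which is absurd. So some $X_i$ has $\rho(X_i,Y)\le\LS(X_i,Y)\le 2$. Your sharper target $\rho(X_j,Y)\le 1$ would need the single-$W$ version, which your approach does not supply. The quasi-tree conclusion via Manning's bottleneck criterion is fine as stated.
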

The standard path triangle enjoys the following almost-tripod property \cite[Lemma 3.6]{BBFS}.
\begin{lem}[Almost-tripod property]\label{lem:triangle_in_PC}
    Let $U, V, W\in \f$. Then $\f_K(U, V) \cup \f_K(V, W)$ contains all but at most 2 consecutive elements of $\f_K(U, W)$. In particular, there exists a decomposition $$\f_K(U, W) = \alpha\sqcup \beta\sqcup \gamma$$ where $\alpha\subseteq \f_K(U, V)$, $|\beta|\le 2$, $\gamma\subseteq \f_K(V, W)$, and $\alpha < \beta < \gamma$ in $\f_K(U, W)$.
\end{lem}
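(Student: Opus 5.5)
This is \cite[Lemma 3.6]{BBFS}, and the proof can simply cite it. To argue directly, I would work entirely with the strong projection axioms and Proposition~\ref{prop:std_path}. Fix $Y\in\f_K(U,W)$ with $Y\ne V$. By \ref{axiom:tri_ineq},
\[
K<\proj_Y(U,W)\le \proj_Y(U,V)+\proj_Y(V,W),
\]
so at least one of the two terms is large. I would sort such $Y$ into three types:
\begin{enumerate}
\item $\proj_Y(V,W)\le\theta$, which forces $\proj_Y(U,V)>K-\theta$;
\item $\proj_Y(U,V)\le\theta$, which forces $\proj_Y(V,W)>K-\theta$;
\item both $\proj_Y(U,V)>\theta$ and $\proj_Y(V,W)>\theta$.
\end{enumerate}

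The first key step is monotonicity along the standard path. Suppose $Y_1<Y_2$ in $\f_K(U,W)$ and $\proj_{Y_2}(U,V)>\theta$. Applying \ref{axiom:strong_berstock} with the large projection on $Y_2$, together with the order characterisations in Proposition~\ref{prop:std_path}(\ref{ord_from_end}), I expect to get $\proj_{Y_1}(V,W)=\proj_{Y_1}(Y_2,W)\le\theta$. So $V$ ``projects like $Y_2$'' to every earlier $Y_1$. Symmetrically, if $\proj_{Y_1}(V,W)>\theta$, then every later $Y_2$ satisfies $\proj_{Y_2}(U,V)\le\theta$. These two facts give the ordering of the types:
\begin{itemize}
\item all type (1) elements precede all type (2) elements;
\item type (3) elements sit in between;
\item there is at most one type (3) element, since two of them would contradict the monotonicity just derived.
\end{itemize}
If $V$ itself lies in $\f_K(U,W)$, it also sits in this middle region.

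The next step upgrades the thresholds. For a type (1) element $Y$ preceding the middle region, I want $\proj_Y(U,V)>K$ and not merely $>K-\theta$. For this I would use Proposition~\ref{prop:std_path}(\ref{mid_proj}) to compare with a later element $Y'$ of the path:
\begin{itemize}
\item if some $Y'>Y$ in $\f_K(U,W)$ has $\proj_{Y'}(U,V)>\theta$, then \ref{axiom:strong_berstock} gives $\proj_Y(U,V)=\proj_Y(U,Y')=\proj_Y(U,W)>K$, hence $Y\in\f_K(U,V)$;
\item the only type (1) element that can fail this is the last one before the middle region, where no such $Y'$ exists.
\end{itemize}
The symmetric argument handles the first type (2) element after the middle. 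This identifies the exceptional block $\beta$ as consisting of the type (3) element or $V$, together with possibly one boundary element.

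The main obstacle is the counting: showing the exceptional set $\beta$ has size at most $2$ rather than $3$. The candidates are the middle element plus the two boundary elements on either side, which is three in the naive count. I would try to show that when a middle element $M$ exists (type (3), or $M=V$), it supplies the required $Y'$ for both neighbours. Concretely, $\proj_M(U,V)>\theta$ upgrades every earlier element, and $\proj_M(V,W)>\theta$ upgrades every later one, leaving only $M$ itself. When no middle element exists, at most the two adjacent boundary elements remain. Consecutiveness of $\beta$ then follows from the monotone ordering. The orders on $\alpha\subseteq\f_K(U,V)$ and $\gamma\subseteq\f_K(V,W)$ agree with the order on $\f_K(U,W)$ by Corollary~\ref{cor:sub_segment_std} and Proposition~\ref{prop:std_path}(\ref{crit_in_Y}).
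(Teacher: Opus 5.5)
Citing \cite[Lemma 3.6]{BBFS} is exactly what the paper does. The paper gives no independent proof, so your first sentence already matches it.

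Your self-contained sketch is a genuine addition, and it does go through. However, the monotonicity step needs one more application of \ref{axiom:strong_berstock} than you indicate. Fed into \ref{axiom:strong_berstock}, the hypothesis $\proj_{Y_2}(U,V)>\theta$ alone only gives information about projections to $U$ and $V$. The triangle inequality with $\proj_{Y_2}(U,Y_1)\le\theta$ yields only $\proj_{Y_2}(Y_1,V)>0$. The fix has two steps:
\begin{enumerate}
\item Use $\proj_{Y_1}(U,Y_2)>\theta$, which is the definition of $Y_1<Y_2$, in \ref{axiom:strong_berstock} to get $\proj_{Y_2}(Y_1,V)=\proj_{Y_2}(U,V)>\theta$. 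This is valid since $V\ne Y_2$.
\item Apply \ref{axiom:strong_berstock} centred at $Y_2$ to obtain $\proj_{Y_1}(V,Z)=\proj_{Y_1}(Y_2,Z)$ for all $Z\ne Y_1$.
\end{enumerate}
Taking $Z=W$ gives your monotonicity. Taking $Z=U$, together with Proposition~\ref{prop:std_path}(\ref{mid_proj}), gives your upgrading step $\proj_{Y_1}(U,V)=\proj_{Y_1}(U,W)>K$. The symmetric version handles later elements, and the case where the middle element is $V$ itself is covered directly by (\ref{mid_proj}).

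With this in place, the rest of your argument is correct:
\begin{itemize}
\item the three types partition $\f_K(U,W)\setminus\{V\}$ and are ordered as you claim;
\item there is at most one type (3) element;
\item when a middle element exists, it is the only exceptional one;
\item otherwise at most the last type (1) and first type (2) elements remain, and these are consecutive.
\end{itemize}

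One citation is off. Agreement of the orders on $\alpha$ and $\gamma$ does not follow from Corollary~\ref{cor:sub_segment_std}, which requires $V\in\f_K[U,W]$. It is immediate from the definition of $<$ and Proposition~\ref{prop:std_path}(\ref{ord_from_end}), since both orders are detected by $\proj_{Y_1}(U,Y_2)>\theta$, respectively $\proj_{Y_2}(Y_1,W)>\theta$. In any case, this agreement is not needed for the statement.
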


It will be convenient to  use the \textit{standard path length function} 
\begin{align}\label{eqn:standardpathlength}
\LS(U,V)=\begin{cases}
\, |\f_K(U,V)|+1, & U\ne V\\
\, 0, & U=V
\end{cases}
\end{align} 
which is   the length of the standard path from $U$ to $V$. By \cite[Corollary 3.7]{BBFS},  
\begin{align}\label{rem:std_len_quasi_isom}
  \forall Y\neq Z\in \f,\quad  \rho(Y, Z)\le \LS(Y, Z)\le 2\rho(Y, Z) -1
\end{align}
so $\LS$ is comparable with the graph metric $\rho$. 
Then we have the following bottleneck property in terms of $\LS$, which is a simple corollary of Lemma~\ref{lem:bottleneck_in_PC_rho} and Eq.~(\ref{rem:std_len_quasi_isom}).
\begin{lem}[Bottleneck property]\label{lem:bottleneck_in_PC}
    For  any $U, V, Y\in \PC$ with $Y\in \f_K(U, V)$, any path from $U$ to $V$ in $\PC$ contains a point $Z$ such that $\LS(Z, Y)\le 3$.
\end{lem}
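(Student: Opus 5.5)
The plan is to derive the statement from Lemma~\ref{lem:bottleneck_in_PC_rho} together with the comparison (\ref{rem:std_len_quasi_isom}) between $\rho$ and $\LS$. Let $p$ be a path from $U$ to $V$ in $\PC$, and fix $Y\in \f_K(U,V)$. By Lemma~\ref{lem:bottleneck_in_PC_rho}, $p$ meets the $2$-neighborhood of $Y$ in the graph metric $\rho$. So there is a vertex $Z$ on $p$ with $\rho(Z,Y)\le 2$. If the point of $p$ found there is interior to an edge, we replace it by the nearer endpoint vertex of that edge. This keeps the bound, since the neighborhood is taken around a vertex and vertices are at integer distance.

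Next we convert $\rho$ into $\LS$. If $Z=Y$, then $\LS(Z,Y)=0$. Otherwise (\ref{rem:std_len_quasi_isom}) gives
\[
\LS(Z,Y)\le 2\rho(Z,Y)-1\le 2\cdot 2-1=3,
\]
which is exactly the required bound.

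The only delicate point is bookkeeping. We must make sure the ``$2$-neighborhood'' in Lemma~\ref{lem:bottleneck_in_PC_rho} is measured in $\rho$, and that the witness can be taken to be a vertex. This matters because $\LS$ is defined only on vertices of $\PC$, i.e.\ on elements of $\f$. Once both points are checked, the estimate above finishes the proof, and there is no substantive obstacle.
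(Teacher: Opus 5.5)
Your proposal is correct and follows the paper's approach: the paper presents this lemma as a direct corollary of Lemma~\ref{lem:bottleneck_in_PC_rho} and the comparison $\LS\le 2\rho-1$ from Eq.~(\ref{rem:std_len_quasi_isom}). Taking a vertex $Z$ on the path with $\rho(Z,Y)\le 2$ gives $\LS(Z,Y)\le 3$, with the case $Z=Y$ trivial, exactly as you do.
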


The following weak triangle inequality for $\LS$ will be frequently invoked.
\begin{lem}\label{lem:std_tri_ineq}
    Let $U, V, W\in \f$. Then $\LS(U, V) \le \LS(U, W) +\LS(W, V) + 1$.
\end{lem}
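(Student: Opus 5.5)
Our plan is to deduce the weak triangle inequality for $\LS$ from the almost-tripod property, Lemma~\ref{lem:triangle_in_PC}, by counting elements of $\f_K(U,V)$. First we dispose of the degenerate cases. If $U=V$, the left side is $0$ and there is nothing to prove. If $W=U$ (or $W=V$), the right side equals $\LS(U,V)+1$ (using $\LS(U,U)=0$), so the inequality is trivial. So assume $U,V,W$ are pairwise distinct, so that all three quantities are of the form $|\f_K(\cdot,\cdot)|+1$.

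Apply Lemma~\ref{lem:triangle_in_PC} to the triple $(U,W,V)$, with $W$ in the role of the middle vertex. This gives a decomposition
\[
\f_K(U,V)=\alpha\sqcup\beta\sqcup\gamma,\qquad \alpha\subseteq \f_K(U,W),\ |\beta|\le 2,\ \gamma\subseteq\f_K(W,V).
\]
Counting, we get $|\f_K(U,V)|\le |\f_K(U,W)|+|\f_K(W,V)|+2$. Translating to $\LS$,
\[
\LS(U,V)=|\f_K(U,V)|+1\le \bigl(\LS(U,W)-1\bigr)+\bigl(\LS(W,V)-1\bigr)+3=\LS(U,W)+\LS(W,V)+1.
\]
This is exactly the claimed bound.

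The only subtle point is whether $W$ itself could lie in $\f_K(U,V)$ and be double counted, or whether $\beta$ might overlap $\alpha$ or $\gamma$. Neither issue arises, because the decomposition is disjoint and $W\notin\f_K(U,W)\cup\f_K(W,V)$ by definition. If $W\in\f_K(U,V)$, it simply sits in $\beta$, and the count above is unaffected. Hence no step is a real obstacle once the almost-tripod lemma is available; the main care is the bookkeeping of the $+1$ in the definition of $\LS$ and the degenerate cases handled first.
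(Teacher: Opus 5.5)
Your proof is correct and follows essentially the same route as the paper: you dispose of the degenerate cases, then apply the almost-tripod decomposition of Lemma~\ref{lem:triangle_in_PC} with $W$ as the middle vertex and count $|\alpha|+|\beta|+|\gamma|$, keeping track of the $+1$ in the definition of $\LS$. Your bookkeeping, including the remark that $W$ (if it lies in $\f_K(U,V)$) must land in $\beta$, is accurate.
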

\begin{proof}
    When $U=V$ or $V=W$ or $U=W$, the inequality holds trivially. Suppose that $U,V, W$ are distinct. We may decompose $$\f_K(U, V)=\alpha\sqcup \beta\sqcup \gamma$$ as in Lemma \ref{lem:triangle_in_PC}  and according to the definition of $\LS$ in \cref{eqn:standardpathlength}, $$
    \begin{aligned}
    \LS(U, V) &= |\f_K(U, V)|+1 \\
    &= |\alpha|+|\beta|+|\gamma| + 1\\
    &\le |\f_K(U,W)|+2+|\f_K(W,V)|+1\\
    &\le  \LS(U,W) + \LS(W,V) + 1.    
    \end{aligned}$$
    The lemma is proved.
\end{proof}

In the next two lemmas, we examine how the standard path is affected  when the endpoints are perturbed in a small neighborhood.

\begin{lem}\label{lem:common_prefix}
    Given $U, V\in \f$, write $\f_K(U, V) = \{Y_1< Y_2 < \cdots< Y_n\}$ for $n\ge 2$. Assume that $W\in \f$ satisfies $d := \LS(V, W)\le n-2$. Then $$\{Y_1< \cdots< Y_{n-d-1}\}\subseteq \f_K(U, W).$$
\end{lem}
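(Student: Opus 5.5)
The plan is to derive this directly from the almost-tripod property (Lemma~\ref{lem:triangle_in_PC}), applied to the triangle with vertices $U$, $W$, $V$.

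First I dispose of the degenerate cases. If $W=V$, then $d=0$ and $\f_K(U,W)=\f_K(U,V)$, so the claim is immediate. If $W=U$, then $d=\LS(V,U)=n+1>n-2$, which contradicts the hypothesis. So I may assume that $U,V,W$ are pairwise distinct. In that case $d=|\f_K(V,W)|+1$, so $|\f_K(V,W)|=d-1$. By axiom \ref{axiom:reflect}, $\f_K(V,W)=\f_K(W,V)$.

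Next I apply Lemma~\ref{lem:triangle_in_PC} to the triple $(U,W,V)$, with $W$ as the middle vertex. This gives a decomposition
\[
\f_K(U,V)=\alpha\sqcup\beta\sqcup\gamma,\qquad \alpha\subseteq\f_K(U,W),\quad |\beta|\le 2,\quad \gamma\subseteq \f_K(W,V),
\]
with $\alpha<\beta<\gamma$ in the total order on $\f_K(U,V)$ from Proposition~\ref{prop:std_path}.

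Since $\alpha<\beta<\gamma$, the set $\alpha$ is an initial segment $\{Y_1<\cdots<Y_{|\alpha|}\}$ of $\f_K(U,V)$. Counting gives
\[
|\alpha|=n-|\beta|-|\gamma|\ge n-2-(d-1)=n-d-1,
\]
which is nonnegative because $d\le n-2$. Hence $\{Y_1<\cdots<Y_{n-d-1}\}\subseteq\alpha\subseteq\f_K(U,W)$, as claimed.

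The only point that needs care is that $\alpha$ really is an initial segment with respect to the order on $\f_K(U,V)$. This follows from the ordering $\alpha<\beta<\gamma$ stated in Lemma~\ref{lem:triangle_in_PC}, so I expect no genuine obstacle beyond tracking the orientation of the triangle correctly.
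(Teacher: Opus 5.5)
Your proposal is correct and follows the paper's proof essentially verbatim: apply the almost-tripod Lemma~\ref{lem:triangle_in_PC} to the triangle $U,W,V$, bound $|\gamma|\le |\f_K(V,W)|=d-1$, and conclude that the initial segment $\alpha\subseteq\f_K(U,W)$ has at least $n-d-1$ elements. Your explicit treatment of the degenerate cases $W=V$ and $W=U$ (the latter ruled out by $d\le n-2$) makes precise what the paper leaves implicit.
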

\begin{proof}
    By Lemma \ref{lem:triangle_in_PC}, $\f_K(U, V)$ can be decomposed as $\alpha\sqcup \beta\sqcup \gamma$ where $\alpha\subseteq \f_K(U, W)$, $|\beta|\le 2$, $\gamma\subseteq \f_K(V,W)$, and $\alpha<\beta<\gamma$. Assume $W\ne V$; otherwise the proof is trivial. Then $|\f_K(V, W)|=\LS(V, W)-1 = d-1$, so $|\gamma| \le d-1$. Now,   $$|\alpha|= n - |\beta| - |\gamma| \ge n - 2 - (d-1) = n-d-1$$ Since $\alpha < \beta < \gamma$ is an ordered decomposition of $\f_K(U, V)$, we derive $\{Y_1< \cdots< Y_{n-d-1}\}\subseteq \alpha\subseteq \f_K(U, W)$.
\end{proof}
Applying Lemma~\ref{lem:common_prefix} twice yields the following lemma.
\begin{lem}\label{lem:common_mid}
    Let $U, V\in \f$ and $\f_K(U, V) = \{Y_1< Y_2 < \cdots< Y_n\}$. Then for any $d>0$ and $X,Y\in \f$, if $\LS(U,X)\le d$ and $\LS(V,Y)\le d$, then $$\{Y_{d+2}, \cdots, Y_{n-d-1}\}\subseteq \f_K(X, Y).$$
\end{lem}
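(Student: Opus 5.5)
The plan is to apply Lemma~\ref{lem:common_prefix} twice, once at each end, with Corollary~\ref{cor:sub_segment_std} used in between to pass to a sub-segment of the standard path. We may assume $n\ge 2d+2$: otherwise the index set $\{d+2,\dots,n-d-1\}$ is empty and there is nothing to prove. We may also take $X\ne U$ and $Y\ne V$, since an equality only makes the corresponding step trivial.

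\textbf{Step 1: trimming the $V$ end.} Apply Lemma~\ref{lem:common_prefix} to the pair $U,V$ with $W=Y$. Put $d_1=\LS(V,Y)\le d$; then $d_1\le n-2$. The lemma gives
\[
\{Y_1<\cdots<Y_{n-d_1-1}\}\subseteq \f_K(U,Y),
\]
and hence $\{Y_1,\dots,Y_{n-d-1}\}\subseteq\f_K(U,Y)$.

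\textbf{Step 2: the order on $\f_K(U,Y)$.} To use the mirror form of Lemma~\ref{lem:common_prefix} at the $U$ end, we need to know where these elements sit in $\f_K(U,Y)$, in particular that $Y_1,\dots,Y_{d+1}$ are its first elements. Since $Y_1,\dots,Y_{n-d-1}$ all lie in $\f_K(U,V)\cap\f_K(U,Y)$, and both orders are defined by projections with $U$ fixed, the order on this common set should be the same in both. Concretely, for $Y_i<Y_j$ in $\f_K(U,V)$, Proposition~\ref{prop:std_path}(\ref{ord_from_end}) gives $\proj_{Y_j}(U,Y_i)\le\theta$. The same criterion then gives $Y_i<Y_j$ in $\f_K(U,Y)$.

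The harder point is that there are no \emph{extra} elements of $\f_K(U,Y)$ lying before $Y_{d+1}$. For this, apply Corollary~\ref{cor:sub_segment_std} to the interval between $U$ and $Y_{n-d-1}$:
\[
\f_K(U,Y_{n-d-1})=\{Y_1,\dots,Y_{n-d-2}\},
\]
and this holds whether the ambient standard path is $\f_K(U,V)$ or $\f_K(U,Y)$. So both standard paths begin with $Y_1<\cdots<Y_{n-d-1}$ as an initial segment. I expect this step to be the main obstacle; Corollary~\ref{cor:sub_segment_std} is the tool that handles it.

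\textbf{Step 3: trimming the $U$ end.} Write $\f_K(U,Y)=\{Z_1<\cdots<Z_N\}$, where $Z_k=Y_k$ for $k\le n-d-1$ and $N\ge n-d-1\ge d+1$. Apply the mirror version of Lemma~\ref{lem:common_prefix}, obtained by reversing roles via symmetry \ref{axiom:reflect}, with $X$ replacing $U$ and $d_2=\LS(U,X)\le d$. This gives
\[
\{Z_{d_2+2},\dots,Z_N\}\subseteq \f_K(X,Y).
\]
The hypothesis $d_2\le N-2$ needed for the lemma holds when $N\ge d+2$. If instead $N=d+1$, the target set $\{Y_{d+2},\dots,Y_{n-d-1}\}$ is empty and there is nothing to show.

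Since $d_2\le d$ and $Z_k=Y_k$ for $k\le n-d-1$, we conclude $\{Y_{d+2},\dots,Y_{n-d-1}\}\subseteq\f_K(X,Y)$.
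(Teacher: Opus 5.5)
Your proof is correct and follows the paper's route: the paper's entire argument is the remark that the lemma follows from applying Lemma~\ref{lem:common_prefix} twice. Your Step~2 fills in the bookkeeping this leaves implicit. It uses the $U$-only order criterion together with Corollary~\ref{cor:sub_segment_std} to show that $\f_K(U,Y)$ begins with $Y_1<\cdots<Y_{n-d-1}$, which is what makes the second (mirrored) application of Lemma~\ref{lem:common_prefix} legitimate.
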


The following is a simple extension of  the guard property in \cite[Lemma 3.3]{BBFS}. %Roughly speaking, when a point $Y\in \PC$ moves outside the ball of radius $d$ centered at $Z\in \PC$, the smallest $(d-3)$-elements of $\f_K(Z,Y)$   remain the same.

\begin{lem}\label{lem:more_guard_out_ball}
    Let $\{X_1,X_2, \cdots,X_m\}$ be a path in $\PC$. Given an integer $d\ge 3$,  let $Z\in \f$ such that $\LS(X_i, Z)\ge d$ for each $1\le i\le m$. Then for each $1 \le i \le m$, the first $(d-3)$ elements of $\f_{K}(Z, X_i)$ coincide. 
\end{lem}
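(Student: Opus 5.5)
It suffices to compare consecutive vertices of the path, then propagate along it. Precisely, I will show that for each $1\le i<m$ the first $(d-3)$ elements of $\f_K(Z,X_i)$ and of $\f_K(Z,X_{i+1})$ coincide, as ordered sets. Since coincidence of the first $(d-3)$ elements is an equivalence relation on the vertices $X_i$, induction along the path then gives the statement for all $i$. Note also that $\LS(Z,X_i)\ge d\ge 3$ forces $|\f_K(Z,X_i)|\ge d-1$, so the first $(d-3)$ elements are well defined.

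For the consecutive step, fix $i$. Since $X_i,X_{i+1}$ are adjacent in $\PC$, either $X_i=X_{i+1}$, which is trivial, or $\f_K(X_i,X_{i+1})=\emptyset$, i.e. $\LS(X_i,X_{i+1})=1$. Write $\f_K(Z,X_i)=\{Y_1<\cdots<Y_n\}$ with $n=\LS(Z,X_i)-1\ge d-1\ge 2$. Apply Lemma~\ref{lem:common_prefix} with $U=Z$, $V=X_i$, $W=X_{i+1}$ and $\LS(V,W)=1\le n-2$. This gives $\{Y_1<\cdots<Y_{n-2}\}\subseteq \f_K(Z,X_{i+1})$, and $n-2\ge d-3$.

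It remains to show that these elements are precisely the \emph{first} $(d-3)$ elements of $\f_K(Z,X_{i+1})$, in the same order. This is the main obstacle, since inclusion alone does not rule out extra elements of $\f_K(Z,X_{i+1})$ interleaving before $Y_{d-3}$. To settle it, I would use the almost-tripod decomposition of Lemma~\ref{lem:triangle_in_PC} for the triangle $Z,X_{i+1},X_i$:
\[
\f_K(Z,X_{i+1})=\alpha\sqcup\beta\sqcup\gamma,\qquad \alpha\subseteq \f_K(Z,X_i),\quad |\beta|\le 2,\quad \gamma\subseteq \f_K(X_i,X_{i+1})=\emptyset,
\]
with $\alpha<\beta<\gamma$. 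Thus $\f_K(Z,X_{i+1})$ is an initial segment $\alpha\subseteq\f_K(Z,X_i)$ followed by at most two extra elements.

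The orders on $\alpha$ coming from the two standard paths agree. One way to see this is by Proposition~\ref{prop:std_path}(\ref{ord_from_end}): the comparison $A<B$ is characterized by $\proj_{A}(Z,B)\le\theta$ in the reversed description, equivalently $\proj_B(Z,A)\le\theta$, which refers only to $Z$ and not to the far endpoint. Alternatively, apply Corollary~\ref{cor:sub_segment_std} to a common element.

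The symmetric argument, with the roles of $X_i$ and $X_{i+1}$ swapped, shows $\f_K(Z,X_i)=\alpha'\sqcup\beta'$ with $\alpha'\subseteq\f_K(Z,X_{i+1})$ an initial segment and $|\beta'|\le2$. The set $\alpha\cap\alpha'$ is then an initial segment of both orders, because order membership is determined from $Z$. Its size is at least $\min(n,n')-2\ge d-3$, where $n'=|\f_K(Z,X_{i+1})|\ge d-1$. Hence the first $(d-3)$ elements coincide.

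If the claimed $Z$-side characterization of the order proves delicate, the fallback is to use Lemma~\ref{lem:std_path_prefix} with the greatest common element $W$ of $\alpha$, so that both paths share the prefix $\f_K(Z,W)\cup\{W\}$.
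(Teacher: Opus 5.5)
Your proposal is correct and is essentially the paper's proof. Both reduce to adjacent vertices, use Lemma~\ref{lem:common_prefix} (equivalently the almost-tripod decomposition) in both directions to see that each initial $(d-3)$-segment lies in the other standard path, and then use Proposition~\ref{prop:std_path} to see that the orders agree on common elements, so the initial segments coincide. You simply spell out the order-compatibility and initial-segment step that the paper leaves to Proposition~\ref{prop:std_path}.

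There is one sign slip: $A<B$ in $\f_K(Z,X)$ means $\proj_A(Z,B)>\theta$, not $\le\theta$; equivalently $\proj_B(Z,A)\le\theta$. Both conditions involve only $Z$, which is all your argument needs.
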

\begin{proof}
    By induction,  it suffices to prove the lemma for $m=2$. Let $\f_K(Z, X_1) = \{Y_1<Y_2<\cdots<Y_n\}$ where $n\ge d-1$. By Lemma \ref{lem:common_prefix}, since $\LS(X_1, X_2) = 1$, we have $\{Y_1, \cdots, Y_{n-2}\}\subseteq\f_K(Z, X_2)$. Thus, $\{Y_1, \cdots, Y_{d-3}\}\subseteq \f_K(Z, X_1)\cap \f_K(Z, X_2)$. By the same reason, the smallest $(d-3)$ elements of $\f_K(Z, X_2)$ are contained in $\f_K(Z, X_1)$. Hence, by Proposition \ref{prop:std_path}, the smallest $(d-3)$ elements of $\f_K(Z, X_1)$ and $\f_K(Z, X_2)$ agree.
\end{proof}

\subsection{WPD elements and $\kappa$-divergence}
Assume that $G$  acts by isometry on a metric space $(X,d)$. Given  $r\ge 0$, define the \textit{$r$-coarse stabilizer} of a pair of $x,y\in X$: $$\mathrm{Stab}_r(x,y):=\{g\in G: d(x,gx)\le r, d(y,gy)\le r\}$$

\begin{defn}\label{defn:WPD}\cite{BF2}
We say that  $f\in G$  satisfies the \textit{weak proper discontinuity condition} (or that $f$ is a \textit{WPD element}) if for all $r>0$ and $x \in X$ there exists $L \in \mathbb N$ such that
$$|\mathrm{Stab}_r(x,f^Lx)|<\infty$$    
\end{defn}

\begin{rem}\label{rmk:WPD}
Assume, in addition, that $f\in G$ is a strongly contracting  isometry. 
\begin{enumerate}
    \item By \cite[Lemma 6.4]{DGO},  $L$ could be chosen so that $|\mathrm{Stab}_r(x,f^Mx)|<\infty$ holds for any $M>L$.
    \item 
    In practice,  it  suffices to verify the finite coarse stabilizer for a \emph{suffciently large} $r$, and those points $x$ in a \emph{large $R$-neighborhood} of the axis $\ax(f)$. The values of $r, R$ may depend on the contracting constant of $\ax(f)$. Indeed, \cite[Propoisition 5.31]{DGO}   says the same conclusion for acylindrical actions on hyperbolic spaces.  However, the hyperbolicity could be easilly replaced with an argument using the  strongly contracting property of $\ax(f)$.  
\end{enumerate}
\end{rem}

The following useful facts about WPD elements shall be used implicitly. The items (1)(2) are proved in \cite[Sec. 6.1]{DGO} and (2) is due to Sisto \cite[Theorem 1]{Sisto16}. 
\begin{lem}\label{lem:WPDisMorse}
Assume that $G$ acts by isometry  on a metric space $X$. Let $f\in G$ be a strongly contracting WPD   element. Then
\begin{enumerate}
    \item 
    $f$ is contained in the unique maximal elementary subgroup $E(f)$.
    \item 
    The system 
    $\{gE(f)o: g\in G\}$ has bounded projection property.
    \item 
    $f$ is a Morse element in $G$. 
\end{enumerate}   
\end{lem}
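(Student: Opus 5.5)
We would prove the three items in order, since (2) uses the description of $E(f)$ from (1), and (3) uses the bounded projection property from (2). Throughout, let $\ax(f)=\langle f\rangle o$. By Definition~\ref{defn:contrelem} it is a strongly $C$-contracting quasi-geodesic. We use Remark~\ref{rmk:WPD}(1) to get, for each $r$, an $L$ with $|\mathrm{Stab}_r(x,f^Mx)|<\infty$ for all $M>L$.

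For (1), let $\mathrm{Stab}$ denote the set of $g\in G$ with $d_{Haus}(g\ax(f),\ax(f))<\infty$. First, $E(f)\subseteq \mathrm{Stab}$, since $g$ commuting (or anti-commuting) with $f^n$ maps $\langle f^n\rangle o$ to a set at bounded Hausdorff distance. Next we show that $\mathrm{Stab}$ is virtually $\langle f\rangle$.
\begin{enumerate}
\item Take $g\in\mathrm{Stab}$ and compose with a power of $f$ so that $g$ moves $o$ a bounded amount, bounded in terms of $d(o,go)$ and the Hausdorff constant.
\item Because $g\ax(f)$ is a quasi-geodesic fellow-travelling $\ax(f)$, the element $g$, or $gf$-type corrections of it, coarsely preserves orientation. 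Using the contracting property, $g$ then also moves $f^Mo$ a bounded amount.
\item WPD leaves only finitely many such elements, so $[\mathrm{Stab}:\langle f\rangle]<\infty$.
\end{enumerate}
The uniform bound in step 2 is obtained separately for each fixed Hausdorff constant. To conclude $\mathrm{Stab}=E(f)$, note that each $g\in\mathrm{Stab}$ normalizes a finite-index subgroup of $\langle f\rangle$, so $gf^ng^{-1}=f^{\pm n}$ for some $n$. Maximality: any virtually cyclic subgroup containing $f$ commensurates $\langle f\rangle$, hence lies in $\mathrm{Stab}=E(f)$.

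For (2), by the equivalence lemma from \cite{YANG6} it suffices to prove bounded intersection for the translates $gE(f)o$. Each translate is at finite Hausdorff distance from $g\ax(f)$ by (1), so each is strongly contracting with a uniform constant. Suppose $N_r(\ax(f))\cap N_r(g\ax(f))$ has diameter $D\gg 0$. Then there are $i,j$ and, for many shifts $k$, elements $h_k=f^{-i-k}gf^{j+k}$ moving both $o$ and $f^Mo$ by at most $r'=r'(r,C)$. The contracting property makes the two axes fellow-travel along the overlap, so this works for every $k$ up to about $D/\tau(f)-M$. By WPD there are finitely many such $h$, say $N(r)$ of them. If $D$ exceeds a threshold depending on $N(r)$, two indices $k\ne k'$ give $h_k=h_{k'}$. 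This yields $f^{k'-k}g'=g'f^{k'-k}$ for $g'=f^{-i}gf^{j}$, so $g\in E(f)$ and $gE(f)=E(f)$. This gives the bounded intersection function $\sigma(r)$.

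For (3), which we expect to be the main obstacle, we follow Sisto. Work with the $X$-projection $\pi^X_{E(f)}$ in the Cayley graph. The goal is a contraction-type statement: if a word geodesic stays far from $E(f)$ in $d_S$, then its $X$-projection has bounded $d_S$-diameter. The first step is to show that the orbit map restricted to $E(f)$ is a quasi-isometry onto $\ax(f)$, using (1) and finite index. Next, for a word path $p$ far from $E(f)$, chop it into pieces of bounded $X$-length and apply Lemma~\ref{BigFive}(3) to show the $X$-projection advances only when $po$ comes $2C$-close to $\ax(f)$. Finally, WPD together with (2) should show that coming $X$-close to $\ax(f)$ along a long stretch forces word-closeness to $E(f)$. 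This last implication, passing from $X$-closeness to word-closeness, is where the real work lies. From it we obtain a sublinear divergence estimate for word paths avoiding $E(f)$, and standard arguments then give that $E(f)$, hence $\langle f\rangle$, is Morse. If this direct route fails, we would instead cite \cite[Theorem 1]{Sisto16} and \cite{GS22}.
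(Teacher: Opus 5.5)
\paragraph{Comparison with the paper.} The paper does not prove this lemma. It cites \cite[Sec.~6.1]{DGO} for (1) and (2), with hyperbolicity replaced by strong contraction as in Remark~\ref{rmk:WPD}(2), and \cite[Theorem~1]{Sisto16} for (3). Your (1) and (2) transplant the DGO arguments, which is the right plan, and your (3) ultimately falls back on the same citation. The issues are a genuine gap in your finite-index step for (1), and a direct route for (3) that does not yet constitute a proof.

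\paragraph{The gap in (1).} The bounds in your steps 1--2 for $h=f^{-k}g$ depend on $D_g:=d_{Haus}(g\ax(f),\ax(f))$, and you obtain them ``separately for each fixed Hausdorff constant.'' WPD then only shows that each set $\{g\in\mathrm{Stab}: D_g\le D\}$ lies in finitely many cosets $\langle f\rangle h$; note that $D_{f^kg}=D_g$. Since $\mathrm{Stab}$ is the increasing union of these sets over $D$, nothing bounds the total number of cosets.

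What is missing is a uniform bound $D_g\le D_0$ for every $g\in\mathrm{Stab}$, with $D_0$ depending only on $C$ and the quasi-geodesic constants of $\ax(f)$. This holds because $g\ax(f)$ is a translate of $\ax(f)$. Given $y\in g\ax(f)$, pick $y_\pm\in g\ax(f)$ far out on either side and $x_\pm\in\ax(f)$ with $d(x_\pm,y_\pm)\le D_g$. Strong contraction of $g\ax(f)$ forces $[x_-,x_+]$ to pass uniformly close to $y$, since $y$ lies between the projections of $x_\pm$. The Morse property of $\ax(f)$ then keeps $[x_-,x_+]$ uniformly close to $\ax(f)$. This is exactly the step where DGO use hyperbolicity.

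With $D_0$ fixed first, the bound $r$ on $d(hf^Mo,f^{\pm M}o)$ is independent of $M$. A single application of WPD with $M\ge L(r)$ then gives finitely many coset representatives. Orientation-reversing elements cannot be corrected by multiplying by powers of $f$; they form at most one further coset of the orientation-preserving subgroup and are counted the same way.

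Your (2) inherits this gap, because it uses that $E(f)o$ is at finite Hausdorff distance from $\ax(f)$. Once (1) is repaired, the pigeonhole argument works, provided you also treat overlaps with opposite orientation. There $h_k=f^{-i-k}gf^{j-k}$, and a coincidence $h_k=h_{k'}$ gives $g'^{-1}f^ng'=f^{-n}$.

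\paragraph{The direct route for (3).} The implication you single out is where the argument stops being a proof. Point stabilizers may be infinite. For $s\in\mathrm{Stab}_G(o)$, the vertices $f^ts$ have orbit points $f^to\in\ax(f)$ for every $t$, while $d_S(f^ts,E(f))=d_S(s,E(f))$. This distance is unbounded as $s$ ranges over an infinite $\mathrm{Stab}_G(o)$, because the orbit map on $E(f)$ is proper.

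So $X$-closeness to $\ax(f)$ carries no word-metric information by itself. Any statement forcing a word path to come uniformly close to $E(f)$ must control how word paths travel between such points. That control is the whole content of Sisto's theorem and of the divergence property in Definition~\ref{defn:divergence}. The available proofs use a genuinely different mechanism: either hyperbolic embeddedness of $E(f)$, built from the projection data of (2) via Bestvina--Bromberg--Fujiwara, or the acylindrical action on the quasi-tree of spaces, as in Lemma~\ref{lem:WPDdivergent}. Your (3) therefore stands only through the fallback citation of \cite{Sisto16}, which is exactly the paper's route.
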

In the sequel, we shall call  $\ax(f):=E(f)o$ the \textit{quasi-axis} or \textit{WPD axis} of $f$.

A \textit{diverging} function $\kappa : \mathbb{R}_+ \to \mathbb{R}_+$ means that it is non-decreasing  and $\kappa(R)\to \infty$ as $R\to \infty$. 

\begin{defn}\label{defn:divergence}\cite{GS23}
Fix   a finite symmetric generating set $S$ of $G$, a diverging function  $\kappa$ and $N>0$. Let  $f\in G$ be a strongly contracting WPD element and denote $H:=E(f)$. We say that $f$ is $(\kappa, N)$-\textit{divergent} for the action $G\act X$ if  $p$ is a path from $x$ to $y$ in $\mathrm{Cay}(G,S)$  so that $p\cap N_R(H)=\emptyset$  and $\proj_{Ho}^\pi(xo,yo) \ge N$, then $p$ has length at least $\kappa(R)$. 
\end{defn}
\begin{rem}\label{rmk:divergence}
Up to modifying the divergence function $\kappa$, the divergence property of an element is independent of the choice of generating set $S$. In \cite[Definition 2.2]{GS23}, the subgroup $H$ is taken to be $\langle f\rangle$. By Lemma \ref{lem:WPDisMorse}, the subgroup $\langle f\rangle$ has finite index in $E(f)$. Therefore, under the WPD assumption, the two definitions are equivalent.
\end{rem}

In \cite[Lemma 2.5]{GS23}, it is proved that a loxodromic WPD isometry on a hyperbolic space is divergent. (A kind of converse is given in \cite[Lemma 3.8]{GS22}.) This extends  to a strongly contracting WPD isometry via a combination of known results in literature. %We include the proof for completeness. 
 
\begin{lem}\label{lem:WPDdivergent}
Assume that $G$ acts by isometry  on a metric space $X$. Let $f\in G$ be a strongly contracting WPD   element.  Fix a finite generating set $S$. Then there exist a diverging function $\kappa$  and a constant $N$ so that $f$ is $(\kappa,N)$-divergent for the action $G\act X$.
\end{lem}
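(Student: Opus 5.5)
The plan is to pass from the action on $X$ to an acylindrical action on a hyperbolic space where $f$ is a loxodromic WPD element, and then quote \cite[Lemma 2.5]{GS23}. Concretely, take a finite set $F$ containing $f$ and an independent strongly contracting WPD element, and let $\f=\{gE(f')o: g\in G, f'\in F\}$. By Lemma~\ref{lem:WPDisMorse}(2) this is a $C$-contracting system with bounded projection. Hence the shortest projections $\proj^\pi_Y$ satisfy \ref{axiom:reflect}, \ref{axiom:tri_ineq}, \ref{axiom:berstock}, \ref{axiom:bound_proj} and \ref{axiom:dist}. Here the Behrstock inequality follows from Lemma~\ref{BigFive}(3) and bounded projection, and finiteness follows from Lemma~\ref{BigFive}(2) together with bounded intersection. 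Theorem~\ref{thm:forcing_seq} then gives strong projection axioms, and we form the quasi-tree $\PC$. By \cite{BBF,BBFS}, $G$ acts on $\PC$ acylindrically, with $f$ acting loxodromically and WPD.

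The divergence property in $\PC$ is not directly the one we need, because Definition~\ref{defn:divergence} involves the projection $\proj^\pi_{Ho}$ in $X$, where $H=E(f)$. So I would work instead with the standard ``quasi-tree of spaces'' $\mathcal C_K(\f)$ of \cite{BBF}. This is hyperbolic and admits an isometric embedding of each $Y\in\f$. In it, $f$ acts loxodromically along the copy of $\ax(f)$, and its nearest-point projection to that copy agrees with $\pi_{Ho}$ up to bounded error, at least for points of $\bigcup_{Y\in\f}Y$. The orbit map $g\mapsto go$ into $\mathcal C_K(\f)$ is coarsely Lipschitz on $\mathrm{Cay}(G,S)$.

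To prove WPD of $f$ on $\mathcal C_K(\f)$, I would use the WPD property on $X$ together with the fact that any element coarsely fixing two far-apart points of $\ax(f)$ must preserve the vertex $\ax(f)$ of $\PC$. Such an element therefore lies in $E(f)$, and there it coarsely fixes two points of $Ho$ in $X$. By Remark~\ref{rmk:WPD}, there are only finitely many such elements.

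With this in place, the loxodromic WPD element $f$ of the hyperbolic space $\mathcal C_K(\f)$ is $(\kappa',N')$-divergent by \cite[Lemma 2.5]{GS23}. Now let $p$ be a path in $\mathrm{Cay}(G,S)$ that avoids $N_R(H)$ and has $\proj^\pi_{Ho}(xo,yo)\ge N$. Its image in $\mathcal C_K(\f)$ has large projection onto the copy of $Ho$, again up to the bounded comparison error. Since $p$ avoids $N_R(H)$, the word-metric distance to $H$ is at least $R$. By Remark~\ref{rmk:divergence}, either the hypothesis of \cite{GS23} or a change of generating set converts this into the required distance hypothesis. This yields $\ell(p)\ge\kappa(R)$ with $\kappa$ a rescaling of $\kappa'$.

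The main obstacle is the comparison between projections in $X$ and in the auxiliary hyperbolic space, and between the orbit-distance hypothesis used in \cite{GS23} and the word-metric hypothesis here. If this becomes unwieldy, I would fall back on a direct argument. Assume a sequence of paths $p_n$ of bounded length $\ell$ that avoid $N_{R_n}(H)$ with $R_n\to\infty$ and have projection at least $N$. Translate by $H$ so that $\pi_{Ho}(x_n o)$ stays near $o$. Since the $p_n$ have bounded length, their endpoints have projections onto $Ho$ that are uniformly close to each other, while the projection diameter must be at least $N$. Lemma~\ref{BigFive}(3) then forces the projections to stay near the geodesic $[x_no,y_no]$. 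Finitely many words of length $\ell$, combined with the WPD finiteness of coarse stabilizers, would then give elements of $G$ far from $H$ that coarsely stabilize a long segment of $\ax(f)$. This contradicts WPD, since such elements lie in $E(f)=H$.
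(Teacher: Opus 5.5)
Your main route is the paper's own: pass to the quasi-tree of spaces $\mathcal C_K(\f)$, get divergence of $f$ there from \cite[Lemma 2.5]{GS23}, and pull it back to $X$ because projections onto vertex spaces in $\mathcal C_K(\f)$ agree with those in $X$ up to bounded error \cite[Corollary 4.10]{BBF}. Checking WPD of $f$ on $\mathcal C_K(\f)$ by hand is an acceptable substitute for the paper's appeal to acylindricity \cite[Theorem 6.9]{BBFS}. The auxiliary independent element is unnecessary and should be dropped: the paper uses only $\f=\{g\ax(f):g\in G\}$, and the lemma's hypotheses do not supply a second independent element. Your fallback is not needed, and as sketched it has a hole. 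WPD only makes coarse stabilizers of far-apart points finite; it does not place them in $E(f)$. Moreover, the coarse-stabilizer constant you would naively obtain is of order $\beta\ell$, which is not small compared with the segment length $\ge N$, so you would first need an argument making it uniform.
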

 
\begin{proof}
By \cite[Theorem 5.10]{BBFS}, the system $\f=\{g\ax(f): g\in G\}$ satisfies projection axiom and  $G$ acts acylindrically  on the projection complex $\PC$. Let  $\mathcal C_K(\f)$  denote the quasi-tree  of spaces, which is obtained from $\PC$ by replacing each vertex $U\in \f$ with the corresponding axis $U$ in $X$. We call $U$  the vertex space in $\mathcal C_K(\f)$. The edges between $U$ and $V$ in $\PC$ are replaced with the union of edges with length comparable to $K$ between points in $\pi_U(V)$ and $\pi_V(U)$. By \cite[Theorem 6.9]{BBFS}, $G$ acts acylindrically on $\mathcal C_K(\f)$.  Thus,  by \cite[Lemma 2.5]{GS23}, $f$ is divergent with respect to the action on $\mathcal C_K(\f)$. By \cite[Corollary 4.10]{BBF}, the shortest projection of the vertex space $U$ to $V$ in $\mathcal C_K(\f)$ agrees with that of $U$ to $V$ in the original space $X$ up to a uniformly bounded error.  Thus, $f$ is also divergent for the action on $X$. The proof is complete.  
\end{proof}

Two  strongly contracting WPD elements $h,k\in G$ are called \textit{independent} if no power of $h$ is conjugate to some power of $k$ in $G$. Equivalently, the system $\{g\ax(h), g\ax(k): g\in G\}$ has bounded projection property. The next lemma provides a useful method to construct admissible paths.
\begin{lem}[Extension Lemma]\label{extend3}
Let $h_{1}, h_{2}, h_{3} \in G$ be pairwise independent, contracting WPD elements. Denote $\f=\{g\ax(h_i):i=1,2,3, g\in G\}$. Then there exist constants  $L, \tau>0$ depending only on $\f$ with the following property.

For each $1\le i\le 3$, choose an element $f_i\in \langle h_i\rangle$ such that $d(o,f_io)\ge L$, and let $F$ be the set of these elements. Then for any $g,h\in G$, there exists $f \in F$ such that the path  $$\gamma:=[o, go]\cdot(g[o, fo])\cdot(gf[o,ho])$$ is $(L, \tau)$-admissible with respect to the   system $\f$. 
%\item
%The point  $go$  is an $(r, f)$--barrier for any geodesic  $[\gamma^-,\gamma^+]$.	%In particular, the path $[o, go][go, gfo][gfo, gfho]$ is a $(1, 4\varepsilon_0)$--quasi-geodesic.
%\end{enumerate}
\end{lem}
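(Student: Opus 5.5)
The Extension Lemma is the standard one from \cite{YANG6,YANG10}, transplanted to strongly contracting WPD axes. The plan is to pick $f$ by a pigeonhole argument over three independent directions, and then apply Proposition~\ref{admisProp}.

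\textbf{Setup.} By Lemma~\ref{lem:WPDisMorse} and the independence assumption, $\f$ is a $C$-contracting system with $\theta$-bounded projection for some $C,\theta$. Each axis $g\ax(h_i)=gE(h_i)o$ lies at finite Hausdorff distance from a strongly contracting quasi-geodesic (Remark~\ref{rem:basicfacts}). Set $\tau$ to be a constant of the form $\theta+10C$ (enlarged as the argument requires), and let $L=L(\tau)$ be the constant given by Proposition~\ref{admisProp}.

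Consider the path $\gamma=[o,go]\cdot g[o,fo]\cdot gf[o,ho]$. Write $q_1=[o,go]$, $p_1=g[o,fo]$, $q_2=gf[o,ho]$, with $p_0,p_2$ trivial. The endpoints of $p_1$ lie on $g\ax(h_i)$ when $f=f_i$, and $p_0,p_2$ are degenerate pieces, or can be absorbed by convention. Condition (LL) is immediate from $d(o,f_io)\ge L$. It therefore remains to verify (BP) at $Y=g\ax(h_i)$: we need
\[
\mathrm{diam}\,\pi_Y([o,go])\le\tau \quad\text{and}\quad \mathrm{diam}\,\pi_Y(gf_i[o,ho])\le\tau.
\]
Translating by $g^{-1}$, and by $(gf_i)^{-1}$ together with the invariance $f_i\ax(h_i)=\ax(h_i)$, these become
\[
\mathrm{diam}\,\pi_{\ax(h_i)}([g^{-1}o,o])\le\tau \quad\text{and}\quad \mathrm{diam}\,\pi_{\ax(h_i)}([o,ho])\le\tau.
\]

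\textbf{Key claim.} For any point $x$, at most one index $i\in\{1,2,3\}$ has $\mathrm{diam}\,\pi_{\ax(h_i)}([o,x])>\tau$. Granting this, the point $x=g^{-1}o$ rules out at most one index and $x=ho$ rules out at most one more, so some $i$ survives and $f=f_i$ works. To prove the claim, suppose it fails for two indices $i\ne j$. Since $o$ lies on both axes and $\proj^\pi(o,x)$ differs from $\mathrm{diam}\,\pi(\gamma)$ by at most $4C$ (Lemma~\ref{BigFive}(1)), we get $\proj^\pi_{\ax(h_i)}(o,x)>\tau-4C>C$. Lemma~\ref{BigFive}(3) then shows that $[o,x]$ passes within $2C$ of points of $\ax(h_i)$ far from $o$, and likewise for $\ax(h_j)$.

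\textbf{Main obstacle.} The geodesic $[o,x]$ now fellow-travels both $\ax(h_i)$ and $\ax(h_j)$ from the common point $o$. We need a long initial segment of $[o,x]$ lying in $N_{r}(\ax(h_i))\cap N_r(\ax(h_j))$, which would contradict the bounded intersection property, equivalent to bounded projection by \cite{YANG6}, once $\tau$ is large relative to $\sigma(r)$. Getting this long common segment requires care: by contraction, the projection of $[o,y]$ to a contracting set is coarsely an interval starting near $\pi(o)=o$ (Lemma~\ref{BigFive}), so the initial segment of $[o,x]$ up to the last point near the axis stays $O(C)$-close to the axis. Both such segments begin at $o$ and have length at least $\tau-O(C)$, so the shorter one lies in both neighborhoods. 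This yields the contradiction for $\tau$ chosen larger than $\sigma(O(C))+O(C)$.

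Finally, $\tau$ depends only on $\f$, and hence so does $L$.
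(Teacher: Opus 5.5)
Your proposal is correct and is the standard argument of \cite{YANG6,YANG10}, which the paper quotes without reproving. Distinct axes through the common point $o$ have bounded intersection, so at most one of the three axes can receive a projection larger than $\tau$ from a geodesic $[o,x]$. The two constraints $x=g^{-1}o$ and $x=ho$ therefore leave some index free.

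The only loose end is the convention for the trivial pieces $p_0,p_2$. The same claim handles it: apply it to $x=go$ and $x=h^{-1}o$, where two indices survive each time. This lets you choose axes $Y_0\ni o$ and $Y_2\ni gfho$ that are distinct from $g\ax(h_i)$ and have the required bounded projections.
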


\begin{rem} 
The (BP) and (LL) conditions in Definition \ref{AdmDef} are local conditions, so we may connect any number of elements  $g\in G$ using $F$ to obtain an admissible path. We refer the reader to \cite{YANG10} for a precise formulation.
\end{rem}

To conclude this discussion, we describe the principal example used
throughout the paper.

\begin{example}\label{ex:projectionforcontracting}
Let $X$ be a metric space, and let $\f$ be a $C$-contracting system
of subsets of $X$. For distinct $U,V\in\f$, let $\pi_V(U)$ denote
the shortest-point projection of $U$ to $V$, and define
$$\proj_Y^\pi(U,V)=\mathrm{diam}(\pi_Y(U)\cup\pi_Y(V)).$$
If $\f$ has the bounded projection property, then there exists
$\theta_0>0$, depending only on the contracting and bounded projection
constants, such that the family
$\{\proj_Y^\pi:Y\in\f\}$ satisfies
\ref{axiom:reflect}, \ref{axiom:tri_ineq},
\ref{axiom:berstock}, \ref{axiom:bound_proj}, and
\ref{axiom:dist} with constant $\theta_0$. By Theorem~\ref{thm:forcing_seq}, there are modified projection sets
$\widetilde\pi_Y(U)$ such that the functions
$$\proj_Y(U,V)=\mathrm{diam}(\widetilde\pi_Y(U)\cup\widetilde\pi_Y(V))$$
satisfy the strong projection axioms with constant $11\theta_0$ and
differ from the original projection distances by a uniformly bounded
amount.

From now on, we use the modified projection sets and write
$\pi_Y(U)$ in place of $\widetilde\pi_Y(U)$. We also replace
$11\theta_0$ by $\theta$. Thus, all subsequent projection distances
refer to the modified projections, while their comparison with the
original shortest-point projections is understood up to the uniform
error supplied by Theorem~\ref{thm:forcing_seq}.
\end{example}

For sufficiently large $K>3\theta$, let $\PC$ be the projection
complex associated with $\f$, equipped with its graph metric $\rho$.
Let
$$
Z:=\bigcup_{Y\in\f}Y
$$
with the metric induced from $X$. Define the set-valued map
$$
\Phi(x):=\{Y\in\f:x\in Y\}\subseteq\PC.
$$
This map is coarsely well defined. Indeed, if $Y,Y'\in\Phi(x)$, then
the bounded projection property implies
$\f_K(Y,Y')=\emptyset$ for sufficiently large $K$. Hence
$\rho(Y,Y')\le1$, and therefore any choice of a vertex in $\Phi(x)$ defines the same
coarse map up to distance one.

\begin{lem}\label{lem:coarseLip2Projcplx}
For every sufficiently large $K$, there exists $L=L(K)>1$ such that,
for all $x,y\in Z$ and all choices
$U\in\Phi(x)$ and $V\in\Phi(y)$,
$$
\rho(U,V)\le Ld(x,y)+L.
$$
In particular, any choice of representatives defines a coarsely
$L$-Lipschitz map $\Phi:Z\to\PC$.
\end{lem}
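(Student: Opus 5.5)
The plan is to reduce to points at bounded distance and then chain along a geodesic. The inequality is coarse, so it suffices to find $L_0=L_0(K)$ with $\rho(U,V)\le L_0$ whenever $U\in\Phi(x)$, $V\in\Phi(y)$ and $d(x,y)\le 1$. The general case then follows by subdivision, provided the intermediate points can be taken in $Z$. If $d(x,y)\le 1$ we are done directly. Otherwise, take a geodesic $[x,y]$ in $X$ and points $x=x_0,x_1,\dots,x_k=y$ on it with $k\le d(x,y)+1$ and consecutive distances at most $1$. These points need not lie in $Z$, so the chaining argument needs care.

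To handle this, I would not subdivide at all. Instead I would bound $\rho(U,V)$ using the standard path length $\LS(U,V)=|\f_K(U,V)|+1$ and Eq.~(\ref{rem:std_len_quasi_isom}), which gives $\rho(U,V)\le\LS(U,V)$. The task is then to bound $|\f_K(U,V)|$ linearly in $d(x,y)$.

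Fix $Y\in\f_K(U,V)$. By Theorem~\ref{thm:forcing_seq}, the modified projections differ from shortest projections by at most $2\theta$. Hence $\proj^\pi_Y(U,V)>K-2\theta$, and so $\proj^\pi_Y(x,y)>K-2\theta-2\theta'$, where $\theta'$ bounds $\mathrm{diam}\,\pi_Y(U)$ via bounded projection. If $K$ is large, Lemma~\ref{BigFive}(3) shows that $[x,y]$ passes within $2C$ of $\pi_Y(x)$ and of $\pi_Y(y)$. Combined with Lemma~\ref{BigFive}(1), the geodesic $[x,y]$ then has a subsegment of length at least $K-c$ inside $N_{2C}(Y)$, for a constant $c$ depending only on $C,\theta$. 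By Lemma~\ref{BigFive}(2), the projection of $[x,y]$ to $Y$ also has diameter at most $d(x,y)+2C$.

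The key step, and the main obstacle, is to show that these subsegments for distinct $Y\in\f_K(U,V)$ are essentially disjoint and ordered. For this I would use the bounded intersection property: for $Y\ne Y'$, the set $N_{2C}(Y)\cap N_{2C}(Y')$ has diameter at most $\sigma(2C)$. So two such subsegments overlap in length at most $\sigma(2C)$. Taking $K$ large enough that $K-c>2\sigma(2C)+1$, each $Y$ contributes at least $1$ unit of length to $[x,y]$ that no other $Y'$ claims. One caveat: overlap with possibly several others must be controlled. The fix is to use the total order of Proposition~\ref{prop:std_path}: consecutive elements $Y_i<Y_{i+1}$ have segments appearing in that order along $[x,y]$, so only neighbours overlap. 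Therefore $|\f_K(U,V)|\le d(x,y)$.

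It follows that $\rho(U,V)\le d(x,y)+1$, so the lemma holds with $L$ around $2$. The ordering claim is where real work would go. I would prove it via Proposition~\ref{admisProp}, treating $[x,y]$ together with the $Y_i$ as an admissible-type configuration, or directly via the Behrstock inequality and Lemma~\ref{BigFive}(3).
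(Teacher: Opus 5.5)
Your argument is correct, but it takes a different route from the paper's. The paper lifts the standard path $U=Y_0<Y_1<\cdots<Y_n=V$ to a path from $x$ to $y$ by joining consecutive projections in $\pi_{Y_i}(Y_{i+1})$ and $\pi_{Y_{i+1}}(Y_i)$. By \cite[Lemma 4.5]{HLY} this lift is $(K-E,\tau)$-admissible, with $n-1$ interior pieces of length at least $K-E$. The quasi-geodesic constant $c$ of Proposition~\ref{admisProp} then gives $(n-1)(K-E)\le c\,d(x,y)+c$.

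You instead work on the geodesic $[x,y]$ itself, using only Lemma~\ref{BigFive}, the $2\theta$-comparison of Theorem~\ref{thm:forcing_seq}, and bounded intersection. In effect you prove by hand the fellow-travelling that the admissible-path machinery packages. This is more elementary and self-contained. The paper's version is shorter and reuses the standard lift that it needs again in Lemma~\ref{lem:lift_wpd}.

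Two remarks on your write-up:
\begin{enumerate}
\item Lemma~\ref{BigFive}(3) only places the \emph{endpoints} $p_Y,q_Y$ of your subsegment within $2C$ of $Y$. To bound overlaps you need all of $J_Y:=[p_Y,q_Y]_{[x,y]}$ inside $N_D(Y)$ for some $D=D(C)$. This follows from the Morse property of strongly contracting sets, and you then use $\sigma(D)$, which is still independent of $K$.
\item The ordering step you flag as the main obstacle is unnecessary. Suppose every $J_Y$ has length at least $\ell:=K-c>2\sigma(D)$ and distinct ones overlap in length at most $\sigma(D)$. Then no point $t\in[x,y]$ lies in three of them. Indeed, each $J_Y\ni t$ extends at least $\ell/2$ to one side of $t$, and two that extend to the same side share a segment of length $\ell/2>\sigma(D)$. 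Hence $\sum_Y|J_Y|\le 2d(x,y)$, so $|\f_K(U,V)|\le 2d(x,y)/(K-c)$ and $\rho(U,V)\le\LS(U,V)\le 2d(x,y)/(K-c)+1$.
\end{enumerate}
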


\begin{proof}
Let
$U=Y_0<Y_1<\cdots<Y_n=V$
be the standard path from $U$ to $V$, so that
$n=\LS(U,V)$. A \textit{standard lift} $\gamma$ of this path, with endpoints $x$ and
$y$, is obtained by joining the consecutive shortest
projections in $\pi_{Y_i}(Y_{i+1})$ and $\pi_{Y_{i+1}}(Y_{i})$ by
$d_X$-geodesics. There exist uniform constants $E,\tau>0$, depending only on the
contracting and bounded projection constants, such that $\gamma$ is
a $(K-E,\tau)$-admissible path with the associated contracting subsets $Y_i$ ($0<i<n)$. In particular, each of its $n-1$
interior contracting pieces has length at least $K-E$. See \cite[Lemma 4.5]{HLY} for full detail.

Choose $K$ sufficiently large that Proposition~\ref{admisProp}
applies, and let $c=c(K)>1$ be the resulting quasigeodesic constant.
Then $(n-1)(K-E)\le\len(\gamma)\le c\,d(x,y)+c.$
Since 
$\rho(U,V)\le n$,
$$
\rho(U,V)
\le \frac{c}{K-E}d(x,y)+\frac{c}{K-E}+1.
$$
The conclusion follows after enlarging the constant.
\end{proof}

\begin{lem}\label{lem:lift_wpd}
For
every sufficiently large $K>3\theta$, every element acting
loxodromically on $\PC$ is a strongly contracting WPD element for the
action on $X$.
\end{lem}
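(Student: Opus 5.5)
Let $g\in G$ act loxodromically on $\PC$. There are two things to prove: that $g$ is strongly contracting on $X$, and that $g$ satisfies WPD on $X$.

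\textbf{Strong contraction.} Since $g$ is loxodromic on the quasi-tree $\PC$, we have $\LS(\po,g^n\po)\ge cn$ for some $c>0$. The first step is to build a quasi-axis in $X$. Because $\PC$ is a quasi-tree and $g$ is loxodromic, for large $n$ the standard path $\f_K(\po,g^n\po)$ contains a vertex $Y$ together with its translate $gY$, and $\rho(Y,gY)$ is bounded in terms of the translation length. Replacing $g$ by a power if necessary, and using Lemma~\ref{lem:common_mid} together with Corollary~\ref{cor:sub_segment_std}, I will arrange that the bi-infinite sequence $\{g^kY\}_{k\in\mathbb Z}$ lies in order on the standard paths $\f_K(g^{-N}Y,g^NY)$, with consecutive projections $\proj_{g^kY}(g^{k-1}Y,g^{k+1}Y)>K$.

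Next I take the standard lift, as in the proof of Lemma~\ref{lem:coarseLip2Projcplx}. Joining consecutive projections $\pi_{g^kY}(g^{k\pm1}Y)$ by geodesics and traversing each $g^kY$ between these projections gives a $g$-invariant $(K-E,\tau)$-admissible bi-infinite path $\gamma$. For $K$ large, Proposition~\ref{admisProp} makes $\gamma$ a quasi-geodesic along which $g$ acts by translation, so $g$ has infinite order and $d(o,g^no)$ grows linearly.

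It remains to show $\gamma$ is strongly contracting. This follows the standard argument for admissible paths built from a contracting system with bounded projection (as in \cite{YANG6,YANG10}): if a ball disjoint from $\gamma$ had large projection to $\gamma$, then its projection would cover a long interior piece, and by Lemma~\ref{BigFive}(3) together with the fellow-travel property this contradicts the contraction of that piece $g^kY$. Uniformity comes from the fact that each piece has length at least $K-E$, while the transitions have $\tau$-bounded projection.

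\textbf{WPD.} Fix $r>0$ and $x$ near $\gamma$, and let $h\in\mathrm{Stab}_r(x,g^Mx)$. Then $h$ moves $x$ and $g^Mx$ by at most $r$. By Lemma~\ref{lem:coarseLip2Projcplx}, $h$ moves the corresponding vertices of $\PC$ by at most $Lr+L$ in $\rho$. By Lemma~\ref{lem:common_mid}, $h$ therefore maps middle vertices of the standard path from $Y$ to $g^MY$ onto the same standard path, with a shift of at most $2Lr+2L+3$ positions.

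The key step is then a pigeonhole argument. Choose $M$ so large that there are more middle vertices than the possible shifts times the number of admissible pieces. The acylindricity of $G\act\PC$ (\cite[Theorem 5.10]{BBFS}) gives that only finitely many elements move two far-apart vertices $\rho$-boundedly. Applied to two middle vertices $g^aY$ and $g^bY$ at $\rho$-distance exceeding the acylindricity constant for $Lr+L+C$, this yields $|\mathrm{Stab}_r(x,g^Mx)|<\infty$.

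The main obstacle is making the first step rigorous: extracting from loxodromicity on $\PC$ a single vertex $Y$ whose $g$-orbit is linearly ordered with large consecutive projections, uniformly in $K$. For this I would use the bottleneck property, Lemma~\ref{lem:bottleneck_in_PC}, together with the guard property, Lemma~\ref{lem:more_guard_out_ball}.
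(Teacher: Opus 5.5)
This is essentially the paper's proof: WPD is pulled back from the acylindrical action on $\PC$ through the coarsely Lipschitz map of Lemma~\ref{lem:coarseLip2Projcplx}. Your middle-vertex and pigeonhole detour is unnecessary, since acylindricity applies directly to the vertex $U\ni x$ and to $g^MU$. Strong contraction comes from lifting a $g$-periodic chain of axes with large middle projections to an admissible path. The only difference is that you use the orbit of a single vertex under a power of $g$, where the paper uses a full $g$-invariant bi-infinite standard path. In both versions, periodicity is what keeps the transitional geodesics of bounded length; strong contraction needs this, and their $\tau$-bounded projections alone do not suffice.

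The step you flag as the main obstacle is the one the paper handles by a diagonal extraction. It is most cleanly closed by applying Lemma~\ref{lem:stable_axis} to $g^m$ for large $m$: the standard Gromov product $\langle g^{-m}\po,g^m\po\rangle^{\mathrm{std}}_\po$ stays bounded while $\LS(\po,g^m\po)$ grows linearly. Any vertex $Y$ on the resulting $g^m$-invariant standard path then satisfies $\proj_Y(g^{-m}Y,g^mY)>K$.
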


\begin{proof}
Let $g\in G$ act loxodromically on $\PC$. Since the action
$G\curvearrowright\PC$ is acylindrical by \cite[Theorem 5.10]{BBFS}, $g$ is WPD on $\PC$. We first transfer the WPD property to $X$. Choose $z\in Z$ and
$U\in\Phi(z)$. By Lemma~\ref{lem:coarseLip2Projcplx}, there exists
$L>1$ such that, for every $h\in G$,
$$
\rho(U,hU)\le Ld(z,hz)+L.
$$
The same estimate holds at $g^nz$, with $g^nU\in\Phi(g^nz)$. Hence,
for every $r>0$, the $r$-coarse stabilizer of
$(z,g^nz)$ in $Z$ is contained in a uniformly bounded coarse
stabilizer of $(U,g^nU)$ in $\PC$. Since $g$ is WPD on $\PC$, it is
therefore WPD on $Z$, and thus WPD on $X$ by Remark \ref{rmk:WPD}(2).

It remains to prove that $g$ is strongly contracting. 

First, every loxodromic element $g$ admits a unique axis $\gamma$ so that  any subpath of $\gamma$ is a standard path. Fixing a basepoint $\po\in \PC$, $n\mapsto g^{n}\po$ gives a quasi-geodesic denoted as $\alpha$. The sequence of standard paths $\alpha_n=\f_K[g^{-n}\po,g^n\po]$ remains within a $R$-neighborhood of $\alpha$ for some $R>0$ by Morse Lemma. By the almost-tripod property of $\alpha_n$,  a Cantor's diagonal argument extracts a bi-infinite standard path $\gamma$ from $\alpha_n$ as $n\to\infty$. Since $\gamma\subset N_R(\alpha)$, the almost-tripod property implies  $g\gamma=\gamma$, so the axis is unique and invariant under $g$. 

Next, as in the proof of Lemma \ref{lem:coarseLip2Projcplx}, the standard path $\gamma$ lifts to a $(K-E,\tau)$-admissible path $\tilde \gamma$ whose contracting subsets $Y_i$ are given by the corresponding vertices on $\gamma$. If $K$ is large enough, $\tilde \gamma$ is strongly contracting by \cite[Lemma 2.19]{YANG10}. Since $g$ acts cocompactly by translation on
$\widetilde\gamma$, it is strongly contracting on $X$. 
\end{proof}

\section{Anchored length of word geodesics in the projection complex}\label{sec:anchoredlength}
Let $(X,d)$ be a geodesic metric space and $G$ be a group with a finite symmetric generating set $S$. Assume that $G$ acts by isometry on $X$ with at least two independent  strongly contracting WPD elements. Fix a basepoint $o\in X$. Denote  $\Pi:\mathrm{Cay}(G,S)\to X$ the orbital map defined by $g\mapsto go$.

\subsection{Setup} We set up the necessary objects  used throughout  this and next Sections \ref{SecShortPC} and \ref{SecExpGenWPD}. 
\medskip 

\noindent\textbf{Finite set $F$ of WPD elements.} Fix \textit{any}  non-empty finite set  of independent  contracting contracting WPD elements $F\subseteq G$ for the action of $G$ on $X$. 
    \begin{itemize}
        \item Let $\beta>0$ be a constant such that $\Pi$ is a $\beta$-Lipschitz map. Furthermore, for each $f\in F$, the restriction  $E(f)\to \ax(f)=E(f)o$ is a $\beta$-quasi-isometry.
        \item 
        Let  $\kappa=\kappa(S,F)$ be a diverging function and $N=N(S)>0$ so that each $f\in F$ is $(\kappa, N)$-divergent  with respect to the map $\Pi: \mathrm{Cay}(G,S)\to X$.     
        
    \end{itemize} 
\medskip

\noindent\textbf{Projection complex.} 
\begin{itemize}
        \item 
        Let $C>0$ be the common contracting constant for the quasi-axes $\ax(f)$ with  $f\in F$. 
        \item 
        Let $\theta=\theta(\f)$ be the projection   constant so that
    $$\f = \{g\cdot \ax(f) : g\in G,\ f\in F\}$$   
    satisfies the projection axioms. See Lemma \ref{lem:WPDisMorse} and Example \ref{ex:projectionforcontracting}. 
\end{itemize}Let $\PC$ be the associated projection complex with graph metric $\rho$, where 
    \begin{align}\label{eq:defnK}
    K > \max\{4\theta + \beta + 5C, N + 4\theta\}.    
    \end{align} 
    Fix $f_0\in F$ and choose the basepoint $\po = \ax(f_0)$ in $\PC$. In this section, the $r$-neighborhood of a subset $\alpha\subseteq \PC$ means the neighborhood with respect to $\LS$, i.e. the set consisting of $Y\in \PC$ such that $\LS(Y, A)\le r$ for some $A\in \PC$.
\medskip

We start with an elementary observation which will be used implicitly. %The path in $\mathrm{Cay}(G, S)$ labels a path in the projection complex.
\begin{lem}\label{lem:word_is_PC_path}
    Let $\gamma=\{g_0, g_1, \cdots, g_m\}$ be a path in $\mathrm{Cay}(G, S)$. Then  $$\gamma\po:=\{g_0\po, g_1\po, \cdots, g_m\po\}$$ is a path in $\PC$. 
\end{lem}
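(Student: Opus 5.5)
The plan is to reduce to a single edge $g_{i-1}\to g_i$ of $\gamma$. We must show that $g_{i-1}\po$ and $g_i\po$ are equal or adjacent in $\PC$, i.e. that $\f_K(g_{i-1}\po,g_i\po)=\emptyset$. Write $g_i=g_{i-1}s$ with $s\in S$. Since $G$ acts on $\f$ preserving the projections, and hence preserves the sets $\f_K(\cdot,\cdot)$, it suffices to treat the pair $\po,s\po$ for $s\in S$. Translating by $g_{i-1}^{-1}$ gives $\f_K(g_{i-1}\po,g_i\po)=g_{i-1}\f_K(\po,s\po)$.

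For a fixed $s$ with $s\po\neq\po$, we bound $\proj_Y(\po,s\po)$ for every $Y\in\f\setminus\{\po,s\po\}$. The basepoint $o$ lies on $\po=\ax(f_0)=E(f_0)o$, and $so\in s\po$ with $d(o,so)\le\beta$, since $\Pi$ is $\beta$-Lipschitz. By Theorem~\ref{thm:forcing_seq} and Example~\ref{ex:projectionforcontracting}, $\proj_Y(\po,s\po)$ differs by at most a uniform constant from $\mathrm{diam}(\pi_Y(\po)\cup\pi_Y(s\po))$ for the original shortest projections. This diameter is at most
\[
\mathrm{diam}\,\pi_Y(\po)+\proj^\pi_Y(o,so)+\mathrm{diam}\,\pi_Y(s\po).
\]
The two outer terms are at most the bounded-projection constant. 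The middle term is at most $d(o,so)+2C\le\beta+2C$ by Lemma~\ref{BigFive}(2). Altogether $\proj_Y(\po,s\po)\le\beta+2C+O(\theta)$, which is below $K$ by the choice \eqref{eq:defnK}, namely $K>4\theta+\beta+5C$.

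The main point to check is the bookkeeping of constants in this last step. I would track the discrepancy $2\theta$ between $\proj_Y$ and $\proj_Y^\pi$ coming from \eqref{eq:projdist}, the $\theta$-neighborhood in part (1) of Theorem~\ref{thm:forcing_seq}, and the bounded-projection diameters, and confirm that they fit within $4\theta+5C$. If they do not fit directly, the alternative is to use that $S$ is finite: then only finitely many pairs $(\po,s\po)$ occur, and each $\f_K(\po,s\po)$ is finite by \ref{axiom:dist}. In that case it is enough to know that $K$ has been taken large, which is allowed since $K$ is only required to be sufficiently large.

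Once each $\f_K(\po,s\po)$ is empty, consecutive vertices $g_{i-1}\po,g_i\po$ coincide or span an edge of $\PC$. Therefore $\gamma\po$ is a path in $\PC$, after deleting repeated consecutive vertices if paths are required to be non-stuttering.
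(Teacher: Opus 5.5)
Your proposal is correct and follows essentially the same argument as the paper: reduce to $\rho(\po,s\po)\le 1$ for $s\in S$, bound $\proj^\pi_Y(\po,s\po)\le 2\theta+\beta+2C$ using \ref{axiom:bound_proj} on the outer terms and Lemma~\ref{BigFive}(2) on $\proj^\pi_Y(o,so)$, then add the $2\theta$ from \eqref{eq:projdist}. This gives $\proj_Y(\po,s\po)\le 4\theta+\beta+2C<K$, so the constants fit directly under \eqref{eq:defnK} and your finiteness fallback is not needed.
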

\begin{proof}
    It suffices to prove that for any $s\in S$, $\rho(\po, s\po)\le 1$.
    
    Let  $s\in S$ and $Y\in \f\setminus\{\po,s\po\}$. Since $d(o, so)\le \beta d_S(1, s) = \beta$, by Lemma~\ref{BigFive} we have $$\proj_Y^{\pi}(o, so)\le d(o, so) + 2C\le \beta + 2C.$$ Noting $o\in \po$ and $so\in s\po$, the axiom \ref{axiom:bound_proj} implies $$\proj_Y^{\pi}(\po, s\po)\le \mathrm{diam}(\pi_Y(\po)) + d(o, so) + \mathrm{diam}(\pi_Y(s\po))\le 2\theta + \beta + 2C.$$ Applying Eq. (\ref{eq:projdist}) in Theorem~\ref{thm:forcing_seq} yields $$\proj_Y(\po, s\po)\le \proj_Y^{\pi}(\po, s\po) + 2\theta \le 4\theta + \beta + 2C < K,$$ by the choice of $K$ in Eq. (\ref{eq:defnK}). Thus,  $\f_K(\po, s\po) = \emptyset$, so $\rho(\po, s\po)\le 1$.
\end{proof}

\iffalse
\begin{defn}
    Let $a, b\in G$ and $\gamma$ be a word geodesic from $a$ to $b$. For a finite subset $A=\{g_0=a, g_1, g_2, \cdots, g_m, g_{m+1} = b\}\subseteq \gamma$ with $m>0$ such that $g_0, g_1, \cdots, g_{m+1}$ are linearly ordered along $\gamma$ with $m\ge 0$. Define the \emph{total length} of $A$ 
    \[L(\gamma, A):=\sum_{i=0}^m \LS(g_i\cdot O, g_{i+1}\cdot O).\]
\end{defn}\fi

 \subsection{Anchored length and the main proposition}
 Let $\LS:\f\times\f\to\mathbb{R}_{\ge 0}$ be   the standard length function defined in Eq. (\ref{eqn:standardpathlength}). This is   the length of the standard path from $g \po$ to $h \po$. We extend the  length function to the following anchored version.
\begin{defn}[Anchored standard length]\label{def:anchored_set}
    Let $A=\{a_1< a_2< \cdots< a_m\}$  be a finite  \textit{ordered}  set of distinct elements in $G$ with $m\ge 0$. Given  two elements $g, h\in G$,  the \textit{standard $A$-length from $g$ to $h$} is defined as   
    \[\LS_A(g\po,h\po)\;:=\;\sum_{i=0}^m \LS(a_i \po, a_{i+1} \po) \]
    where $a_0=g, a_{m+1}=h$. The $A$ will be referred to as the  \textit{anchored set}. 

    In the sequel, we usually choose the  set $A$ on some word geodesic $\gamma$ from $g$ to $h$: $A=\{ a_1< a_2< \cdots< a_m\}$  on  $\gamma$. In this case,  $A$ is said \emph{geodesically anchored} from $g$ to $h$.
\end{defn}
Removing $g$ or $h$ from $A$ does not change
$\LS_A(g\po,h\po)$. Thus, we may always assume that
$A\cap\{g,h\}=\emptyset$.   
\begin{rem}
By the very definition, the standard $A$-length function is asymmetric in general: the order of $g$ and $h$ matters. We allow $A$ to be empty (i.e. $m=0$), in which case we omit $A$ and recover the usual one $\LS$ (which is then symmetric).   
\end{rem}

The main proposition of this section reads as follows.
\begin{prop}\label{prop:word_geod_on_proj_cplx}
    Suppose that each element $f\in F$ is $(\kappa, N)$-divergent. Then for any $\delta>0$ there exists a constant $E = E(\kappa, \beta, C, \delta)>0$ such that the following holds:
    
    Let $g,h\in G$ be two elements and let $A$ be an anchored set of elements on a geodesic from $g$ to $h$. Then \[\LS_A(g\po, h\po)\; \le\; \LS(g\po, h\po)+ \delta \, d_S(g,h)  + E\, |A|.\]
\end{prop}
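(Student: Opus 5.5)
We argue by induction on $|A|$, reducing to a single-anchor estimate. Concretely, we aim to prove: for every $\delta>0$ there is $E$ such that for $g,h\in G$ and $a$ on a word geodesic $\gamma$ from $g$ to $h$,
\[
\LS(g\po,a\po)+\LS(a\po,h\po)\le \LS(g\po,h\po)+\delta\,\min\{d_S(g,a),d_S(a,h)\}+E. \tag{$\ast$}
\]
This suffices. Given $A=\{a_1<\dots<a_m\}$, apply $(\ast)$ successively with $a_m$ on the subgeodesic from $a_{m-1}$ (or $g$) to $h$, then peel off anchors one at a time. The resulting errors are $\delta$ times lengths of disjoint subsegments of $\gamma$ plus $E$ per anchor, and these sum to at most $\delta d_S(g,h)+E|A|$. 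If the bookkeeping of the $\min$ is awkward, we will instead use the weaker bound with $\delta d_S$ of the segment on one chosen side, oriented so that the segments charged remain disjoint.

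To prove $(\ast)$, write $\f_K(g\po,a\po)$ and $\f_K(a\po,h\po)$. By the almost-tripod property (Lemma~\ref{lem:triangle_in_PC}), $\LS(g\po,a\po)+\LS(a\po,h\po)-\LS(g\po,h\po)$ is, up to an additive constant, twice the number of vertices in the common ``backtracking'' part. This is the set $\mathcal B$ of $Y$ lying in both $\f_K(a\po,g\po)$ and $\f_K(a\po,h\po)$ as a common initial segment from $a\po$, which has the shape of an insize of the standard-path triangle. So it suffices to show $|\mathcal B|\le \delta\, d_S(g,a)+E$ and likewise with $d_S(a,h)$. Fix $Y\in\mathcal B$, $Y=bE(f)o$. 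Then $\proj_Y(a\po,g\po)>K$ and $\proj_Y(a\po,h\po)>K$. Since both sides exceed $K$ while $g\po,h\po$ sit on the same side of $Y$ relative to $a\po$, the projections of $g\po$ and $h\po$ to $Y$ are close and both far (more than $K-O(\theta)>N$) from $\pi_Y(a\po)$. By Lemma~\ref{lem:word_is_PC_path}, the subpath $[g,a]_\gamma$ maps to a path in $\PC$ that must pass within $\LS$-distance $3$ of $Y$ (Lemma~\ref{lem:bottleneck_in_PC}). The $X$-projections along $[g,a]_\gamma o$ also move by at least $N$ across $Y$, and the same holds along $[a,h]_\gamma$. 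Now apply $(\kappa,N)$-divergence (Definition~\ref{defn:divergence}, Lemma~\ref{lem:WPDdivergent}). If the geodesic $\gamma$ stays outside $N_R(bE(f))$ on the relevant stretch, that stretch has length at least $\kappa(R)$. Otherwise $\gamma$ enters $N_R(bE(f))$ both before and after $a$, at points $x\in[g,a]_\gamma$ and $y\in[a,h]_\gamma$ whose projections to $Y$ are near those of $g$ and $h$, hence close to each other. Since $\gamma$ is geodesic and $E(f)\to Y$ is a $\beta$-quasi-isometry, $d_S(x,y)\le 2R+O(\beta R)$. The segment from $x$ to $y$ passes through $a$, so $a$ is within bounded distance $D(R)$ of $Y$'s coset, contradicting $\proj_Y(a\po,g\po)>K$ once we control how far $a\po$ can project. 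More precisely, this bounds the number of such $Y$ by a constant $M(R)$, since all of them would have $a$ within $D(R)$ of their coset and, by bounded projection, only boundedly many axes are that close to a point while pairwise distinct in the order.

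Hence all but $M(R)$ elements of $\mathcal B$ force a stretch of $[g,a]_\gamma$ of length at least $\kappa(R)$ avoiding $N_R(Y)$. The key remaining step, and the main obstacle, is making these stretches essentially disjoint for distinct $Y_1<Y_2<\dots$ in $\mathcal B$, so that $|\mathcal B|\kappa(R)\lesssim d_S(g,a)$. We plan to use the ordering: $\gamma$ must pass $2$-close in $\PC$ to each $Y_i$ in order (bottleneck), and by the strong Behrstock axiom the passage near $Y_{i+1}$ occurs after $\gamma$ has left the region where projections to $Y_i$ change. This lets us assign to $Y_i$ the subsegment of $\gamma$ between its passages near $Y_{i-1}$ and $Y_{i+1}$ (only every third or fifth element, to get disjointness). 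Choosing $R$ with $\kappa(R)\ge 5/\delta$ gives $|\mathcal B|\le \delta d_S(g,a)+E$ with $E=E(\kappa,\beta,C,\delta)$, and symmetrically for $[a,h]_\gamma$, which proves $(\ast)$.
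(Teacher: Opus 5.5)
There is a genuine gap, located exactly where your reduction needs it. For a fixed $Y\in\mathcal B$ your dichotomy has two branches. In the first, $\gamma$ enters $N_R(bE(f))$ both in $[g,a]_\gamma$ and in $[a,h]_\gamma$, near the projections of $g$ and $h$; this pins $a$ within $D(R)$ of the coset, so it happens for at most $M(R)$ axes. In the second it does not. But the negation of ``on both sides'' is ``on at least one side''. So for all but $M(R)$ axes you get a long divergent stretch in $[g,a]_\gamma$ \emph{or} in $[a,h]_\gamma$, with no control over which. Hence the step ``all but $M(R)$ elements of $\mathcal B$ force a stretch of $[g,a]_\gamma$ \dots\ and symmetrically for $[a,h]_\gamma$'' does not follow. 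Nothing you use rules out the configuration where $[a,h]_\gamma$ runs $R$-close to the cosets of all $Y\in\mathcal B$ while $[g,a]_\gamma$ crosses them far away. There all the divergent length sits on the $g$-side, and you get no bound $|\mathcal B|\le\delta\,d_S(a,h)+E$ (it is not clear such a bound is even true). Suppose disjointness is arranged, e.g.\ by guard decompositions of $[g,a]_\gamma$ and of the reverse of $[a,h]_\gamma$ with common guard axes in $\mathcal B$, as in Lemmas~\ref{lem:block_decomp} and \ref{lem:good_block}. Then what the argument actually gives is only the symmetric estimate $|\mathcal B|\le\delta\,d_S(g,h)+E$.

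That symmetric estimate is not enough for peeling. Your bookkeeping needs each removed anchor charged to a segment disjoint from the others, i.e.\ a one-sided bound. A one-sided bound valid for all triples is, after reversing $\gamma$, the same as your $\min$ version, so the fallback in your first paragraph does not help. With the symmetric bound, removing $a_m,a_{m-1},\dots$ successively costs $\delta\sum_j d_S(a_{j-1},h)$, which can be of order $\delta|A|\,d_S(g,h)$. Removing alternate anchors in rounds instead costs $\delta\,d_S(g,h)\log_2|A|$, and letting $\delta$ shrink across rounds fails because $E(\delta)$ can grow arbitrarily fast. Either loss is fatal, since in Lemma~\ref{lem:std_length_of_image} and Theorem~\ref{thm:grow_tight_PC} one takes $|A|$ proportional to $d_S(g,h)$.

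The paper avoids isolating anchors altogether. For $\LS(g\po,h\po)\le 13$ it inducts on $d_S(g,h)+|A|$ keeping all anchors together (Lemma~\ref{lem:barrier_free_on_proj_cplx}). In the essential case a single $K$-guard separates $\{g,h\}$ from all of $A$. Lemma~\ref{lem:cut_end_div} then trims length $62/\delta$ from \emph{whichever} end of $\gamma$ avoids $N_R(H)$, while $A$ stays in the remaining subgeodesic. The trimmed pieces are disjoint parts of one geodesic, so only the symmetric dichotomy is ever used. Short decompositions then handle general $g,h$.
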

\iffalse\begin{rem}
    If $G$ is a hyperbolic group acting geometrically on $X$, then the constant $E$ is bounded for each $\delta>0$. In general, $E\to \infty$ as $\delta\to 0$.

    In the proposition, $E$ is independent of $N$, and the projection complex constant $K$ depends on $N$. 
\end{rem}\fi

The remainder of this section is devoted to the proof of Proposition \ref{prop:word_geod_on_proj_cplx}.

\subsection{Preparatory results}
In the proof, we shall prove the inequality by induction   via  decomposing the geodesic into shorter segments. 
\begin{defn}[Short decomposition]
    Let $\gamma$ be a  path from $g$ to $h$ in $\mathrm{Cay}(G,S)$. A \emph{short decomposition} of $\gamma$ with length $m$ means a sequence of ordered points  on $\gamma$  \[
    \{g = c_0< c_1< \cdots< c_m< c_{m+1} = h\}
    \] which satisfy $\LS(c_i\po, c_{i+1}\po)\le 13$ for each $0\le i\le m$. 

    Let $A$ be an   anchored set of vertices on $\gamma$ from $g$ to $h$. We say that the short decomposition is $A$-proper if
$$
([c_j,c_{j+1}]_\gamma\cap A)\setminus\{c_j,c_{j+1}\}
\ne\emptyset
$$
for at least two values of $j$.  %$A$ is not entirely contained in the subpath $[c_i, c_{i+1}]_\gamma$ for each $0\le i\le m$. That is, $$|[c_i, c_{i+1}]_\gamma \cap A|< |A|.$$
\end{defn}

Short decomposition always exists thanks to the bottleneck property in Lemma \ref{lem:bottleneck_in_PC}.% of the projection complex.
\begin{lem}\label{lem:split_path_into_pieces}
    Let $\gamma$ be a path from $g$ to  $h$ in $\mathrm{Cay}(G,S)$. Then $\gamma$ admits a {short decomposition} with length  $m\le \lfloor\LS(g\po,h\po)/5\rfloor$ \[
    \{g = c_0< c_1< \cdots< c_m< c_{m+1} = h\}
    \] and there exists a sequence of ordered points on $\f_K[g\po, h\po]$ \[\{g\po = Z_0<Z_1<\cdots < Z_m<Z_{m+1} = h\po\}\] so that $\LS(Z_i, c_i\po) \le 3$ for each $1\le i\le m$.
\end{lem}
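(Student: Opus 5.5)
The plan is to use the bottleneck property (Lemma~\ref{lem:bottleneck_in_PC}) to pin the word path $\gamma\po$ to evenly spaced vertices of the standard path from $g\po$ to $h\po$. By Lemma~\ref{lem:word_is_PC_path}, the sequence $\gamma\po=\{g_0\po,\dots,g_\ell\po\}$ is a path in $\PC$ from $g\po$ to $h\po$. Write
\[
\f_K(g\po,h\po)=\{Y_1<\cdots<Y_{n-1}\},\qquad n=\LS(g\po,h\po),
\]
with $Y_0=g\po$ and $Y_n=h\po$. If $n<10$ we take $m=0$. In that case we still need $\LS(g\po,h\po)\le 13$, which holds since $n\le 9$. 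The interesting case is $n\ge 10$.

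Set $m=\lfloor n/5\rfloor$ and pick $Z_i:=Y_{5i}$ for $1\le i\le m$, dropping the last one if $5m\ge n$. To make the count come out exactly, $m$ may be reduced by one so that every $Z_i$ lies in $\f_K(g\po,h\po)$, i.e.\ $5i\le n-1$. For each such $Z_i$, Lemma~\ref{lem:bottleneck_in_PC} gives a vertex $g_{j}\po$ on $\gamma\po$ with $\LS(g_j\po,Z_i)\le 3$. We let $c_i$ be such a $g_j$.

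The main obstacle is ordering: the points $c_i$ must be increasing along $\gamma$. To get this, choose $c_i$ as the \emph{first} point of $\gamma$, after $c_{i-1}$, whose image lies within $\LS$-distance $3$ of $Z_i$. Such a point exists because the subpath of $\gamma\po$ from $c_{i-1}\po$ to $h\po$ is again a path in $\PC$, and we can check that $Z_i\in\f_K(c_{i-1}\po,h\po)$. For this, combine $\LS(c_{i-1}\po,Z_{i-1})\le 3$ with Lemma~\ref{lem:common_mid}: any element of $\f_K(Z_{i-1},h\po)$ at index at least $3+2$ beyond $Z_{i-1}$ survives. Corollary~\ref{cor:sub_segment_std} identifies $\f_K(Z_{i-1},h\po)$ as the tail $\{Y_k:k>5(i-1)\}$. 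Since $Z_i=Y_{5(i-1)+5}$, the spacing $5$ is just enough, and the bottleneck applied to the subpath then produces $c_i>c_{i-1}$. Some care with the off-by-one indices in Lemma~\ref{lem:common_mid} is needed, and the constant $5$ may need to be checked against its exact form.

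Finally, bound consecutive lengths using Lemma~\ref{lem:std_tri_ineq} twice:
\[
\LS(c_i\po,c_{i+1}\po)\le \LS(c_i\po,Z_i)+\LS(Z_i,Z_{i+1})+\LS(Z_{i+1},c_{i+1}\po)+2\le 3+5+3+2=13,
\]
using $\LS(Z_i,Z_{i+1})=5$, which follows from Corollary~\ref{cor:sub_segment_std}. The endpoint segments are handled the same way: $\LS(g\po,Z_1)=5$ and $\LS(Z_m,h\po)\le 9$ give at most $3+9+1=13$ after the adjustment of $m$. This proves the short decomposition, the bound $m\le\lfloor n/5\rfloor$, and the ordered points $Z_i$ with $\LS(Z_i,c_i\po)\le 3$.
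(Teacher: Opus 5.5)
Your proposal is correct and follows the paper's proof. You pin $\gamma\po$ via the bottleneck property to every fifth vertex $Y_{5i}$ of $\f_K[g\po,h\po]$, and you bound consecutive pieces by $3+5+3+2=13$ using the weak triangle inequality.

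The one difference is how ordering is secured. You select greedily forward, checking $Z_i\in\f_K(c_{i-1}\po,h\po)$; the paper instead takes each $c_i$ as the globally first point near $Y_{5i}$ and verifies $Y_{5i-5}\in\f_K(g\po,c_i\po)$. Your check is cleanest via Lemma~\ref{lem:common_prefix} applied from the $h\po$ end, which works whenever $5i\le n-1$. The two-sided Lemma~\ref{lem:common_mid}, read literally, needs $5i\le n-5$, which fails at the last index for your choice of $m$. In both versions $c_i=c_{i-1}$ can occur, so choose $c_i$ at or after $c_{i-1}$ and discard repetitions, as the paper does.
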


\begin{proof}
    Denote $n = \LS(g\po,h\po)$ and $\f_K(g\po, h\po) = \{Y_1<Y_2< \cdots <Y_{n-1}\}$. Write $Y_0 = g\po$ and $Y_n = h\po$ by convention. By Lemma~\ref{lem:word_is_PC_path}, $\gamma$ labels a path  in $\PC$ from $g\po$ to $h\po$, which passes through the $3$-neighborhood of every vertex on the standard path $\f_K(g\po, h\po)$ by the bottleneck property in Lemma~\ref{lem:bottleneck_in_PC}. 
    Set $m =\max\{0, \lfloor n/5\rfloor-1\}$. If $10\le n\le 14$; then $m=1$ and choosing $Z_1=Y_5\in \f_K(g\po, h\po)$ and  $c_1\in \gamma$ with $\LS(Z_1, c_1\po) \le 3$ satisfies the conclusion. If $n\le 9$ there is nothing to prove. 
    
    We now assume $m\ge 2$, and pick $Z_i = Y_{5i}$ for $0\le i\le m$, and set $Z_{m+1} = Y_n$. For each $1\le i\le m$, choose $c_i\in \gamma$ to be the first point 
    (in the natural order) with $\LS(c_i\po, Y_{5i})\le 3$. Set $c_0 = g$ and $c_{m+1} = h$. For $1\le i\le m$, $\LS(Y_{5(i-1)}, Y_{5i}) = 5$, and $5 \le \LS(Y_{5m}, h\po) \le 9$.
    By Lemma~\ref{lem:std_tri_ineq}, the length function $\LS$ satisfies  the weak triangle inequality, so for $1\le i\le m$,
    \[
    \begin{aligned}
        \LS(c_{i-1}\po, c_i\po) 
        &\le \LS(c_{i-1}\po, Y_{5(i-1)}) 
           + \LS(Y_{5(i-1)}, Y_{5i}) 
           + \LS(Y_{5i}, c_i\po) + 2\le  13\\
        \LS(c_m\po, h\po) 
        &\le \LS(c_m\po, Y_{5m}) + \LS(Y_{5m}, h\po) + 1  \le13 .
    \end{aligned}
    \]
    It remains to check that  $\{c_0, c_1, \cdots, c_{m+1}\}$ are ordered in $\gamma$. Indeed, by Corollary~\ref{cor:sub_segment_std}, $\f_K(g\po, Y_{5i}) = \{Y_1,\dots,Y_{5i-1}\}$. Since $\LS(c_i\po, Y_{5i})\le 3$, Lemma~\ref{lem:common_prefix} gives
    \[
    \{Y_1,\dots,Y_{5i-5}\} \subseteq \f_K(g\po, c_i\po).
    \]
    In particular $Y_{5(i-1)}\in \f_K(g\po, c_i\po)$. By definition, $c_{i-1}\in \gamma$ is the first point with $\LS(c_{i-1}\po, Y_{5i-5})\le 3$, and the bottleneck property for $\f_K(g\po, c_i\po)$ proves that  $c_{i-1}$ must appear before $c_i$. Hence, up to removing repetitions $\{c_0, c_1, \cdots, c_{m+1}\}$ forms a short decomposition. 
\end{proof}

The induction will not only be on the length of the geodesic, but also on the cardinality of the anchored set on it.
The next lemma  breaks  the anchored set into smaller ones. 
For $x\in \PC$ and $r\ge 0$, denote 
$$B^{\mathrm{std}}(x,r) = \big\{y\in \PC: \LS(x, y)\le r\big\}$$
the ball-like set  in $\PC$   with respect to the standard path length.

\begin{lem}\label{lem:leave_ball_twice}
Assume  $\LS(g\po, h\po) \le 13$ for some $g, h \in G$. Let $A$ be an anchored set of vertices on some   path $\gamma$  from $g$ to $h$ in $\mathrm{Cay}(G,S)$. Suppose there exist distinct $a, b \in A$ such that 
\[
\LS(a\po, g\po) \ge 200, \quad \LS(b\po, g\po) \ge 200, \quad \text{and} \quad \f_K[a\po, b\po] \cap B^{\mathrm{std}}(g\po,20)\neq \emptyset.
\]
Then $\gamma$ admits an $A$-proper {short decomposition} with length at most $12$. 
\end{lem}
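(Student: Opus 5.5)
The plan is to manufacture a single ``pivot'' point $c$ on $\gamma$ that lies strictly between $a$ and $b$ and is close to $g\po$ in $\LS$. I then cut $\gamma$ at $c$ and apply Lemma~\ref{lem:split_path_into_pieces} separately to $[g,c]_\gamma$ and $[c,h]_\gamma$. The anchor $a$ and the anchor $b$ then land in the interiors of two different pieces, which gives $A$-properness.

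\emph{Step 1: the pivot.} Without loss of generality $a<b$ along $\gamma$ (the argument is symmetric otherwise). By Lemma~\ref{lem:word_is_PC_path}, the subpath $[a,b]_\gamma$ labels a path in $\PC$ from $a\po$ to $b\po$. By hypothesis there is $Y\in\f_K[a\po,b\po]$ with $\LS(Y,g\po)\le 20$.
\begin{itemize}
\item If $Y$ is an interior vertex, the bottleneck property (Lemma~\ref{lem:bottleneck_in_PC}) gives a point $c\in[a,b]_\gamma$ with $\LS(c\po,Y)\le 3$.
\item If $Y\in\{a\po,b\po\}$, the hypothesis $\LS(a\po,g\po),\LS(b\po,g\po)\ge 200$ makes this impossible.
\end{itemize}
Lemma~\ref{lem:std_tri_ineq} then gives $\LS(c\po,g\po)\le 24$. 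Applying it again together with $\LS(g\po,h\po)\le 13$ gives $\LS(c\po,h\po)\le 38$. Using the triangle inequality once more, $\LS(a\po,c\po)\ge 200-24-1>0$, and similarly for $b$. Hence $c\neq a,b$, so $c$ lies strictly between $a$ and $b$ on $\gamma$.

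\emph{Step 2: splitting each half.} Apply Lemma~\ref{lem:split_path_into_pieces} to $[g,c]_\gamma$. This gives a short decomposition with at most $\lfloor 24/5\rfloor=4$ interior points $c_i$, each with $\LS(c_i\po,Z_i)\le 3$ for some $Z_i$ on $\f_K[g\po,c\po]$. Likewise, $[c,h]_\gamma$ gets at most $\lfloor 38/5\rfloor=7$ interior points. Concatenating the two decompositions and adding $c$ produces a short decomposition of $\gamma$ with at most $4+1+7=12$ interior points. Every consecutive pair has $\LS\le 13$ by the lemma.

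\emph{Step 3: properness.} This is the only delicate point: I must ensure that $a$ and $b$ are not themselves decomposition points. Every decomposition point $c_i$ satisfies $\LS(c_i\po,Z_i)\le3$ with $Z_i$ on a standard path whose endpoints lie in $\{g\po,c\po,h\po\}$. Each such $Z_i$ satisfies $\LS(Z_i,g\po)\le 38+13+1$, since vertices of a standard path from $U$ to $V$ are within $\LS(U,V)$ of $U$. So $\LS(c_i\po,g\po)$ is bounded by a constant well below $200$, and the same holds for $g,c,h$. Hence neither $a$ nor $b$ is a decomposition point.

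Since $a\in(g,c)_\gamma$, it lies in the interior of some piece of the first half. Since $b\in(c,h)_\gamma$, it lies in the interior of some piece of the second half. These are two different indices $j$, so the decomposition is $A$-proper.

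The main obstacle is the bookkeeping in Step 3, namely checking that the constants stay below $200$. The constants $200$ and $20$ appear to be chosen with exactly this slack.
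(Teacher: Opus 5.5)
Your proposal is correct and follows the paper's proof essentially step for step: the same pivot obtained from the bottleneck property, the same application of Lemma~\ref{lem:split_path_into_pieces} to $[g,c]_\gamma$ and $[c,h]_\gamma$ giving length at most $4+1+7=12$, and the same placement of $a$ and $b$ in two distinct pieces. The only cosmetic difference is in Step 3, where you bound $\LS(c_i\po,g\po)$ using the companion vertices $Z_i$ on the sub-standard paths, whereas the paper chains the bounds $\LS(c_j\po,c_{j+1}\po)\le 13$ to get $\LS(g\po,c_i\po)\le 14i-1\le 168$; both stay below $200$.
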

\begin{proof}
Up to exchanging $a$ and $b$, we may assume that $g, a, b, h$ are linearly ordered along $\gamma$. By assumption that $\f_K[a\po, b\po] \cap B^{\mathrm{std}}(g\po,20)$, there exists $Y\in \f_K(a\po, b\po)$ such that $\LS(Y, g\po)\le 20$. So by Lemma~\ref{lem:bottleneck_in_PC}, there exists $p\in [a, b]_{\gamma}$ such that $\LS(p\po, Y)\le 3$. The weak triangle inequality (Lemma~\ref{lem:std_tri_ineq}) for $\LS$ implies
\[
\LS(p\po, g\po) \le \LS(p\po, Y) + \LS(Y, g\po) + 1 \le 24
\]
\[
\LS(p\po, h\po) \le \LS(p\po, g\po) + \LS(g\po, h\po) + 1 \le 38.
\]
By Lemma~\ref{lem:split_path_into_pieces}, there exists a {short decomposition} on $[g, p]_\gamma$ with length $l$:  
\[
l \le \left\lfloor \frac{\LS(g\po, p\po)}{5} \right\rfloor \le 4.
\]
Similarly, $[p, h]_\gamma$ admits a {short decomposition} with length $m$:  
\[
m \le \frac{\LS(p\po, h\po)}{5} \le 7.
\]
Since $g, p, h$ are linearly ordered on $\gamma$, concatenating the above decompositions yields the desired {short decomposition} on $\gamma$ of length $k := l + m + 1\le 12$:
\[
\{c_0 = g< c_1< \cdots< c_k< c_{k+1} = h\}
\]
where $c_{l+1} = p$.

It remains to show that the decomposition is $A$-proper. Indeed, by definition of short decomposition, $\LS(c_{i}, c_{i+1})\le 13$ for each $0\le i\le k$. By the weak triangle inequality (Lemma~\ref{lem:std_tri_ineq}),  
$$\LS(g\po, c_i\po)\le \sum_{j=0}^{i-1}\LS(c_{j}\po, c_{j+1}\po) + (i-1)\le 14i-1\le 168$$
for each $1\le i\le k$. By the assumption that $\LS(a\po, g\po), \LS(b\po, g\po)\ge 200$, we see $a, b\not\in\{c_0, \cdots, c_{k+1}\}$. Since  $p=c_{l+1}$ is contained in the decomposition and $a\in [g,p]_\gamma, b\in [p,h]_\gamma$, we conclude that $a$ and $b$ must be contained in two distinct $([c_j, c_{j+1}]_\gamma \cap A)\setminus \{c_j, c_{j+1}\}\ne \emptyset$. This justifies the $A$-properness of the decomposition.
\end{proof} 

To formulate the next lemma  conveniently, it would be helpful to introduce the term of guard. %A similar use of guard appears in \cite{BBF}, and ours is certainly indebted to theirs.

\begin{defn}[$K$-guard]\label{defn:Kguard}
Let  $A, B\subseteq G$ be two non-empty finite sets of elements. We say that $Y\in \PC$ is a \textit{$K$--guard} between $A$ and $B$ if for each pair $(a,b)\in (A\times A)\cup (B\times B)$,
    $$\begin{aligned}
    d_Y(a\po, b \po)\le \theta
    \end{aligned}$$  
    and for each pair $(a,b)\in A\times B$,
    $$d_Y(a \po, b\po) > K.  $$
It follows by definition that $A$ and $B$ must be disjoint.
\end{defn}

The following  lemma deals with the bottom case of the induction.  This is the place to require essentially that each element $f\in \f$ is $(\kappa, N)$-divergent and $\gamma$ is a   \emph{quasi-geodesic} rather than just an arbitrary path in Lemma \ref{lem:leave_ball_twice}.  

\begin{lem}\label{lem:cut_end_div}
For any $M,\lambda > 0$, there exists a constant $L = L(M, \lambda) > 0$ with the following property.

Let $\gamma = [g,h]$ be a $\lambda$-quasi-geodesic path in $\mathrm{Cay}(G,S)$, and let $A \subseteq \gamma$ be an ordered anchored set of vertices from $g$ to $h$. Assume that $Y \in \PC$ is a $K$-guard between $\{g,h\}$ and $A$.  
If $\len(\gamma) > L$, then there exists a subpath $\gamma_1 = [g_1, h_1]_\gamma$ with the  endpoints $g_1,h_1$ on $\gamma$ such that the following holds:
\begin{enumerate}
    \item $\gamma_1$ contains  the anchored set $A$;
    \item \(
    \len(\gamma_1) \le \len(\gamma) - M;
    \)
    \item $Y\notin \f_K[g\po, g_1\po]$ and $Y\notin \f_K[h_1\po, h\po]$.
\end{enumerate}

\end{lem}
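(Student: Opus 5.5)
Write $Y=x E(f)o$ for some $x\in G$, $f\in F$, and set $H:=xE(f)\subseteq G$, so that $Y=Ho$. The plan is to use the $(\kappa,N)$-divergence of $f$ to show that $\gamma$ must come close to $H$ in the word metric, both between $g$ and the first anchor and between the last anchor and $h$. We then cut $\gamma$ at those near-visits.

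Since $Y$ is a $K$-guard, for every $a\in A$ we have $\proj_Y(g\po,a\po)>K$ and $\proj_Y(a\po,h\po)>K$, while $\proj_Y(g\po,h\po)\le\theta$. By Eq.~(\ref{eq:projdist}) and $K>N+4\theta$, this gives $\proj^\pi_{Ho}(go,ao)\ge N$ for the unmodified shortest projections, up to the bounded error from replacing $g\po$ by the point $go\in g\po$. Let $a_1$ and $a_m$ be the first and last anchors. Choose $R$ with $\kappa(R)$ larger than the relevant lengths, and apply Definition~\ref{defn:divergence} to the subpath $[g,a_1]_\gamma$. If this subpath has length less than $\kappa(R)$, it must meet $N_R(H)$. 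The same holds for $[a_m,h]_\gamma$.

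The key claim is a two-case split on the length of $[g,a_1]_\gamma$.
\begin{enumerate}
\item If $[g,a_1]_\gamma$ is long, we cut off a long initial piece of it directly.
\item If it is short, divergence gives a point $u\in[g,a_1]_\gamma$ with $d_S(u,H)\le R$.
\end{enumerate}
In either case we want to produce a cut point $g_1\in[g,a_1]_\gamma$ such that $Y\notin\f_K[g\po,g_1\po]$ and $d_S(g,g_1)\ge M$ whenever $\len(\gamma)>L$. Symmetrically we produce $h_1\in[a_m,h]_\gamma$. Then $\gamma_1=[g_1,h_1]_\gamma$ contains $A$, which gives item (1).

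To get item (3), choose $g_1$ to be the last point of $[g,a_1]_\gamma$ such that $\proj_Y(g\po,g_1\po)\le K$. Then $Y\notin\f_K[g\po,g_1\po]$ automatically, provided $g_1\po\ne Y$; that last case can be handled by stepping back one point. Consecutive points of $\gamma$ change $\proj_Y$ by at most $\beta+2C+4\theta$ (the computation in Lemma~\ref{lem:word_is_PC_path}). Hence at the point just after $g_1$, the projection distance has already jumped past $K$. By Lemma~\ref{BigFive}(3), $\pi_{Ho}(go)$ then lies $2C$-close to the $X$-geodesic, so the orbit of $\gamma$ enters a bounded $d$-neighbourhood of $Ho$ near $g_1$. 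Using the divergence function together with the $\beta$-quasi-isometry $E(f)\to\ax(f)$, this forces $g_1$ to be within a word distance $R_0=R_0(\kappa,\beta,C)$ of $H$.

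For item (2), we need to show that $\len([g,g_1]_\gamma)$ or $\len([h_1,h]_\gamma)$ is at least $M$. Suppose both are less than $M$. Then $g$ and $h$ are within $M+R_0$ of $H$, and the same is true of $g_1$ and $h_1$. The projection of $g_1 o$ to $Ho$ is already far along, with $\proj_Y(g\po,g_1\po)$ close to $K$, and similarly for $h_1o$ relative to $ho$. Because $\gamma$ is a $\lambda$-quasi-geodesic and $H$ is Morse in $\mathrm{Cay}(G,S)$ (Lemma~\ref{lem:WPDisMorse}), the segment $[g_1,h_1]_\gamma$ between two points near $H$ stays in a $\sigma(\lambda)$-neighbourhood of $H$. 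So does the segment from $g$ to $h$, whose projections are nearly equal since $\proj_Y(g\po,h\po)\le\theta$. But the anchors $a\in A$ project more than $K$ away from $g$ on $Y$, and this forces the path to travel out along $H$ and back. A $\lambda$-quasi-geodesic cannot do this once it is longer than some $L(M,\lambda)$. This contradiction gives item (2), and we may then cut at whichever end is long; if only one end is long, the other cut is taken trivially.

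The main obstacle is this last backtracking argument: converting the projection data in $X$ into a word-metric backtracking along $H$. This requires the divergence function to relate $X$-projections to word-metric closeness of $\gamma$ to $H$, and that relation is where the dependence of $L$ on $\kappa$ and $\lambda$ enters.
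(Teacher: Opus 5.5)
Your overall shape is right, and your choice of $g_1$ as the last point of $[g,a_1]_\gamma$ with $\proj_Y(g\po,g_1\po)\le K$ (and symmetrically $h_1$) makes items (1) and (3) essentially automatic. The gap is in item (2).

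First, the claim that $g_1$ lies within word distance $R_0$ of $H$ does not follow. Lemma~\ref{BigFive}(3) only places $\pi_{Ho}(go)$ near the $X$-geodesic $[go,g_1'o]$; it does not put $g_1o$ near $Ho$. In any case, $X$-proximity to $Ho$ gives no word-metric proximity to $H$, since the action need not be proper. Divergence cannot localize a near-visit at a prescribed point. It only says that a short path with projection change at least $N$ meets $N_R(H)$ somewhere. That is all you need, applied to the short end. Suppose $\len([g,g_1]_\gamma)<M$, and let $g_1'$ be the point just after $g_1$. The subpath from $g$ to $g_1'$ has length less than $M+1<\kappa(R)$ (choose $R$ accordingly), and $\proj^\pi_{Ho}(go,g_1'o)>K-4\theta>N$. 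So it meets $N_R(H)$, and $d_S(g,H)\le M+R+1$. The same holds for $h$.

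Second, the final contradiction is never actually carried out. The Morse property of $H$ and the anchors forcing an excursion ``out along $H$ and back'' are beside the point. That excursion has size governed by the fixed constant $K$, so by itself it contradicts nothing once $\lambda$ is large. The missing step is to bound $d_S(g,h)$ directly:
\begin{enumerate}
\item Pick $g',h'\in H$ word-close to $g,h$. By the $\beta$-Lipschitz orbit map and Lemma~\ref{BigFive}(2), $\proj^\pi_Y(go,g'o)$ and $\proj^\pi_Y(ho,h'o)$ are at most $\beta(M+R+1)+2C$.
\item The guard condition $\proj_Y(g\po,h\po)\le\theta$ gives $\proj^\pi_Y(go,ho)\le 3\theta$.
\item Since $g'o,h'o\in Y$, these bounds control $d(g'o,h'o)$.
\item The $\beta$-quasi-isometry $H\to Y$ then bounds $d_S(g',h')$, hence $d_S(g,h)$, hence $\len(\gamma)\le\lambda d_S(g,h)+\lambda$.
\end{enumerate}
Choosing $L$ above this bound gives the contradiction. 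The anchors enter only through item (1).

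This is precisely the paper's Case 2. The paper instead cuts at the points at distance exactly $M$ from the ends, and in its Case 1 uses divergence to get item (3) along the whole removed piece. Your last-point choice gives (3) for free instead.
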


\begin{proof}
For concreteness, let $H \subseteq G$ denote the left coset of $E(f)$ for some $f \in F$, which gives  $Y = Ho$. By assumption, each $f \in F$ is $(\kappa, N)$-divergent. Recall that the orbit map $\Phi:g \mapsto g \cdot o$ restricts to a $\beta$-quasi-isometric embedding on $E(f)$ for some $\beta>1$. 

We begin by fixing the constants. Given an integer $M>0$, let $R > 0$ be the least integer such that $$\kappa(R) > M$$ and then set
\begin{align}\label{eq:LofValue}
L = \lambda(2(R + M) + \beta\bigl(2\beta(R+M) + 4C + 3\theta + 1\bigr))+\lambda.    
\end{align}

Choose two elements $g_1, h_1$ on $ \gamma=[g,h]$ such that 
\[
\len([g, g_1]_{\gamma}) = M, \qquad \len([h_1, h]_{\gamma}) = M.
\]
The desired subpath $\gamma_1$ shall be obtained by cutting out either $[g, g_1]_{\gamma}$ or $[h_1, h]_{\gamma}$.  This is the content of the case (1), while the   case (2) shall be proven impossible.

\medskip
{\textbf{Case 1.}} Suppose either $[g, g_1]_\gamma$ or $[h_1, h]_\gamma$  is disjoint with the $R$-neighborhood of $H$ in word metric. Assume $[g, g_1]_\gamma \cap N_R(H) =\emptyset$ for concreteness; the other case is symmetric. We shall prove that $\gamma_1 = [g_1, h]_\gamma$ is the desired subpath.

For any given $p \in [g, g_1]_\gamma$, $\len([g, p]_{\gamma}) \le M < \kappa(R)$. Since $Y=Ho$ is $(\kappa, N)$-divergent (Definition \ref{defn:divergence}), this implies $\proj_Y^\pi(go, po)< N$. %\ywy{REMOVE 
%\[
%\proj_Y^S(g,p) < N
%\]
%whence by the $\beta$-Lipschitz of orbital map, $$\proj_Y^\pi(go, po)\, \le\, \beta\,\proj_Y^S(g,p)\le\, N\beta.$$} 
By Eq. (\ref{eq:projdist}) in Theorem~\ref{thm:forcing_seq},
\[
\proj_Y(g\po, p\po) \le \proj_Y^\pi(g\po,p\po)+2\theta\le \proj_Y^\pi(go, po) + 4\theta \le N + 4\theta < K
\]
where the last inequality follows by the choice of $K$ in (\ref{eq:defnK}).
This holds for every $p\in  [g, g_1]_\gamma$, so justifies the item (3): $Y\notin \f_K[g\po, g_1\po]$. 

Noting that $Y$ is a $K$-guard between $\{g,h\}$ and $A$, by Definition \ref{defn:Kguard} we obtain $\proj_Y(a\po, g\po) > K$ for every $a \in A$. This implies $p \notin A$ for any $p \in [g, g_1]_\gamma$. Hence,  $A \subseteq [g_1, h]_\gamma$. Taking the subpath $\gamma_1 = [g_1, h]_\gamma$ completes the proof in this case.

%{\textbf{Case 2.}} Suppose $d_S([h_1, h]_\gamma, H) > R$.

%By the same argument as in Case 1, we obtain $A \subseteq [g, h_1]_\gamma$. Taking $\gamma_1 = [g, h_1]_\gamma$ completes the proof.
\medskip
{\textbf{Case 2.}} Let us assume 
\[
d_S([g, g_1]_\gamma, H) \le R \quad \text{and} \quad d_S([h_1, h]_\gamma, H) \le R.
\]
The remaining proof is to derive a contradiction by estimating the length of $\gamma$. 
\medskip

First, the triangle inequality gives
\[
d_S(g, H) \le \mathrm{diam}_S([g, g_1]_\gamma) + d_S([g, g_1]_\gamma, H) \le M + R,
\]
\[
d_S(h, H) \le \mathrm{diam}_S([h_1, h]_\gamma) + d_S([h_1, h]_\gamma, H) \le M + R.
\]
Choose $g', h' \in H$ such that $d_S(g, g') \le R + M$ and $d_S(h, h') \le R + M$. The $\beta$-Lipschitz property of the orbital map $\Pi: g\in G \mapsto go\in X$ then gives
\[
d(go, g'o) \le \beta\, d_S(g, g') \le \beta(R + M).
\]
By Lemma~\ref{BigFive}(2), the shortest projection map to the strongly contracting subset $Y$ is coarsely Lipschitz, so
\[
\proj_Y^\pi(go, g'o) \le d(go, g'o) + 2C \le \beta(R+M) + 2C.
\]
We argue similarly as above and get
\[
\proj_Y^\pi(ho, h'o) \le \beta(R+M) + 2C.
\]

On the other hand, since $Y$ is a $K$-guard between $\{g,h\}$ and $A$, we have $\proj_Y(g\po, h\po)\le \theta$, so by Eq. (\ref{eq:projdist}),
\[
\proj_Y^\pi(go, ho) \le \proj_Y(g\po, h\po) + 2\theta \le 3\theta.
\]
Using the triangle inequality for $\proj_Y^\pi$,
\[
\begin{aligned}
\proj_Y^\pi(g'o, h'o)
&\le \proj_Y^\pi(g'o, go) + \proj_Y^\pi(go, ho) + \proj_Y^\pi(ho, h'o)\\
&\le 2\beta(R+M) + 4C + 3\theta.
\end{aligned}
\]
The restriction of the map $\Pi: H \to Y$ is a $\beta$-quasi-isometry, implying
\[
d_S(g', h') \le \beta\, d(g'o, h'o) + \beta \le \beta\bigl(2\beta(R+M) + 4C + 3\theta + 1\bigr).
\]

At last,  let us estimate   
\[
\begin{aligned}
d_S(g, h)
&\le d_S(g, g') + d_S(g', h') + d_S(h', h)\\
&\le 2(R + M) + \beta\bigl(2\beta(R+M) + 4C + 3\theta + 1\bigr)\\
Eq.\, (\ref{eq:LofValue})\; &= (L-\lambda)/\lambda.
\end{aligned}
\]
Since $\gamma$ is a $\lambda$-quasi-geodesic from $g$ to $h$, we see $\len(\gamma)\le L$.  
This contradicts the assumption $\len(\gamma) > L$, so the case (2) is  impossible. The lemma is proved. 
\end{proof}

\subsection{Proof of Proposition~\ref{prop:word_geod_on_proj_cplx}}
We first prove a special case, under the assumption $\LS(g\po, h\po) \le 13$, which   serves as an inductive step to the full case.

The following elementary lemma will be used in the proof.
\begin{lem}\label{lem:inductanchoredlength}
Assume that a sequence of points $$\{c_0=g < c_1 < \cdots < c_k < c_{k+1}=h\}$$ is ordered along a geodesic $\gamma=[g,h]$ with $k\ge 1$. 
    Let $A$ be an  anchored set on $\gamma$ from $g$ to $h$. Denote $A_i=([c_i,c_{i+1}]_\gamma\setminus\{c_i, c_{i+1}\})\cap A$ for $0\le i\le k$. Denote $m=|\{c_1, \cdots, c_k\}\setminus A|$. Then
    \[
    \LS_A(g\po, h\po) \le m + \sum_{i=0}^k \LS_{A_i}(c_i\po, c_{i+1}\po).
    \]   
\end{lem}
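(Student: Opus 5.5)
The inequality is purely combinatorial and rests only on the weak triangle inequality for $\LS$ (Lemma~\ref{lem:std_tri_ineq}). First write out the left side. Since the points of $A$ and the cut points $c_1,\dots,c_k$ all lie on the geodesic $\gamma$, we can merge them into one chain ordered along $\gamma$. Let $A=\{a_1<\cdots<a_r\}$ and put $a_0=g$, $a_{r+1}=h$. Every $a_j$ either coincides with some $c_i$ or lies in the open interior of exactly one segment $[c_i,c_{i+1}]_\gamma$. So $A$ is the disjoint union of the sets $A_i$ together with the anchors that coincide with cut points, namely $A\cap\{c_1,\dots,c_k\}$.

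Next, consider the refined chain $D=A\cup\{c_1,\dots,c_k\}$, ordered along $\gamma$, with $g$ and $h$ at the ends. We compare $\LS_A(g\po,h\po)$ with the sum of $\LS$ over consecutive pairs of $D$. A cut point $c_i\notin A$ lies strictly between two consecutive elements $a_j<a_{j+1}$ of $A\cup\{g,h\}$. Inserting it into the chain replaces the term $\LS(a_j\po,a_{j+1}\po)$ by $\LS(a_j\po,c_i\po)+\LS(c_i\po,a_{j+1}\po)$. By Lemma~\ref{lem:std_tri_ineq}, the old term is at most the new sum plus $1$. Inserting the $m$ cut points not in $A$ one at a time, each insertion costs at most $1$, which gives
\[
\LS_A(g\po,h\po)\le m+\sum_{(x,y)}\LS(x\po,y\po),
\]
where the sum runs over consecutive pairs $(x,y)$ in $D$. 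If several cut points fall between the same pair $a_j<a_{j+1}$, we simply repeat the insertion; each step still costs $1$. Cut points already in $A$ cost nothing.

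Finally, regroup the consecutive pairs of $D$ according to the segments $[c_i,c_{i+1}]_\gamma$. The elements of $D$ in the closed segment $[c_i,c_{i+1}]_\gamma$ are exactly $c_i$, then $A_i$ in order, then $c_{i+1}$. Hence the consecutive pairs inside that segment sum to exactly $\LS_{A_i}(c_i\po,c_{i+1}\po)$. Summing over $i$ yields the claim.

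There is no real obstacle here. The only care needed is the bookkeeping: repeated points such as $c_i=c_{i+1}$ or $a_j=c_i$ contribute zero-length terms ($\LS=0$ when the endpoints coincide), and the convention $A\cap\{g,h\}=\emptyset$ keeps the end terms unambiguous.
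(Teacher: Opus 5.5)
Your proposal is correct and takes the same approach as the paper. Both insert the cut points $c_i\notin A$ one at a time into the chain $g<a_1<\cdots<a_r<h$, paying at most $1$ per insertion via the weak triangle inequality (Lemma~\ref{lem:std_tri_ineq}), and then regroup the refined chain segment by segment. Your version just spells out the final regrouping into the terms $\LS_{A_i}(c_i\po,c_{i+1}\po)$, which the paper leaves implicit.
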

\begin{proof}
Write $A=\{a_1<\cdots< a_n\}$ and $a_0=g, a_{n+1}=h$. Then $\LS_A(g\po, h\po) = \sum_{i=0}^n \LS(a_i\po, a_{i+1}\po)$. If $(a_i,a_{i+1})_\gamma$ contains $c_j$, then $\LS(a_i\po, a_{i+1}\po)\le \LS(a_i\po, c_j\po)+\LS(c_j\po, a_{i+1}\po)+1$. By induction,  the conclusion follows by applying  the weak triangle inequality for $\LS$ at most $m$ times.      
\end{proof}

\begin{lem}\label{lem:barrier_free_on_proj_cplx}
    Suppose that each element $f\in F$ is $(\kappa, N)$-divergent. Then for any $\delta>0$ there exists a constant $E = E(\delta)>0$ such that the following holds:
    
    Let $g,h\in G$ be two elements and let $A$ be a non-empty geodesically anchored set from $g$ to $h$. Assume that $\LS(g\po, h\po) \le 13$. Then 
    \begin{align}\label{eq:keyinequalityinproof}
    \LS_A(g\po, h\po) \le \delta \, d_S(g,h)  + E\, |A| - 200.
    \end{align}

\end{lem}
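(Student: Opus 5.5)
We argue by strong induction on the pair $(d_S(g,h),|A|)$, ordered lexicographically with $d_S(g,h)$ first. The constant $E$ is chosen so large that the $-200$ on the right side of \eqref{eq:keyinequalityinproof} is paid for. First fix $M$ with $\delta M\ge 400$. Let $L=L(M,1)$ be given by Lemma~\ref{lem:cut_end_div}; geodesics are $1$-quasi-geodesics. Then take $E\ge 10^4(L+1)$, which dominates all constants below.

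\emph{Base cases.} Suppose every $a\in A$ satisfies $\LS(a\po,g\po)<200$. Since $\LS(g\po,h\po)\le 13$, the weak triangle inequality (Lemma~\ref{lem:std_tri_ineq}) bounds each summand of $\LS_A$ by about $415$. Hence $\LS_A(g\po,h\po)\le 415(|A|+1)\le E|A|-200$, because $|A|\ge 1$. Similarly, if $d_S(g,h)\le L$, then consecutive anchors are at word distance at most $L$. By Lemma~\ref{lem:coarseLip2Projcplx} and \eqref{rem:std_len_quasi_isom}, each summand is $O(L)$, so the estimate holds once $E\gg L$.

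\emph{Inductive step.} Assume now that $d_S(g,h)>L$ and that some anchor is far from $g\po$, i.e.\ $\LS(a\po,g\po)\ge 200$. We split into two cases.

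\emph{Case (i): two distinct anchors $a,b$ with $\LS\ge 200$ from $g\po$ have $\f_K[a\po,b\po]$ meeting $B^{\mathrm{std}}(g\po,20)$.} Lemma~\ref{lem:leave_ball_twice} gives an $A$-proper short decomposition $c_0<\dots<c_{k+1}$ with $k\le 12$. Each piece $[c_i,c_{i+1}]$ has $\LS\le 13$ and is shorter or carries fewer anchors. By $A$-properness, at least two pieces carry anchors, so each $A_i$ has $|A_i|<|A|$. Pieces with $A_i=\emptyset$ contribute at most $13$. For the other pieces we apply induction, and Lemma~\ref{lem:inductanchoredlength} gives
\[
\LS_A\le 12+13\cdot 13+\sum_{A_i\neq\emptyset}\bigl(\delta d_S(c_i,c_{i+1})+E|A_i|-200\bigr).
\]
Using $\sum|A_i|\le|A|$, additivity of $d_S$ along $\gamma$, and the fact that at least two terms each contribute $-200$, we get $\LS_A\le \delta d_S(g,h)+E|A|-400+181$. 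This is at most $\delta d_S(g,h)+E|A|-200$.

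\emph{Case (ii): Case (i) fails.} Then all far anchors are separated from $g\po$, and likewise from $h\po$, since $\LS(g\po,h\po)\le 13$, by a common vertex $Y$ of their standard paths. To find $Y$, take a far anchor $a$. By Lemma~\ref{lem:more_guard_out_ball}, the early elements of $\f_K(g\po,a\po)$, say the $30$th, coincide for all far anchors. The failure of Case (i) forces the standard paths between far anchors to avoid $B^{\mathrm{std}}(g\po,20)$. Together with Proposition~\ref{prop:std_path}, this makes $Y$ a $K$-guard between $\{g,h\}$ and the far anchors $A'$. Near anchors are handled as in the base case: we peel them off at cost $O(1)$ each, using Lemma~\ref{lem:inductanchoredlength}.

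We then apply Lemma~\ref{lem:cut_end_div} to obtain $\gamma_1=[g_1,h_1]\supseteq A'$ with $d_S(g_1,h_1)\le d_S(g,h)-M$. Since $Y\notin\f_K[g\po,g_1\po]$, item (3) of that lemma together with bottleneck/tripod arguments gives $\LS(g\po,g_1\po)=O(1)$ and $\LS(h_1\po,h\po)=O(1)$, hence $\LS(g_1\po,h_1\po)\le 13$ after a bounded further adjustment. Induction on the shorter segment then gives
\[
\LS_A(g\po,h\po)\le O(1)+\delta(d_S(g,h)-M)+E|A|-200.
\]
Since $\delta M\ge 400$ absorbs the $O(1)$ error, the inequality follows.

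The main obstacle is Case (ii). One must verify that the failure of Case (i) genuinely produces a single $K$-guard separating $\{g,h\}$ from all far anchors. One must also check that the endpoint perturbation keeps $\LS(g_1\po,h_1\po)\le 13$, so that induction applies. If the bound $13$ is not preserved, I would fall back on Lemma~\ref{lem:split_path_into_pieces}: re-decompose $[g_1,h_1]$ into short pieces and absorb the bounded number of extra pieces into the saving $\delta M$.
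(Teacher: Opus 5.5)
Your architecture matches the paper's: split at a near anchor, use Lemma~\ref{lem:leave_ball_twice} for an $A$-proper decomposition, and otherwise find a guard and cut with Lemma~\ref{lem:cut_end_div}. Your base cases and Case (i) are fine. The genuine gap is in Case (ii), exactly where you flagged it.

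Item (3) of Lemma~\ref{lem:cut_end_div} does not give $\LS(g\po,g_1\po)=O(1)$. The discarded segment $[g,g_1]_\gamma$ has word length $M$ and can travel $\LS$-distance of order $M$ into a branch of $\PC$ that never comes near $Y$. The condition $Y\notin\f_K[g\po,g_1\po]$ only says that $g_1\po$ has not crossed $Y$. Your fallback does not repair this, because $\LS(g\po,g_1\po)$ and $\LS(g_1\po,h_1\po)$ can both be of order $M$. Comparing the first and last summands of $\LS_A$ through $g_1,h_1$ costs order $M$. So does paying $13$ for each anchor-free piece of a short decomposition of $[g_1,h_1]_\gamma$, since there can be order $M$ such pieces. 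When $\delta$ is small, $\delta M$ cannot absorb this.

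The missing idea is to discard $g_1,h_1$ as endpoints and re-select them next to the guard.
\begin{itemize}
\item Let $a$ be the first anchor. Since $Y\in\f_K(g\po,a\po)$, the tripod decomposition of $\f_K(g\po,a\po)$ relative to $g_1\po$ (Lemma~\ref{lem:triangle_in_PC}), together with $Y\notin\f_K(g\po,g_1\po)$, puts $Y$ within $\LS$-distance $2$ of a vertex of $\f_K[g_1\po,a\po]$.
\item The bottleneck property (Lemma~\ref{lem:bottleneck_in_PC}) applied to the subpath $[g_1,a]_\gamma$ then gives $g_2\in[g_1,a]_\gamma$ with $\LS(g_2\po,Y)\le 6$. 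Symmetrically, you get $h_2\in[b,h_1]_\gamma$ for the last anchor $b$.
\item With $\LS(g\po,Y)=16$ this yields $\LS(g\po,g_2\po)\le 23$, $\LS(h_2\po,h\po)\le 37$ and $\LS(g_2\po,h_2\po)\le 13$. All three bounds are independent of $M$.
\item Meanwhile $[g_2,h_2]_\gamma\subseteq\gamma_1$ inherits the word-length saving of $M$. It still contains $A$ in its interior, because all anchors are $200$-far from $g\po$.
\end{itemize}
Induction now applies, and $\delta M\ge 62$ absorbs the error.

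Two smaller corrections.
\begin{itemize}
\item With radius $20$, Lemma~\ref{lem:more_guard_out_ball} only makes the first $18$ elements of $\f_K(g\po,b\po)$ agree across all far anchors $b$, so the $30$th element is not available. Take the $16$th, and use the $15$th and $17$th as neighbours to get the $\theta$-type bounds that make it a guard.
\item Near anchors cannot be removed at bounded cost by deleting them from $A$: a near anchor between two far ones can carry unboundedly much of $\LS_A$. What works is to split $\gamma$ at a near anchor $a$, short-decompose $[g,a]_\gamma$ and $[a,h]_\gamma$ into $O(1)$ pieces with $\LS\le 13$ (Lemma~\ref{lem:split_path_into_pieces}), and apply induction to each piece; the lost $E$ for $a$ pays for the $O(1)$ extra pieces. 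This closes that case outright, so Case (ii) may assume all anchors are far, as the paper does.
\end{itemize}
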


\begin{proof}
    For given $\delta>0$, let $$L = L\left(\frac{62}{\delta},1\right)>0$$ be given by Lemma \ref{lem:cut_end_div}. Take $E = 4L + 1500$.

    Let $\gamma$ be the word geodesic from $g$ to $h$ that contains $A$.  Note that $d_S(g,h)=\len(\gamma)$ and $|A|\le \len(\gamma)$.
    By Lemma \ref{lem:word_is_PC_path}, $\gamma$ labels a path from $g\po$ to $h\po$, and the standard paths are $2$-quasi-geodesic. We thus note  $$\LS_A(g\po, h\po)  \le 2\len(\gamma)+2|A|.$$ Our goal Eq. (\ref{eq:keyinequalityinproof})  improves significantly this inequality.
    
    To that end, let us define  the \textit{complexity} of the pair $(\gamma,A)$ by $$\Omega(\gamma,A)\; :=\;d_S(g,h)+|A|$$ 
    We shall induct on $\Omega(\gamma,A)$ to prove the desired inequality (\ref{eq:keyinequalityinproof}).
 
\medskip    

\noindent{\textbf{Base step.}} Let $\gamma$ be a geodesic  with  an anchored set $A$ so that $\LS(g\po, h\po) \le 13$.  If $\len(\gamma) + |A| \le 2L$, then the choice of the constant $E$ shows $$
\begin{aligned}
\LS_A(g\po, h\po) & \le 2\len(\gamma)+2|A|\\
&\le 4L\le E - 200 \\
& \le \delta\; \len(\gamma) + E\;|A| - 200.    
\end{aligned}$$
Thus, Eq. (\ref{eq:keyinequalityinproof}) is verified under the complexity $\Omega(\gamma,A)  \le 2L$.

\medskip
\noindent{\textbf{Induction Step.}} Let $\Omega_0> 2L$ be an integer. Suppose the inequality (\ref{eq:keyinequalityinproof})  holds  for any pair $(\gamma,A)$ with $\Omega(\gamma,A)< \Omega_0$.  We need to prove (\ref{eq:keyinequalityinproof}) for any geodesic $\gamma$ with  an anchored set $A$ satisfying  $\Omega(\gamma,A)= \Omega_0$. Since  $\len(\gamma) + |A| \ge 2L$ and $|A|\le \len(\gamma)$, we obtain $\len(\gamma) \ge L$.
    
    \medskip

    We divide the induction into the following three cases.
    
    \medskip
    \noindent{\textbf{Case 1.}} Suppose there exists $a\in A$ such that $\LS(a\po, g\po)\le 200$.
     \medskip
    
    By the weak triangle inequality for $\LS$ (Lemma~\ref{lem:std_tri_ineq}), $$\LS(a\po, h\po)\le \LS(a\po, g\po) + \LS(g\po, h\po) + 1\le 214.$$ 
    
    By Lemma~\ref{lem:split_path_into_pieces}, $[g, a]_\gamma$ and $[a,h]_\gamma$ admit a {short decomposition} of length less than $\lfloor\LS(g\po, a\po)/5\rfloor\le 40$ and $\lfloor\LS(a\po, h\po)/5\rfloor\le 42$ respectively. Concentrating these yields a {short decomposition} of $\gamma$ with length $2\le k\le 83$, denoted as $$\{g = c_0< c_1<\cdots< c_k< c_{k+1} = h\}$$ 
    Note that $\LS(c_i\po,c_{i+1}\po)\le 13 $ for $0\le i\le k$.
    
    Denote $\gamma_i=[c_i, c_{i+1}]_\gamma$ and $A_i = (A\cap \gamma_i)\setminus \{c_i, c_{i+1}\}$ for $0\le i\le k$. By Lemma \ref{lem:inductanchoredlength}, 
    \begin{align}\label{eq:case1totalinequality}
    \LS_A(g\po, h\po)&\le 83 + \sum_{i=0}^k \LS_{A_i}(c_i\po, c_{i+1}\po).
    \end{align}
    By the construction, $a\not\in A_i$, so both $|A_i|$ and $\len(\gamma_i)$ decrease. Hence,  $\Omega(\gamma_i,A_i)<\Omega(\gamma,A)$ for each $0\le i\le k$. By the induction hypothesis, if $A_i\neq \emptyset$,
    \[
    \LS_{A_i}(c_i\po, c_{i+1}\po)\le \delta\; d_S(c_i, c_{i+1}) + E\; |A_i| - 200.
    \]
    If $A_i=\emptyset$, then $\LS_{A_i}(c_i\po, c_{i+1}\po)\le 13$. Thus, in both cases, 
    \begin{align}\label{eq:case1inequality}
        \LS_{A_i}(c_i\po, c_{i+1}\po)\le \delta\; d_S(c_i, c_{i+1}) + E\; |A_i| +13.
    \end{align}
    
    Note that $d_S(g,h)=\sum_{0\le i\le k} d_S(c_i, c_{i+1})$ and $\sum_{i=0}^k|A_i|\le |A|-1$. Summing up the inequalities (\ref{eq:case1inequality}) and plugging them into (\ref{eq:case1totalinequality}), with $E>1500$,
    \[\begin{aligned}
        \LS_{A}(g\po, h\po)&\le \delta d_S(g, h) + E(|A|-1) + 1175\\
        &\le \delta d_S(g, h) + E|A| - 200.
    \end{aligned}
    \]
    The inequality  (\ref{eq:keyinequalityinproof}) follows in this case.
     
     \medskip
     
    \noindent{\textbf{Case 2.}} Suppose that $\LS(a\po, g\po)\ge 200$ for each $a\in A$, but there exist two distinct  $a\ne b\in A$ so that $\mathbf \f_K(a\po,b\po)\cap B^{\mathrm{std}}(g\po, 20)\ne\emptyset$. 
     \medskip
     
    This is exactly the assumption of  Lemma \ref{lem:leave_ball_twice}. Thus, $\gamma$ admits a $A$-proper short decomposition with length $2\le k \le 12$ so that $a,b$ are not in the same path component. Namely, there exists a sequence of linearly ordered of elements $\{c_0 = g< c_1< \cdots< c_k< c_{k+1} = h\}$  on $\gamma$  so that for each $0\le i\le k$,  
    \begin{itemize}
        \item  $\LS(c_i\po, c_{i+1}\po)\le 13$,
        \item 
        $|A_i| < |A|$ with $\gamma_i := [c_{i}, c_{i+1}]_\gamma$ and $A_i = (A\cap\gamma_i)\setminus\{c_i, c_{i+1}\}$. 
        \item $A_i\neq \emptyset$ for least two $i$'s.
    \end{itemize}   
    Thus, $$\len(\gamma_i) + |A_i| < \Omega(\gamma, A)=\Omega_0.$$ Notice that here $A_i$ may be an empty set. 
 
    \begin{itemize}
        \item If $A_i = \emptyset$, by Definition \ref{def:anchored_set} of $\LS_A$, 
        \begin{align}\label{eq:AiEmptyset}
        \LS_{A_i}(c_i\po, c_{i+1}\po) = \LS(c_i\po, c_{i+1}\po)\le 13.    
        \end{align} 
        \item 
        If $A_i\neq \emptyset$, the induction hypothesis for $(\gamma_i,A_i)$  shows 
        \begin{align}\label{eq:AiNonemptyset}
        \LS_{A_i}(c_i\po,c_{i+1}\po)\le  \delta\; \len(\gamma_i) + E\; |A_i| - 200.\end{align} 
    \end{itemize}

    Let $I_1 = \{i: 0\le i\le k, A_i\neq \emptyset\}$. By Lemma \ref{lem:leave_ball_twice}, $|I_1|\ge 2$.  Let $I_2 = \{i: 0\le i\le k, A_i = \emptyset\}$. Then $|I_2|\le (k+1)-2\le 11$. By Lemma~\ref{lem:inductanchoredlength} and the weak triangle inequality for $\LS$, 
    \[\begin{aligned}
        \LS_A(g\po,h\po)&\le k+\sum_{i=0}^k  \LS_{A_i}(c_i\po, c_{i+1}\po) \\
     Eq. (\ref{eq:AiEmptyset}),\,Eq. (\ref{eq:AiNonemptyset}) \quad  &\le \sum_{i\in I_1} \left(\delta\; \len(\gamma_i) + E\;|A_i| - 200\right) + 13\;|I_2|+12\\
        &\le \delta\; \len(\gamma) + E\; |A| - 200\;|I_1| + 13\;|I_2|+12\\
        &\le \delta\; \len(\gamma) + E\; |A| - 200.
    \end{aligned}\]

    \medskip
    
    \noindent{\textbf{Case 3.}} Suppose that $\LS(a\po, g\po)\ge 200$ for any $a\in A$, and $\f_K[a\po, b\po] \cap B^\mathrm{std}(g\po,20) = \emptyset$  for any $a, b\in A$. 
    \medskip
     
    We first claim that
    \begin{claim}\label{claim:guardYexists}
    There exists a $K$-guard $Y\in \f$  between $\{g,h\}$ and $A$ so that $\LS(g\po,Y)=16$.    
    \end{claim}    
    \begin{proof}[Proof of the claim]
    
    Fix a reference $a\in A$. Let $b\in A$ be an arbitrary element. Then by assumption $\f_K[a\po, b\po]\cap B^{\mathrm{std}}(g\po, 20) = \emptyset$. By Lemma \ref{lem:more_guard_out_ball},  $\f_K(g\po, b\po)$ agrees with $\f_K(g\po,a\po)$ on the smallest 18 elements. Furthermore,  if we apply Lemma \ref{lem:common_prefix} to $\f_K(g\po, b\po)$ and $\f_K(h\po, b\po)$ with $\LS(g\po,h\po)\le 13$,  the 15th, 16th and 17th smallest elements denoted as $Y_0$, $Y$ and $Y_1$ in $\f_K(g\po, a\po)$ also belong to $\f_K(h\po, b\po)$. In particular, $\LS(g\po,Y)=16$. 
    
    In summary, we found three candidates $Y_0, Y, Y_1\in \f_K(g\po, b\po)\cap \f_K(h\po, b\po)$ for any $b\in A$. It remains to show that $d_Y(g\po, h\po)<\theta$ and $d_Y(b_1\po, b_2\po)<\theta$ for any $b_1, b_2\in A$. 

    Indeed, let us fix any $b\in A$. Apply Proposition~\ref{prop:std_path} to $Y_0<Y$ in $\f_K(g\po, b\po)$, we have $d_{Y_0}(g\po, Y)>K$. Again apply Proposition~\ref{prop:std_path} to $Y_0<Y$ in $\f_K(h\po, b\po)$, we have $d_{Y}(h\po, Y_0)\le \theta$. Apply \ref{axiom:strong_berstock} to $d_{Y_0}(g\po, Y)>K$, we obtain $$d_Y(g\po, h\po) = d_Y(Y_0, h\po)\le \theta.$$

    Fix any $b_1, b_2\in A$. Similarly, we get $d_{Y_1}(b_1\po, Y)>K$ and $d_{Y}(Y_1, b_2\po)\le \theta$. Apply \ref{axiom:strong_berstock} to $d_{Y_1}(b_1\po, Y)>K$ we obtain $$d_Y(b_1\po, b_2\po) = d_Y(Y_1, b_2\po)\le \theta.$$ 
    Hence, $Y$ is a $K$-guard between $\{g, h\}$ and $A$, and the claim follows.
    \end{proof}
\iffalse
    \begin{proof}[Proof of the claim]
    
    Fix a reference $a\in A$. By assumption, $a\not\in B^{\mathrm{std}}(g\po, 15)$, so the standard path $\f_K[a\po, g\po]$ has length at least 15. Let $b\in A$ be an arbitrary element, for which $\f_K[a\po, b\po]\cap B^{\mathrm{std}}(g\po, 15) = \emptyset$. By Lemma \ref{lem:more_guard_out_ball},  $\f_K(g\po, b\po)$ agrees with $\f_K(g\po,a\po)$ on the smallest 12 elements. Further,  if we apply Lemma \ref{lem:common_prefix} to $\f_K(g\po, b\po)$ and $\f_K(h\po, b\po)$ with $\LS(g\po,h\po)\le 10$,  the 12th  element denoted as  $Y$ in $\f_K(g\po, b\po)$ belongs to $\f_K(h\po, b\po)$. In particular, $\LS(g\po,Y)=12$. 
    
    In summary, we found a candidate  $Y\in \f_K(g\po, b\po)\cap \f_K(h\po, b\po)$ for any $b\in A$. This implies $d_Y(g\po, b\po)\ge K$ and  $d_Y(h\po, b\po)\ge K$ for any $b\in A$. It thus remains to show that $d_Y(g\po, h\po)<K$ and $d_Y(a\po, b\po)<K$ for any $a\ne b\in A$. Equivalently, $Y\notin \f_K(g\po, h\po)$ and $Y\notin \f_K(a\po, b\po)$.
    
    Since $\LS(g\po, h\po)\le 10$ and $\LS(g\po, Y)=12$, we have $Y\not\in \f_K(g\po, h\po)$. Also, for any distinct $a, b\in A$, since $\f_K[a\po, b\po]\cap B^{std}(g\po, 15) = \emptyset$ and $\LS(g\po, Y) = 12$, $Y\not\in \f_K(a\po, b\po)$. Hence, $Y$ is a $K$-guard between $\{g, h\}$ and $A$, and the claim follows.
    \end{proof}
\fi
    
    Now, we apply Lemma \ref{lem:cut_end_div} to the pair $(\gamma,A)$ with $\len(\gamma)\ge L$.  There exists a subpath $\gamma_1\subseteq \gamma$ with endpoints $g_1, h_1\in \gamma$ such that $$\len(\gamma_1) \le \len(\gamma) - 62\delta^{-1}$$ and $A$ is entirely contained in $\gamma_1$.

   \medskip 
   Let $a$ be the first point of $A$ (which must be on $\gamma_1$, since $A\subseteq \gamma_1$). By Lemma~\ref{lem:triangle_in_PC}, the tripod-like property of the triangle $(g\po, a\po, g_1\po)$ provides a decomposition $$\f_K(g\po, a\po) = \alpha_1\sqcup\alpha_2\sqcup\alpha_3$$ where $\alpha_1\subseteq \f_K(g\po,g_1\po)$, $|\alpha_2|\le 2$ and $\alpha_3\subseteq \f_K(g_1\po, a\po)$. 
   
   \medskip 
   We apply  Lemma~\ref{lem:cut_end_div}(3) to the $K$-guard $Y$ given by Claim \ref{claim:guardYexists}, so $Y\notin \f_K(g\po, g_1\po)$. However, by the definition of $K$-guard $Y\in \f_K(g\po, a\po)$, so either $Y\in \alpha_2$ or $Y\in \alpha_3$. In both cases, there exists $Z\in \f_{K}[g_1\po, a\po]$ such that $\LS(Z, Y)\le 2$. By the bottleneck property (Lemma~\ref{lem:bottleneck_in_PC}), there exists $g_2\in [g_1, a]_\gamma$ so that $\LS(g_2\po, Z)\le 3$. Hence by the weak triangle inequality (Lemma~\ref{lem:std_tri_ineq}), $$\LS(g_2\po, Y)\le \LS(g_2\po, Z) + \LS(Z, Y) + 1\le 6.$$ Note that $\LS(g\po, a\po)\ge 200$ and $\LS(g\po, Y)=16$ by the claim above. The weak triangle inequality (Lemma~\ref{lem:std_tri_ineq}) gives
    $$\LS(g\po, g_2\po)\le \LS(g\po, Y) + \LS(Y, g_2\po) + 1\le 23.$$ So $a\neq g_2$.  %and Lemma~\ref{lem:std_tri_ineq},  such that $$\LS(g_2\po, Y)\le \LS(Y, \f_K[g_1\po, a\po]) + 2 + 1\le 5.$$
    
    \medskip 
   Similarly, if $b$ is the last point in $A$, then there exists $h_2\in [b, h_1]_\gamma$ such that $\LS(h_2\po, Y)\le 6$ and $h_2\neq b$. In addition, by the weak triangle inequality (Lemma~\ref{lem:std_tri_ineq}), \[
    \begin{aligned}
    %\LS(g\po, g_2\po)&\le \LS(g\po, Y) + \LS(Y, g_2\po) + 1\le 12+4+1=17,  \\
    \LS(h_2\po, h\po)\le \LS(g\po, h\po) + \LS(g\po, Y) + \LS(Y, h_2\po) + 2\le 37.\\
    \end{aligned}
    \]
     Let  $\gamma_2=[g_2,h_2]_\gamma$. Then $A\subseteq \gamma_2\setminus\{g_2, h_2\}$, and $\Omega(\gamma_2, A) < \Omega(\gamma, A)$. Notice that
    \[\LS(g_2\po, h_2\po) \le \LS(g_2\po, Y) + \LS(Y, h_2\po) + 1\le 13.\]
    Hence, by the induction hypothesis for $(\gamma_2,A)$, $$\LS_A(g_2\po,h_2\po)\le \delta \;\len(\gamma_2) + E |A| - 200.$$ 
    Recalling $\len(\gamma_2)\le \len(\gamma_1)\le \len(\gamma)- 62\delta^{-1}$, we have
    \[\begin{aligned}
        \LS_A(g\po,h\po)&\le \LS_A(g_2\po,h_2\po) + \LS(g\po, g_2\po) + \LS(h_2\po, h\po)+2\\
        &\le \delta(\len(\gamma) - 62\delta^{-1}) + E\; |A| - 200 + 62\\
        &\le \delta\; \len(\gamma) + E\; |A| - 200.
    \end{aligned}\]
    Thus, the inequality (\ref{eq:keyinequalityinproof}) holds in the case (3). 
    
   \medskip 
   \noindent Therefore, the inductive step proves (\ref{eq:keyinequalityinproof}) in each case and the proof is completed.
\end{proof}
We are now ready to prove Proposition \ref{prop:word_geod_on_proj_cplx}.

\begin{proof}[Proof of Proposition \ref{prop:word_geod_on_proj_cplx}]
    
     By Lemma \ref{lem:split_path_into_pieces}, $\gamma=[g,h]$ admits a short decomposition $$C:=\{c_0=g<c_1<\cdots<c_k<c_{k+1}=h\}\subseteq \gamma$$ with $k\le \LS(g\po, h\po)/5$ and a sequence of points $$\{Y_0 = g\po < Y_1<\cdots < Y_k<Y_{k+1} = h\po\}\subseteq\f
     _K[g\po, h\po]$$ such that $\LS(c_i\po, Y_i)\le 3$ for each $0\le i\le k+1$. Denote $\gamma_i=[c_i,c_{i+1}]_\gamma$ and $A_i=(A\cap \gamma_i)\setminus \{c_i,c_{i+1}\}$.
    If $A_i\ne \emptyset$, by Lemma \ref{lem:barrier_free_on_proj_cplx}, 
    \[
    \LS_{A_i}(c_i\po, c_{i+1}\po) \le  \delta \, d_S(c_i,c_{i+1})  + E\, |A_i| - 200.
    \] 
     
    Let $I$ be the set of indices $0\le i\le k$ so that  $A_i\neq \emptyset$. Then $B:=A\setminus \cup_{i\in I} A_i$ is contained in $C$. Let $\widehat C$ be  the union of $\{g,h\}$, $\{c_i, c_{i+1}: i\in I\}$ and $B$. It is the subset of the above short decomposition $C$ which excludes $c_i$'s  that  are not contained in the anchor set $A$, unless $(c_i,c_{i+1})_\gamma\cap A\ne\emptyset$.  
    Hence,  the number \(m\) of interior points of \(\widehat C\)  is at most  $2|I|+|B|$. Explicitly, write $$\widehat C=\{g=c_{i_0}<c_{i_1}<c_{i_2}<\cdots<c_{i_m}<c_{m+1}=h\}$$ and we estimate the partial sum of $\LS_A(g\po, h\po)$ over $(a_i, a_{i+1})\in A$ appearing in $C$: 
     $$
     \begin{aligned}
     &\sum_{j=0}^{m}\LS(c_{i_j}\po, c_{i_{j+1}}\po) \le \sum_{j=0}^{m} (\LS(Y_{i_j}, Y_{i_{j+1}})+8) \le  8m+\LS(g\po,h\po)
     \end{aligned} 
     $$
     where the term $8$ follows by applying twice the weak triangle inequality of $\LS$.
    Note that $\widehat C$ may not be a short decomposition of $\gamma$ anymore, to which we may apply Lemma \ref{lem:inductanchoredlength}:
    \[
    \begin{aligned}
    \LS_A(g\po, h\po) & \le m +(8m+\LS(g\po,h\po))+ \sum_{i\in I}\LS_{A_{i}}(c_{i}\po, c_{i+1}\po) \\
    & \le 9m + \LS(g\po,h\po) + \sum_{i\in I}\left(\delta \; d_S(c_i,c_{i+1})  + E\; |A_i| - 200 \right) \\
    & \le \LS(g\po,h\po) +  \delta \; d_S(g,h)  + E\; |A|-E|B|- 200|I|+9m\\
    & \le \LS(g\po, h\po)+ \delta \; d_S(g,h)  + E\; |A|
    \end{aligned}
    \]
    where  $\sum_{i\in I}|A_i|=|A|-|B|$ and $m\le 2|I|+|B|$. The proof is complete.
    \end{proof} 

\section{Growth tightness of short displacements in projection complex}\label{SecShortPC}

We retain the setup of Section~\ref{sec:anchoredlength}. Thus, $(G,S)$ acts on $X$, and $F\subseteq G$ is a set consisting of three pairwise independent strongly contracting WPD elements. Let $\theta$ be the projection constant associated with $\f$, and let $\PC$ be the projection complex corresponding to a sufficiently large constant $K\gg 3\theta$, as specified in \eqref{eq:defnK}. We now introduce the additional constants needed in this section.

\medskip

\noindent{\textbf{Finite set $F$ of WPD elements}}.  
\begin{itemize}
    \item Let $\tau>0$ be the constant so that   Extension  Lemma \ref{extend3} holds for $F$;
    \item Let $L, r>0$ be given by Proposition \ref{admisProp} such that an $(L,\tau)$-admissible path relative to $\f$ has the $r$-fellow travel property;
    \item By replacing each $f\in F$ by a sufficiently large power, we assume 
    \begin{enumerate}
        \item $d(o, fo)>\max\{L, K + 2r + 4\theta\}$;
        \item Since each $f\in F$ is a WPD element, there exists $M>0$ such that $$\forall f\in F,\quad |\mathrm{Stab}_{3r}(o, fo)| < M$$ 
    \end{enumerate}
\end{itemize}
We remark that Lemma \ref{extend3} requests $F$ to contain three independent  elements.

\begin{thm}\label{thm:grow_tight_PC}
    There exists a constant $\varepsilon = \varepsilon(G, X, S, K, \kappa, N)>0$ such that the following set $$A := \bigl\{g\in G: \, \LS(\po, g\po)\le \varepsilon \, d_S(1, g)\bigr\}$$ is growth tight in $G$ with respect to the word metric $d_S$.
\end{thm}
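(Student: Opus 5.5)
We follow the insertion strategy sketched in the introduction. Fix a small density $\eta>0$, to be chosen in terms of the constants $L,\tau,r,M$ and $|F|$. For $g\in A\cap S_n$, fix a geodesic word $g=s_1\cdots s_n$ with prefixes $x_j=s_1\cdots s_j$. Choose a set $Q=\{q_1<\cdots<q_k\}\subseteq\{1,\dots,n\}$ with $k=\lfloor\eta n\rfloor$ and consecutive gaps at least $1/\eta$. Define the insertion map
\[
\Theta(g,Q)=x_{q_1}f_1\,(x_{q_1}^{-1}x_{q_2})\,f_2\cdots f_k\,(x_{q_k}^{-1}g),
\]
where at each anchor $f_i\in F$ is supplied by the Extension Lemma~\ref{extend3}, so that the concatenated path is $(L,\tau)$-admissible. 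Its word length is at most $n+k\max_{f\in F}d_S(1,f)$. The number of pairs $(g,Q)$ is about $|A\cap S_n|\binom{n}{\eta n}$, which is roughly $|A\cap S_n|e^{H(\eta)n}$. So if $\Theta$ has multiplicity at most $e^{c(\eta)n}$, with $c(\eta)$ noticeably smaller than $H(\eta)$, we get $|A\cap S_n|\le e^{(\omega(G,S)+O(\eta)+c(\eta)-H(\eta))n}$. Since $H(\eta)\sim\eta\log(1/\eta)$ dominates $O(\eta)$ as $\eta\to0$, this gives the growth gap.

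The multiplicity bound (the analogue of Lemma~\ref{lem:size_of_preimage}) is the core. Given $w=\Theta(g,Q)$, we must recover $(g,Q)$ up to a small number of choices. By Proposition~\ref{admisProp}, the admissible path fellow travels any geodesic in $X$ from $o$ to $wo$. Since $d(o,fo)>K+2r+4\theta$, the inserted axes $x_{q_i}f_{<i}\ax(f_i)$ have projection greater than $K$ from $\po$ and $w\po$. Hence all $k$ of them lie in $\f_K(\po,w\po)$, in order.

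The set $\f_K(\po,w\po)$ may contain many other vertices, and we must locate the inserted ones among them. Here we use Proposition~\ref{prop:word_geod_on_proj_cplx}, applied with anchors $Q$ on a geodesic from $1$ to $g$:
\[
\LS_Q(\po,g\po)\le\LS(\po,g\po)+\delta n+E k\le(\varepsilon+\delta+E\eta)n.
\]
Along the admissible path, the standard path from $\po$ to $w\po$ is coarsely the concatenation of the standard paths between consecutive anchors, shifted by the inserted pieces. Therefore
\[
\LS(\po,w\po)\lesssim \LS_Q(\po,g\po)+O(k)=O\bigl((\varepsilon+\delta+\eta)n\bigr).
\]
Choose $\delta$ and then $\varepsilon$ small relative to $\eta$, say $\varepsilon,\delta\le\eta^2$, though $E$ then depends on $\delta$; this interplay needs care. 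Then the inserted axes form a $k$-subset of a set of size $O(\eta n)$, giving at most $e^{O(\eta)n}$ choices, which beats $H(\eta)n$. Having fixed which vertices are inserted, we recover each $x_{q_i}f_{<i}$ coarsely from the fellow-travel points: it lies within $3r$ of the projection of $o$ to the vertex. The bound $|\mathrm{Stab}_{3r}(o,fo)|<M$ then leaves at most $M$ choices per anchor, hence $M^k$ in total. The remaining data are the $f_i$, with at most $|F|^k$ choices, and the lengths $q_i$, which are determined once the group elements are known. Altogether the multiplicity is $e^{O(\eta)n}\,(M|F|)^{\eta n}$.

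The main obstacle is the bookkeeping in this last comparison. We need the inserted vertices to actually realize the count of $\f_K(\po,w\po)$ controlled by $\LS_Q$, and the exponent $O(\eta)$ from the multiplicity must be strictly smaller than the entropy $\eta\log(1/\eta)$ gained from choosing $Q$. The way to secure this is to choose $\eta$ last, small enough that $\log(1/\eta)$ beats $\log(M|F|)$ plus all the constants from Proposition~\ref{prop:word_geod_on_proj_cplx}. We would first try to fix $\delta=\eta$ and use $E=E(\delta)$, taking $\eta$ small enough that $E(\eta)\eta\cdot$const remains below $\tfrac12\eta\log(1/\eta)$. If $E(\delta)$ grows too fast for this, we would instead insert at sparser anchors, chosen among a thinned set of allowed positions.
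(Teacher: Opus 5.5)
Your outline matches the paper's: inserting $F$-elements at anchors, placing the inserted axes in order on $\f_K[\po,w\po]$, bounding $\LS(\po,w\po)$ with Proposition~\ref{prop:word_geod_on_proj_cplx}, and recovering prefixes with $M^k$ choices. However, the final count does not close as you set it up, and that count is the step that makes the theorem work.

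\textbf{The missing $E$.} When you pass from $\LS(\po,w\po)\le(\varepsilon+\delta+E\eta)n+O(k)$ to $O\bigl((\varepsilon+\delta+\eta)n\bigr)$, you drop $E=E(\delta)$. This constant comes from Lemma~\ref{lem:cut_end_div} through the arbitrary divergence function $\kappa$, so it has no growth bound as $\delta\to0$. Any scheme that ties $\delta$ or $\varepsilon$ to $\eta$ needs such a bound, so both of your schemes fail. Taking $\varepsilon,\delta\le\eta^2$ with $\eta$ chosen last is circular. Taking $\delta=\eta$ requires $E(\eta)\lesssim\log(1/\eta)$, which is not available. Thinning the allowed anchor positions does not help, because it cuts the entropy of $Q$ by the same amount.

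\textbf{The fix: decouple the constants.} Fix $\varepsilon=\delta$ first, as an absolute constant small relative to $1/(M|S|^{l})$, where $l=\max_{f\in F}d_S(1,f)$. The paper takes $\varepsilon=1/(4|S|^lM)$. Each $f_i$ is determined by its axis, so no factor $|F|^k$ is needed, though including it would be harmless. Next take $E=E(\varepsilon)$, and only then set the insertion density $m/n=\min\{1/100,\varepsilon/E\}$. Then $Em\le\varepsilon n$, so $\LS(\po,w\po)\le 3\varepsilon n$, and
\[
\frac{\binom{3\varepsilon n}{m}}{\binom{n-1}{m}}\le\Bigl(\frac{3\varepsilon}{1-m/n}\Bigr)^{m},
\]
which beats $(M|S|^l)^m$. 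In your entropy language, the $\log E$ contributions of $\binom{n}{m}$ and $\binom{3\varepsilon n}{m}$ cancel. The quantity that must dominate the per-anchor costs is $\log(1/\varepsilon)$, not $\log(1/\eta)$. In this regime the multiplicity is not $e^{O(\eta)n}$.

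\textbf{The spacing constraint.} You require consecutive anchors to be at least $1/\eta$ apart while taking $k=\lfloor\eta n\rfloor$ anchors. This leaves a total slack of only about $2/\eta$, so there are polynomially many such $Q$, not $\binom{n}{\eta n}$, and the entropy you count disappears. Drop the constraint. Admissibility needs only $d(o,fo)>L$ for the inserted pieces, and the paper uses all $m$-subsets of $\{1,\dots,n-1\}$.
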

Since the isometric action of $G$ on any metric space has Lipschitz orbital map, the non-triviality of the above statement lies in the existence of a small $\varepsilon$ satisfying the growth tightness. Note that \ref{MainThmShortElems} is reduced to Theorem \ref{thm:grow_tight_PC}.
\begin{proof}[Proof of \ref{MainThmShortElems} assuming Theorem \ref{thm:grow_tight_PC}]
Consider  the $G$-equivariant collapsing map $\Phi: \cup_{Y\in \f} Y \to \PC$, which is coarsely $L$-Lipschitz by Lemma \ref{lem:coarseLip2Projcplx} for some $L>0$. Then for any $g\in G$, $$ \LS(\po, g\po)\le Ld(o,go)+L$$
Thus, if $\LS(\po, g\po)> \varepsilon \, d_S(1, g)$ for some $\varepsilon>0$ then 
$$\begin{aligned}
d(o, go)&> \frac{\varepsilon \, d_S(1, g)-L}{L} \ge \frac{\varepsilon d_S(1,g)}{2L}    
\end{aligned}$$
when $d_S(1,g)>2L/\varepsilon$. Therefore, setting $\varepsilon_0:=\varepsilon/2L$, $\bigl\{g\in G: \, d(o, go)> \varepsilon_0 \, d_S(1, g)\bigr\}$
is exponentially generic by  Theorem \ref{thm:grow_tight_PC}.  The proof is complete.
\end{proof}

The remainder of this section is devoted to the proof of Theorem \ref{thm:grow_tight_PC}.    
Concretely, let us set  $l: = \max_{f\in F}\{d_S(1, f)\}$, and choose
    \begin{align}\label{eqn:defepsilon}
    \varepsilon = \frac{1}{4|S|^{l} M}    
    \end{align}

\subsection{Defining the insertion map}\label{subsec:insertion_map} The main tool of the proof is the map of inserting contracting elements in a word introduced in \cite{DY24}. We need several auxiliary definitions.
\begin{defn}\label{def:insert_set}
Let $0<m<n$. An ordered subset
$I=\{i_1<\cdots<i_m\}\subseteq\{1,\ldots,n-1\}$
is called an \emph{anchored set}. We denote
\[
P_m:=\{I\subseteq\{1,\ldots,n-1\}:|I|=m\}.
\]
\end{defn}
\begin{defn}\label{def:anchored_decomp}
    Let $g = s_1 \cdots s_n\in G$ be a word representation with $s_i\in S$ ($1\le i\le n$). Let $I = \{i_1<i_2<\cdots< i_m\}\in P_m$ and $i_0=0, i_{m+1} = n$. The \emph{anchored decomposition} of $g$ relative to  $I$ is defined as the product decomposition $g = h_0 h_1 \cdots h_m$, where $h_j = s_{i_j + 1} \cdots s_{i_{j+1}}.$
    %The set $I$ will be refered to as anchored set.
\end{defn}

Recall $B_n:= \{g\in G: d_S(1,g) \le n\}$ and $S_n:= \{g\in G: d_S(1,g) = n\}$. 
\begin{defn}[Insertion map]\label{def:insert_map}
    Fix $n>m>0$. Given the data
    \begin{enumerate}
        \item For any $g\in S_n$, fix a word geodesic representation $g = s_1 s_2 \cdots s_n$. 
    
        \item 
        For any $I\in P_m$, let $g = h_0h_1 \cdots h_m$ be the anchored decomposition of $g$ with anchored set $I$.
    \end{enumerate}
    we define the map $\Phi: S_n\times P_m\to G$ by
    $$\Phi(g, I)\; :=\; h_0 \prod_{i=1}^m f_i h_i = h_0 f_1 h_1 f_2 h_2 \cdots f_m h_m$$ 
    where for each $1\le i\le m$, $f_i\in F$ are chosen by Lemma~\ref{extend3} (depending on $h_{i-1}$ and $h_i$) subject to   the condition:
    \begin{equation}
    \tag{$\dagger$}\label{eq:tripleadmissible}
    \begin{array}{lr}
    \text{the triple } (h_{i-1},\, f_i, h_i) \text{ labels an } (L,\tau) \text{–admissible path.} & 
    \end{array}
    \end{equation}  
\end{defn}

\begin{lem}\label{lem:(gI)labeledpath}
Given $g\in S_n$ and $I\in P_m$, the path  in $X$ denoted as   $\gamma(g,I)$  labeled by the following tuple
\[
\bigl(
h_0, f_1, h_1, f_2, h_2,
\ldots,
f_m, h_m
\bigr)
\]
is an $(L, \tau)$-admissible path. In particular, 
writing $$x_i = h_0 f_1 h_1 \cdots f_{i-1} h_{i-1}$$ for each $1\le i\le m$, we have $$ \pi_{x_i\ax(f_i)}(o)\subseteq N_{r}(x_io)\quad \text{and} \quad \pi_{x_i\ax(f_i)}(\Phi(g,I))\subseteq N_{r}( x_if_io).$$  
\end{lem}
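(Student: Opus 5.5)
The approach is to check the (LL) and (BP) conditions of Definition~\ref{AdmDef} locally, and then invoke Proposition~\ref{admisProp} to obtain the projection estimates.

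First we describe the path explicitly. Set $y_0=1$ and $y_i = x_i f_i$ for $1\le i\le m$, so that $x_{i+1}=y_i h_i$. The path $\gamma(g,I)$ is the concatenation
\[
[o,h_0o]\cdot x_1[o,f_1o]\cdot y_1[o,h_1o]\cdot x_2[o,f_2o]\cdots x_m[o,f_mo]\cdot y_m[o,h_mo].
\]
The pieces $p_i := x_i[o,f_io]$ are the contracting pieces. Their endpoints lie on $x_i\ax(f_i)\in\f$, because $o\in\ax(f_i)=E(f_i)o$ and $f_io\in\ax(f_i)$. The transitional pieces are $q_i := y_{i-1}[o,h_{i-1}o]$. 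To match Definition~\ref{AdmDef} exactly, we take $p_0$ and $p_{m+1}$ trivial, at $o$ and at $\Phi(g,I)o$, lying on the axes $\po$ and $\Phi(g,I)\po$ say. Alternatively, we simply regard the path as beginning with the transitional piece $q_1=[o,h_0o]$, as the statement's convention allows.

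Next we verify (LL). Each $p_i$ with $1\le i\le m$ has length $d(o,f_io)>L$, by the standing assumption that each $f\in F$ was replaced by a large power.

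Then we verify (BP). This condition is local, involving only $p_i$ and the adjacent transitional pieces $q_i$ and $q_{i+1}$. Translating by $x_i^{-1}$, it reduces to the triple $[o,h_{i-1}^{-1}\cdots]$. More precisely, $x_i^{-1}\gamma$ restricted near $p_i$ is $x_i^{-1}y_{i-1}[o,h_{i-1}o]\cdot[o,f_io]\cdot f_i[o,h_io]$. This is the left translate by $h_{i-1}^{-1}$ of the path $[o,h_{i-1}o]\cdot h_{i-1}[o,f_io]\cdot h_{i-1}f_i[o,h_io]$, which is exactly the path labeled by $(h_{i-1},f_i,h_i)$ in Lemma~\ref{extend3}. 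By condition~\eqref{eq:tripleadmissible}, this triple is $(L,\tau)$-admissible, so $\mathrm{diam}(\pi_{x_i\ax(f_i)}(q_i))\le\tau$ and $\mathrm{diam}(\pi_{x_i\ax(f_i)}(q_{i+1}))\le\tau$, using $G$-equivariance of the projections.

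We also need consecutive axes to be distinct, i.e.\ $x_i\ax(f_i)\ne x_{i+1}\ax(f_{i+1})$. This is also built into Lemma~\ref{extend3}: in the admissible structure of the triple, the contracting subsets adjacent to $p_i$ are distinct from $x_i\ax(f_i)$. If $q_{i+1}$ is trivial, i.e.\ $h_i=1$, this needs a remark. Since $I\subseteq\{1,\dots,n-1\}$ with distinct indices, each $h_j$ for $1\le j<m$ is a nonempty subword of a geodesic word and hence nontrivial. When $h_i$ is short, the required statement $x_i\ax(f_i)\neq x_if_ih_i\ax(f_{i+1})$ is exactly what the extension lemma guarantees via its choice of $f$. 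I expect this bookkeeping of what Lemma~\ref{extend3} literally provides to be the only delicate point. Following the Remark after Lemma~\ref{extend3} (and \cite{YANG10}), (LL) and (BP) are local, so the concatenation is admissible as in Remark~\ref{ConcatenationAdmPath}.

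Finally, Proposition~\ref{admisProp} applies, since $L$ and $r$ were chosen from it for the constant $\tau$. Its "moreover" clause states that the shortest projections of $\gamma_-=o$ and $\gamma_+=\Phi(g,I)o$ to the axis $Y_i=x_i\ax(f_i)$ lie within $r$ of $(p_i)_-=x_io$ and $(p_i)_+=x_if_io$ respectively. This gives both inclusions $\pi_{x_i\ax(f_i)}(o)\subseteq N_r(x_io)$ and $\pi_{x_i\ax(f_i)}(\Phi(g,I)o)\subseteq N_r(x_if_io)$.
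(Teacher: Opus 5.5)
Your argument is the paper's, written out in more detail. The paper's proof just says that condition \eqref{eq:tripleadmissible} plus concatenation (Remark~\ref{ConcatenationAdmPath}) gives admissibility, and then applies the ``moreover'' clause of Proposition~\ref{admisProp}. Your translation reducing (BP) at $x_i\ax(f_i)$ to the triple $(h_{i-1},f_i,h_i)$, your (LL) check and your final projection step are all correct.

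\textbf{The gap: distinctness of consecutive axes.} The step that fails as you justify it is the one you flag yourself, $Y_i\neq Y_{i+1}$ for $Y_i=x_i\ax(f_i)$. It is not built into Lemma~\ref{extend3}. The path $[o,go]\cdot g[o,fo]\cdot gf[o,ho]$ has only one nontrivial contracting piece, so its admissible structure says nothing about neighbouring axes. Moreover, $f_{i+1}$ is selected from $(h_i,h_{i+1})$ alone, without reference to $f_i$. Nontriviality of $h_i$ does not help either. Suppose $h_i\in E(f_i)$ is short, e.g.\ a single generator. Then the (BP) conditions involving $q_{i+1}$ hold for every choice of $f_{i+1}$, since projection diameters of $q_{i+1}$ are at most $d(o,h_io)+2C$. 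So \eqref{eq:tripleadmissible} can be met with $f_{i+1}=f_i$, and then $Y_{i+1}=x_if_ih_i\ax(f_i)=Y_i$. If in addition $h_i$ reverses the orientation of $\ax(f_i)$, then $p_{i+1}$ runs back along $Y_i$ to $x_ih_io$. In that case $\pi_{Y_i}(\Phi(g,I)o)$ lies near $x_io$ rather than near $x_if_io$, so the conclusion itself fails, and so does the distinctness used in Lemma~\ref{lem:CA_in_std_path}.

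\textbf{How to repair it.} The fix belongs in the choice of the $f_i$. Choose them successively, and whenever $h_i\in E(f_i)$, forbid $f_{i+1}=f_i$. For $f_{i+1}\neq f_i$, $Y_{i+1}\ne Y_i$ is automatic by independence. The extra constraint costs nothing. When $h_i\in E(f_i)$, the geodesic $[h_i^{-1}o,o]$ has both endpoints on $\ax(f_i)$, hence (for $\tau$ large) bounded projection to $\ax(f)$ for every $f\neq f_i$. So the only candidate excluded on account of $h_i$ is $f_i$ itself. At most one further candidate is excluded on account of $h_{i+1}$, and three independent elements still leave a valid choice. The paper's one-line proof passes over this same point, deferring to the remark after Lemma~\ref{extend3} and \cite{YANG10}.

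\textbf{A smaller point.} Drop your first option $Y_0=\po$. Read literally, it would require $\mathrm{diam}(\pi_{\po}([o,h_0o]))\le\tau$ and $\po\neq h_0\ax(f_1)$, neither of which is guaranteed. Use the convention that trivial end pieces impose no conditions.
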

\begin{proof}
By Remark \ref{ConcatenationAdmPath}, the concatenation of appropriate $(L, \tau)$-admissible paths gives an $(L, \tau)$-admissible path, so the admissibility of  $\gamma(g,I)$ follows by the condition (\ref{eq:tripleadmissible}). The ``in particular" statement then follows by Proposition~\ref{admisProp}.  
\end{proof}

%Throughout the section, we assume $g\in A_n$ and $I\in P_m$ are chosen arbitrarily.

The next lemma states that the strongly contracting subsets in the above $(L, \tau)$-admissible path appears in the standard path between $\po$ and $\Phi(g, I)\po$.

\begin{defn}\label{def:char_axis}
    Let $g\in S_n$ and $I\in P_m$ as above. For $1\le i\le m$, define the \emph{$i$-th characteristic axis} of $\Phi(g, I)$ by $$Y_i(g, I):= h_0 \left(\prod_{j=1}^{i-1} f_jh_j\right) \ax(f_i)$$ 
    The set of characteristic axes forms the ordered set $$\begin{aligned}
        \mathcal{CA}(g, I):&=\Bigl\{Y_i(g, I): 1\le i\le m\Bigr\}\\
        &= \Bigl\{h_0 \ax(f_1), h_0f_1h_1\ax(f_2), \cdots, h_0 f_1 h_1 f_2 \cdots h_{m-1}\ax(f_m)\Bigr\}.
    \end{aligned}$$
\end{defn}

\begin{lem}\label{lem:CA_in_std_path}
    The characteristic axes are pairwise distinct, and
$$
\mathcal{CA}(g,I)\subseteq\f_K[\po,\Phi(g,I)\po].
$$
Moreover, their order agrees with the order on this standard path.
\end{lem}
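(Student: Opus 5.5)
The plan is to read off the projection estimates for each characteristic axis from the fellow-travel conclusion of Lemma~\ref{lem:(gI)labeledpath}. Once each $Y_i$ is shown to lie in $\f_K(\po,\Phi(g,I)\po)$, the ordering will follow from Proposition~\ref{prop:std_path}.

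Fix $1\le i\le m$ and write $Y_i=x_i\ax(f_i)$, with $x_i$ as in Lemma~\ref{lem:(gI)labeledpath}. By that lemma, the shortest projection of $o$ to $Y_i$ lies in $N_r(x_io)$, and the shortest projection of $\Phi(g,I)o$ lies in $N_r(x_if_io)$. Both $x_io$ and $x_if_io$ lie on $Y_i$, at distance $d(o,f_io)$ apart. Hence
\[
\proj^\pi_{Y_i}(o,\Phi(g,I)o)\ge d(o,f_io)-2r.
\]
To pass from points to the vertices $\po$ and $\Phi(g,I)\po$, I need to control $\pi_{Y_i}(\po)$.

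\textbf{Distinctness.} First I check $Y_i\ne \po$ and $Y_i\ne\Phi(g,I)\po$. Then bounded projection (axiom \ref{axiom:bound_proj}, in the original shortest-point form) gives $\mathrm{diam}\,\pi_{Y_i}(\po)\le\theta$, and this set contains $\pi_{Y_i}(o)$ since $o\in\po$. The same holds for $\Phi(g,I)\po$. Combining with Eq.~(\ref{eq:projdist}),
\[
\proj_{Y_i}(\po,\Phi(g,I)\po)\ge d(o,f_io)-2r-4\theta>K,
\]
by the choice $d(o,fo)>K+2r+4\theta$. (One may lose another additive $\theta$ in this bookkeeping; I would absorb it by enlarging the power of $f$.)

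For the inequality $Y_i\neq\po$ I plan to argue as follows. If $Y_i=\po$, then $o\in Y_i$ and $\pi_{Y_i}(o)=\{o\}$, so $d(o,x_io)\le r$. Likewise, if $Y_i=\Phi(g,I)\po$, then $d(\Phi(g,I)o,x_if_io)\le r$. I expect these degenerate cases to be the main obstacle, since the $h_j$ may be short and $x_io$ may genuinely lie near $o$. The cleanest fix is to exploit the order. I would prove the lower bound $\proj_{Y_i}(Y_j,Y_k)>K$ for $j<i<k$, using the fellow travel property applied to the subpath from $(p_j)_-$ to $(p_k)_+$. Then I would treat $\po$ as the axis preceding $Y_1$, when $h_0$ is trivial, by appending a dummy piece, for instance by prepending $f_0$. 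If this gets messy, I will simply allow $Y_i=\po$ or $Y_i=\Phi(g,I)\po$, which is harmless since these are the endpoints of $\f_K[\po,\Phi(g,I)\po]$.

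\textbf{Pairwise distinctness and order.} For $i<j$, the same fellow-travel argument gives
\[
\pi_{Y_i}(Y_j)\subseteq\pi_{Y_i}(\text{later points of the admissible path}),
\]
and this set lies near $x_if_io$, while $\pi_{Y_i}(o)$ lies near $x_io$. Therefore
\[
\proj_{Y_i}(\po,Y_j)>K>\theta.
\]
If $Y_i=Y_j$, the projection of $Y_j$ to itself would contain both its own points near $x_jo$, giving a contradiction. More simply, distinctness is encoded in the (BP) condition $Y_{i}\ne Y_{i+1}$, and for nonadjacent pairs it follows from the order. Finally, by Definition~\ref{def:stdpath}, $\proj_{Y_i}(\po,Y_j)>\theta$ means exactly $Y_i<Y_j$. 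This order is total and strict, so the $Y_i$ are distinct and appear in the standard path in increasing index order.
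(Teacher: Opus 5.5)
Your proposal is correct and follows essentially the same route as the paper. The fellow-travel estimate from Lemma~\ref{lem:(gI)labeledpath} gives $\proj^\pi_{Y_i}(o,\Phi(g,I)o)\ge d(o,f_io)-2r$, Theorem~\ref{thm:forcing_seq} upgrades this to $\proj_{Y_i}(\po,\Phi(g,I)\po)>K$, and the same comparison for $j>i$ gives the order. Your treatment of pairwise distinctness (via (BP) plus transitivity of the strict order) and of the endpoint cases is more careful than what the paper writes. One simplification: the lower bound only needs the containment $\pi_{Y_i}(o)\subseteq\pi_{Y_i}(\po)$, not the diameter bound on $\pi_{Y_i}(\po)$, so it does not require $Y_i\ne\po$.
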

\begin{proof}
    Given any $1\le i\le m$, denote $Y := Y_i(g, I)$. The goal is to prove that $$\proj_Y(\po, \Phi(g, I)\po)>K.$$ By Theorem~\ref{thm:forcing_seq}, $\proj_Y(\po, \Phi(g, I)\po) \ge \proj_Y^\pi(o, \Phi(g, I)o) - 2\theta$. It then suffices to prove   $$\proj_Y^\pi(o, \Phi(g, I)o)> K + 2\theta.$$

    In fact,  $\gamma(g, I)$ has $r$-fellow-travel property and $Y$ is a strongly contracting subset in the admissible path structure of $\gamma(g, I)$. Hence, $$\proj_Y^\pi(o, \Phi(g, I) o)\ge d(o, f_i o) - 2r \ge K + 2\theta$$ where we used $d(o, fo) > K + 2r + 2\theta
    $ for each $f\in F$. Similarly, we can check  $\proj_Y(\po, Y')>K$ for any $Y':=Y_j(g, I)$ with $j>i$. Thus, the order on $\mathcal{CA}(g, I)$ agrees with the order on $\f_K(\po, \Phi(g, I)\po)$.
\end{proof}

The main result of Section \ref{sec:anchoredlength}, Proposition~\ref{prop:word_geod_on_proj_cplx},   gives an upper bound of $\LS(\po, \Phi(g, I) \po)$.
\begin{lem}\label{lem:std_length_of_image}
    For any $\delta>0$, there exists a constant $E = E(G, X, S, \delta)$ such that for any $n>m>0$, $g\in S_n$ and $I\in P_m$, we have $$\LS(\po, \Phi(g, I) \po)\le \LS(\po, g\po) + \delta n + Em.$$
\end{lem}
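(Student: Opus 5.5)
The plan is to realize $\Phi(g,I)$ as the endpoint of a word path in $\mathrm{Cay}(G,S)$ and compare standard lengths through the anchored length of Proposition~\ref{prop:word_geod_on_proj_cplx}. Write $g=s_1\cdots s_n$ as the fixed geodesic word, with anchored decomposition $g=h_0h_1\cdots h_m$. Set $a_j=h_0h_1\cdots h_{j-1}$ for $1\le j\le m$; these are the points $s_1\cdots s_{i_j}$ on the word geodesic $\gamma$ from $1$ to $g$. Set $a_0=1$ and $a_{m+1}=g$. Then $A=\{a_1<\cdots<a_m\}$ is a geodesically anchored set from $1$ to $g$. Proposition~\ref{prop:word_geod_on_proj_cplx}, applied with the given $\delta$, yields
\[
\sum_{j=0}^m \LS(a_j\po,a_{j+1}\po)=\LS_A(\po,g\po)\le \LS(\po,g\po)+\delta n+E_0m .
\]

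Next, split $\LS(\po,\Phi(g,I)\po)$ along the points $x_j:=h_0f_1h_1\cdots f_{j-1}h_{j-1}$ and $x_jf_j$ for $1\le j\le m$. Set $x_1f_1\cdot$ by convention, and put $x_0f_0:=1$ and $x_{m+1}:=\Phi(g,I)$. Iterating the weak triangle inequality (Lemma~\ref{lem:std_tri_ineq}) along the chain
\[
\po,\ x_1\po,\ x_1f_1\po,\ x_2\po,\ \dots,\ x_mf_m\po,\ \Phi(g,I)\po
\]
gives
\[
\LS(\po,\Phi(g,I)\po)\le \sum_{j=0}^{m}\LS(x_jf_j\po,x_{j+1}\po)+\sum_{j=1}^m\LS(x_j\po,x_jf_j\po)+2m .
\]
By $G$-invariance, $\LS(x_jf_j\po,x_{j+1}\po)=\LS(\po,h_j\po)=\LS(a_j\po,a_{j+1}\po)$, since $a_{j+1}=a_jh_j$. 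Likewise $\LS(x_j\po,x_jf_j\po)=\LS(\po,f_j\po)\le D:=\max_{f\in F}\LS(\po,f\po)$. This bound is a constant depending only on $F$; alternatively it is at most $2\,d_S(1,f)\le 2l$, by Lemma~\ref{lem:word_is_PC_path} and Eq.~(\ref{rem:std_len_quasi_isom}). Combining the two estimates,
\[
\LS(\po,\Phi(g,I)\po)\le \LS_A(\po,g\po)+(D+2)m\le \LS(\po,g\po)+\delta n+(E_0+D+2)m ,
\]
so $E:=E_0+D+2$ works.

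The real content is Proposition~\ref{prop:word_geod_on_proj_cplx}, which is already established. The only points to check are bookkeeping. First, the anchors $a_j$ are distinct interior points of the geodesic, because $I\subseteq\{1,\dots,n-1\}$ has distinct entries and the word is geodesic. Second, the translation invariance $\LS(xU,xV)=\LS(U,V)$ holds because the modified projections are $G$-equivariant (Theorem~\ref{thm:forcing_seq}(3)). Third, the number of weak-triangle applications is $2m$, so it contributes only a term linear in $m$. No admissibility of the inserted path is needed for this upper bound.
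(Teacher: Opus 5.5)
Your proposal is correct and follows essentially the same route as the paper. Both chain the weak triangle inequality through the points $x_j\po$ and $x_jf_j\po$, use $G$-invariance to identify the $h_j$-segments with the anchored length $\LS_A(\po,g\po)$ for the anchors $a_j=h_0\cdots h_{j-1}$, bound the $f_j$-segments by $D=\max_{f\in F}\LS(\po,f\po)$, and then conclude with Proposition~\ref{prop:word_geod_on_proj_cplx}.
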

\begin{proof}
    As in Definition~\ref{def:insert_map}, let $g = h_0\cdots h_m$ be the anchored decomposition with anchored set $I$. The  index set $I$ gives an anchored set on a geodesic from $1$ to $g$. Namely, setting $g_i = h_0 \cdots h_i$ for $0\le i\le m$, the set $$Q = \{g_0< g_1< \cdots< g_{m-1}\}$$ is a geodesically anchored set from $1$ to $g$ (see Definition~\ref{def:anchored_set}). 

    Set $D:=\max\{\LS(\po, f_i\po):f_i\in F\}$. By the weak triangle inequality (Lemma~\ref{lem:std_tri_ineq}), $$\begin{aligned}
        \LS(\po, \Phi(g, I)\po) &\le 2m + \LS(\po, h_0 \po) + \sum_{i=1}^m \bigl(\LS(\po, f_i \po) + \LS(\po, h_i \po)\bigr)\\
        &=2m + \LS_Q(\po, g\po) + \sum_{i=1}^m\LS(\po, f_i\po)\\
        &\le 2m+mD + \LS_Q(\po, g\po)
    \end{aligned}$$
    where the equality follows by the definition of the anchored length $\LS_Q$.
    
    By Proposition~\ref{prop:word_geod_on_proj_cplx}, there exists $E_1 = E_1(G, X,S, \delta)$ such that $$\LS_Q(\po, g\po)\le \LS(\po, g\po) + \delta n + E_1 m$$ 
    which yields $$\LS(\po, \Phi(g, I)\po) \le \LS(\po, g\po) + \delta n + (E_1 + D+3)m.$$
    Setting $E = E_1 + D+3$ completes the proof.
\end{proof}

\subsection{Estimating the kernel of the map}
Recall from Theorem \ref{thm:grow_tight_PC}, $$A = \bigl\{g\in G: \, \LS(\po, g\po)\le \varepsilon d_S(1, g)\bigr\}$$
Set $A_n=A\cap S_n$. We give a bound on the kernel of the restriction map $\Phi:A_n\times P_m\to G$. %The key step is to bound the size of the preimage of $\Phi$.
\begin{lem}\label{lem:size_of_preimage}
For any $\varepsilon>0$, there exists a constant $E = E(G,X,S,\varepsilon)>0$ such that for any $h\in G$ and any $n>m\ge 1$,
\[
\Bigl|\bigl\{(g,I):\; g\in A_n,\; I\in P_m,\; \Phi(g,I)=h\bigr\}\Bigr|
\le M^m \; \binom{2\varepsilon n + Em}{m}.
\]
\end{lem}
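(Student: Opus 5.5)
The plan is to recover $(g,I)$ from $h=\Phi(g,I)$ in two stages: first choose which vertices of the standard path $\f_K[\po,h\po]$ are the characteristic axes, then recover the inserted group elements up to bounded ambiguity.

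\textbf{Step 1 (locating the axes).} Fix $h$ and suppose $\Phi(g,I)=h$ with $g\in A_n$ and $I\in P_m$. By Lemma~\ref{lem:CA_in_std_path}, the $m$ characteristic axes $Y_1(g,I)<\cdots<Y_m(g,I)$ are distinct vertices of $\f_K(\po,h\po)$, and they appear in the order of the standard path. By Lemma~\ref{lem:std_length_of_image} with $\delta=\varepsilon$, and using $\LS(\po,g\po)\le\varepsilon n$ for $g\in A_n$,
\[
\LS(\po,h\po)\le 2\varepsilon n+Em .
\]
Hence the ordered set $\mathcal{CA}(g,I)$ is one of at most $\binom{2\varepsilon n+Em}{m}$ subsets of size $m$ of the standard path. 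It therefore suffices to show that, once $\mathcal{CA}(g,I)=\{Y_1<\cdots<Y_m\}$ is fixed, there are at most $M^m$ pairs $(g,I)$ mapping to $h$ with these characteristic axes.

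\textbf{Step 2 (recovering the elements from the axes).} Write $x_i=h_0f_1h_1\cdots f_{i-1}h_{i-1}$, so that $Y_i=x_i\ax(f_i)$. By Lemma~\ref{lem:(gI)labeledpath}, the projection $\pi_{Y_i}(o)$ lies in $N_r(x_io)$ and $\pi_{Y_i}(ho)$ lies in $N_r(x_if_io)$. Suppose two preimages $(g,I)$ and $(g',I')$ share the same axes, giving elements $x_i,f_i$ and $x_i',f_i'$. The axis $Y_i$ determines the coset $x_iE(f_i)$, which pins down $f_i$ up to the finite ambiguity in $F$ (independent elements have distinct axis orbits). The projection points of $o$ and $ho$ to $Y_i$ depend only on $Y_i$, $o$ and $h$, so
\[
d(x_io,x_i'o)\le 2r+\text{(projection diameter)}\le 3r ,
\]
and similarly $d(x_if_io,x_i'f_i'o)\le 3r$. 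Taking $r$ large enough to absorb the projection diameter $C$, this says $x_i^{-1}x_i'\in\mathrm{Stab}_{3r}(o,f_io)$ after checking $f_i=f_i'$. By the choice of $M$, there are fewer than $M$ choices for $x_i$ given $Y_i$. Choosing $x_1,\dots,x_m$ independently gives at most $M^m$ choices.

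\textbf{Step 3 (reconstructing $(g,I)$).} From the $x_i$ and $f_i$, set $h_0=x_1$, $h_i=f_i^{-1}x_i^{-1}x_{i+1}$, and $h_m=(x_mf_m)^{-1}h$. This recovers $g=h_0\cdots h_m$. Since the geodesic word for $g$ is fixed, the anchored set $I$ is recovered as well, via the partial sums $d_S$-lengths of $h_0,\dots,h_{j}$ (the decomposition is along a geodesic, so each $|h_j|_S$ equals its word length). Combining the two counts gives the bound $M^m\binom{2\varepsilon n+Em}{m}$.

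\textbf{Main obstacle.} The delicate point is Step 2. One must argue that the coset $Y_i$ forces $f_i=f_i'$; if distinct elements of $F$ could share an axis orbit, the factor would become $|F|M$ instead of $M$, but this can be absorbed into $M$. One must also bound the constant in the stabilizer estimate so that it is exactly $3r$. I would enlarge $r$ at the outset so that $2r$ plus the projection diameter is at most $3r$, as the setup's choice of $\mathrm{Stab}_{3r}$ suggests.
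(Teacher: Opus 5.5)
Your proposal is correct and follows essentially the same argument as the paper. You count the possible sets $\mathcal{CA}(g,I)$ by $\binom{2\varepsilon n+Em}{m}$ via Lemmas~\ref{lem:CA_in_std_path} and~\ref{lem:std_length_of_image}, then use the fellow-travel property of $\gamma(g,I)$ and the WPD stabilizer bound to get fewer than $M$ choices for each prefix $x_i$, from which $(g,I)$ is reconstructed. The only difference is cosmetic: you do not need to enlarge $r$, because Proposition~\ref{admisProp} applies to \emph{every} shortest projection point. So a single point of $\pi_{Y_i}(o)$ is $r$-close to both $x_io$ and $x_i'o$, giving $x_i^{-1}x_i'\in\mathrm{Stab}_{2r}(o,f_io)\subseteq\mathrm{Stab}_{3r}(o,f_io)$ directly.
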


\begin{proof}
Fix $h\in G$. Let $g\in A_n$ and $I\in P_m$ satisfy $\Phi(g,I)=h$. By Lemma~\ref{lem:std_length_of_image}, given $\varepsilon>0$, there exists a constant $E>0$ (independent of $h$, $g$, and $I$) such that
\[
\LS(\po,h\po)\le \LS(\po,g\po)+\varepsilon n+Em \le 2\varepsilon n+Em.
\]

Our goal is to bound the cardinality of the preimage $\Phi^{-1}(h)$, that is the number of pairs $(g,I)$ that give rise to the same element $h=\Phi(g,I)$. We proceed in two steps. First, we determine the possible locations of the characteristic axes appearing in the admissible path $\gamma(g,I)$. We then recover the anchored decomposition of $g$ via the fellow travel property of $\gamma(g,I)$.

\medskip
\noindent\textbf{Step 1:}
Recall that the index set $I$ records the location where the map $\Phi(g,I)$ inserts the $f_i$'s into the   word $g=s_1s_2\cdots s_n$. By Lemma~\ref{lem:CA_in_std_path}, we have
\[
\mathcal{CA}(g,I)\subseteq \f_K(\po,h\po).
\]
Thus every characteristic axis arising from $(g,I)$ lies in the collection of axes between $\po$ and $h\po$. Therefore, the number of  choices for $\mathcal{CA}(g,I)$ is at most
\[
\binom{\LS(\po,h\po)}{m}\;
\le
\;\binom{2\varepsilon n+Em}{m},
\]
which is   the number of possible ways to choose $m$ indices from the set $\f_K(\po,h\po)$.

\medskip
\noindent\textbf{Step 2:}
Once $\mathcal{CA}(g,I)=\{Y_1<\dots<Y_m\}$ is fixed, the main difficulty is to determine the elements $h_i$ in the decomposition
\[
g=h_0\cdots h_m.
\]
The anchored index set
\[
I=\left\{\sum_{j=0}^{i-1} d_S(1,h_j):\, 1\le i\le m\right\}
\]
will be then uniquely determined. As we said, this relies on the $r$-fellow travel property.

The elements $f_i$ are associated to the axes via
\[
Y_i
=
h_0\left(\prod_{j=1}^{i-1} f_j h_j\right)\ax(f_i).
\]
Because the elements of $F$ are pairwise independent, the translates
of $\ax(f)$ and $\ax(f')$ cannot coincide for distinct $f,f'\in F$.
Thus, once $Y_i$ is fixed, the element $f_i$ is determined.

For a fixed $1\le i\le m$, it suffices to determine the element
\[
x_i=h_0f_1h_1\cdots f_{i-1}h_{i-1},
\]
so that $Y_i=x_i\ax(f_i).$

Indeed, once the elements $x_i$ ($1\le i\le m$) are known, the factors $h_i$ with $0\le i\le m$ can be recovered inductively from the relation
\[
h_0f_1h_1\cdots f_m h_m = h.
\]

Fix one possible prefix $x_i$, and let $x_i'$ be another possibility
corresponding to the same ordered characteristic axes. By the
$r$-fellow-travel property,
\[
d(x_io,x_i'o)\le 2r,
\qquad
d(x_if_io,x_i'f_io)\le 2r.
\]
Setting $a=x_i^{-1}x_i'$, we obtain
\[
d(o,ao)\le 2r,
\qquad
d(f_io,af_io)\le 2r.
\]
Hence $a\in\operatorname{Stab}_{2r}(o,f_io)$, so there are at most $M$
possible choices for $x_i$. Hence, the number of possible anchored decompositions $g=h_0\cdots h_m$ is bounded above by $M^m$. Combining this with the bound from Step~1, we conclude that
\[
\Bigl|\bigl\{(g,I):\; g\in A_n,\; I\in P_m,\; \Phi(g,I)=h\bigr\}\Bigr|
\le M^m \cdot \binom{2\varepsilon n+Em}{m}.
\]
The proof is complete.
\end{proof}

The output of the above analysis gives an inequality to bound $|A_n|$. 
\begin{lem}\label{lem:fund_comb_ineq}
    For any $\varepsilon>0$, there exists $E = E(G,X,S, \varepsilon)>0$ such that for any $1\le m\le n/10$, we have $$|A_n|\cdot \binom{n-1}{m} \; \le\;  |B_n| \cdot |S|^{ml} \cdot M^m \cdot \binom{2\varepsilon n + Em}{m}$$
where  $l = \max_{f\in F}\{d_S(1, f)\}$.    
\end{lem}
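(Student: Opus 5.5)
This is a double-counting argument for the restricted insertion map $\Phi: A_n\times P_m\to G$. The domain has exactly $|A_n|\cdot|P_m| = |A_n|\binom{n-1}{m}$ elements. By Lemma~\ref{lem:size_of_preimage}, each fiber $\Phi^{-1}(h)\cap(A_n\times P_m)$ has at most $M^m\binom{2\varepsilon n+Em}{m}$ elements, with $E=E(G,X,S,\varepsilon)$ the constant given there. So it is enough to bound the number of possible images $h=\Phi(g,I)$ by $|B_n|\cdot|S|^{ml}$:
\[
|A_n|\binom{n-1}{m}\le \bigl|\Phi(A_n\times P_m)\bigr|\cdot M^m\binom{2\varepsilon n+Em}{m}.
\]

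To bound the image, I encode each $h=\Phi(g,I)=h_0f_1h_1\cdots f_mh_m$ as a word. Write each $f\in F$ as a fixed geodesic word of length $d_S(1,f)\le l$; if needed, pad with the identity so that every $f$ corresponds to a word of length exactly $l$ over $S\cup\{1\}$. The anchored decomposition $g=h_0\cdots h_m$ comes from the fixed geodesic word $s_1\cdots s_n$ for $g$. So $h$ is spelled by the word $s_1\cdots s_{i_1}\,w_1\,s_{i_1+1}\cdots s_{i_2}\,w_2\cdots$, which is the geodesic word of $g$ with $m$ blocks $w_i$ of length at most $l$ inserted after positions $i_1<\cdots<i_m$.

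The cleanest count uses an injection. Map $(g,I)$ to $(g,I,(f_1,\dots,f_m))$, so $h$ is determined by $g\in B_n$, $I$, and the inserted words. This alone would introduce an unwanted factor $\binom{n-1}{m}$. Instead, I count $h$ as a product $h=g'\cdot u$, which requires rearranging. A better route is to note that $h$ is the product of $n$ letters of $S$ interleaved with $m$ blocks, each in $S^{\le l}$. Reading $h$ as a word of length $n+ml$, it lies in $B_{n+ml}$, and the crude estimate $|B_{n+ml}|\le|B_n|\cdot|B_{ml}|\le|B_n|\,|S|^{ml}$ follows from submultiplicativity of balls. (Here $|B_{ml}|\le |S|^{ml}$ holds if $1\in S$, or after absorbing a constant.)

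So the image lies in $B_{n+ml}$, which has at most $|B_n||S|^{ml}$ elements. Combining this with the fiber bound gives the stated inequality. The hypothesis $m\le n/10$ is used only to guarantee $n>m\ge1$ and that $P_m$ is nonempty, as required by Lemma~\ref{lem:size_of_preimage}. The main point to check is the constant in $|B_{ml}|\le|S|^{ml}$: when $1\notin S$, the ball includes shorter words, so one should use $|B_k|\le \sum_{j\le k}|S|^j$ and adjust. Alternatively, pad $F$-words to exact length $l$ and count $h$ via the word in $S^{n}\times S^{ml}$ after fixing positions, noting the positions are already accounted for on the left side. If that constant becomes an issue, I would absorb a factor $2^m$ into $M^m$ by enlarging $M$.
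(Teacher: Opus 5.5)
Your argument is the same as the paper's: count the domain $A_n\times P_m$, bound each fiber by Lemma~\ref{lem:size_of_preimage}, and use $\Phi(A_n\times P_m)\subseteq B_{n+ml}$ together with $|B_{n+ml}|\le |B_n|\,|S|^{ml}$.

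The only wobble is your route to that last inequality. The bound $|B_{ml}|\le|S|^{ml}$ is false for small $ml$ when $1\notin S$ (already $|B_1|=|S|+1$). However, you need neither it nor any enlargement of $M$. For $N\ge 1$, every $y\in B_{N+1}$ lies in $B_N\cdot S$: if $y\neq 1$, peel off the last letter of a geodesic word; if $y=1$, write $1=s\cdot s^{-1}$ using $S=S^{-1}$. Iterating from $N=n\ge 1$ gives $|B_{n+ml}|\le|B_n|\,|S|^{ml}$ directly, with the original $M$.
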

\begin{proof}
    The left hand side of the inequality is the size of the domain $A_n\times P_m$ of $\Phi$.

    Now we consider the right hand side of the inequality. Let $g\in A_n$ and $I\in P_m$ with anchored decomposition $g = h_0 \cdots h_m$. By triangle inequality, $$d_S(1, \Phi(g, I))\le \sum_{i=0}^m d_S(1, h_i) + \sum_{i=1}^m d_S(1, f_i)\le n + ml.$$
    Hence, $\mathrm{Im}(\Phi) \subseteq B_{n+ml}$, which then implies $$|\mathrm{Im}(\Phi)| \le |B_{n+ml}|\le |B_n| \cdot |S|^{ml}.$$
    By Lemma~\ref{lem:size_of_preimage}, for each $h\in \mathrm{Im}(\Phi)$, $$\left|\Phi^{-1}(h)\right|\le M^m \; \binom{2\varepsilon n+Em}{m}.$$
    Combining these two inequalities completes the proof.
\end{proof}
 
\begin{proof}[Proof of Theorem~\ref{thm:grow_tight_PC}]
    By Lemma~\ref{lem:fund_comb_ineq}, it suffices to find $m = m(n)$ such that the right-hand side of $$
    \frac{|A_n|}{|B_n|}\; \le \; \frac{|S|^{ml}\cdot M^m \cdot \binom{2\varepsilon n+Em}{m}}{\binom{n-1}{m}}$$ decays exponentially in $n$. Recall from \eqref{eqn:defepsilon} that $\varepsilon = \frac{1}{4|S|^{l} M}$.  

Let
\[
\delta=\min\left\{\frac1{100},\frac{\varepsilon}{E}\right\},
\qquad
m= \delta n.
\]
If
$2\varepsilon n+Em<m$, the conclusion is immediate. Otherwise, since
$n-i-1\ge n-m$ for $0\le i<m$, we obtain
    \[\begin{aligned}
        \frac{|A_n|}{|B_n|}\;&\le \;\left(M\,|S|^l\right)^m \; \prod_{i=0}^{m-1}\left(\frac{2\varepsilon n + Em - i}{n-i-1}\right)\\
        \;&\le\; \left(M\,|S|^l\right)^m \; \left(\frac{2\varepsilon n + Em}{n-m}\right)^m\\
        \;&\le\; \left(M\,|S|^l\cdot \frac{2\varepsilon + E\delta}{1-\delta}\right)^m\\
        \;&\le\; \left(M\,|S|^l\cdot \frac{3\varepsilon}{1-\delta}\right)^m\\
        \;&\le\; \left(\frac{3}{4(1-\delta)}\right)^m \;=\left(\frac{4}{5}\right)^{\delta n}.
    \end{aligned}\] 
    Thus the ratio decays exponentially. Theorem~\ref{thm:grow_tight_PC} is proved.
\end{proof}
\section{Genericity of WPD elements}\label{SecExpGenWPD}

In this section, we prove ~\ref{MainThmWPDGeneric}. We retain the notation and standing assumptions of Section~\ref{SecShortPC}; in particular, $F\subseteq G$ consists of three pairwise independent strongly contracting WPD elements, and $\PC$ denotes the associated projection complex for $K$ chosen as in \eqref{eq:defnK}.

\iffalse We assume always the following notations throughout this section.
\begin{enumerate}
    \item Let $F$ be a finite set of independent WPD elements on $X$, which satisfy the $(\kappa, N)$-diverging property;
    
    \item Suppose $\beta>0$ is a constant such that the orbit map $\iota: G\to Go$ given by $g\mapsto go$ is a $\beta$-Lipschitz map. Furthermore, for each $f\in F$, the restriction of $\iota$ on $E(f)\to \ax(f)$ is a $\beta$-quasi-isometry. Suppose that the orbit map $\iota_{K}: Go\to \PC$ given by $go\mapsto g\po$ is also a $\beta$-Lipschitz map.

    \item Let $g\in G$ be an arbitrary element. Write $$N_R(g):=\{h\in G: d_S(g, h)\le R\}.$$
\end{enumerate}\fi

\subsection{Guard decomposition of long displacements}

\begin{defn}\label{def:block_decomposition}
    Let $\gamma$ be an oriented path from $g$ to $h$ in $\mathrm{Cay}(G,S)$. Given $K>0$, a \emph{$K$-guard decomposition} of $\gamma$ consists of an ordered set of points  on $\gamma$ $$\{g_0 = g< g_1< \cdots< g_m=h\}$$  for some $m\ge 0$, and an ordered set of axes called \emph{guard axes}  $$\{Y_1< \cdots< Y_m\}\subseteq \f_K(g\po, h\po)$$ such that the following conditions are satisfied for each $1\le i\le m$: %$Y_i$ is a $K$-guard between $\{\po, g_{i-1}\po\}$ and $\{g_i\po, g\po\}$ (c.f. Definition \ref{defn:Kguard}):  
    $$\proj_{Y_i}(g\po, g_{i-1}\po)\le \theta, \quad \proj_{Y_i}(g_i\po, h\po) \le \theta,\quad \text{and} \quad Y_i\in \f_K(g_{i-1}\po, g_i\po).$$  We say the subpath $\gamma_i := [g_{i-1}, g_i]_{\gamma}$ is \emph{guarded} by $Y_i$, and $(\gamma_i, Y_i)$ a \emph{guarded block}. The \emph{length} of the decomposition is $m$. For $m=0$, we allow the empty guard decomposition, consisting of no guard axes or guarded blocks.
\end{defn}

The following lemma provides a guard decomposition for any word path.
\begin{lem}\label{lem:block_decomp}
    Let $\gamma$ be a   path from $g$ to $h$ in $\mathrm{Cay}(G,S)$ and $\mathcal{Y}\subseteq \f_K(g\po, h\po)$ be a subset of the standard path. Then there exists a $K$-guard decomposition of $\gamma$ with length $$\left\lfloor \frac{|\mathcal{Y}|+1}{12}\right\rfloor$$ whose guard axes are contained in $\mathcal{Y}$.
\end{lem}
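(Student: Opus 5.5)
Write $\mathcal Y=\{W_1<W_2<\cdots<W_p\}$ in the order of $\f_K(g\po,h\po)$ and set $m=\lfloor (p+1)/12\rfloor$. The idea is to take as guard axes every twelfth element, say $Y_i:=W_{12i-6}$ for $1\le i\le m$, and to place the cut points $g_i$ on $\gamma$ with the bottleneck property, half way between consecutive guards. The spacing of $12$ leaves room on both sides of each guard to absorb the additive errors coming from Lemma~\ref{lem:bottleneck_in_PC}, Lemma~\ref{lem:common_prefix} and Lemma~\ref{lem:std_tri_ineq}. Since only $\mathcal Y$ (not the full standard path) is controlled, I would measure separation by order in $\f_K[g\po,h\po]$. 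Consecutive chosen guards are separated by at least $11$ further elements of $\mathcal Y\subseteq\f_K(g\po,h\po)$, so their standard distance is at least $12$.

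\textbf{Construction of cut points.} For $1\le i<m$, pick a separating axis $Z_i\in\f_K(g\po,h\po)$ strictly between $Y_i$ and $Y_{i+1}$ with at least $5$ elements of $\f_K$ on each side before reaching the guards; the element $W_{12i}$ works. By Lemma~\ref{lem:bottleneck_in_PC} applied to the path $\gamma\po$ (Lemma~\ref{lem:word_is_PC_path}), choose $g_i\in\gamma$ to be the first point with $\LS(g_i\po,Z_i)\le 3$. Set $g_0=g$ and $g_m=h$. Ordering $g_{i-1}<g_i$ is proved exactly as in Lemma~\ref{lem:split_path_into_pieces}: Lemma~\ref{lem:common_prefix} shows $Z_{i-1}\in\f_K(g\po,g_i\po)$, and the bottleneck property then forces the first visit near $Z_{i-1}$ to occur before $g_i$.

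\textbf{Verifying the guard conditions.} Fix $i$. By Corollary~\ref{cor:sub_segment_std}, $\f_K(Z_{i-1},Z_i)$ consists of the elements strictly between them, which include $Y_i$ with a margin of at least $5$ on each side; here $Z_0=g\po$ and $Z_m=h\po$ (for $i=1$, use that $W_{6}$ has $5$ predecessors in $\mathcal Y$). Since $\LS(g_{i-1}\po,Z_{i-1})\le 3$ and $\LS(g_i\po,Z_i)\le3$, Lemma~\ref{lem:common_mid} with $d=3$ gives $Y_i\in\f_K(g_{i-1}\po,g_i\po)$. For $\proj_{Y_i}(g\po,g_{i-1}\po)\le\theta$, I would apply Lemma~\ref{lem:common_prefix} to the standard path $\f_K(g_{i-1}\po,h\po)$ versus $\f_K(Z_{i-1},h\po)$ to find an axis $Y'$ strictly between $Z_{i-1}$ and $Y_i$ lying in both $\f_K(g_{i-1}\po, h\po)$ and $\f_K(g\po,h\po)$ with $Y'<Y_i$. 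Then Proposition~\ref{prop:std_path}(\ref{ord_from_end}) yields $\proj_{Y_i}(g\po,Y')\le\theta$ and $\proj_{Y_i}(Y',g_{i-1}\po)\le\theta$. To combine these into a single $\theta$ bound without doubling, use \ref{axiom:strong_berstock}: since $\proj_{Y'}(g\po,Y_i)>K$ and $\proj_{Y'}(g_{i-1}\po,Y_i)>K$, strong Behrstock gives $\proj_{Y_i}(g\po,\cdot)=\proj_{Y_i}(Y',\cdot)$, as in Claim~\ref{claim:guardYexists}. Hence $\proj_{Y_i}(g\po,g_{i-1}\po)=\proj_{Y_i}(Y',g_{i-1}\po)\le\theta$. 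The bound $\proj_{Y_i}(g_i\po,h\po)\le\theta$ is symmetric, and the ordering $Y_1<\cdots<Y_m$ is inherited from $\mathcal Y$.

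\textbf{Main obstacle.} The main obstacle is the bookkeeping: the subset $\mathcal Y$ may be sparse in $\f_K(g\po,h\po)$, so the margins must be counted in the full standard path, and these margins must be large enough to survive the $\pm3$ bottleneck error plus the loss of up to $2$ elements in the almost-tripod Lemma~\ref{lem:triangle_in_PC}. I expect a margin of $5$ on each side, hence spacing $12$, to suffice, with the boundary cases $i=1$ and $i=m$ handled by the convention $g_0=g$, $g_m=h$ (which need no bottleneck error there). The cases where $m\le1$ are degenerate and would be checked directly.
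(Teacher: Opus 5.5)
Your proposal is correct and follows the paper's proof essentially step for step: guard axes $W_{12i-6}$; cut points chosen by the bottleneck property near $W_{12i}$, ordered via Lemma~\ref{lem:common_prefix}; the containment $Y_i\in\f_K(g_{i-1}\po,g_i\po)$ via Lemma~\ref{lem:common_mid} with $d=3$; and the guard inequalities via \ref{axiom:strong_berstock} applied at a neighbouring axis. The differences are cosmetic. You pivot the strong Behrstock step at $g\po$, using $\proj_{Y'}(g\po,Y_i)>K$, whereas the paper pivots at $g_{i-1}\po$, using $\proj_{Y_{12i-7}}(g_{i-1}\po,Y_{12i-6})>K$. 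Your convention $g_m=h$ also handles the last block cleanly, which the paper's write-up leaves implicit.
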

\begin{proof}
    Denote $n=|\mathcal{Y}|+1$.  Set  $Y_0 = g\po$ and $Y_n = h\po$. Let us list $\mathcal{Y}\subseteq \f_K(g\po, h\po)$ in the induced order $$\mathcal{Y} = \{Y_1<Y_2< \cdots< Y_{n-1}\}$$ 
    
    Set $m = \lfloor n/12\rfloor$. By the bottleneck property of standard paths in Lemma~\ref{lem:bottleneck_in_PC}, for each $1\le i\le m$ there exists $g_i\in \gamma$  such that $\LS(g_i\po, Y_{12i})\le 3$. By Lemma~\ref{lem:common_prefix}, $$\{Y_1< Y_2 < \cdots < Y_{12i-5}\}\subseteq \f_K(g\po, g_i\po).$$ Hence, we may choose $g_i$ inductively from $i=m$ to $1$ such that $g_{i-1}\in [g, g_i]_{\gamma}$. Thus $g_1, \cdots, g_{m-1}$ are ordered on $\gamma$.
    
    By Corollary~\ref{cor:sub_segment_std}, $\{Y_{12i-11}< Y_{12i-10}< \cdots< Y_{12i-1}\}\subseteq \f_K(Y_{12(i-1)}, Y_{12i})$. Since $\LS(Y_{12(i-1)}, g_{i-1}\po)\le 3$ and $\LS(Y_{12i}, g_i\po)\le 3$, Lemma~\ref{lem:common_mid} implies 
    $$\{Y_{12i-7}< Y_{12i-6}< Y_{12i-5}\}\subseteq \f_K(g_{i-1}\po, g_i\po).$$ 
    
    We shall verify that $\gamma_i=[g_{i-1}, g_i]_{\gamma}$ is guarded by the axis $Y_{12i-6}$. Write $U=Y_{12i-7}$ and $V=Y_{12i-6}$ for short. Then it suffices to show that $\proj_{V}(g\po, g_{i-1}\po)<\theta$ and $\proj_{V}(g_i\po, h\po)<\theta$. Indeed, since $U\in \f_K(g_{i-1}\po, V)$, we have $\proj_{U}(g_{i-1}\po, V)>K>\theta$. The strong Behrstock inequality~\ref{axiom:strong_berstock} with $U\in \f_K(g\po, V)$ then implies
    $$\proj_{V}(g_{i-1}\po,g\po) = \proj_{V}(U, g\po)<\theta.$$
    Similarly, since $Y_{12i-5}\in \f_K(V, g_i\po)$, one proves $\proj_{V}(g_i\po, h\po)<\theta$. Thus, $\{g_i:1\le i\le m\}$ together with $\{Y_{12i-6}:1\le i\le m\}$  forms a $K$-guard decomposition.
\end{proof}
\begin{rem}\label{rem:block_every_10}
    In the proof of Lemma~\ref{lem:block_decomp}, a guard axis is picked from every 12 consecutive axes in the ordered set $\mathcal{Y}$.
\end{rem}

Let  $(\gamma_i, Y_i)$ be  a guarded block of a   path from $g$ to $h$ as in Definition \ref{def:block_decomposition}. Let $H_i$ be the left coset representing $Y_i\in \f$, i.e. $\Pi(H_i)=H_io=Y_i$. 

\begin{defn}\label{def:good_block}
Given $R>0$, we say that $(\gamma_i, Y_i)$ is \emph{$(K, R)$-good} if there exists $x\in \gamma_i$ such that $d_S(t, x)\le R$ for any group element $t\in \pi_{H_i}^X(g)$. We shall refer to $x$ as a \emph{neighbor point} of the block $(\gamma_i, Y_i)$.
Otherwise,  $(\gamma_i, Y_i)$ is called \emph{$(K, R)$-bad}. 
\end{defn}
  
We observe that the bad blocks in any guard decomposition is rare in a qualitative way.
\begin{lem}\label{lem:good_block}
Set $K>2\theta+ N$. For any $\varepsilon>0$, there exists $R=R(\varepsilon)>0$ with the following property. For any   path $\gamma$ from $g$ to $h$ in $\mathrm{Cay}(G,S)$ and any $K$-guard decomposition of $\gamma$,   the number of $(K,R)$-bad blocks is at most $\varepsilon\,\len(\gamma)$.  
\end{lem}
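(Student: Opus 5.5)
The strategy is to show that each bad block is forced to be long: a bad block has length at least $\kappa(R)$, where $\kappa$ is the divergence function of Lemma~\ref{lem:WPDdivergent}. Since the blocks $\gamma_i$ are subpaths of $\gamma$ with disjoint interiors, there can then be at most $\len(\gamma)/\kappa(R)$ bad blocks. Given $\varepsilon>0$, we choose $R$ so large that $\kappa(R)\ge 1/\varepsilon$, which is possible because $\kappa$ diverges. This yields the bound $\varepsilon\,\len(\gamma)$.

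To bound the length of a bad block $(\gamma_i,Y_i)$ with $\gamma_i=[g_{i-1},g_i]_\gamma$, we first compare projections.
\begin{itemize}
\item The guard conditions give $\proj_{Y_i}(g\po,g_{i-1}\po)\le\theta$ and $\proj_{Y_i}(g_i\po,h\po)\le\theta$.
\item We also have $Y_i\in\f_K(g_{i-1}\po,g_i\po)$, so $\proj_{Y_i}(g_{i-1}\po,g_i\po)>K$.
\item Passing to shortest projections via Eq.~(\ref{eq:projdist}) and the fact that $o\in\po$, we get $\proj^\pi_{Y_i}(g_{i-1}o,g_io)>K-4\theta>N$, using $K>2\theta+N$ after adjusting constants. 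In the same way, $\proj^\pi_{Y_i}(go,g_{i-1}o)\le 3\theta$, up to a bounded error.
\end{itemize}
Thus the endpoints of $\gamma_i$ have far-apart projections to $Y_i=H_io$, while $g_{i-1}o$ projects close to $\pi_{Y_i}(go)$.

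Next we use badness. Every point $x\in\gamma_i$ is at distance more than $R$ from some $t\in\pi^X_{H_i}(g)$. We want to upgrade this to $\gamma_i\cap N_{R'}(H_i)=\emptyset$ for some $R'$ comparable to $R$, and then apply $(\kappa,N)$-divergence (Definition~\ref{defn:divergence}) to conclude $\len(\gamma_i)\ge\kappa(R')$. The upgrade is not automatic, because $\gamma_i$ could approach $H_i$ far from $\pi^X_{H_i}(g)$. To handle this, we would split $\gamma_i$ at the first point $x$ with $d_S(x,H_i)\le R'$, and take the initial segment $[g_{i-1},x]_{\gamma_i}$, which avoids $N_{R'}(H_i)$. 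There are two cases.
\begin{itemize}
\item If $\proj^\pi_{Y_i}(g_{i-1}o,xo)\ge N$, then divergence applied to this initial segment already gives length at least $\kappa(R')$.
\item Otherwise $xo$ projects within roughly $N+3\theta$ of $\pi_{Y_i}(go)$. Let $x'\in H_i$ be a point with $d_S(x,x')\le R'$. Then $x'o$ lies within $\beta R'+2C$ of the projection region in $X$. Since $H_i\to Y_i$ is a $\beta$-quasi-isometry, this forces $d_S(x',t)\lesssim\beta(\beta R'+N+C+\theta)$ for $t\in\pi^X_{H_i}(g)$, and hence $d_S(x,t)\le R$. This contradicts badness, provided $R$ is chosen as roughly $\beta^2R'+$const.
\end{itemize}
So we first choose $R'$ with $\kappa(R')\ge1/\varepsilon$, and then set $R$ accordingly.

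The main obstacle is this second case. The definition of a good block asks for a single $x$ within $R$ of \emph{every} $t\in\pi^X_{H_i}(g)$, so we need $\pi^X_{H_i}(g)$ to have uniformly bounded word diameter. This should follow from strong contraction, which makes $\pi_{Y_i}(go)$ have diameter at most about $C$, together with the quasi-isometry $H_i\to Y_i$. The remaining work is the bookkeeping of constants between the modified projections of Theorem~\ref{thm:forcing_seq} and the shortest projections.
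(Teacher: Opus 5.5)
Your proof is correct and follows the paper's argument essentially step for step. The paper likewise shows that every bad block has length at least $\kappa(r)$ with $r=(R-\beta(3\theta+N+1))/(2\beta^2+1)$, by splitting at the entry point of $\gamma_i$ into $N_r(H_i)$, using divergence in one case and a contradiction with badness in the other. Your ``main obstacle'' is in fact automatic: since $\proj^\pi_{Y_i}(go,xo)$ is the diameter of $\pi_{Y_i}(go)\cup\pi_{Y_i}(xo)$, the bound $d(to,yo)\le 3\theta+N$ holds simultaneously for every $t\in\pi^X_{H_i}(g)$, which is exactly how the paper concludes.
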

\begin{proof}
Let $(\gamma_i=[g_{i-1},g_i]_\gamma, Y_i)$ be a  $(K, R)$-bad block  of $\gamma$. By definition, $\proj_{Y_i}(g_{i-1} \po, g_i \po)>K$. We denote by $H_i\subseteq G$ the coset representing $Y_i$. We shall prove that $\len(\gamma_i)\ge \kappa(r)$ where   $$r:=\frac{R-\beta(3\theta +N+1)}{2\beta^2+1}$$ 
 Depending on whether $N_r(H_i)\cap \gamma_i=\emptyset$, we deal with the following two cases.
  
\medskip

\noindent \textbf{Case 1} Assume $N_r(H_i)\cap \gamma_i=\emptyset$. 
%The orbital map $\Pi:H_i\to Y_i$  is a $\beta$-quasi-isometry, so by definition of $X$-projection, $\proj^X_{H_i}(g_{i-1},g_i)\ge \proj^\pi_{Y_i}(g_{i-1} o, g_i o)/\beta-\beta$. 
   By Eq. (\ref{eq:projdist}) in Theorem~\ref{thm:forcing_seq}, $$\proj^\pi_{Y_i}(g_{i-1} o, g_i o)\ge \proj_{Y_i}(g_{i-1} \po, g_i \po)-2\theta \ge K-2\theta> N.$$  Since the coset $H_i$ is $(\kappa, N)$-divergent, we obtain $\len(\gamma_i)>\kappa(r)$. 
 
\medskip

\noindent \textbf{Case 2}. Assume $N_r(H_i)\cap \gamma_i\ne \emptyset$. Let $x$ be the entry point of $\gamma_i$ in $N_r(H_i)$. If $\proj_{Y_i}^\pi(g_{i-1}o,xo)\ge N$, then by the  $(\kappa, N)$-divergence of $H_i$, we already have $\len(\gamma_i)\ge \len([g_{i-1},x]_{\gamma_i})>\kappa(r)$ .  Otherwise, let us assume $\proj_{Y_i}^\pi(g_{i-1}o,xo)< N$ to derive a contradiction.  
\medskip

Indeed, let $y\in \pi_{H_i}^X(x)$ be an element representing the shortest projection of $xo$ to $Y_i$. By definition of the guard decomposition, $\proj_{Y_i}(g\po, g_{i-1}\po)<\theta$, so by Eq. (\ref{eq:projdist}), $\proj^\pi_{Y_i}(go,  g_{i-1}o)\le \proj_{Y_i}(g\po, g_{i-1}\po) + 2\theta\le 3\theta$.  Thus, $$\proj_{Y_i}^\pi(go,xo)\le \proj^\pi_{Y_i}(go,  g_{i-1}o)+\proj^\pi_{Y_i}(g_{i-1}o,xo)\le 3\theta +N.$$ Let us choose any $t\in \pi_{H_i}^X(g)$. It follows that $d(to,yo)\le \proj_{Y_i}^\pi(go,xo)\le 3\theta +N$. 

We are going to prove $d_S(t,x)\le R$, which contradicts $(\gamma_i,Y_i)$ being a $(K,R)$-bad block. To this end, let $z\in H_i$   such that $d_S(x,z)=d_S(x,H_i)\le r$. As $yo\in \pi_{Y_i}(xo)$, we have $$d(xo,yo)\le d(xo,zo)\le \beta d_S(x,z)\le  r\beta$$ Thus, $d(yo,zo)\le d(yo,xo)+d(xo,zo)\le 2r\beta$. Since  $\Pi: H_i\to Y_i$ is a $\beta$-quasi-isometry, $$d(to,zo)\le d(to,yo)+d(yo,zo)\le 2r\beta+3\theta +N$$  implies $$d_S(t,z)\le \beta d(to,zo)+\beta\le 2r\beta^2+\beta(3\theta +N+1) = R-r$$ where the equality follows by the definition of $r$. Thus, $d_S(t,x)\le d_S(t,z) +d_S(z,x)\le R$. 
\medskip

Summarizing the two cases, each bad block  $\gamma_i$ has length at least   $\kappa(r)$. If $m$  is the  number of $(K, R)$-bad blocks $(\gamma_i, Y_i)$, then $$m\cdot \kappa(r)\le \len(\gamma).$$ By choosing $R$ large enough so that $R>\beta(3\theta +N+1)$ and 
$$\kappa(r)\cdot \varepsilon = \kappa\left(\frac{R-\beta(3\theta +N+1)}{2\beta^2+1}\right)\cdot \varepsilon >1 ,$$ we obtain $m\le \varepsilon\,\len(\gamma)$, and
the proof is complete.
\end{proof}
\begin{rem}
    In the proof of Lemma~\ref{lem:good_block}, we observe that each $(K, R)$-bad block has length at least $$\kappa\left(\frac{R-\beta(3\theta +N+1)}{2\beta^2+1}\right).$$ This observation will be used in Lemma~\ref{lem:good_block_in_each_copy}.
\end{rem}

To continue the proof, we use the following variant of the Gromov product in the projection complex $\PC$. For three vertices $x,y,z\in \PC$, define
\[
\langle x,y\rangle_z^{\mathrm{std}}
:=\frac{1}{2}\bigl(\LS(x,z)+\LS(y,z)-\LS(x,y)\bigr).
\]

\begin{lem}\label{lem:factsonstdpath}
Let $x,y,z$ be three vertices in $\PC$. Then:
\begin{enumerate}
    \item Let $\alpha$ be the maximal common subpath of the standard paths $\f_K[z,x]$ and $\f_K[z,y]$. Then
    \[
    \bigl|\len(\alpha)-\langle x,y\rangle_z^{\mathrm{std}}\bigr|\le 3.
    \]
    \item  If
$\f_K[u,v]=\f_K[u,y]\cdot\f_K[y,v]$
for some $u\in\f_K(x,y)$ and $v\in\f_K(y,z)$,
then
$\f_K[x,z]=\f_K[x,y]\cdot\f_K[y,z].$
\end{enumerate}
\end{lem}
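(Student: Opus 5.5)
For part (1), I will use the almost-tripod property (Lemma~\ref{lem:triangle_in_PC}) together with the prefix lemmas. Let $\alpha$ be the maximal common initial segment of $\f_K[z,x]$ and $\f_K[z,y]$, ending at a vertex $W$. By Corollary~\ref{cor:sub_segment_std} and Lemma~\ref{lem:std_path_prefix}, we can write $\f_K[z,x]=\alpha\cdot\f_K[W,x]$ and $\f_K[z,y]=\alpha\cdot\f_K[W,y]$ as concatenations of ordered sets. This gives $\LS(z,x)=\len(\alpha)+\LS(W,x)$ and $\LS(z,y)=\len(\alpha)+\LS(W,y)$.

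The claim then reduces to showing that $\LS(x,y)$ equals $\LS(W,x)+\LS(W,y)$ up to an additive error of at most $6$. The upper bound is the weak triangle inequality (Lemma~\ref{lem:std_tri_ineq}), which costs an error of $1$. For the lower bound I would apply Lemma~\ref{lem:triangle_in_PC} to the triangle $(z,x,y)$, using the decomposition of $\f_K(z,x)$ into $\alpha'\sqcup\beta\sqcup\gamma$. The pieces behave as follows.
\begin{enumerate}
\item The piece $\alpha'\subseteq\f_K(z,y)$ is an initial segment of both standard paths. By maximality, $\alpha'$ is contained in $\alpha$ up to a bounded number of elements.
\item The piece $\beta$ has $|\beta|\le2$.
\item The piece $\gamma\subseteq\f_K(y,x)$. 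So all but about $2$ vertices of $\f_K[W,x]$ lie on $\f_K[x,y]$.
\end{enumerate}
Applying the same argument to $\f_K(z,y)$ shows that all but about $2$ vertices of $\f_K[W,y]$ lie on $\f_K[x,y]$. These two families are disjoint, because a vertex common to both would extend $\alpha$, contradicting its maximality. Hence $\LS(x,y)\ge\LS(W,x)+\LS(W,y)-O(1)$. Tracking the constants coming from $|\beta|\le 2$ and the $+1$ terms gives the bound $3$ after halving.

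For part (2), I will show that every $Y\in\f_K(x,y)$ and every $Y'\in\f_K(y,z)$ lies in $\f_K(x,z)$, and that $y$ does as well. After that, a cardinality/order argument via the total order of Proposition~\ref{prop:std_path} finishes the proof.

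The key step is that $y\in\f_K(u,v)$ gives $\proj_y(u,v)>K$. Since $u\in\f_K(x,y)$, the middle-projection property Proposition~\ref{prop:std_path}(\ref{mid_proj}) together with the order characterisation (\ref{ord_from_end}) yields $\proj_y(x,u)\le\theta$. Symmetrically, $\proj_y(v,z)\le\theta$. The triangle inequality \ref{axiom:tri_ineq} then gives $\proj_y(x,z)\ge K-2\theta$.

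This is the main obstacle, because $K-2\theta$ is not $>K$. I would try to fix it with the strong Behrstock axiom \ref{axiom:strong_berstock}. We have $\proj_u(x,y)>K>\theta$. Applying \ref{axiom:strong_berstock} in the form "if $\proj_u(x,y)>\theta$ then $\proj_y(x,W)=\proj_y(u,W)$" with $W=v$ and then $W=z$ gives the exact equalities $\proj_y(x,\cdot)=\proj_y(u,\cdot)$, and similarly on the $z$ side. Combining these yields $\proj_y(x,z)=\proj_y(u,v)>K$, so $y\in\f_K(x,z)$.

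Once $y$ is known to be in $\f_K(x,z)$, Lemma~\ref{lem:std_path_prefix} / Corollary~\ref{cor:sub_segment_std} apply with $y$ as an intermediate vertex. They show that the portion of $\f_K[x,z]$ before $y$ equals $\f_K[x,y]$ and the portion after $y$ equals $\f_K[y,z]$, which is exactly the desired concatenation.
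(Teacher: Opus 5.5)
Your proof is correct. For (1) it is essentially the paper's argument. The paper asserts, from Lemma~\ref{lem:triangle_in_PC}, that the three standard paths form a tripod whose centre may be a triangle with sides at most $3$, and reads off $\langle x,y\rangle_z^{\mathrm{std}}=\len(\alpha)+(a+b-c)/2$ from this picture. Your count derives that picture rather than assuming it. To make your constant exact, replace ``$\alpha'$ is contained in $\alpha$ up to a bounded number of elements'' by the exact inclusion $\alpha'\subseteq\alpha$. This holds because, by Corollary~\ref{cor:sub_segment_std}, $\f_K(z,x)\cap\f_K(z,y)$ is downward closed in both paths with the same induced order, so it is contained in $\alpha$; the same fact justifies your disjointness claim. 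Then $\f_K(W,x)\subseteq\beta\sqcup\gamma$, and you get $\LS(W,x)+\LS(W,y)-5\le\LS(x,y)\le\LS(W,x)+\LS(W,y)+1$, hence an error of at most $5/2$.

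For (2) your route is genuinely different. The paper checks that every three consecutive vertices $A,B,C$ of $\f_K[x,y]\cdot\f_K[y,z]$ satisfy $\proj_B(A,C)>K$; the hypothesis is used only for the triple centred at $y$. It then invokes the local-to-global result \cite[Lemma~2.4]{BBFS} to place all these vertices in order on $\f_K[x,z]$, and uses adjacency of consecutive vertices to exclude extra ones. You instead apply \ref{axiom:strong_berstock} twice, from $\proj_u(x,y)>\theta$ and from $\proj_v(z,y)>\theta$. This gives the exact identity $\proj_y(x,z)=\proj_y(u,v)>K$, so $y\in\f_K(x,z)$, and Corollary~\ref{cor:sub_segment_std} then splits $\f_K[x,z]$ at $y$.

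Your argument is shorter and uses only the axioms recorded in Section~\ref{SecPrelim}. It also makes transparent that the hypothesis enters only through $y\in\f_K(u,v)$. The paper's version handles all junctions of a long concatenation at once, as needed in Lemma~\ref{lem:stable_axis}; yours handles one junction and iterates by induction equally well.

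One aside: your intermediate claim $\proj_y(x,u)\le\theta$ does not follow from items (\ref{mid_proj}) and (\ref{ord_from_end}) of Proposition~\ref{prop:std_path}, since $y$ is an endpoint of $\f_K[x,y]$ rather than an element of $\f_K(x,y)$. It does follow from \ref{axiom:strong_berstock} (with $W=u$) together with \ref{axiom:bound_proj}, and your final argument does not use it anyway.
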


\begin{proof}
By Lemma~\ref{lem:triangle_in_PC},the standard paths joining the three vertices
$x,y,z$ form a tripod, except that the center may be replaced by a triangle whose
side lengths are at most $3$. After removing $\alpha$, let $a$ and
$b$ be the lengths of the remaining central portions of
$\f_K[z,x]$ and $\f_K[z,y]$, respectively, and let $c$ be the length of
the opposite central side. Then
$\langle x,y\rangle_z^{\mathrm{std}}
=\len(\alpha)+(a+b-c)/2$.
The weak triangle inequality gives $c\le a+b+1$, while
$a,b,c\le3$. Hence $-1/2\le(a+b-c)/2\le3$, proving the first assertion.

For the second assertion, consider the vertex sequence of
$\f_K[x,y]\cdot\f_K[y,z]$. Every three consecutive vertices
$A,B,C$ contained in either of the two constituent standard paths satisfy
$\proj_B(A,C)>K$ by
Proposition~\ref{prop:std_path}(\ref{mid_proj}). The same inequality holds
for the triple centered at $y$, since
$\f_K[u,y]\cdot\f_K[y,v]$ is a standard path. Therefore \cite[Lemma~2.4]{BBFS}, applied to the interval underlying the
concatenated path, shows that all its vertices occur, in the given order,
on $\f_K[x,z]$. Consecutive vertices of the concatenation are adjacent in
$\PC$, so no additional vertex of $\f_K[x,z]$ can occur between them.
Hence
$\f_K[x,z]=\f_K[x,y]\cdot\f_K[y,z],$
as required.
\end{proof}
 
\begin{lem}\label{lem:stable_axis}
Let $g\in G$ be an element such that
\[
\langle g^{-1}\po,g\po\rangle_\po^{\mathrm{std}}
\le \LS(\po,g\po)/2-7.
\]
Let $u$ denote the greatest element of
$\f_K[g^{-1}\po,g\po]\cap \f_K[g^{-1}\po,\po],$
and let $v$ denote the smallest element of
$\f_K[g^{-1}\po,g\po]\cap \f_K[\po,g\po],$
both ordered as vertices of $\f_K[g^{-1}\po,g\po]$. Then
\[
\gamma:=\bigcup_{m\in\mathbb Z}
g^m\bigl(\f_K[v,gu]\cdot\f_K[gu,gv]\bigr)
\]
is a bi-infinite standard path. In particular, $g$ is loxodromic on $\PC$.
\end{lem}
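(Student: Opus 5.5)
The plan is to build the path one period at a time and then glue the periods using Lemma~\ref{lem:factsonstdpath}(2). Write $\ell=\LS(\po,g\po)=\LS(g^{-1}\po,\po)$, the two being equal by $G$-invariance.

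\textbf{Step 1: the three standard paths from $\po$.} Consider $\f_K[\po,g^{-1}\po]$, $\f_K[\po,g\po]$ and $\f_K[g^{-1}\po,g\po]$. By Lemma~\ref{lem:factsonstdpath}(1), the maximal common initial subpath $\alpha$ of the first two has length at most $\langle g^{-1}\po,g\po\rangle^{\mathrm{std}}_\po+3\le \ell/2-4$. The almost-tripod property (Lemma~\ref{lem:triangle_in_PC}) then shows that $\f_K[g^{-1}\po,g\po]$ contains two long pieces: a piece of $\f_K[g^{-1}\po,\po]$ of length at least $\ell/2+4-O(1)$ ending at $u$, and a piece of $\f_K[\po,g\po]$ starting at $v$. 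Between $u$ and $v$ there are at most a bounded number ($\le 3$) of central vertices. In particular $u$ lies well past the midpoint of $\f_K[g^{-1}\po,\po]$, and $v$ lies well before the midpoint of $\f_K[\po,g\po]$, each within distance about $\ell/2-4$ of $\po$.

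\textbf{Step 2: translate by $g$.} Applying $g$ shows that $gu\in\f_K[\po,g\po]$ and $gv\in\f_K[g\po,g^2\po]$. I will check that $v<gu$ in $\f_K[\po,g\po]$. The vertex $v$ sits at standard distance at most about $\ell/2-4+3$ from $\po$. The vertex $gu$ sits at distance about $\ell-(\ell/2-4+3)$ from $\po$, since $u$ is within $\ell/2-4+3$ of $\po$ along $\f_K[g^{-1}\po,\po]$ and $g$ maps $\po$ to $g\po$. The slack ``$-7$'' in the hypothesis is exactly what makes these two positions separate, with room for the $\pm3$ errors. Consequently $\f_K[v,gu]$ is a subpath of $\f_K[\po,g\po]$ by Corollary~\ref{cor:sub_segment_std}, and it is nontrivial. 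Similarly, $\f_K[gu,gv]=g\f_K[u,v]$ is the translate of the short central segment of $\f_K[g^{-1}\po,g\po]$.

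\textbf{Step 3: local standardness across each junction.} There are two kinds of junction.
\begin{itemize}
\item At $gu$, the path $\f_K[v,gu]\cdot\f_K[gu,gv]$ has to be standard. Its translate $g^{-1}(\cdot)$ is a subpath of $\f_K[g^{-1}\po,g\po]$ passing through $u$ then to $v$. That subpath is standard by Corollary~\ref{cor:sub_segment_std}, since $g^{-1}v<u<v$ there; this uses the fact that $g^{-1}v$ lies on the $\f_K[g^{-1}\po,\po]$-side portion before $u$, which again comes from the position count in Step~2.
\item At $gv$, the path $\f_K[gu,gv]\cdot\f_K[gv,g\,gu]$ is $g$ applied to $\f_K[u,v]\cdot\f_K[v,gu]$. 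This is a subpath of the concatenation $\f_K[g^{-1}\po,g\po]$ extended along $\f_K[\po,g\po]$. Here one uses Lemma~\ref{lem:factsonstdpath}(2) with $y=v$ to see that $\f_K[u,gu]=\f_K[u,v]\cdot\f_K[v,gu]$.
\end{itemize}

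\textbf{Step 4: globalize.} I will induct, repeatedly applying Lemma~\ref{lem:factsonstdpath}(2) with $y$ equal to each successive junction vertex, where the witnesses $u',v'$ are the neighbouring vertices that make the local three-term pieces standard. This shows that every finite subpath of $\gamma$ between translates of $v$ is a standard path. Hence $\gamma$ is a bi-infinite standard path, and it is $g$-invariant by construction. Each period contributes at least one vertex, so $\LS(\po,g^n\po)$ grows linearly by the quasi-isometry (\ref{rem:std_len_quasi_isom}), and $g$ is therefore loxodromic.

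The main obstacle is the bookkeeping in Steps~2--3. One has to verify precisely that the $-7$ margin beats the tripod error terms, namely the size at most 2 of the center from Lemma~\ref{lem:triangle_in_PC} and the error 3 in Lemma~\ref{lem:factsonstdpath}(1). This margin is what guarantees $v<gu$ strictly, so that consecutive periods overlap in at least one edge. That overlap is exactly what Lemma~\ref{lem:factsonstdpath}(2) needs.
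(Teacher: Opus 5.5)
Your proposal is correct and follows essentially the paper's route: the tripod estimate places $v$ at position at most $\LS(\po,g\po)/2-1$ and $gu$ at position at least $\LS(\po,g\po)/2+1$ on $\f_K[\po,g\po]$, giving $g^{-1}v<u\le v<gu$, and the periods are then glued by equivariance and Lemma~\ref{lem:factsonstdpath}(2). One simplification: in your second junction check, $gu$ already lies on the tail $\f_K[v,g\po]$ of $\f_K[g^{-1}\po,g\po]$, so Corollary~\ref{cor:sub_segment_std} gives $\f_K[u,gu]=\f_K[u,v]\cdot\f_K[v,gu]$ directly, without Lemma~\ref{lem:factsonstdpath}(2), whose hypotheses cannot be met there when $u=v$.
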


\begin{proof}
Set \(L=\LS(\po,g\po)\). Let \(\alpha\) be the common initial subpath of
\(\f_K[\po,g^{-1}\po]\) and \(\f_K[\po,g\po]\), and let \(w\) be its terminal
vertex. By Lemma~\ref{lem:factsonstdpath}(1) and the assumption,
\[
\len(\alpha)
\le \langle g^{-1}\po,g\po\rangle_\po^{\mathrm{std}}+3
\le L/2-4.
\]

By Lemma~\ref{lem:triangle_in_PC}, the standard-path triangle with vertices
\(g^{-1}\po,\po,g\po\) is tripod-like, with a center triangle of side length at
most \(3\). The vertices of this center triangle on the three sides are precisely
\(u,v,w\), where \(u\) lies on \(\f_K[g^{-1}\po,\po]\), \(v\) lies on
\(\f_K[\po,g\po]\), and \(w\) lies on both
\(\f_K[\po,g^{-1}\po]\) and \(\f_K[\po,g\po]\).

Thus, measured along the standard path \(\f_K[\po,g\po]\), the vertex \(v\)
lies within distance \(3\) of \(w\). Hence
\[
\LS(\po,v)\le \len(\alpha)+3\le L/2-1.
\]
On the other hand, measured along \(\f_K[g^{-1}\po,\po]\), the vertex \(u\)
lies within distance \(3\) of \(w\), and therefore
\[
\LS(g^{-1}\po,u)\ge L-\len(\alpha)-3\ge L/2+1.
\]
Applying \(g\), this says that \(gu\) lies on \(\f_K[\po,g\po]\) after the
midpoint, while \(v\) lies before the midpoint. Hence
$v<gu$
in the order on \(\f_K[\po,g\po]\).

Applying \(g^{-1}\) to this inequality gives
$g^{-1}v<u$
on \(\f_K[g^{-1}\po,\po]\). Since \(u\) and \(v\) occur in this order on
\(\f_K[g^{-1}\po,g\po]\) (possibly $u=v$), we obtain
$g^{-1}v<u\le v<gu$
inside \(\f_K[g^{-1}\po,g\po]\). Applying \(g\), we get
$v<gu\le gv<g^2u$
inside \(\f_K[\po,g^2\po]\). Therefore
$\f_K[v,gu]\cdot\f_K[gu,gv]\cdot\f_K[gv,g^2u]$
is a standard path. By equivariance, the same holds for every translate by a
power of \(g\), and hence every finite subpath of \(\gamma\) is a standard path.
Thus by Lemma~\ref{lem:factsonstdpath}(2), \(\gamma\) is a bi-infinite standard path.

Finally, standard paths are uniformly quasi-geodesic by
Eq.~\eqref{rem:std_len_quasi_isom}. Since \(g\gamma=\gamma\) and \(g\) translates
\(\gamma\) nontrivally, \(g\) is loxodromic on \(\PC\).
\end{proof}

This is the key lemma of this section, which will imply exponential genericity.
\begin{lem}\label{lem:split_into_conjugacy}
    Fix $\varepsilon>0$. % be such that $\LS(\po, g\po)\ge \varepsilon_0\, d_S(1, g)$.%let $\mathcal{Y}\subseteq \f_{3K}(\po, g\po)$ be a subset of vertices on the standard path from $\po$ to $g\po$. % such that $$|\mathcal{Y}|\ge \varepsilon\, d_S(1, g).$$ 
    There exists $M = M(\varepsilon, K)$ such that for any $g\in G$, at least one of the following holds:
    \begin{enumerate}
        \item  $\langle g^{-1}\po, g\po\rangle_{\po}^{\mathrm{std}}<\varepsilon\,d_S(1, g)$ in $\PC$;
        %\item  There exists a subset $\mathcal{Y}_0\subseteq\mathcal{Y}$ such that $|\mathcal{Y}_0|\ge |\mathcal{Y}|-\varepsilon\,d_S(1, g)$ and for each $Y\in \mathcal{Y}_0$, either $Y = gY$ or $Y\neq gY$ and $\proj_Y(g^{-1}Y, gY)>K$;
        %\item\label{split_into_conjugacy:parallel} There exists $Y\in \mathcal{Y}$ such that $Y=gY$;
        %\item\label{split_into_conjugacy:alive} There exists $Y\in \mathcal{Y}$ such that $Y\neq gY$ and $\proj_Y(g^{-1}Y, gY)>K;$
        \item\label{split_into_conjugacy:conjugate} There exist $u,v\in G$ such that $g=uvu^{-1}$ and 
        \begin{align}\label{eq:sumthdist}
        d_S(1, u) + d_S(1, v)\le \left(1-\frac{\varepsilon}{1000}\right)d_S(1, g) + M.    
        \end{align}
    \end{enumerate}
\end{lem}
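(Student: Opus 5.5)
The plan is to assume (1) fails and produce the conjugacy in (2). Set $n=d_S(1,g)$ and suppose $\langle g^{-1}\po,g\po\rangle^{\mathrm{std}}_\po\ge\varepsilon n$. By Lemma~\ref{lem:factsonstdpath}(1), the standard paths $\f_K[\po,g\po]$ and $\f_K[\po,g^{-1}\po]$ then share an initial subpath $\alpha$ with $\len(\alpha)\ge\varepsilon n-3$. Let $\mathcal Y\subseteq\alpha$ be the axes of $\alpha$ lying in its second half, say those $Y$ with $\LS(\po,Y)\ge\len(\alpha)/2$. Then $|\mathcal Y|\ge\varepsilon n/2-4$, and $\mathcal Y$ is a common subset of $\f_K(\po,g\po)$ and of $\f_K(\po,g^{-1}\po)$, ordered the same way in both.

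Next take a word geodesic $\gamma$ from $1$ to $g$, and let $\gamma'$ be the reverse of $g^{-1}\gamma$; this is a word geodesic from $1$ to $g^{-1}$. Apply Lemma~\ref{lem:block_decomp} with this same $\mathcal Y$ to both $\gamma$ and $\gamma'$. By Remark~\ref{rem:block_every_10}, the guard axes are chosen from the ordered list $\mathcal Y$ alone (every twelfth element, the element $Y_{12i-6}$), independently of the path. Hence both decompositions have the same $\lfloor(|\mathcal Y|+1)/12\rfloor\ge\varepsilon n/24-1$ guard axes. Now apply Lemma~\ref{lem:good_block} with $\varepsilon'=\varepsilon/100$, obtaining $R=R(\varepsilon')$. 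Each path has at most $\varepsilon n/100$ bad blocks, so together they have fewer than $\varepsilon n/24-1$ bad blocks once $n$ is large; small $n$ is absorbed into $M$. Therefore some guard axis $Y=Ho$ guards an $(K,R)$-good block in both $\gamma$ and $\gamma'$. Both paths start at $1$, so their neighbor points $x\in\gamma$ and $x'\in\gamma'$ are each within $R$ of the same $t\in\pi^X_H(1)$, and hence $d_S(x,x')\le2R$. Set $y:=gx'$. Since $x'\in\gamma'=g^{-1}\gamma$ (as sets), we have $y\in\gamma$.

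Now take $u=x$ and $v=x^{-1}gx$, so that $g=uvu^{-1}$. Then
\[
d_S(1,v)=d_S(x,gx)\le d_S(x,y)+d_S(gx',gx)\le d_S(x,y)+2R .
\]
If $y$ comes after $x$ on $\gamma$, this gives
\[
d_S(1,u)+d_S(1,v)\le |x|+(|y|-|x|)+2R=n-d_S(y,g)+2R .
\]
So the saving we need is a lower bound on $d_S(y,g)=d_S(x',1)$. By Lemma~\ref{lem:word_is_PC_path} the map $h\mapsto h\po$ is $1$-Lipschitz into $(\PC,\rho)$, and $\rho\ge\LS/2$, so $d_S(x',1)\ge\LS(\po,x'\po)/2$. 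The neighbor point $x'$ lies in a block guarded by $Y$, and the blocks built in Lemma~\ref{lem:block_decomp} keep $x'\po$ within uniformly bounded $\LS$-distance of the $12$-window of $\mathcal Y$ around $Y$. Since $Y$ lies in the second half of $\alpha$, we get $\LS(\po,x'\po)\ge\varepsilon n/2-O(1)$, and this yields \eqref{eq:sumthdist} with room to spare.

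The case where $y$ precedes $x$ on $\gamma$ must also be ruled out or handled. My intended argument is this: $x\po$ is near $Y$ in the initial part of $\f_K[\po,g\po]$, while $y\po=g x'\po$ is near $gY$, which lies in the terminal copy $g\alpha$. The initial and terminal portions are disjoint, because otherwise the Gromov product would be close to $\LS(\po,g\po)/2$. The bottleneck property (Lemma~\ref{lem:bottleneck_in_PC}) then forces $x$ before $y$. If they do overlap, $\len(\alpha)$ is comparable to $\LS(\po,g\po)$, and I would instead choose $Y$ in the first half of $\alpha$ to keep the two portions apart.

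The step I expect to be the main obstacle is the middle one. It requires verifying that the ``good'' neighbor points really sit at controlled $\LS$-position along the standard path, and that the common axis $Y$ can be chosen simultaneously good for both geodesics while staying far from both endpoints. This is the step that converts the Gromov-product hypothesis into a definite word-length saving proportional to $\varepsilon n$.
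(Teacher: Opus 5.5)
Your architecture is sound, and one observation simplifies the paper's argument. Lemma~\ref{lem:block_decomp} selects its guard axes from the ordered list alone. So running it on $\gamma$ and on $\gamma'$ with the same $\mathcal Y\subseteq\alpha$, ordered identically in both standard paths, gives the same guard axes, and a single count of bad blocks yields a common good axis. The paper instead selects good axes for $\gamma$ first and then re-decomposes the reversed geodesic. Your identity $y=gx'\in\gamma$, the bound $d_S(x,x')\le 2R$, and your estimate when $x$ precedes $y$ are all correct.

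Two steps fail as written, however. First, your justification of $\LS(\po,x'\po)\ge\varepsilon n/2-O(1)$ is wrong. A guarded block of a word geodesic need not stay in any bounded neighborhood of its $12$-window in $\PC$: word geodesics are not quasi-geodesic there, and the bottleneck property only says the path passes near each vertex of the standard path. What locates $x'$ is the good-block property itself. You have $d_S(x',t)\le R$ with $t\in H$. If $H$ is a coset of $E(f)$ with $f\in F$, then $t\ax(f)=Y$, so $\rho(t\po,Y)=\rho(\ax(f_0),\ax(f))\le D=D(F)$. By Lemma~\ref{lem:word_is_PC_path} this gives $\rho(x\po,Y),\rho(x'\po,Y)\le R+D$, and hence $d_S(1,x')\ge\rho(\po,Y)-R-D$.

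Second, the case where $y$ precedes $x$ cannot be excluded by Lemma~\ref{lem:bottleneck_in_PC}, because that lemma does not order particular points of a word geodesic. Since $|d_S(1,x)+d_S(1,y)-n|\le 2R$ and $|d_S(1,x)-d_S(1,t)|\le R$, $y$ precedes $x$ essentially when $d_S(1,t)>n/2$. But $d_S(1,t)$ is bounded only from below by $\PC$-data. So $\gamma$ may pass near $Y$ at an earlier time and reach $x$ only after $y$, even if $Y<gY$ on $\f_K[\po,g\po]$.

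This case must therefore be treated, as the paper does. With $u=y=gx'$ and $v=x'^{-1}gx'$ you get $d_S(1,u)+d_S(1,v)\le d_S(1,x)+2R$, so you also need $d_S(x,g)$ to be linear in $n$. Your second-half choice does not give this when $\len(\alpha)$ is close to $\LS(\po,g\po)$ (e.g.\ if $g^2\po=\po$): then $Y$ may be adjacent to $g\po$ and $x$ word-close to $g$.

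Instead, take $\mathcal Y$ in a middle window of $\alpha$, say $\len(\alpha)/4\le\LS(\po,Y)\le\len(\alpha)/2$, and shrink $\varepsilon'$ in Lemma~\ref{lem:good_block}, e.g.\ to $\varepsilon/200$. Since $\len(\alpha)\le\LS(\po,g\po)$, this gives $\LS(Y,g\po)\ge\len(\alpha)/2$. Then both $d_S(1,x)$ and $d_S(x,g)$ are at least $\varepsilon n/8-O(R+D)$, so whichever order occurs, the matching choice of $u$ yields the required saving. The paper obtains the middle position differently: it pigeonholes over linearly many doubly good axes with distinct neighbor points on $\gamma$, and never needs to compare $\PC$-positions with word positions.
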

\begin{proof}
    We assume the assertion (1) is false, and shall verify the assertion (2).
    \medskip

    \noindent
    \textbf{Step 1: Setup.} %We will find a collection of axes $\mathcal{Y}_1$ of cardinality at least $$|\mathcal{Y}_1|\ge \varepsilon\,d_S(1, g)/2$$ such that for any $Y\in \mathcal{Y}_1$, $Y\in \f_{K}(\po, g\po)$ and $gY\in \f_K(\po, g\po)$.
    %
    %\medskip\noindent
    By assumption, $\langle g^{-1}\po, g\po\rangle_{\po}^{\mathrm{std}}\ge\varepsilon\,d_S(1, g)$. Let $\mathcal{Y}_1$ be the common subpath of standard paths $\f_K[g^{-1}\po,\po]$ and $\f_K[\po,g\po]$. By Lemma~\ref{lem:factsonstdpath}, $$|\mathcal{Y}_1|\ge \langle g^{-1}\po, g\po\rangle_{\po}^{\mathrm{std}}-3\ge \varepsilon\,d_S(1, g) - 3.$$
    Then for each $Y\in \mathcal{Y}_1$, $Y\in \f_K(\po, g\po)$ and $gY\in g\cdot \f_K(g^{-1}\po, \po) = \f_K(\po, g\po)$.

    \medskip
    \noindent \textbf{Step 2: Doubly-good blocks.} 
        Let $\gamma$ and $\bar \gamma$ be an oriented word geodesic from $1$ to $g$ and its reverse, respectively.  Choose $R = R(\varepsilon/1000)>0$ as in Lemma~\ref{lem:good_block}. We will find a set of axes in $\mathcal Y_1$ denoted by $\mathcal Z$ so that the following holds:
        \begin{enumerate}[label=(\roman*)]
            \item $|\mathcal{Z}|\ge (\varepsilon/500)\cdot d_S(1, g) -2$;
            \item Each $Y\in \mathcal{Z}$ is a $(K, R)$-good block for $\gamma$ and $gY$ is a $(K, R)$-good block for $\bar \gamma$;
            \item There exists an injective neighbor map $f: \mathcal{Z}\to \gamma$ such that for each $Y\in \mathcal{Z}$ with representing coset $H$ and each $x\in \pi_H^X(1)$, $d_S(x, f(Y))\le R$. Furthermore, $d_S(f(Y_1), f(Y_2))\ge 1$ for each $Y_1, Y_2\in \mathcal{Z}$ with $Y_1\ne Y_2$.
        \end{enumerate} 
    \medskip
 
    \noindent First, by  Lemma~\ref{lem:block_decomp}, $\gamma$ admits a $K$-guard decomposition  with guard axes contained in $\mathcal{Y}_1$, and of length at least $$\left\lfloor\frac{|\mathcal{Y}_1|+1}{12}\right\rfloor\ge \frac{\varepsilon}{12}\cdot d_S(1, g)-2.$$ 
    Denote the set of guard axes of the $(K, R)$-good blocks by $\mathcal{Y}_2\subseteq \mathcal Y_1$. Then by Lemma~\ref{lem:good_block}, $$|\mathcal{Y}_2|\ge \frac{\varepsilon}{12}\cdot d_S(1, g) - 2 -\frac{\varepsilon}{1000}\cdot \len(\gamma)\ge \frac{\varepsilon}{13}\cdot d_S(1, g)-2.$$
    
   \medskip 
   \noindent  Next, we apply a similar argument to $\bar \gamma$ and $g\mathcal{Y}_2$. Lemma~\ref{lem:block_decomp} provides a $K$-guard decomposition of $\bar \gamma$ with guard axes contained in $g\mathcal{Y}_2$ and of length at least $$\left\lfloor\frac{|\mathcal{Y}_2|+1}{12}\right\rfloor\ge \frac{\varepsilon}{200}\cdot d_S(1, g) -2.$$ Denote the set of guard axes of the $(K, R)$-good blocks by $\mathcal{Z}_1\subseteq g\mathcal{Y}_2$. By Lemma~\ref{lem:good_block}, $$|\mathcal{Z}_1|\ge \frac{\varepsilon}{200}\cdot d_S(1, g) -2 - \frac{\varepsilon}{1000}\cdot \len(\gamma) = \frac{\varepsilon}{250}\cdot d_S(1, g) -2.$$
    Finally, let $\mathcal{Z}_2\subseteq \mathcal{Y}_2$ such that $\mathcal{Z}_1 = g\mathcal{Z}_2$, and set $\mathcal{Z}$ to be the subset of odd-positioned elements in $\mathcal{Z}_2$. Then $$|\mathcal{Z}|\ge |\mathcal{Z}_1|/2-1\ge \frac{\varepsilon}{500}\cdot d_S(1, g) -2.$$
    
   \medskip 
   \noindent   We verify that $\mathcal Z$ is the desired set of axes.
    
   By construction, for each $Y\in \mathcal Z$, $Y$ is a $(K, R)$-good axis for $\gamma$, while $gY$ is a $(K, R)$-good axis for the inverse $\bar\gamma$. The property (\emph{ii}) is fulfilled. 
    
    Furthermore, the point $x=f(Y)$ lies on the subpath of $\gamma$ guarded by $Y$, which are disjoint for distinct $Y\in \mathcal Z$. Since $\gamma$ is a word geodesic, each subpath has length at least $1$. This justifies the injectivity of the map $f$ in (\emph{iii}). 
    \medskip
    
    \noindent\textbf{Step 3: Short conjugation.} It remains to find $u,v\in G$  with  $g=uvu^{-1}$ satisfying  Eq. (\ref{eq:sumthdist}).
    \medskip
    
    By property (\textit{i}) in Step 2, $|\mathcal{Z}|\ge \varepsilon/500\cdot d_S(1, g)-2$. By property (\textit{iii}) in Step 2, the neighbor map $f: \mathcal{Z}\to \gamma$ is injective and the images are separated by at least distance 1. Thus, we can find $Y\in \mathcal{Z}$ and $x = f(Y)$ such that $x$ does not lie in the very front and back of $\gamma$, i.e. $$\frac{\varepsilon}{1000}\cdot d_S(1, g) - 2 \le d_S(1, x)\le \left(1-\frac{\varepsilon}{1000}\right) \cdot d_S(1, g)+2.$$ 

    Let $H$ be the left coset representing $Y$, i.e. $Y = \Pi(H) = Ho$. The property (\emph{ii}) of $\mathcal{Z}$ in Step 2 shows that for any given $t\in \pi_{H}^X(1)$, $d_S(t, \gamma)\le R$ and $d_S(gt, \gamma)\le R$. So we choose $y\in \gamma$ witnessing $d_S(y, gt)<R$. Then by triangle inequality, $$
    \begin{aligned}
    d_S(t, gt)&\le d_S(t, x) + d_S(x, y) + d_S(y, gt)\le d_S(x, y) + 2R,\\
    d_S(1, t)&\le d_S(1, x) + d_S(x, t) \le d_S(1, x) + R,\\
    d_S(1, gt)&\le d_S(1, y) + d_S(y, gt)\le d_S(1, y) + R,\\
    \end{aligned}$$
    so we have
    \begin{align}\label{eq:distxandy}
        \notag &\quad|d_S(1, y) + d_S(1, x) -d_S(1, g)|= |d_S(g, y) - d_S(1, x)|\\
        &\le |d_S(g,y)-d_S(g,gt)|+|d_S(g, gt) - d_S(1, t)|+ |d_S(1,t)-d_S(1,x)|\\
        \notag &\le d_S(y,gt) + d_S(x,t) \le 2R.
    \end{align}

    If $d_S(1, x)\le d_S(1, y)$, write $u:=t, v := t^{-1}gt$ so that $g = u\cdot v \cdot u^{-1}$. Then $$\begin{aligned}
        d_S(1, u) + d_S(t,gt)&\le d_S(1,x) + d_S(x,y) + 3R=d_S(1, y) + 3R\\
        &\le \left(1-\frac{\varepsilon}{1000}\right) \cdot d_S(1, g)+(5R+2)
    \end{aligned}$$
    verifying the inequality (\ref{eq:sumthdist}). The case that  $d_S(1, x)\ge d_S(1, y)$ could also be possible. We then  write $u:=gt, v: = t^{-1}gt$ so that $g = u\cdot v \cdot u^{-1}$, and the above inequality (\ref{eq:sumthdist}) holds by the same estimates. 
    
    Taking $M = 5R+2$ finishes the proof.
\end{proof}

%The next elementary lemma singles out a computational component of the proof. 
\begin{lem}\label{lem:Bgrowthtight}
Fix $M>0$ and $\delta>0$. Let $B(\delta, M)$ be the subset of elements $g\in G$ so that  $g = uvu^{-1}$ for some $u, v\in G$ satisfying $$d_S(1, u) + d_S(1, v)\le (1- \delta) d_S(1, g) + M.$$  
Then   $B(\delta, M)$ is growth tight.    
\end{lem}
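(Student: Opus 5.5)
The plan is a direct counting argument: parametrize elements of $B(\delta,M)\cap S_n$ by the pairs $(u,v)$ and show that the number of such pairs grows at a strictly smaller exponential rate than $|S_n|$. Write $\omega=\omega(G,S)$. If $\omega=0$ the statement is vacuous or trivial, so assume $\omega>0$. We use two standard facts about ball growth: submultiplicativity, which gives $|B_k|\ge e^{\omega k}$ for all $k$, and the upper bound $|B_k|\le C_\eta e^{(\omega+\eta)k}$ for any $\eta>0$. We also use $|S_k|\le|B_k|$.

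Fix $n$ and $g\in B(\delta,M)\cap S_n$, and choose $u,v$ with $g=uvu^{-1}$. Set $a=d_S(1,u)$ and $b=d_S(1,v)$, so that $a+b\le (1-\delta)n+M$. The map $(u,v)\mapsto uvu^{-1}$ is surjective onto $B(\delta,M)\cap S_n$, which gives
\[
|B(\delta,M)\cap S_n|\le \sum_{a+b\le (1-\delta)n+M}|S_a|\,|S_b|\le \bigl((1-\delta)n+M+1\bigr)^2\,\max_{a+b\le(1-\delta)n+M}|B_a|\,|B_b|.
\]
Applying the upper growth bound to each factor, we get $|B_a||B_b|\le C_\eta^2 e^{(\omega+\eta)(a+b)}\le C_\eta^2e^{(\omega+\eta)M}e^{(\omega+\eta)(1-\delta)n}$. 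Summing over spheres $S_n$ to pass to balls $B_n$ costs only another polynomial factor.

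It follows that $\omega(B(\delta,M),S)\le(\omega+\eta)(1-\delta)$. Choosing $\eta>0$ small, for example $\eta<\delta\omega/(1-\delta)$, makes this strictly less than $\omega$, and hence $B(\delta,M)$ is growth tight.

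There is no real obstacle here. The only point needing care is that the conjugating element $u$ is counted with its full length, not twice. The inequality in the hypothesis bounds $d_S(1,u)+d_S(1,v)$ rather than $2d_S(1,u)+d_S(1,v)$, so the count of pairs already has exponent $(1-\delta)$, and the polynomial number of length splittings $(a,b)$ is harmless.
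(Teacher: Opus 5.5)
Your argument is correct and is essentially the paper's proof. Both bound $|B(\delta,M)\cap S_n|$ by summing $|S_a|\,|B_b|$ (or $|S_a|\,|S_b|$) over $a+b\le(1-\delta)n+M$, apply $|B_k|\le C e^{\omega' k}$ for some $\omega'$ slightly above $\omega(G,S)$, and take $\omega'<\omega(G,S)/(1-\delta)$.

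One small correction to your aside: if $\omega(G,S)=0$, the statement is not vacuous but false, since $B(\delta,M)\supseteq B_{M/\delta}$ is non-empty. The lemma therefore genuinely needs exponential growth. This holds in the paper's setting, and the paper's own choice of $\omega$ implicitly assumes it too.
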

\begin{proof}

Indeed, for each $\omega>\omega(G)$, there exists $C = C(\omega)$ such that $$\forall n\ge 0, \quad |S_n|\le |B_n|\le C(\omega)\cdot e^{\omega n}.$$  By the defining property of $g\in B$, for each $n\ge 0$ and $\omega>\omega(G)$,
$$\begin{aligned}
        |B\cap S_n|&\le \sum_{t\in B_n}|B((1-\delta)n + M - d_S(1, t))|\\
        &\le \sum_{k=1}^{(1-\delta)n} |S(k)|\cdot |B((1-\delta)n + M -k)|\\
        &\le \sum_{k=1}^{(1-\delta)n} C(\omega)e^{\omega k} \cdot C(\omega)e^{\omega ((1-\delta) n + M-k)}\\
        &\le nC(\omega)^2\cdot e^{\omega((1-\delta)n + M)},
     \end{aligned}$$
     where we write $S(k), B(k)$ for $S_k, B_k$ for clarity.

     Let us take $\omega>0$ such that $\omega(G)<\omega<\omega(G)/(1-\delta)$, so $$|B\cap S_n| \le nC(\omega)^2e^{\omega M} \cdot e^{\omega_1 n},$$ where $\omega_1 = \omega\cdot (1-\delta)<\omega(G)$. Hence, $B$ is a growth tight set.
\end{proof}
%All the ingredients are ready, and  let us spell out the proof of the genericity of WPD elements.
%\begin{proof}[Proof of \ref{MainThmWPDGeneric}]
\subsection{Proof of \ref{MainThmWPDGeneric} and \ref{MainThmRecurrence}}
Assume that $K \gg 3\theta$ is sufficiently large so that Lemma \ref{lem:lift_wpd} holds.  By Proposition~\ref{thm:grow_tight_PC} there exists $\varepsilon>0$ such that the following set is growth tight in $\mathrm{Cay}(G, S)$: $$A := \bigl\{g\in G: \, \LS(\po, g\po)\le \varepsilon \, d_S(1, g)\bigr\}$$ 
Let $M>0$ be given by Lemma~\ref{lem:split_into_conjugacy} for $\frac{\varepsilon}{4}$. Then the set $B$ of elements $g\in G$ with $g=uvu^{-1}$ and
$$\begin{aligned} 
d_S(1, u) + d_S(1, v)\le \left(1-\frac{\varepsilon}{4000}\right)d_S(1, g) + M    
\end{aligned}$$
is growth tight by Lemma \ref{lem:Bgrowthtight}. To finish the proof of \ref{MainThmWPDGeneric}, we shall prove that each element $g\in G\setminus (A\cup B)$ satisfying $d_S(1, g)\ge 28\varepsilon^{-1}$ is a strongly contracting WPD element. 
\medskip

In fact, since $g\in G\setminus A$, we have $\LS(\po, g\po)\ge \varepsilon\, d_S(1, g)$. Since $g\in G\setminus B$,  Lemma~\ref{lem:split_into_conjugacy} implies $\langle g^{-1}\po, g\po\rangle^{\text{std}}_\po\le \varepsilon\,d_S(1, g) /4$. Since $d_S(1, g) \ge 28\varepsilon^{-1}$, we have $$\langle g^{-1}\po, g\po\rangle^{\text{std}}_\po\le \varepsilon\,d_S(1, g) /4\le \varepsilon\,d_S(1, g)/2 - 7\le \LS(\po, g\po)/2 - 7.$$ By Lemma~\ref{lem:stable_axis}, $g$ is a loxodromic element in $\PC$. Hence, $g$ is a strongly contracting WPD element by Lemma \ref{lem:lift_wpd}.

\medskip

     To summarize, except finitely many elements $g$ (i.e. $d_S(1, g)\le 28\varepsilon^{-1}$),   every element $g\in G\setminus(A\cup B)$ are strongly contracting WPD elements. Since  $A\cup B$ is  growth tight, \ref{MainThmWPDGeneric} is proved.

 \medskip
 \ref{MainThmRecurrence} is proved along the way.
    For any $g\in G\setminus A$, Lemma \ref{lem:block_decomp} provides a $K$-guard decomposition of $\gamma=[1,g]_S$ with guard axes contained in $\f_K(\po,g\po)$ with length $m\ge \LS(\po,g\po)/12>2\varepsilon_0\, d_S(1, g)$, where $\varepsilon_0=\epsilon/24$.  Let $R=R(\varepsilon_0)$ given by Lemma \ref{lem:good_block} so that the number of $(K,R)$-bad blocks is at most $\varepsilon_0\,d_S(1,g)$. Hence the number of  $(K,R)$-good blocks for $\gamma$ is at least $\varepsilon_0\,d_S(1,g)$. Unveiling  the definition of good blocks proves \ref{MainThmRecurrence}.

\section{Stable length of generic elements}\label{SecWPDGen}

Throughout this section, we retain the notation and standing assumptions of Section~\ref{SecExpGenWPD}. 
\begin{defn}
    Define $\tau_X: G\to \mathbb{R}_{\ge 0}$ to be the function of the \emph{stable translation length}, that is, for each $g\in G$, $$\tau_X(g):=\lim_{n\to\infty} \frac{d(o,g^no)}{n}$$ where the limit exists due to Fekete's lemma.
    By triangle inequality, it is independent of the choice of basepoint $o\in X$.
    Similarly, define \emph{stable word length} function $\tau_S: G\to \mathbb{R}_{\ge 0}$, for each $g\in G$, $$\tau_S(g):=\lim_{n\to\infty} \frac{d_S(1,g^n)}{n}.$$

\end{defn}

We restate Theorem~\ref{MainThmStableLength} as follows.

\begin{thm}\label{thm:generic_stable_lengths}
Fix a basepoint $o\in X$ and a finite symmetric generating set $S$. For every $\varepsilon>0$, the set of elements
$g\in G$ satisfying
\[
\tau_X(g)\ge (1-\varepsilon)d(o,go)
\qquad\text{and}\qquad
\tau_S(g)\ge (1-\varepsilon)d_S(1,g)
\]
is exponentially generic.
\end{thm}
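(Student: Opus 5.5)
The plan is to refine the conjugacy-shortening dichotomy of Lemma~\ref{lem:split_into_conjugacy} quantitatively. The target is a bound of the form $\tau_S(g)\ge d_S(1,g)-2\langle\cdot\rangle$, where $\langle\cdot\rangle$ is a ``cancellation'' quantity that must be small outside a growth-tight set. Growth-tight sets are closed under finite unions, so it suffices to show that each of the two conditions separately fails only on a growth-tight set.

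\textbf{Word length.} Fix $\varepsilon>0$ and let $g=s_1\cdots s_n$ be a geodesic word. First I would use the elementary fact that the geodesics of $g$ and $g^{-1}$ can cancel. Set
\[
c(g)=\max\{k: g=uvu^{-1},\ d_S(1,u)=k,\ d_S(1,v)=n-2k\}.
\]
The set of $g$ with $c(g)\ge \varepsilon n/4$ is contained in $B(\varepsilon/2,0)$, which is growth tight by Lemma~\ref{lem:Bgrowthtight}. So we may assume that $g$ is ``cyclically almost reduced''.

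\textbf{The main obstacle: linear growth of $d_S(1,g^k)$ for almost cyclically reduced $g$.} In a general group, cyclically reduced words need not have large stable length. Here the projection complex has to be used. The approach is to run the good-block recurrence of Theorem~\ref{MainThmRecurrence} simultaneously on $g$ and on its cyclic conjugates. Since $g\notin A$, the geodesic $\gamma=[1,g]$ carries $\gtrsim\varepsilon_0 n$ good guard axes $Y_i$ with neighbor points in $\gamma$. These axes are arranged along $\f_K[\po,g\po]$.

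The key point is to show that the axes whose neighbor points lie in the first and last $\varepsilon n$ portions of $\gamma$ are stable: the same axes appear on $\f_K[\po,g^k\po]$, using Lemma~\ref{lem:stable_axis} together with the bi-infinite standard path $\gamma_g$. I would then argue that any word geodesic from $1$ to $g^k$ must pass within $R$ of the cosets $g^jH_i$ at the points dictated by the good blocks. This would be proved by a divergence argument in the style of Lemma~\ref{lem:good_block}, applied to a geodesic $[1,g^k]$ rather than $[1,g]$.

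It then follows that $[1,g^k]$ is coarsely a concatenation of $k$ pieces. Each piece goes from a near-projection point $t$ on one axis to its translate $gt$, and has word length at least $d_S(t,gt)-2R\ge n-2d_S(1,t)-2R$. By choosing the axis with $d_S(1,x)$ in the first $\varepsilon n/4$ of $\gamma$, which is possible after discarding the set where the good axes avoid this window, this gives $d_S(1,g^k)\ge k(1-\varepsilon)n$.

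The exceptional set where no good axis appears early in $\gamma$ must also be shown to be growth tight. I would treat it as the set $A$ of Theorem~\ref{thm:grow_tight_PC} applied to the prefix $[1,x]$ of length $\varepsilon n/4$. Such $g$ factor as $g=ab$ with $a$ in a growth-tight set of length $\varepsilon n/4$, so the count is at most $|A\cap B_{\varepsilon n/4}|\,|B_{(1-\varepsilon/4)n}|$, which has a growth gap.

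\textbf{$X$-displacement.} For $\tau_X$ the argument is parallel and easier, since $X$-geodesics genuinely fellow travel the contracting axes. Lifting $\gamma_g$ via Lemma~\ref{lem:coarseLip2Projcplx} gives an admissible path through the $g^j$-translates of an axis $Y$ near $\po$. Proposition~\ref{admisProp} then yields $d(o,g^ko)\ge k\bigl(d(o,go)-2d(o,\pi_Y(o))-O(1)\bigr)$. Outside the growth-tight set where $d(o,\pi_Y o)\ge\varepsilon d(o,go)/4$, this bound gives the claim. That set is controlled by Theorem~\ref{MainThmShortElems} applied to the prefix, together with Theorem~\ref{MainThmShortElems} to ensure $d(o,go)\gtrsim n$.
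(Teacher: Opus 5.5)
Your overall plan matches the paper's: prove the two inequalities separately, place linearly many axes coming from a short prefix of $[1,g]_S$ on the $g$-invariant standard path of Lemma~\ref{lem:stable_axis}, and sum the pieces of $[1,g^k]_S$ between consecutive translates. However, two steps do not work as proposed.

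\textbf{Goodness of every translate.} You claim that every geodesic $[1,g^k]_S$ passes within $R$ of every translate $g^jH$ of a chosen good axis, and you propose to prove this by a divergence argument in the style of Lemma~\ref{lem:good_block}. That argument only shows that each bad block has length at least $\kappa(r)$. Hence there are at most $\varepsilon'\,d_S(1,g^k)\le\varepsilon' kn$ bad blocks. This bound still allows entire translate-copies to be bypassed. A density statement does not help either: between good points in non-consecutive copies $j<j'$, the piece is bounded below only in terms of $d_S(1,g^{j'-j})$, which is the quantity you are trying to estimate.

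The paper handles this by letting the axis vary with the copy (Lemma~\ref{lem:good_block_in_each_copy}). Take $\mathcal Y\subseteq\f_K(v,gu)$ with $|\mathcal Y|\ge\delta n$. Suppose every axis of $g^m\mathcal Y$ guards a bad block, and let $a,b$ be the nearest good neighbor points in copies $i<m<j$. Two facts give $d_S(a,b)\le(j-i+2)n+2M$:
the bound $\proj^X_H(1,g^i)\le 3\beta\theta+\beta$ for cosets $H$ of axes in $g^i\mathcal Y$, which follows from the ordering of translates and \ref{axiom:strong_berstock};
and the goodness of $g^{-i}Y$ for $[1,g]_S$.
Since the path is geodesic, the bad blocks in between force $d_S(a,b)\ge(j-i-1)\kappa(r)(\delta n/300-1)$. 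This is a contradiction once $\kappa(r)$ is a large constant and $n$ is large. Your phrase ``a piece from $t$ to $gt$'' also silently uses $\pi^X_{g^jH}(1)\approx g^j\pi^X_H(1)$. That is exactly the projection estimate above, and it needs proof.

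\textbf{The prefix step.} ``No good axis appears early in $\gamma$'' does not reduce to ``the prefix lies in the short-displacement set of Theorem~\ref{thm:grow_tight_PC}''. The path $\gamma\po$ is not a quasi-geodesic in $\PC$. So a prefix $h$ with $\LS(\po,h\po)\ge\varepsilon_0d_S(1,h)$ can have a standard path that leaves $\f_K[\po,g\po]$ at once, and then its axes say nothing about $\f_K(\po,g\po)$.

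The missing input is the anchored length estimate, Proposition~\ref{prop:word_geod_on_proj_cplx}, with anchor set $\{h\}$. It gives $\LS(\po,h\po)+\LS(h\po,g\po)\le\LS(\po,g\po)+\delta n+E$. Lemma~\ref{lem:triangle_in_PC} then yields $|\f_K(\po,h\po)\cap\f_K(\po,g\po)|\ge\delta n$ for all but finitely many $g$ (Lemma~\ref{lem:generic_prefix}).

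You also need these shared axes to lie in the fundamental domain $\f_K(v,gu)$. That requires $\langle g^{-1}\po,g\po\rangle^{\mathrm{std}}_{\po}\ll\delta n$, which comes from Lemma~\ref{lem:split_into_conjugacy}: exclude $B(\varepsilon',M)$ with $M>0$, using Lemma~\ref{lem:Bgrowthtight}. Your quantity $c(g)$ does not supply this. It records only exact geodesic cancellation and does not control the Gromov product in $\PC$. (Also, $c(g)\ge\varepsilon n/4$ places $g$ in $B(\varepsilon/4,0)$, not $B(\varepsilon/2,0)$.)

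The same gap affects your argument for $\tau_X$. To get $d(o,\pi_Y(o))\le\varepsilon d(o,go)/4$, you need $Y\in\f_K(\po,h\po)\cap\f_K(v,gu)$ for a short prefix $h$; then Lemma~\ref{BigFive} gives $d(o,\pi_Y(o))\le d(o,ho)+2C$. Applying Theorem~\ref{MainThmShortElems} to the prefix does not produce such a $Y$. Once this is repaired, your single-axis treatment of $\tau_X$ is fine. Contraction (Lemma~\ref{lem:entry_point_ordered_on_geod}) makes $[o,g^ko]$ pass near every translate, so no good/bad distinction is needed there.
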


We prove the two inequalities separately. Theorem~\ref{thm:stable_trans_in_X}
establishes the assertion for $\tau_X$, while
Theorem~\ref{thm:stable_in_cay} establishes the assertion for $\tau_S$.
The theorem then follows by intersecting the two exponentially generic
sets, since a finite intersection of exponentially generic sets is again
exponentially generic.

\subsection{Preparatory lemmas}
\iffalse
The following lemma is a simple corollary of Theorem~\ref{thm:grow_tight_PC}, indicating that the orbital map is bi-Lipschitz on generic elements.
\begin{lem}\label{lem:bilipshitz_for_generic_elements}
    There exists a constant $\beta_1>0$ such that exponentially generic elements $g\in G$ satisfy $$d(o, go)\ge \beta_1\, d_S(1, g).$$
\end{lem}
\begin{proof}
    By Theorem~\ref{thm:grow_tight_PC}, for exponentially generic elements $g\in G$, $\LS(\po, g\po)\ge \varepsilon\,d_S(1, g)$ for some $\varepsilon>0$. Note that for any $u, v\in G$, $d(uo, vo)\ge (K-2\theta)\cdot \LS(u\po, v\po)$. The lemma follows by taking $\beta_1 = \varepsilon\cdot (K-2\theta)$.
\end{proof}\fi

The following lemma will be used to estimate stable lengths. It says that,
for a generic element, every prescribed initial segment of a word geodesic
determines linearly many common vertices of the associated standard paths.

\begin{lem}\label{lem:generic_prefix}
For every $0<\varepsilon<1$, there exist
$\delta=\delta(\varepsilon)>0$ and an exponentially generic subset
$G_\varepsilon\subseteq G$ with the following property.

For every $g\in G_\varepsilon$, every word geodesic $\gamma=[1,g]_S$, and
every vertex $h\in\gamma$ satisfying
$d_S(1,h)=\lfloor\varepsilon d_S(1,g)\rfloor,$
we have
\[
\bigl|\f_K(\po,g\po)\cap\f_K(\po,h\po)\bigr|
\ge \delta\,d_S(1,g).
\]
\end{lem}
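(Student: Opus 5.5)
The idea is to combine the growth gap of Theorem~\ref{thm:grow_tight_PC} with the anchored length estimate, Proposition~\ref{prop:word_geod_on_proj_cplx}, applied with a single anchor $h$. Set $n=d_S(1,g)$ and let $\varepsilon_1>0$ be the constant of Theorem~\ref{thm:grow_tight_PC}. We would define $G_\varepsilon$ to be the set of $g$ such that
\[
\LS(\po,h\po)\ge \varepsilon_1\varepsilon n/2
\]
for every vertex $h$ on every word geodesic $[1,g]_S$ with $d_S(1,h)=\lfloor\varepsilon n\rfloor$. We must show that this set is exponentially generic, and that it has the stated intersection property.

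\textbf{Genericity.} If $g\notin G_\varepsilon$, then $g=h\cdot(h^{-1}g)$ with $d_S(1,h)=k:=\lfloor\varepsilon n\rfloor$ and $d_S(1,h^{-1}g)=n-k$. Moreover $h$ lies in the growth-tight set $A$ of Theorem~\ref{thm:grow_tight_PC}, up to adjusting the constant for small $k$. Let $\omega_A<\omega(G)$ be the growth rate of $A$. Counting pairs gives
\[
|S_n\setminus G_\varepsilon|\le |A\cap B_k|\cdot|B_{n-k}|\lesssim e^{\omega_A\varepsilon n+\omega(n-\varepsilon n)+o(n)},
\]
and the exponent is strictly less than $\omega(G)n$. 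This is the splitting argument used in Lemma~\ref{lem:Bgrowthtight}.

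\textbf{Intersection property.} Fix $g\in G_\varepsilon$ and $h\in\gamma$ as in the statement. We apply Proposition~\ref{prop:word_geod_on_proj_cplx} with $A=\{h\}$ and a small $\delta_0$ to get
\[
\LS(\po,h\po)+\LS(h\po,g\po)\le \LS(\po,g\po)+\delta_0 n+E.
\]
This bounds the standard-path Gromov product: $\langle \po,g\po\rangle^{\mathrm{std}}_{h\po}\le(\delta_0n+E)/2$. By the tripod structure in Lemma~\ref{lem:triangle_in_PC} and Lemma~\ref{lem:factsonstdpath}(1), the common initial subpath of $\f_K[\po,h\po]$ and $\f_K[\po,g\po]$ then has length at least
\[
\langle h\po,g\po\rangle^{\mathrm{std}}_{\po}-3=\LS(\po,h\po)-\langle\po,g\po\rangle^{\mathrm{std}}_{h\po}-3 .
\]
More carefully, we would use Lemma~\ref{lem:common_prefix} with $U=\po$, $V=h\po$ and $W=g\po$ to pass from the tripod picture to the first several vertices of $\f_K(\po,h\po)$ lying in $\f_K(\po,g\po)$. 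This gives
\[
|\f_K(\po,g\po)\cap\f_K(\po,h\po)|\ge \varepsilon_1\varepsilon n/2-\delta_0n/2-E/2-4 .
\]
Choosing $\delta_0=\varepsilon_1\varepsilon/4$ yields a lower bound of $\delta n$ with $\delta=\varepsilon_1\varepsilon/8$, for $n$ large. Small $n$ can be discarded from $G_\varepsilon$, since finite sets do not affect genericity.

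\textbf{Main obstacle.} The delicate step is the translation from the single-anchor inequality into a common-prefix count. The subtlety is that the tripod center may sit near $h\po$ rather than near $\po$, and this is controlled precisely by the Gromov product at $h\po$ being small. We would carry this out by applying Lemma~\ref{lem:triangle_in_PC} to the triangle $(\po,h\po,g\po)$: the portion of $\f_K(\po,h\po)$ not lying on $\f_K(\po,g\po)$ is contained in $\f_K(h\po,g\po)$ together with at most 2 central vertices. Its size is therefore at most $\LS(\po,h\po)+\LS(h\po,g\po)-\LS(\po,g\po)+O(1)$, which gives the count directly.
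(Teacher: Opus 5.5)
Your proposal is correct and follows essentially the same route as the paper, which also bounds the exceptional set by splitting $g=h\cdot(h^{-1}g)$ with $h$ in the growth-tight set of Theorem~\ref{thm:grow_tight_PC}. It then shows that the remaining bad elements are finite by combining Proposition~\ref{prop:word_geod_on_proj_cplx} for the single anchor $\{h\}$ with the tripod property of the triangle $(\po,h\po,g\po)$, as you do. The one slip is your aside invoking Lemma~\ref{lem:common_prefix} with $V=h\po$, $W=g\po$: its hypothesis $\LS(h\po,g\po)\le|\f_K(\po,h\po)|-2$ generally fails for small $\varepsilon$, since the left side is linear in $d_S(1,g)$ while $|\f_K(\po,h\po)|\le 2\varepsilon d_S(1,g)$, and that lemma ignores the small Gromov product at $h\po$ anyway. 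Your stated count already follows directly from Lemma~\ref{lem:factsonstdpath}(1) at $\po$, so drop that aside.
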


\begin{proof}
Let $\varepsilon_0>0$ be the constant given by
Theorem~\ref{thm:grow_tight_PC}, and set
$\delta:=\frac{\varepsilon\varepsilon_0}{3}.$
Consider the set
\[
A:=\left\{g\in G\;\middle|\;
\begin{aligned}
&\text{there exist a word geodesic $\gamma=[1,g]_S$ and $h\in\gamma$}\\
&\text{such that }
d_S(1,h)=\lfloor\varepsilon d_S(1,g)\rfloor
\text{ and }
\LS(\po,h\po)<2\delta d_S(1,g)
\end{aligned}
\right\}.
\]

We first prove that $A$ is growth tight. By
Theorem~\ref{thm:grow_tight_PC}, the set
\[
Z:=\left\{h\in G:
\LS(\po,h\po)\le\varepsilon_0d_S(1,h)\right\}
\]
is growth tight. Choose constants
$\omega(Z,S)<\omega_1<\omega(G,S),  C_1>1$
such that
\[
|Z\cap B_m|\le C_1e^{\omega_1m}
\]
for every $m\ge0$. Choose $\omega_2>\omega(G,S)$ sufficiently close to
$\omega(G,S)$ that
\[
\omega_*:=\varepsilon\omega_1+(1-\varepsilon)\omega_2
<\omega(G,S).
\]
After increasing a constant $C_2>1$ if necessary, we may assume that
$|B_m|\le C_2e^{\omega_2m}$
for every $m\ge0$.

\medskip
Indeed, let $g\in A\cap S_n$, and let $h$ be a witness for $g\in A$. Set
$r:=d_S(1,h)=\lfloor\varepsilon n\rfloor.$
If $n\ge3\varepsilon^{-1}$, then
\[
\LS(\po,h\po)
<2\delta n
=\frac{2\varepsilon\varepsilon_0}{3}n
\le\varepsilon_0(\varepsilon n-1)
\le\varepsilon_0r.
\]
Thus $h\in Z$. Since $h$ lies on a word geodesic from $1$ to $g$, we may
write $g=hk$ with $d_S(1,k)=n-r.$
Consequently,
\[
\begin{aligned}
|A\cap S_n|
&\le |Z\cap B_r|\,|B_{n-r}|\\
&\le C_1C_2e^{\omega_1r+\omega_2(n-r)}\\
&\le C_1C_2e^{\omega_2-\omega_1}e^{\omega_*n}.
\end{aligned}
\]
Since $\omega_*<\omega(G,S)$, this proves that $A$ is growth tight.

We now consider the set
\[
D:=\left\{g\in G\setminus A\;\middle|\;
\begin{aligned}
&\text{there exist a word geodesic $\gamma=[1,g]_S$ and $h\in\gamma$}\\
&\text{such that }
d_S(1,h)=\lfloor\varepsilon d_S(1,g)\rfloor
\text{ and}\\[-2mm]
& \bigl|\f_K(\po,g\po)\cap\f_K(\po,h\po)\bigr|
<\delta d_S(1,g)
\end{aligned}
\right\}.
\]
We claim that $D$ is finite.

\medskip
Indeed, let $g\in D$, let $h$ be a witness, and write $n=d_S(1,g)$. 
Since
$g\notin A$, we have $\LS(\po,h\po)\ge2\delta n$. Applying Proposition~\ref{prop:word_geod_on_proj_cplx} to the anchored
set $\{h\}$ gives
\[
\LS(\po,h\po)+\LS(h\po,g\po)
\le\LS(\po,g\po)+\delta n+E,
\]
where $E=E(\delta)$.

\medskip
\noindent Let $\alpha:=\f_K(h\po,\po)\cap\f_K(h\po,g\po).$
By Lemma~\ref{lem:triangle_in_PC}, the standard-path triangle with
vertices $\po,h\po,g\po$ is tripod-like, with an exceptional part of
length at most $3$. Hence
\[
\LS(\po,h\po)+\LS(h\po,g\po)
\ge\LS(\po,g\po)+2\len(\alpha)-3.
\]
Combining the preceding two inequalities gives
$2\len(\alpha)-3\le\delta n+E.$
The tripod-like property also gives
\[
\begin{aligned}
\len(\alpha)
&\ge
\LS(\po,h\po)
-\bigl|\f_K(\po,g\po)\cap\f_K(\po,h\po)\bigr|-2\\
&\ge 2\delta n-\delta n-2\ge \delta n-2.
\end{aligned}
\]
where $\LS(\po,h\po)\ge2\delta n$.
It follows that
\[
2(\delta n-2)-3\le\delta n+E,
\]
and therefore $n\le\delta^{-1}(E+7)$.
Thus $D$ is finite.

The complement of the desired set is contained in $A\cup D$. Since
$A$ is growth tight and $D$ is finite, $A\cup D$ is growth tight.
Therefore \(G_\varepsilon:=G\setminus(A\cup D)\) is exponentially
generic, completing the proof.
\end{proof}

\begin{lem}\label{lem:good_block_in_each_copy}
Given any $\varepsilon>0$ there exist $L=L(\varepsilon)$ and $R=R(\varepsilon)>0$ with the following property. 
Let $g$ be an element in $G$ so that $d_S(1, g)\ge L$. Assume that   $\mathcal Y$ is a non-empty subset of $\f_K(\po,g\po)$ so that
\begin{enumerate}
    \item 
     $|\mathcal Y|\ge \varepsilon   \cdot d_S(1,g)$;
    \item 
    for any $n\ge 1$, $\cup_{m=0}^n g^m\mathcal Y$ is contained in $\f_K(\po,g^{n+1}\po)$;
    \item 
    $g^i\mathcal Y$ appears before $g^j\mathcal Y$ in $\f_K(\po, g^k\po)$ for any $i<j<k\in \mathbb N$.
\end{enumerate} Then  for  any word geodesic $[1,g^{n+1}]_S$ with $n\ge 1$ and each $0\le m\le n$, there exists $Y_m\in \mathcal Y$ so that $g^mY_m$ is a $(K,R)$-good guard axis for $[1,g^{n+1}]_S$.  Furthermore, if $H_m$ is the coset representing $g^mY_m$, then $\proj_{H_m}^X(1, g^m)\le R$.
\end{lem}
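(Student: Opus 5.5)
Fix a word geodesic $\gamma=[1,g^{n+1}]_S$ and an index $0\le m\le n$. The plan is to produce, inside the copy $g^m\mathcal Y$, a $K$-guard decomposition of $\gamma$ whose number of blocks exceeds the number of bad blocks that fit into the relevant piece of $\gamma$.

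By hypothesis (2), $g^m\mathcal Y\subseteq\f_K(\po,g^{n+1}\po)$, and $|g^m\mathcal Y|=|\mathcal Y|\ge\varepsilon\, d_S(1,g)$. I apply Lemma~\ref{lem:block_decomp} to $\gamma$ with the subset $g^m\mathcal Y$. This gives a $K$-guard decomposition of $\gamma$ with at least $\lfloor(\varepsilon d_S(1,g)+1)/12\rfloor$ guarded blocks, all of whose guard axes lie in $g^m\mathcal Y$. The decomposition is global, but I will use only its blocks.

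The key point is localization: these blocks live on a subpath of $\gamma$ of length comparable to $d_S(1,g)$. Hypotheses (2) and (3) say that the copies $g^i\mathcal Y$ are ordered consecutively along $\f_K(\po,g^{n+1}\po)$. Combining this with the bottleneck property (Lemma~\ref{lem:bottleneck_in_PC}) and Lemma~\ref{lem:common_prefix}, each block guarded by an axis of $g^m\mathcal Y$ should lie on the subpath of $\gamma$ between points $p_{m-1}$ and $p_{m+2}$. Here $p_i\in\gamma$ is chosen $3$-close in $\PC$ to axes of $g^{i}\mathcal Y$; we may take $p_{-1}=1$ and $p_{n+2}=g^{n+1}$ at the ends. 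A cleaner alternative is to avoid localization altogether. The remark after Lemma~\ref{lem:good_block} says every $(K,R)$-bad block has length at least $\kappa(r(R))$. So I only need that the blocks guarded by $g^m\mathcal Y$ are disjoint subpaths whose total length is at most about $3d_S(1,g)+O(1)$. Then at most $3d_S(1,g)/\kappa(r)+O(1)$ of them are bad.

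I choose $R$ so that $\kappa(r(R))>100/\varepsilon$, and choose $L$ large so that additive constants are absorbed. Then the number of blocks, roughly $\varepsilon d_S(1,g)/12$, exceeds the number of bad ones, so some block is good. Its guard axis is $g^mY_m$ for some $Y_m\in\mathcal Y$.

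The final assertion is $\proj^X_{H_m}(1,g^m)\le R$. Let $x\in\gamma$ be the neighbor point of the good block, so $\pi^X_{H_m}(1)$ lies in the $R$-ball around $x$. The guard condition gives $\proj_{g^mY_m}(\po,g_{i-1}\po)\le\theta$. I also need $\proj_{g^mY_m}(\po,g^m\po)$ to be small. This follows from (3): $g^mY_m$ comes after all axes on $\f_K[\po,g^m\po]$, and in fact $g^mY_m\notin\f_K(\po,g^m\po)$ because it is translated from $Y_m\in\f_K(\po,g\po)$. By \ref{axiom:strong_berstock}, the projections of $\po$ and $g^m\po$ to $g^mY_m$ then agree up to $\theta$, so $\proj^\pi_{g^mY_m}(o,g^mo)\le 3\theta+O(1)$. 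The $\beta$-quasi-isometry $H_m\to g^mY_m$ converts this into a word-metric bound, and I enlarge $R$ accordingly.

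The main obstacle is the localization or length-budget step, that is, making sure bad blocks cannot all concentrate in the $m$-th copy. I would first try the crude total-length bound via the bottleneck property. If that fails, I would use the estimate $\LS(\po,g^m\po)\approx m\LS(\po,g\po)$ from (2) and (3) to place $p_m$ close to $g^m$ in $\gamma$.
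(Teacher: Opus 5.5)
The gap is in the localization step, which is the heart of the lemma. Every tool you propose for it only locates points in $\PC$: the bottleneck property (Lemma~\ref{lem:bottleneck_in_PC}), Lemma~\ref{lem:common_prefix}, and the estimate $\LS(\po,g^m\po)\approx m\,\LS(\po,g\po)$. Knowing that $p\po$ is $3$-close in $\PC$ to an axis of $g^i\mathcal Y$ says nothing about $d_S(p,g^i)$. Vertex stabilizers such as $E(f_0)$ are infinite, so the orbit map $G\to\PC$ is very far from a quasi-isometric embedding.

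Consequently, nothing you have bounds the word length of the part of $\gamma$ guarded by $g^m\mathcal Y$ by $3d_S(1,g)+O(1)$. In the decomposition you build from $g^m\mathcal Y$ alone, the first block even starts at $1$ and contains everything before the $m$-th copy. Geodesicity of $\gamma$ together with $\len(\gamma)\le(n+1)d_S(1,g)$ does not help either. To confine copy $m$ to a window of length $O(d_S(1,g))$, you would need $\gamma$ to advance roughly $d_S(1,g)$ in word length per copy. That is essentially the conclusion of Theorem~\ref{thm:stable_in_cay}, which this lemma is meant to feed. A global count of bad blocks, each of length $\ge\kappa(r)$, only shows that \emph{most} copies contain a good block, not that \emph{every} copy does.

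The missing idea is a word-metric anchor supplied by good blocks in neighboring copies. The paper first discards the minimal $Y_0\in\mathcal Y$. It then passes to a set $\mathcal Z_0\subseteq\mathcal Y\setminus\{Y_0\}$ of axes that are $(K,R_0)$-good for the short geodesic $[1,g]_S$. Finally it builds a single guard decomposition of $\gamma$ from all copies $\bigcup_i g^i\mathcal Z_0$ at once, with each copy contributing at least $\lfloor|\mathcal Z_0|/12\rfloor$ guards.

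Suppose all blocks of copy $m$ were bad. Take the nearest good blocks in copies $i<m<j$, with neighbor points $a$ and $b$ (or $a=1$, $b=g^{n+1}$ if there are none). Goodness of $g^{-i}Y$ for $[1,g]_S$, combined with $\proj^X_H(1,g^i)\le 3\beta\theta+\beta$, gives $d_S(g^i,a),\,d_S(g^j,b)\le d_S(1,g)+M$. Hence $d_S(a,b)\le (j-i+2)d_S(1,g)+2M$. But at least $(j-i-1)\bigl(\tfrac{\varepsilon}{300}d_S(1,g)-1\bigr)$ bad blocks, each of length $\ge\kappa(r)$, lie between $a$ and $b$. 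Both bounds scale with the number of intermediate copies, so this is contradictory once $\kappa(r)\gg\varepsilon^{-1}$ and $d_S(1,g)\ge L$. A single-copy decomposition has no access to this mechanism.

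Separately, your argument for the final assertion is incomplete. Being a translate of $Y_m\in\f_K(\po,g\po)$ does not by itself show $g^mY_m\notin\f_K(\po,g^m\po)$. Also, \ref{axiom:strong_berstock} requires an intermediate axis with large projection, which you never exhibit. The paper uses $g^mY_0$, and this is exactly why $Y_0$ is removed before the guards are chosen.
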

\begin{proof}
    Let us fix $\gamma:=[1,g^{n+1}]_S$. We first prepare the data to build a $K$-guard decomposition of $\gamma$.
    
    Let $Y_0$ denote the minimal axis in $\mathcal Y\subseteq \f_K(\po,g\po)$  with respect to the total order. Applying Lemma~\ref{lem:block_decomp} to $\mathcal Y\setminus \{Y_0\}$, $[1, g]_S$ admits a $K$-guard decomposition with guard axes contained in $\mathcal Y\setminus\{Y_0\}$, which has length at least $\lfloor{|\mathcal Y|}/{12}\rfloor$. Let $R_0 = R(\varepsilon/24)$ be given by Lemma~\ref{lem:good_block} which yields a set, denoted as $\mathcal Z_0$, of guard axes of the $(K, R_0)$-good blocks of $[1, g]_S$ with cardinality   $$|\mathcal Z_0| \ge \left\lfloor\frac{|\mathcal Y|}{12}\right\rfloor-\frac{\varepsilon}{24}\, d_S(1, g)\ge  \frac{\varepsilon}{24}\, d_S(1, g) -1$$
    where $|\mathcal Y|\ge \varepsilon   \cdot d_S(1,g)$ by assumption (1).
    
    \medskip
    \noindent \textbf{The $K$-guard decomposition of $\gamma$.}
    By assumption (2),  $\mathcal Z:=\cup_{m=0}^n g^m\mathcal Z_0$ is a subset of $\f_K(\po,g^{n+1}\po)$, so by Lemma \ref{lem:block_decomp}, $\gamma=[1,g^{n+1}]_S$ admits a $K$-guard decomposition with guard axes in $\mathcal Z$ of length at least $\lfloor |\mathcal Z|/12 \rfloor$. In fact, by Remark~\ref{rem:block_every_10} the set of guard axes which we denote by $\hat{\mathcal Z}\subseteq \mathcal Z$ has the following explicit description : there exist subsets $\mathcal{Y}_0, \cdots, \mathcal{Y}_{n}\subseteq \mathcal{Z}_0$ such that:
    \begin{itemize}
        \item $|\mathcal{Y}_i|\ge \lfloor|\mathcal{Z}_0|/12\rfloor$ for each $0\le i\le n$;
        \item  $\hat{\mathcal Z}=\mathcal{Y}_0\sqcup g\mathcal{Y}_1 \sqcup\cdots \sqcup g^{n}\mathcal{Y}_{n}.$
    \end{itemize}
 
    \noindent 
    We first prove the ``furthermore" statement.
    Namely, given $0\le m\le n$  and $Y\in g^m\mathcal Y_m$, $$\proj_H^X(1, g^m)\le  3\beta\theta+\beta$$
    where $H$ is the coset representing $Y$. Indeed, since $g^{-m}\mathcal Y\subseteq \f_K(\po, g\po)$ and $Y_0$ is the minimal in $\mathcal Y$, we have $\proj_{g^mY_0}(g^m\po, Y) = \proj_{Y_0}(\po, g^{-m}Y)>K$. Since $g^m\mathcal Y\subseteq \f_K(\po, g^{n+1}\po)$, Proposition~\ref{prop:std_path} gives $\proj_{Y}(\po, g^mY_0)<\theta$. Therefore by \ref{axiom:strong_berstock} and Eq.~(\ref{eq:projdist}), $$\proj^\pi_Y(o, g^mo)\le 2\theta + \proj_Y(\po, g^m\po)=2\theta + \proj_Y(\po, g^mY_0)\le 3\theta,$$ whence $\proj_H^X(1, g^m)\le \beta \proj^\pi_Y(o, g^mo)+\beta\le  3\beta\theta+\beta$ by the $\beta$-quasi-isometry map $\Pi_H: H\to Ho$.
    %assume that $Y$ is a $(K, R)$-good block for $\gamma=[1, g^{n+1}]_S$. Let $H$ be the coset representing $Y$. 
        
   \medskip
    
   \noindent The remainder of the proof is to find $Y_m\in \mathcal Y_m$ for each  $0\le m\le n$ so that $g^mY_m\in \hat{\mathcal Z}$ is a $(K,R)$-good guard axis for $\gamma$. 
   The constant  $R$ shall be determined below; see Eq.~(\ref{eq:value_of_R}). 
   
   We argue by contradiction. Fix $0\le m\le n$. Suppose that for each $Y\in \mathcal{Y}_m$, the block guarded by $g^mY$ is a $(K, R)$-bad block. If we set 
    \begin{align}\label{eq:value_of_r}
    r = \frac{R-\beta(3\theta +N+1)}{2\beta^2+1},    
    \end{align} the subpath of a $(K, R)$-bad block has length at least $\kappa\left(r\right)$ by Lemma~\ref{lem:good_block}. Here, $\kappa$ is the diverging function of WPD elements $f\in F$. 
    
    The proof idea is that, if $R$ is sufficiently large, then $\kappa(r)\to \infty$ and the total length of  $(K, R)$-bad blocks guarded by all $Y\in \mathcal{Y}_m$ will exceed the length of $\gamma$; a contradiction. The next paragraph explains which $(K, R)$-bad blocks are chosen.

    \medskip
    \noindent We first consider at the axes $\mathcal{Y}_0\sqcup g\mathcal{Y}_1 \sqcup\cdots \sqcup g^{n-1}\mathcal{Y}_{m-1}$ over the indices $0\le i\le m-1$ before $g^m\mathcal{Y}_m$. We examine the following two cases.
    
    \medskip
    \noindent\textbf{Case 1}. There exists a $(K,R)$-good block of $\gamma=[1, g^{n+1}]_S$ before those guarded by $g^m\mathcal{Y}_m$. Let $Y\in g^i\mathcal{Y}_i$ ($0\le i\le m-1$) be the $(K,R)$-good guard axis that is maximal in the order, and $H$ its representing coset. Choose $x\in\pi_H^X(1)$, $y\in\pi_{H}^X(g^i)$, and let $a\in\gamma$ be the neighbor point of this good block. Then $d_S(a,x)\le R$ and $d_S(x, y)\le \proj_H^X(1, g^i)\le 3\beta\theta + \beta$ by \textbf{(1)} above. 
    
    Since $g^{-i}Y\in\mathcal Z_0$, it is  a $(K, R_0)$-good guard axis for $[1, g]_S$. Moreover, $g^{-i}y\in\pi_{g^{-i}H}^X(1)$.
Hence there exists $t\in[1,g]_S$ such that $d_S(t,g^{-i}y)\le R_0$.
Consequently,
\[
d_S(g^i,y)=d_S(1,g^{-i}y)
\le d_S(1,t)+R_0
\le d_S(1,g)+R_0.
\] 
Therefore by triangle inequality, setting $M = 3\beta\theta+\beta+R+R_0$,
    \begin{equation}\label{equ:good_copy_1}
        d_S(g^i,a)\le d_S(g^i, y) + d_S(y, x) + d_S(x, a)\le d_S(1,g)+M.
    \end{equation}
    
    \noindent\textbf{Case 2}. If no such good block exists, we set $i=-1$ and $a=1$.

    \medskip
    \noindent Similarly, we consider $g^{m+1}\mathcal{Y}_{m+1}\sqcup g^{m+2}\mathcal{Y}_{m+2} \sqcup\cdots \sqcup g^{n}\mathcal{Y}_{n}$ over the indices $n\ge i\ge m+1$ after $g^m\mathcal{Y}_m$. If there is a  $(K,R)$-good block, we find $j\ge m+1$ and $b\in\gamma$ with $d_S(g^j,b)\le d_S(1,g)+M$; otherwise we take $j=n+1$, $b=g^{n+1}$.

    \medskip
    \noindent In either case, by Eq.~(\ref{equ:good_copy_1}) the triangle inequality gives
    \begin{align}
    \notag d_S(a,b)&\le d_S(a, g^i) + d_S(g^i, g^j) + d_S(g^j, b)\\ 
    &\le (j-i+2)d_S(1,g)+2M.\label{equ:good_copy_3}
    \end{align}
    
    \medskip
    
    Let us now estimate the total lengths of those bad blocks we found. By the above defining property of $i,j$ and $a,b$, every axis between $g^{i+1}\mathcal{Y}_{i+1}$ and $g^{j-1}\mathcal{Y}_{j-1}$ guards a $(K,R)$-bad block of $\gamma$, so their block subpaths contribute
    \begin{align}
    \notag d_S(a,b)&\ge \kappa(r)\sum_{l=i+1}^{j-1}|\mathcal{Y}_l|\\
    \label{equ:good_copy_2} &\ge (j-i-1)\,\kappa(r)\left(\frac{|\mathcal{Z}_0|+1}{12}-1\right)\\
    \notag &\ge (j-i-1)\,\kappa(r)\left(\frac{\varepsilon}{300}d_S(1,g)-1\right).
    \end{align}
    
    \noindent Choose $R$ sufficiently large that
    \begin{align}\label{eq:value_of_R}
    R>\max\{R_0,3\beta\theta+\beta,\beta(3\theta+N+1)\}\end{align}
    and  $Q:=\kappa(r)>1500\varepsilon^{-1}$ where $r$ given in (\ref{eq:value_of_r}) depends on $R$. 

Set $s:=j-i-1\ge1$. Combining
\eqref{equ:good_copy_2} and \eqref{equ:good_copy_3} gives
\[
sQ\left(\frac{\varepsilon}{300}d_S(1,g)-1\right)
\le 4s\,d_S(1,g)+2M.
\]
Since $Q\varepsilon/300>5$, it follows that
\[
s\bigl(d_S(1,g)-Q\bigr)\le2M.
\]
Thus $d_S(1,g)\le Q+2M$. Taking
\[
L:=\lceil Q+2M\rceil+1
\]
gives the desired contradiction. The proof is complete.
\end{proof}

We next verify that the assumptions of Lemma \ref{lem:good_block_in_each_copy} hold generically.

\begin{lem}\label{lem:generic_many_axes_stable}
For any $0<\varepsilon<1$ there exist $R=R(\varepsilon)>0$ and $\delta = \delta(\varepsilon)>0$ and an  exponentially generic set $G_\varepsilon\subseteq G$  with the following properties.

\begin{enumerate}
    \item For each $g \in G_\varepsilon$, let $u$ denote the greatest element in $\f_K[g^{-1}\po,g\po]\cap \f_K[g^{-1}\po,\po]$ and $v$ the smallest element in $\f_K[g^{-1}\po,g\po]\cap \f_K[\po,g\po]$, in the order on $\f_K[g^{-1}\po,g\po]$. Then there exists a subset $\mathcal Y$ contained in $\f_K(v, gu)$ of cardinality at least $\delta\,d_S(1, g)$;
    \item Furthermore, pick $h\in [1, g]_S$ so that $d_S(1, h) = \lfloor\varepsilon\,d_S(1, g)\rfloor$. Then $\mathcal Y$ consists of $(K, R)$-good guard axes for a fixed word geodesic $[1, h]_S$.
    \item For every $n\ge1$,
\[
\bigcup_{m=0}^n g^m\mathcal Y
\subseteq\f_K(\po,g^{n+1}\po),
\]
and these subsets occur in the order
$\mathcal Y<g\mathcal Y<\cdots<g^n\mathcal Y.$
\end{enumerate}
\end{lem}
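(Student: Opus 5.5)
The plan is to intersect three exponentially generic sets and then read off $\mathcal Y$ from the standard path structure. Let $\varepsilon_0$ be given by Theorem~\ref{thm:grow_tight_PC}. First I take $G\setminus(A\cup B)$ as in the proof of \ref{MainThmWPDGeneric}, but with a much smaller Gromov-product threshold. Concretely, I apply Lemma~\ref{lem:split_into_conjugacy} with parameter $\eta\ll\varepsilon\varepsilon_0$, so that generically
\[
\langle g^{-1}\po,g\po\rangle^{\mathrm{std}}_\po<\eta\, d_S(1,g)
\qquad\text{and}\qquad
\LS(\po,g\po)\ge \varepsilon_0 d_S(1,g).
\]
Second, I intersect with the set $G_\varepsilon$ of Lemma~\ref{lem:generic_prefix}. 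This guarantees that $\f_K(\po,g\po)\cap\f_K(\po,h\po)$ has at least $\delta_1 d_S(1,g)$ elements, where $\delta_1=\varepsilon\varepsilon_0/3$. Finite intersections of exponentially generic sets remain exponentially generic.

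\textbf{Item (1) and (3).} As in the proof of Lemma~\ref{lem:stable_axis}, the vertices $u,v,w$ form the center of the tripod. The common initial subpath $\alpha$ of $\f_K[\po,g^{\pm1}\po]$ has length at most $\eta d_S(1,g)+3$ by Lemma~\ref{lem:factsonstdpath}(1). Hence $v$ lies within $\eta d_S(1,g)+6$ of $\po$ along $\f_K[\po,g\po]$, and similarly $gu$ lies within that distance of $g\po$. It follows that $\f_K(v,gu)$ is the whole of $\f_K(\po,g\po)$ except for $O(\eta d_S(1,g))$ vertices at the two ends.

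I then define $\mathcal Y_0:=\f_K(v,gu)\cap\f_K(\po,h\po)$. By the generic-prefix bound, and since $\eta\ll\delta_1$, this set has at least $\delta_1 d_S(1,g)/2$ elements. Lemma~\ref{lem:stable_axis} shows that $\gamma=\bigcup_m g^m(\f_K[v,gu]\cdot\f_K[gu,gv])$ is a bi-infinite standard path. Consequently $g^m\f_K(v,gu)$, for $0\le m\le n$, appear in order on the subpath of $\gamma$ from $v$ to $g^{n}gu$. To pass to $\f_K(\po,g^{n+1}\po)$, I use Lemma~\ref{lem:common_mid}: the endpoints $\po$ and $g^{n+1}\po$ are within standard distance $O(\eta d_S(1,g))$ of $v$ and $g^{n+1}u$, respectively.

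This loses only boundedly many extreme vertices. I therefore trim $\mathcal Y_0$ by discarding its first and last $C\eta d_S(1,g)$ elements, which keeps a positive fraction. After that, Corollary~\ref{cor:sub_segment_std} together with Lemma~\ref{lem:factsonstdpath}(2) gives (3).

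\textbf{Item (2).} Elements of $\mathcal Y_0$ lie in $\f_K(\po,h\po)$, so I apply Lemma~\ref{lem:block_decomp} to the fixed geodesic $[1,h]_S$ and the set $\mathcal Y_0$. This produces a guard decomposition of length at least $|\mathcal Y_0|/12-1$. Lemma~\ref{lem:good_block}, with parameter $\delta_1/100$, yields $R$ such that at most $(\delta_1/100)\, d_S(1,h)\le(\delta_1/100)\, d_S(1,g)$ of these blocks are bad. I let $\mathcal Y$ be the set of guard axes of the good blocks. Then $|\mathcal Y|\ge\delta d_S(1,g)$ with $\delta=\delta_1/30$, up to an additive constant that is absorbed for large $d_S(1,g)$. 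Being a subset of the trimmed $\mathcal Y_0$, the set $\mathcal Y$ still satisfies (1) and (3).

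\textbf{Main obstacle.} I expect the hardest step to be the ordering and containment bookkeeping in (3). The issue is that $\f_K(\po,g^{n+1}\po)$ is anchored at $\po$ rather than at $v$, so one must check that the endpoint perturbation removes only the $O(\eta)$-fraction near the ends and not interior vertices of intermediate copies. Lemma~\ref{lem:common_mid}, applied once to the long path of $\gamma$, is exactly what handles this, because its loss is independent of $n$.
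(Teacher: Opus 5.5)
Your proposal is correct and is essentially the paper's argument. The paper also intersects the generic set of Lemma~\ref{lem:generic_prefix} with the complement of the short-conjugation set from Lemma~\ref{lem:split_into_conjugacy}, extracts $(K,R)$-good guard axes for $[1,h]_S$ via Lemmas~\ref{lem:block_decomp} and~\ref{lem:good_block}, discards the few axes outside $\f_K(v,gu)$ by the tripod estimate, and reads off (3) from the bi-infinite standard path of Lemma~\ref{lem:stable_axis}.

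The differences are minor. You intersect with $\f_K(v,gu)$ and trim before extracting good blocks, whereas the paper does it after, and you make the passage to $\f_K(\po,g^{n+1}\po)$ explicit through Lemma~\ref{lem:common_mid} at an $n$-independent cost, which the paper leaves implicit. One constant slip: $\delta=\delta_1/30$ is slightly too large, since $\tfrac{1}{24}-\tfrac{1}{100}<\tfrac{1}{30}$ and an additive constant cannot absorb that deficit, but any smaller choice such as $\delta_1/40$ works.
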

\begin{proof}
    Let $\delta_0 = \delta(\varepsilon)$ be given in Lemma~\ref{lem:generic_prefix}. Then there exists an exponentially generic set $G_\varepsilon\subseteq G$ of elements $g\in G$ so that $$|\f_K(\po, h\po)\cap \f_K(\po, g\po)|\ge \delta_0\,d_S(1, g)$$ where  $h\in [1, g]_S$ so that $d_S(1, h) = \lfloor\varepsilon\,d_S(1, g)\rfloor$. 
    Excluding the elements in Eq.~(\ref{eq:sumthdist}) of  Lemma~\ref{lem:split_into_conjugacy} from $G_\varepsilon$ with $\varepsilon_0 = \delta_0/100$ removes a growth tight subset of elements by Lemma~\ref{lem:Bgrowthtight}, so we may assume without loss of generality that for each $g\in G_\varepsilon$,  $$\langle g^{-1}\po, g\po\rangle_\po^{\text{std}}\le \delta_0\,d_S(1, g)/100.$$  Since  $\LS(\po,g\po)
\ge\delta_0d_S(1,g)$, we may assume further that for each $g\in G_\varepsilon$,
\[
\langle g^{-1}\po,g\po\rangle_\po^{\mathrm{std}}
\le\frac12\LS(\po,g\po)-7,
\] 
after removing finitely many additional elements.
Thus Lemma~\ref{lem:stable_axis} applies.

    Consider the $K$-guard decomposition of $[1, h]_S$ with guard axes contained in $\alpha:=\f_K(\po, h\po)\cap \f_K(\po, g\po)$ provided by Lemma~\ref{lem:block_decomp}. By Lemma~\ref{lem:good_block}, there exists $R = R(\varepsilon)$ and a set $\mathcal{Z}\subseteq\alpha$ of $(K, R)$-good guard axes for $[1, h]_S$ so that  $|\mathcal{Z}|\ge \delta_0\,d_S(1, g)/20-1$. By Lemma~\ref{lem:factsonstdpath}, $$|\f_K(g^{-1}\po, \po)\cap \f_K(\po, g\po)|\le \langle g^{-1}\po, g\po\rangle_\po^{\text{std}} + 3\le \frac{\delta_0}{50}\cdot d_S(1, g)-3.$$ Note that $\f_K(\po, g\po) = g\cdot \f_K(g^{-1}\po, \po)$ and $\f_K(g\po, g^2\po) = g\cdot \f_K(\po, g\po)$. By Lemma~\ref{lem:triangle_in_PC}, the tripod-like property implies $$\begin{aligned}
        |\mathcal Z\cap \f_K(v,gu)|&\ge |\mathcal{Z}|-|\f_K(g^{-1}\po, \po)\cap \f_K(\po, g\po)| - |\f_K(\po, g\po)\cap \f_K(g\po, g^2\po)| -4\\
        &\ge \left(\frac{\delta_0}{20}\cdot d_S(1, g)-1\right) - 2\left(\frac{\delta_0}{50}\cdot d_S(1, g)-3\right) -4\\
        &\ge \frac{\delta_0}{100}\cdot d_S(1, g).
    \end{aligned}$$
    Setting  $\delta = \delta_0/100$ and $\mathcal Y = \mathcal Z\cap \f_K(v, gu)$ proves the first two assertions. By the construction in the proof of Lemma~\ref{lem:stable_axis},
together with Corollary~\ref{cor:sub_segment_std}, the translates
$g^m\mathcal Y$ are contained in $\f_K(\po,g^{n+1}\po)$ and occur
in increasing order. This proves the third assertion.  
\end{proof}

\begin{lem}\label{lem:entry_point_ordered_on_geod}
Assume that $K>4\theta+5C$. Let $[o,go]$ be a geodesic in $X$ and
$\{Y_1<\cdots<Y_n\}\subseteq\f_K(\po,g\po)$. Pick
$x_i\in\pi_{Y_i}(o)$ for each $1\le i\le n$. Then there exists an
ordered sequence $z_1,\ldots,z_n$ on $[o,go]$ such that
$d(z_i,x_i)\le2C$ for every $1\le i\le n$.
\end{lem}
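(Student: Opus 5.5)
The plan is to take $z_i$ to be a point of $[o,go]$ near $\pi_{Y_i}(o)$, supplied by Lemma~\ref{BigFive}(3), and then to check the ordering using the order on the standard path.

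\textbf{Step 1: existence of $z_i$.} Fix $i$. Since $Y_i\in\f_K(\po,g\po)$, we have $\proj_{Y_i}(\po,g\po)>K$. Because $o\in\po$ and $go\in g\po$, comparing with the original projections via Eq.~\eqref{eq:projdist} and \ref{axiom:bound_proj} gives
\[
\proj^\pi_{Y_i}(o,go)\ge \proj_{Y_i}(\po,g\po)-4\theta>K-4\theta>C.
\]
Lemma~\ref{BigFive}(3) then yields $\pi_{Y_i}(o)\subseteq N_{2C}([o,go])$. Hence some $z_i\in[o,go]$ satisfies $d(z_i,x_i)\le 2C$; if there are several, I take the first one along $[o,go]$.

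\textbf{Step 2: ordering.} It suffices to show that $d(o,z_i)\le d(o,z_{i+1})$ for consecutive $i$, after perhaps re-choosing $z_i$ among admissible points. Suppose instead that $z_{i+1}$ lies strictly before $z_i$ on $[o,go]$.

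Since $Y_i<Y_{i+1}$ on the standard path, Proposition~\ref{prop:std_path} gives
\[
\proj_{Y_{i+1}}(\po,Y_i)\le\theta, \qquad \proj_{Y_i}(Y_{i+1},g\po)\le\theta,
\]
and the middle projection property gives $\proj_{Y_i}(\po,Y_{i+1})>K$.

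Now consider the subsegment $[o,z_i]$, which contains $z_{i+1}$. The point $z_{i+1}$ is $2C$-close to $x_{i+1}\in Y_{i+1}$, so $\pi_{Y_i}(z_{i+1})$ lies within $\pi_{Y_i}(Y_{i+1})$ up to error $2C+2C$ by Lemma~\ref{BigFive}(2), and hence within $\theta+4C$ of $\pi_{Y_i}(Y_{i+1})$. On the other hand, $z_i$ is $2C$-close to $x_i\in\pi_{Y_i}(o)$, so $\pi_{Y_i}(z_i)$ is within about $4C$ of $\pi_{Y_i}(o)$. Lemma~\ref{BigFive}(1) says the projection of a geodesic has diameter comparable to the projection distance of its endpoints. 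Applied to $[o,z_i]$, this gives
\[
\proj^\pi_{Y_i}(o,z_{i+1})\lesssim \proj^\pi_{Y_i}(o,z_i)+4C\lesssim 8C.
\]
Combining with the previous paragraph,
\[
\proj_{Y_i}(\po,Y_{i+1})\le 8C+\theta+O(\theta)\le 4\theta+5C+\cdots,
\]
which should contradict $\proj_{Y_i}(\po,Y_{i+1})>K$ once $K>4\theta+5C$ and the constants are tracked carefully.

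\textbf{Main obstacle.} The hard part is the constant bookkeeping in Step 2. The error terms from Lemma~\ref{BigFive}, the $2\theta$ conversion between $\proj$ and $\proj^\pi$, and the $\theta$-close modified projections must together stay below $K$. If the direct estimate is too lossy, I would instead use the contracting property on the segment $[z_{i+1},z_i]$ relative to $Y_i$. When $z_{i+1}$ is far from $Y_i$, projecting a short-ish ball argument keeps $\pi_{Y_i}(z_{i+1})$ near $\pi_{Y_i}(o)$. This forces $\proj^\pi_{Y_i}(o,Y_{i+1})$ to be bounded by a multiple of $C$ plus $\theta$, again contradicting $Y_i<Y_{i+1}$.

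Once consecutive pairs are ordered, transitivity gives the full ordered sequence.
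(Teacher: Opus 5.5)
\textbf{Verdict: genuine gap.} Your Step 1 matches the paper's, but Step 2 does not close under the stated hypothesis $K>4\theta+5C$.

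\textbf{Where the constants fail.} Bounding $\proj^\pi_{Y_i}(o,z_{i+1})$ via Lemma~\ref{BigFive}(1) on $[o,z_i]$ costs $\proj^\pi_{Y_i}(o,z_i)+4C\le 9C$, using $\mathrm{diam}\,\pi_{Y_i}(o)\le C$ and $\proj^\pi_{Y_i}(z_i,x_i)\le 4C$. Passing from $z_{i+1}$ to $x_{i+1}$ costs another $4C$. So you actually get $\proj^\pi_{Y_i}(o,x_{i+1})\le 13C$. Compared with the lower bound $\proj^\pi_{Y_i}(o,x_{i+1})>K-4\theta$, this is a contradiction only if roughly $K\ge 4\theta+13C$. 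Your displayed inequality ``$8C+\theta+O(\theta)\le 4\theta+5C+\cdots$'' is false, and it also drops the $4C$ term. As written, you prove the lemma only under a stronger hypothesis on $K$. That is harmless for the paper's applications, where $K$ may be enlarged, but it is not the statement. The fallback in your last paragraph is too vague to supply the missing estimate. Your estimate does have one small virtue: it works for any admissible choice of the $z_i$, but it pays for this in the constant.

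\textbf{The missing idea.} Use the minimality of $z_i$, which you imposed in Step 1 but never used. Argue directly as follows.
\begin{enumerate}
\item From $\proj^\pi_{Y_i}(o,x_{i+1})>K-4\theta$ and $\proj^\pi_{Y_i}(z_{i+1},x_{i+1})\le 4C$ (Lemma~\ref{BigFive}(2)), get $\proj^\pi_{Y_i}(o,z_{i+1})>K-4\theta-4C>C$.
\item Apply Lemma~\ref{BigFive}(3) to the subsegment $[o,z_{i+1}]$ of $[o,go]$. This gives $x_i\in\pi_{Y_i}(o)\subseteq N_{2C}([o,z_{i+1}])$.
\item Hence the first point $z_i$ that is $2C$-close to $x_i$ lies in $[o,z_{i+1}]$.
\end{enumerate}
In your contrapositive form: if $z_{i+1}<z_i$, minimality says no point of $[o,z_{i+1}]$ is $2C$-close to $x_i$. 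Lemma~\ref{BigFive}(3) then forces $\proj^\pi_{Y_i}(o,z_{i+1})\le C$. This sharp bound, in place of the roughly $9C$ from Lemma~\ref{BigFive}(1), is exactly what makes $K>4\theta+5C$ suffice, and it is the paper's proof.
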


\begin{proof}
Since $Y_i\in\f_K(\po,g\po)$, Eq.~\eqref{eq:projdist} and bounded
projection give
$\proj_{Y_i}^{\pi}(o,go)>K-4\theta>C$. Thus, by
Lemma~\ref{BigFive}, we may choose $z_i$ to be the first point on
$[o,go]$ such that $d(z_i,x_i)\le2C$.

It remains to prove that these points occur in the required order.
For each $1\le i\le n-1$, Proposition~\ref{prop:std_path} gives
$\proj_{Y_i}(\po,Y_{i+1})>K$. Hence Eq.~\eqref{eq:projdist} and
bounded projection imply
$\proj_{Y_i}^{\pi}(o,x_{i+1})>K-4\theta$. Moreover, by
Lemma~\ref{BigFive},
$\proj_{Y_i}^{\pi}(z_{i+1},x_{i+1})\le4C$. Therefore,
\[
\proj_{Y_i}^{\pi}(o,z_{i+1})
>K-4\theta-4C>C.
\]
Lemma~\ref{BigFive} now implies that $[o,z_{i+1}]$ contains a point
at distance at most $2C$ from $x_i$. Since $z_i$ is the first such
point on $[o,go]$, we have $z_i\in[o,z_{i+1}]$. Thus
$z_1,\ldots,z_n$ occur in order.
\end{proof}

\subsection{Estimating stable lengths}
We first  estimate the stable length of generic elements in $X$. %The proof uses only the fact that $\mathcal{Y}$ is non-empty in Lemma~\ref{lem:generic_many_axes_stable}.
\begin{thm}\label{thm:stable_trans_in_X}
For every $0<\varepsilon<1$, there exists an exponentially generic set
of elements $g\in G$ satisfying
\[
\tau_X(g)\ge(1-\varepsilon)d(o,go).
\]
\end{thm}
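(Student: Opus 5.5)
We aim to show that, for generic $g$, the orbit $n\mapsto g^n o$ is close to a concatenation of translates of one geodesic segment, glued at strongly contracting axes where almost no length is lost. Fix $\varepsilon$ and take $g$ in the exponentially generic set $G_{\varepsilon'}$ of Lemma~\ref{lem:generic_many_axes_stable}, with a small auxiliary $\varepsilon'$. Intersect it with the complement of the growth-tight set of \ref{MainThmShortElems}, so that $d(o,go)\ge \varepsilon_1 d_S(1,g)$. For such $g$ we obtain a nonempty set $\mathcal Y\subseteq\f_K(v,gu)$ with
\[
\mathcal Y<g\mathcal Y<\cdots<g^n\mathcal Y\subseteq \f_K(\po,g^{n+1}\po).
\]
We use only one axis $Y\in\mathcal Y$ and its translates $g^mY$, $0\le m\le n$. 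Each $g^mY$ lies on $\f_K(\po,g^{n+1}\po)$, in increasing order.

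The key step is to apply Lemma~\ref{lem:entry_point_ordered_on_geod} to a geodesic $[o,g^{n+1}o]$ in $X$ and the axes $g^mY$. Pick $x_m\in\pi_{g^mY}(o)$. The lemma gives ordered points $z_0<\dots<z_n$ on $[o,g^{n+1}o]$ with $d(z_m,x_m)\le 2C$, and therefore
\[
d(o,g^{n+1}o)\ge\sum_{m=0}^{n-1} d(z_m,z_{m+1}) \ge \sum_{m}d(x_m,x_{m+1})-4Cn.
\]
It remains to compare $d(x_m,x_{m+1})$ with $d(o,go)$. We have $x_{m+1}\in\pi_{g^{m+1}Y}(o)$, and by the Behrstock-type argument in the ``furthermore'' part of Lemma~\ref{lem:good_block_in_each_copy}, $\proj^\pi_{g^{m}Y}(o,g^{m}o)\le 3\theta$. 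So $x_m$ lies within $3\theta$ of $g^m\pi_Y(o)$, and likewise $x_{m+1}$ lies within $3\theta$ of $g^{m+1}\pi_Y(o)$. Writing $p\in\pi_Y(o)$, each term satisfies
\[
d(x_m,x_{m+1})\ge d(p,gp)-6\theta .
\]
Next, since $Y\in\f_K(\po,g\po)$, Proposition~\ref{admisProp} and Lemma~\ref{BigFive} show that a geodesic $[o,go]$ passes within $2C$ of $p$ and of $g\pi_{g^{-1}Y}(o)$. The point $g\pi_{g^{-1}Y}(o)$ is close to $gp$ only up to the error $\proj^\pi_{Y}(o,g^{-1}\cdot)$, which is again bounded by $3\theta$ via the same Behrstock argument, because $g^{-1}Y$ and $Y$ are both on the $g$-invariant standard axis built in Lemma~\ref{lem:stable_axis}. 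We then get
\[
d(p,gp)\ge d(o,go)-2d(o,p)-c,
\]
for a uniform constant $c$.

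The main obstacle is controlling $d(o,p)=d(o,Y)$, the cost of ``entering'' the axis. We need it to be at most $\varepsilon d(o,go)/3$. For this we use property (2) of Lemma~\ref{lem:generic_many_axes_stable}: $Y$ is a $(K,R)$-good guard axis for $[1,h]_S$ with $d_S(1,h)=\lfloor\varepsilon' d_S(1,g)\rfloor$. So there is a neighbor point $x\in[1,h]_S$ within $R$ of $\pi^X_H(1)$, and hence
\[
d(o,p)\le \beta(d_S(1,h)+R)\le \beta\varepsilon' d_S(1,g)+\beta R\le (\beta\varepsilon'/\varepsilon_1)\,d(o,go)+\beta R .
\]
Choosing $\varepsilon'\le \varepsilon\varepsilon_1/(6\beta)$ makes this error small. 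Combining the estimates gives
\[
d(o,g^{n+1}o)\ge n\bigl((1-\varepsilon/2)d(o,go)-c'\bigr).
\]
Dividing by $n$ and letting $n\to\infty$ yields $\tau_X(g)\ge(1-\varepsilon/2)d(o,go)-c'$. Finally, we discard the finitely many $g$ with $d(o,go)\le 2c'/\varepsilon$; this holds generically since $d(o,go)\ge\varepsilon_1 d_S(1,g)$. This gives $\tau_X(g)\ge (1-\varepsilon)d(o,go)$. A finite intersection of exponentially generic sets is exponentially generic, which concludes the proof.
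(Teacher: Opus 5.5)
Your argument is essentially the paper's. In both, you restrict to $d(o,go)\ge\varepsilon_1 d_S(1,g)$ and to the generic set of Lemma~\ref{lem:generic_many_axes_stable} with a small prefix parameter. You then apply Lemma~\ref{lem:entry_point_ordered_on_geod} to $[o,g^{n+1}o]$, compare consecutive projection points with $d(o,go)$ minus twice the entry cost, and bound the entry cost using the short prefix $h$. You do this via goodness for $[1,h]_S$; the paper does it via Lemma~\ref{BigFive} and $Y\in\f_K(\po,h\po)$.

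The one structural difference is that you use a single axis $Y$ and its translates $g^mY$, whereas the paper takes per-copy axes $Y_i$ from Lemma~\ref{lem:good_block_in_each_copy}. Your simplification is legitimate. The estimate in $X$ only needs three things: the ordering in $\f_K(\po,g^{n+1}\po)$, the entry-cost bound from $h$, and the Behrstock bound from the ``furthermore'' clause. It never uses goodness of these axes along $[1,g^{n+1}]_S$, which matters only for $\tau_S$.

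Two small repairs are needed.
\begin{enumerate}
\item The Behrstock argument you cite needs an axis of $\mathcal Y$ strictly before $Y$; the paper uses $\min\mathcal Y$ and excludes it. So choose $Y\neq\min\mathcal Y$, which is possible once $\delta\, d_S(1,g)\ge 2$.
\item The inequality $d(p,gp)\ge d(o,go)-2d(o,p)$ is just the triangle inequality. Drop the detour through $g\pi_{g^{-1}Y}(o)=\pi_Y(go)$: that set lies on $Y$ and is in general not close to $gp\in gY$, so the justification you give does not make sense as written.
\end{enumerate}
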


\begin{proof}
By~\ref{MainThmShortElems}, there exists $\beta_1>0$ such that
exponentially generic elements $g\in G$ satisfy
$d(o,go)\ge\beta_1d_S(1,g)$. Decreasing $\beta_1$ if necessary, assume
that $\beta_1\le\beta$, where $\beta$ is the Lipschitz constant of the
orbital map.

Pick $h\in[1,g]_S$ such that
$d_S(1,h)=\lfloor(\varepsilon\beta_1/4\beta)d_S(1,g)\rfloor$.
Combining Lemmas~\ref{lem:generic_many_axes_stable} and
\ref{lem:good_block_in_each_copy}, we obtain a constant $R>0$ such
that, for exponentially generic $g$ and every $n\ge1$, there exist
$Y_0,\ldots,Y_n\in\f_K(\po,g\po)\cap\f_K(\po,h\po)$ for which
\[
Y_0<gY_1<\cdots<g^nY_n
\quad\text{in }\;\f_K(\po,g^{n+1}\po).
\]
Moreover, if $H_i$ represents $A_i:=g^iY_i$, then
$\proj_{H_i}^X(1,g^i)\le R$.

Choose $x_i\in\pi_{Y_i}(o)$. Since
$Y_i\in\f_K(\po,h\po)$, Lemma~\ref{BigFive} gives
\[
\begin{aligned}
d(o,x_i)
&\le d(o,ho)+2C\\
&\le\beta d_S(1,h)+2C\\
&\le\frac{\varepsilon\beta_1}{4}d_S(1,g)+2C\\
&\le\frac{\varepsilon}{4}d(o,go)+2C.
\end{aligned}
\]
Consequently,
$d(x_i,gx_{i+1})\ge d(o, go) - d(o,x_i) - d(go, gx_{i+1})\ge(1-\varepsilon/2)d(o,go)-4C$.

Set $q_i:=g^ix_i\in\pi_{A_i}(g^io)$ and choose
$p_i\in\pi_{A_i}(o)$. Since $\proj_{H_i}^X(1,g^i)\le R$, the
$\beta$-Lipschitz property of the orbital map gives
$d(p_i,q_i)\le\beta R$.

Applying Lemma~\ref{lem:entry_point_ordered_on_geod} to
$A_0<\cdots<A_n$, we obtain ordered points
$z_0,\ldots,z_n\in[o,g^{n+1}o]$ such that
$d(z_i,p_i)\le2C$. Hence $d(z_i,q_i)\le2C+\beta R$. It follows that
\[
\begin{aligned}
d(o,g^{n+1}o)
&\ge\sum_{i=1}^n d(z_{i-1},z_i)\ge\sum_{i=1}^n \bigl(d(q_{i-1},q_i)-d(q_{i-1},z_{i-1})-d(q_{i},z_i)\bigr)\\
&\ge n\bigl((1-\varepsilon/2)d(o,go)-8C-2\beta R\bigr).
\end{aligned}
\]
Taking $n\to\infty$ gives
$\tau_X(g)\ge(1-\varepsilon/2)d(o,go)-(8C+2\beta R)$. Finally, after removing the finitely many elements satisfying
\[
d_S(1,g)\le
\frac{2(8C+2\beta R)}{\varepsilon\beta_1},
\]
we have $8C+2\beta R\le\varepsilon d(o,go)/2$, and therefore
$\tau_X(g)\ge(1-\varepsilon)d(o,go)$.
\end{proof}

We next estimate the stable word length.
\begin{thm}\label{thm:stable_in_cay}
For every $0<\varepsilon<1$, there exists an exponentially generic set
of elements $g\in G$ satisfying
\[
\tau_S(g)\ge(1-\varepsilon)d_S(1,g).
\]
\end{thm}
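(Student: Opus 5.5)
We mirror the proof of Theorem~\ref{thm:stable_trans_in_X}, but place the checkpoints directly on a word geodesic $\gamma_n=[1,g^{n+1}]_S$ in $\mathrm{Cay}(G,S)$ instead of on a geodesic in $X$. Fix $\varepsilon$. Let $h\in[1,g]_S$ with $d_S(1,h)=\lfloor(\varepsilon/4)d_S(1,g)\rfloor$. Combine Lemma~\ref{lem:generic_many_axes_stable} (applied with $\varepsilon/4$) with Lemma~\ref{lem:good_block_in_each_copy}. This yields an exponentially generic set $G'$ and a constant $R$ with the following property. For every $g\in G'$, every $n\ge1$ and every word geodesic $\gamma_n$, there are axes $Y_0,\dots,Y_n\in\mathcal Y\subseteq\f_K(\po,g\po)\cap\f_K(\po,h\po)$ such that:
\begin{itemize}
\item $Y_0<gY_1<\cdots<g^nY_n$ in $\f_K(\po,g^{n+1}\po)$;
\item each $g^mY_m$ is a $(K,R)$-good guard axis for $\gamma_n$;
\item $\proj^X_{H_m}(1,g^m)\le R$, where $H_m$ is the coset representing $g^mY_m$.
\end{itemize}

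\textbf{Step 1: anchor points.} Since the guarded block of $g^mY_m$ is good, it contains a neighbor point $z_m\in\gamma_n$ with $d_S(z_m,t_m)\le R$ for every $t_m\in\pi^X_{H_m}(1)$. The blocks of a guard decomposition are ordered along $\gamma_n$ in the same order as their guard axes. Hence $z_0,\dots,z_n$ occur in order along $\gamma_n$, and
\[
d_S(1,g^{n+1})\ge\sum_{m=1}^n d_S(z_{m-1},z_m).
\]
By the projection bound, $t_m$ lies within $R$ of some $y_m\in\pi^X_{H_m}(g^m)$. Moreover $g^{-m}y_m\in\pi^X_{g^{-m}H_m}(1)$, and $g^{-m}H_m$ represents $Y_m$.

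\textbf{Step 2: lower bound on consecutive gaps.} We show that $d(1, g^{-m}y_m)$ is at most roughly $(\varepsilon/4)d_S(1,g)$, up to a constant. By Lemma~\ref{lem:generic_many_axes_stable}(2), $Y_m$ is a $(K,R)$-good guard axis for a geodesic $[1,h]_S$. So the projection $g^{-m}y_m$ lies within $R$ of a point of $[1,h]_S$, which gives $d_S(1,g^{-m}y_m)\le d_S(1,h)+R$. Consequently $z_m$ lies within $2R+d_S(1,h)$ of $g^m$ in the word metric. The triangle inequality then gives
\[
d_S(z_{m-1},z_m)\ge d_S(g^{m-1},g^m)-2\bigl(d_S(1,h)+2R\bigr)\ge(1-\varepsilon/2)d_S(1,g)-4R.
\]

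\textbf{Step 3: conclusion.} Summing over $m$, dividing by $n$ and letting $n\to\infty$ gives
\[
\tau_S(g)\ge(1-\varepsilon/2)d_S(1,g)-4R.
\]
Discard the finitely many $g$ with $d_S(1,g)\le 8R/\varepsilon$; for the rest, $\tau_S(g)\ge(1-\varepsilon)d_S(1,g)$.

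\textbf{Expected main obstacle.} The delicate point is Step 1: the neighbor points must be ordered consistently along $\gamma_n$, and the data from $\gamma_n$ and from $[1,h]_S$ must refer to the same projection $\pi^X_{H_m}(1)$. Lemma~\ref{lem:good_block_in_each_copy} only asserts goodness in some guard decomposition whose blocks are ordered along $\gamma_n$. We therefore need each neighbor point to lie in its own block, which holds by definition, together with the monotonicity of the block order under the guard decomposition of Lemma~\ref{lem:block_decomp}. We also need the translate $g^m\cdot\pi^X_{g^{-m}H_m}(1)=\pi^X_{H_m}(g^m)$ to be compared with $\pi^X_{H_m}(1)$ only through the bound $\proj^X_{H_m}(1,g^m)\le R$, which we verify.
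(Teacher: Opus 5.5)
Your proposal is correct and follows the paper's proof essentially step by step: take the $(K,R)$-good guard axes $g^mY_m$ for $[1,g^{n+1}]_S$ from Lemmas~\ref{lem:generic_many_axes_stable} and~\ref{lem:good_block_in_each_copy}, order their neighbor points along the geodesic, bound $d_S(g^m,\pi^X_{H_m}(g^m))\le d_S(1,h)+R$ via goodness on $[1,h]_S$ and equivariance, and tie these together with $\proj^X_{H_m}(1,g^m)\le R$. The only slip is bookkeeping: with a single constant $R$ the chain $z_m\to t_m\to y_m\to g^m$ gives $d_S(z_m,g^m)\le d_S(1,h)+3R$ rather than $+2R$, so the lower bound becomes $(1-\varepsilon/2)d_S(1,g)-6R$ and you should discard $d_S(1,g)\le 12R/\varepsilon$, which changes nothing.
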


\begin{proof}
Apply Lemma~\ref{lem:generic_many_axes_stable} with parameter
$\varepsilon/4$, and denote its good-block constant by $R_0$ and the
resulting density by $\delta>0$. Apply
Lemma~\ref{lem:good_block_in_each_copy} with density $\delta$, and
denote its constant by $R_1$. Set $R:=R_0+R_1$.

Let $n\ge1$ and let $\gamma=[1,g^{n+1}]_S$ be a word geodesic. For
exponentially generic $g$ and each $0\le k\le n$, the preceding lemmas
give an axis
$Y_k\in\f_K(g^k\po,g^{k+1}\po)$ such that $Y_k$ is a
$(K,R_1)$-good guard axis for $\gamma$ and
\[
Y_0<Y_1<\cdots<Y_n
\quad\text{in }\f_K(\po,g^{n+1}\po).
\]
Let $H_k$ be the coset representing $Y_k$. Pick
$t_k\in\pi_{H_k}^X(1)$ and a neighbor point $x_k\in\gamma$ for
$Y_k$. Then $d_S(x_k,t_k)\le R_1\le R$, and the points
$x_0,\ldots,x_n$ occur in this order on $\gamma$.

Write $Y_k=g^k\widetilde Y_k$ and
$H_k=g^k\widetilde H_k$, where $\widetilde Y_k\in\mathcal Y$.
Since $\widetilde Y_k$ is a $(K,R_0)$-good guard axis for
$[1,h]_S$, there exists $w_k\in[1,h]_S$ such that
$d_S(w_k,s)\le R_0$ for every
$s\in\pi_{\widetilde H_k}^X(1)$. Hence, by equivariance,
\[
\begin{aligned}
d_S\bigl(g^k,\pi_{H_k}^X(g^k)\bigr)
&=d_S\bigl(1,\pi_{\widetilde H_k}^X(1)\bigr)\\
&\le d_S(1,h)+R_0\\
&\le\frac{\varepsilon}{4}d_S(1,g)+R_0.
\end{aligned}
\]
while Lemma~\ref{lem:good_block_in_each_copy} gives
$\proj_{H_k}^X(1,g^k)\le R_1$. Therefore,
$d_S(g^k,t_k)\le\varepsilon d_S(1,g)/4+R$.

Assume in addition that $d_S(1,g)\ge8R\varepsilon^{-1}$. Summing the
lengths of the subpaths $[x_{k-1},x_k]_\gamma$, we obtain by triangle inequality
\[
\begin{aligned}
d_S(1,g^{n+1})
&\ge\sum_{k=1}^n d_S(x_{k-1},x_k)\\
&\ge \sum_{k=1}^{n} \left(d_S(g^{k-1}, g^k)-d_S(g^{k-1}, t_{k-1}) - d_S(g^k, t_k) - d_S(t_{k-1}, x_{k-1}) - d_S(t_k, x_k)\right)\\
&\ge n\bigl((1-\varepsilon/2)d_S(1,g)-4R\bigr) \ge n(1-\varepsilon)d_S(1,g).
\end{aligned}
\]
Taking $n\to\infty$ yields
$\tau_S(g)\ge(1-\varepsilon)d_S(1,g)$.
\end{proof}

\section{Growth tightness of acylindrically hyperbolic groups}\label{SecGrowthTight}

In this section, we prove a uniform growth gap for the Schreier graphs of
confined subgroups of acylindrically hyperbolic groups. The proof combines
two ingredients developed earlier. Theorem~\ref{thm:grow_tight_PC} provides
linearly many well-separated positions along a standard path for generic
elements, while the confined extension lemma provides nontrivial insertions
at these positions that preserve the corresponding coset. Admissibility makes
the insertion patterns distinguishable, and the resulting exponential
multiplicity yields the desired growth gap.

\begin{defn}
		A nontrivial subgroup $H\subseteq G$ is called \textit{confined} if there exists a finite set $P\subseteq G$ such that for every element $g\in G$, $g^{-1} H g\cap (P \setminus \{1\})\neq \emptyset$. The set $P$ is called a \textit{confining subset} for $H$.
	\end{defn}
    A nontrivial normal subgroup $H$ is confined, since we may choose $P$ to contain a nontrivial element of $H$. Note that, if $F$ is a nontrivial finite normal subgroup, then for  any subgroup $H<G$, the product $HF$ is confined in $G$. Thus there is no hope to study  commensurability-invariant properties (e.g. growth rate) of confined subgroups without further restrictions. To avoid pathological examples, we will focus on the class of confined subgroups admitting a non-degenerate confining subset.

   \subsection{Main results and consequences}
   It is known that if $G$ is a non-elementary acylindrically hyperbolic group then $G$ contains   a unique, maximal, finite normal subgroup denoted $E(G)$ (see \cite[Theorem 6.14(a)]{DGO}).  A subset $P \subseteq G$ is called \textit{non-degenerate} if it  is disjoint from $E(G)$.
   
    \begin{thm}\label{thm:growthtightnessAH}
    Assume that $G$ is a non-elementary acylindrically hyperbolic group. Then for any finite symmetric generating set $S$ and any finite non-degenerate  subset $P$, there exists a constant $\omega_0=\omega_0(P,S)< \omega(G,S)$ so that for any confined subgroup $H$ with confining set $P$,
    \[
    \omega(G/H,\bar S) \le \omega_0.
    \]
    \end{thm}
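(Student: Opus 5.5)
We will argue by an insertion/multiplicity count in the spirit of Section~\ref{SecShortPC}, now carried out in the Schreier graph of $G/H$. First, fix a non-elementary acylindrical action $G\act X$ on a hyperbolic space and three pairwise independent loxodromic WPD elements (these are strongly contracting), so that all constants of Sections~\ref{sec:anchoredlength}--\ref{SecShortPC} are available. For each coset $Hg$, choose a shortest representative $g$, so $d_S(1,g)=d_{\bar S}(\bar 1,Hg)$. The growth of $G/H$ is then the growth of this set $\mathcal R$ of minimal representatives. By Theorem~\ref{thm:grow_tight_PC}, the elements with $\LS(\po,g\po)\le\varepsilon d_S(1,g)$ form a set of growth rate $<\omega(G,S)$. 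Hence it suffices to bound the growth of $\mathcal R\setminus A$ uniformly in $H$, knowing only the confining set $P$.

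The key step is a \emph{confined extension lemma}. Since $P$ is finite and disjoint from $E(G)$, each $p\in P\setminus\{1\}$ acts nontrivially on the boundary, or at least does not fix the whole axis system. We may therefore enlarge $F$ (with bounded word length $l$, depending only on $P$ and $S$) so that for every $a,b\in G$ and every $p\in P\setminus\{1\}$ there are $f,f'\in F$ such that the word $a\,f\,p\,f'\,b$ labels an $(L,\tau)$-admissible path. To see this, use Lemma~\ref{extend3} twice, with $p$ playing the role of a short transition; non-degeneracy guarantees that $p$ moves $\ax(f)$ to a distinct axis with bounded projection. Now let $g\in\mathcal R\cap S_n\setminus A$. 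Using Lemma~\ref{lem:block_decomp} and Theorem~\ref{MainThmRecurrence}, pick linearly many ($\ge \varepsilon' n$) positions $x_i$ on a geodesic word for $g$, lying near guard axes of the standard path $\f_K[\po,g\po]$. At position $x_i$ confinement gives $p_i\in P\setminus\{1\}$ with $x_i^{-1}Hx_i\ni p_i$. Inserting the conjugate $x_i p_i x_i^{-1}\in H$ at that point does not change the coset $Hg$. Concretely, for a subset $J$ of these positions we form the word obtained by inserting $f p f'$-type pieces (more precisely, a product $q_J$ of elements of $H$ of total length $\le n+O(|J|)$) so that $q_Jg$, or its geodesic rewriting, represents the same coset, while the associated admissible path acquires new axes at the inserted spots.

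The resulting count runs as follows. The map $J\mapsto q_Jg$ sends $2^{\varepsilon' n}$ (or $\binom{\varepsilon' n}{m}$) choices to elements of $Hg\cap B_{n+O(m)}$. By the fellow-travel property (Proposition~\ref{admisProp}) and the WPD stabilizer bound $M$, as in Lemma~\ref{lem:size_of_preimage}, the multiplicity is at most $M^{O(m)}$ times a polynomial in $n$. Summing over cosets, $\sum_{Hg\in \bar B_n}\#\{\text{distinct }q_Jg\}\le |B_{n+cm}|$. This gives
\[
|\bar B_n|\cdot \binom{\varepsilon' n}{m} M^{-cm}\le |B_n|\,|S|^{cm},
\]
and choosing $m=\eta n$ with $\eta$ small, depending only on $P,S$ and the fixed action, yields $\omega(G/H,\bar S)\le \omega(G,S)-c_0$, with $c_0$ independent of $H$.

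We expect the main obstacle to be the injectivity/multiplicity step. Elements of $H$ conjugated by $x_i$ may interact: two different insertion patterns could coincide after multiplying by elements of $H$. Moreover, the minimal representative structure must be compatible with distinguishing the inserted axes, i.e.\ they must genuinely lie on $\f_K[\po,q_Jg\po]$ in the prescribed order. We would handle this exactly as in Lemma~\ref{lem:CA_in_std_path}, reading off the inserted axes from the standard path of the \emph{resulting element itself} rather than from the coset. This is why we count elements of $G$ in $B_{n+cm}$ rather than cosets, and the positional uncertainty is absorbed by the anchored length estimate Proposition~\ref{prop:word_geod_on_proj_cplx}.
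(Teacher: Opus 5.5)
Your overall skeleton matches the paper's: Theorem~\ref{thm:grow_tight_PC} supplies linearly many positions, you insert $H$-conjugates there, and images coming from distinct coset representatives are disjoint. The gap is in the insertion step itself, which is the heart of the argument.

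Confinement at $x_i$ gives $p_i\in x_i^{-1}Hx_i\cap P$. Inserting the bare letter $p_i$ keeps you in $Hg$, but it adds no long contracting piece, so nothing distinguishes different subsets $J$. Conversely, the word $x_ifp_if'$ produced by your two-fold use of Lemma~\ref{extend3} is admissible, but $x_ifp_if'x_i^{-1}\notin H$ in general, so the coset is lost.

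What you need is to insert $qpq^{-1}$ with $p\in (x_iq)^{-1}H(x_iq)\cap(P\setminus\{1\})$. Since $p$ is determined only \emph{after} $q$ is fixed, $q$ must come from a finite set $Q_0$ of independent loxodromics chosen so that \emph{every} $p\in P$ satisfies $p\ax(q)\ne\ax(q)$ and $[\po,p\po]$ has bounded projection to $\ax(q)$, and so that the adjacent segments also have bounded projection. This is the confined extension lemma (Lemma~\ref{ExtensionLemConfined}, from \cite{CGYZ}). It is exactly where non-degeneracy $P\cap E(G)=\emptyset$ enters, by producing $q$ with $P\cap E(q)=\emptyset$. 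Your claim that ``non-degeneracy guarantees that $p$ moves $\ax(f)$ to a distinct axis'' is false for a fixed $f$.

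There is a second, related problem. The piece inserted at position $i$ must be fixed in advance, independently of $J$. In the paper, $q_i,p_i$ depend only on $g_i$, which gives $\Phi_g(J)=g_k(q_kp_kq_k^{-1})g_k^{-1}\Phi_g(I)\in Hg$. Admissibility of $\Phi_g(J)$ for all $J$ then requires every segment $[g_i\po,g_j\po]$ to have bounded projection to the adjacent inserted axes. Lemma~\ref{extend3} in $X$ cannot give this: it chooses $f$ only after seeing both neighbours, and orbit points of a word geodesic need not track any $X$-geodesic. The paper handles this by running the whole argument in the quasi-tree $\PC$. There it restricts to $(\varepsilon,L)$-linearly recurrent elements (Lemma~\ref{lem:LRset_growth_tight}), whose positions are $3$-close to $\f_K[\po,g\po]$ and pairwise $L$-separated, and it uses the Morse-lemma constants $\tau_1,L_0$.

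Finally, your multiplicity bound ``$M^{O(m)}$ times a polynomial, as in Lemma~\ref{lem:size_of_preimage}'' is not available. For $g\notin A$ the standard path of the image has length comparable to $n$, so locating the inserted axes costs a binomial factor exponential in $m$. The bound is also unnecessary. The image $h$ determines its coset representative $g$, and $\Phi_g$ is injective: at the first index where $I$ and $I'$ differ, compare $\proj^\pi_Y(\po,h\po)>L-2r$ with $\le 2r$ (Lemma~\ref{lem:criterion3}). So the multiplicity is one, and this feeds directly into Lemma~\ref{CritGapLem}.
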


The important point is that the upper bound $\omega_0(P,S)$ is uniform over
all confined subgroups with the same confining set $P$.
     Since $E(G)$ is finite, any infinite normal subgroup must admit a non-degenerate confining subset. We thus obtain \ref{MainThmGrowthTight} as an immediate corollary. 
      \begin{cor}
    Assume that $G$ is a non-elementary acylindrically hyperbolic group. Then for any finite generating set $S$ and any infinite normal subgroup $H$,
    \[
    \omega(G/H,\bar S) <  \omega(G,S).
    \]
    \end{cor}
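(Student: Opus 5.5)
The corollary follows from Theorem~\ref{thm:growthtightnessAH} once we exhibit a single finite non-degenerate confining subset for $H$. Normality of $H$ makes the confining condition automatic, so the only real task is to pick a confining element outside the finite subgroup $E(G)$. As in the rest of the paper, I take $S$ to be symmetric, which is the setting in which $\omega(\cdot,S)$ was defined.

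First, I choose an element $h\in H\setminus E(G)$. By \cite[Theorem 6.14(a)]{DGO}, $E(G)$ is finite, while $H$ is infinite by assumption. Hence $H\not\subseteq E(G)$, and such an $h$ exists. Note that $h\neq 1$, because $1\in E(G)$.

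Next, I set $P:=\{h\}$ and check the two required properties.
\begin{enumerate}
\item $P$ is a finite set, and it is disjoint from $E(G)$ by the choice of $h$, so $P$ is non-degenerate.
\item $P$ is confining for $H$. Since $H$ is normal, $g^{-1}Hg=H$ for every $g\in G$. Therefore
\[
h\in g^{-1}Hg\cap (P\setminus\{1\}),
\]
so this intersection is non-empty for every $g\in G$.
\end{enumerate}

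Finally, I apply Theorem~\ref{thm:growthtightnessAH} with this $P$ and the given $S$. It produces a constant $\omega_0(P,S)<\omega(G,S)$ such that $\omega(G/H,\bar S)\le \omega_0(P,S)$. This gives the strict inequality $\omega(G/H,\bar S)<\omega(G,S)$.

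There is no substantive obstacle, since all the work is contained in Theorem~\ref{thm:growthtightnessAH}. The one point needing care is the first step: the confining element must avoid $E(G)$, and not merely be nontrivial. This is exactly where infiniteness of $H$ is used. A finite normal subgroup contained in $E(G)$ would admit no non-degenerate confining set.
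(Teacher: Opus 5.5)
Your proof is correct and is essentially the paper's argument. Since $E(G)$ is finite and $H$ is infinite, any $h\in H\setminus E(G)$ gives a finite non-degenerate confining set $\{h\}$ for the normal subgroup $H$, and Theorem~\ref{thm:growthtightnessAH} then yields the strict inequality.
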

    To derive the strict lower bound on cogrowth, let us cite the following.
    \begin{thm}\cite[Theorem D]{DY26A}\label{thm:growthcogrowth}
    Assume that $G$ is a non-elementary acylindrically hyperbolic group. Then for any finite generating set $S$ and for any confined subgroup $H$ with a finite non-degenerate confining set,
    \[
    \omega(H, S)+\frac{\omega(G/H,\bar S)}{2}\ge \omega(G, S).
    \]
    \end{thm}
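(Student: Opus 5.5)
The plan is a Cauchy--Schwarz argument over the right cosets of $H$, combined with the insertion technique of Section~\ref{SecShortPC}. Fix $n$ and partition the sphere $S_n$ according to the right cosets $Hc$ it meets. Every such coset has a representative of word length at most $n$, so the number of cosets met is at most $|\bar B_n|$, the ball of radius $n$ in the Schreier graph of $G/H$. Cauchy--Schwarz gives
\[
|S_n|^2\le |\bar B_n|\cdot \sum_{c}|Hc\cap S_n|^2
=|\bar B_n|\cdot\#\{(g_1,g_2)\in S_n^2:\ g_1g_2^{-1}\in H\}.
\]
It therefore suffices to inject the set of such pairs, with at most exponentially small (in fact bounded) multiplicity, into $H\cap B_{2n+c}$ for a constant $c$. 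Taking $\limsup\frac1n\log$ then yields $2\omega(G,S)\le \omega(G/H,\bar S)+2\omega(H,S)$, which is the claimed inequality.

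For the map, fix a finite set $F$ of three independent strongly contracting WPD elements, each taken as a large power as in Section~\ref{SecShortPC}. Let $P$ be the finite non-degenerate confining set. Given a pair $(g_1,g_2)$, apply the Extension Lemma~\ref{extend3} to choose $f\in F$ such that $g_1$ followed by $f$ labels an admissible path. Confinement applied to the element $(g_1f)^{-1}$ provides $p\in P\setminus\{1\}$ with $g_1fpf^{-1}g_1^{-1}\in H$. Multiplying by $g_1g_2^{-1}\in H$ gives
\[
\Psi(g_1,g_2):=g_1\,f\,p\,f^{-1}\,g_2^{-1}\in H,\qquad d_S(1,\Psi(g_1,g_2))\le 2n+c,
\]
where $c=2\max_{f\in F}d_S(1,f)+\max_{p\in P}d_S(1,p)$.

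The main obstacle is bounding the fibres of $\Psi$. The element $p$ is uncontrolled, and in particular $fpf^{-1}$ may itself lie in $E(f)$. My first attempt is to use non-degeneracy: since $p\notin E(G)$, some element of $F$ does not commute with $p$ coarsely, i.e.\ $p\notin E(f')$ for at least one $f'\in F$. After enlarging $F$ if needed and using the freedom in the Extension Lemma, I would choose $f'$ so that the tuple $(g_1, f', p, f'^{-1}, g_2^{-1})$ labels an $(L,\tau)$-admissible path with two \emph{distinct} characteristic axes $g_1\ax(f')$ and $g_1f'p\ax(f')$.

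Given an image $h=\Psi(g_1,g_2)$, the argument of Lemma~\ref{lem:size_of_preimage} then applies. By Lemma~\ref{lem:CA_in_std_path} these axes appear on $\f_K[\po,h\po]$, and by Proposition~\ref{admisProp} they are fellow travelled by the word geodesic. A WPD coarse-stabilizer bound $|\mathrm{Stab}_{2r}(o,f'o)|<M$ pins down $g_1$ up to $M$ choices, once the position of the first axis along the standard path is known. Because $|g_1|=n$, the first axis lies at controlled distance from $\po$, so there are at most $O(n)$ possible positions. Once $g_1$, $f'$ and $p$ are known (the last two from finite sets), $g_2$ is determined by $g_2^{-1}=(g_1f'pf'^{-1})^{-1}h$. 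Hence $|\Psi^{-1}(h)|\le C n$, which is subexponential and suffices.

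If the admissibility of the $p$-junction cannot be arranged uniformly, I would instead insert an additional connector $f''\in F$ on each side of $p$, chosen by the Extension Lemma. This only increases the constant $c$. The same axis-recovery argument then goes through with $\Psi(g_1,g_2)=g_1 f'' f p f^{-1} f''^{-1} g_2^{-1}$, after first applying confinement to $(g_1f''f)^{-1}$.
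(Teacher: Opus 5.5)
The paper gives no proof of this statement; it imports it from the companion paper \cite{DY26A}. So there is nothing in the paper to compare against, and your proposal has to stand on its own. Most of it does. The Cauchy--Schwarz reduction over right cosets is correct. The fibre count also works: there are $O(n)$ choices for the first characteristic axis on $\f_K[\po,h\po]$, and that axis determines $f$. There are then at most $M$ choices of $g_1$ by the coarse-stabilizer argument of Lemma~\ref{lem:size_of_preimage}, then $p\in P$, and finally $g_2$ is forced. This gives the inequality, \emph{provided} every pair is sent to an element of $H\cap B_{2n+c}$ labelling an admissible path with two distinct axes $g_1\ax(f)\neq g_1fp\ax(f)$.

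\textbf{The gap} is in producing such a map. The confining element $p$ is obtained only after $f$ is fixed, since confinement is applied to $g_1f$. So the requirement $p\notin E(f)$ at the junction $f\,|\,p\,|\,f^{-1}$ is a condition on a quantity that depends on $f$.

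Your remedy, ``some $f'\in F$ has $p\notin E(f')$, so use $f'$'', is circular. Switching to $f'$ forces you to reapply confinement to $g_1f'$, which may return a different $p'\in E(f')$.

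Non-degeneracy, $P\cap E(G)=\emptyset$, gives no control relative to a fixed $F$. A finite-order $p\in P$ can centralize every element of $F$ without lying in $E(G)$, and confinement may return such a $p$ for every choice. Then $fpf^{-1}=p$ and $\Psi(g_1,g_2)=g_1pg_2^{-1}$, whose fibres are exponentially large.

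The connector variant has the same defect. The junction at $p$ still needs $p\notin E(f)$, with $p$ again chosen after $f$. If $p$ commutes with $f$, the word collapses to $g_1f''pf''^{-1}g_2^{-1}$, and you now need $p\notin E(f'')$, which is equally uncontrolled.

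\textbf{The missing idea} is to choose the inserted elements adapted to $P$ so that the circularity disappears. Take $F$ to be three pairwise independent strongly contracting WPD elements with $E(f)\cap P=\emptyset$ for every $f\in F$. Non-degeneracy makes this possible, e.g.\ via suitable loxodromics with $E(f)=\langle f\rangle\times E(G)$, cf.\ \cite{DGO}. Then enlarge $\tau$ beyond $\max_{p\in P}d(o,po)+2C$ before passing to large powers; note $E(f^k)=E(f)$, so $E(f^k)\cap P=\emptyset$ persists.

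With this choice, whatever $p\in P$ confinement returns, the transition $g_1f[o,po]$ has $\tau$-bounded projection to both adjacent axes, and the two axes are distinct. The only remaining constraints on $f$ are the outer junctions with $g_1$ and $g_2^{-1}$, and these do not involve $p$. So the Extension Lemma~\ref{extend3} selects $f$ from $(g_1,g_2)$ alone, and only then is $p$ produced.

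Equivalently, you can apply the confined extension Lemma~\ref{ExtensionLemConfined} with $g=g_1$ and $h=g_2^{-1}$; it packages exactly this simultaneous choice of $(q,p)$ with $p\ax(q)\neq\ax(q)$. You would then run your fibre count in $\PC$, using acylindricity to bound the coarse stabilizers. With either input, your argument goes through.
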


     \begin{cor}
    Assume that $G$ is a non-elementary acylindrically hyperbolic group. Then for any finite generating set $S$ and any infinite normal subgroup $H$,
    \[
    \omega(H,S) >  \omega(G,S)/2.
    \]
    \end{cor}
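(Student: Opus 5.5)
The statement to prove is the last corollary: for an infinite normal subgroup $H$, $\omega(H,S)>\omega(G,S)/2$. The plan is to combine Theorem~\ref{thm:growthcogrowth} with the growth tightness corollary immediately preceding it.

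\textbf{Step 1: $H$ has a finite non-degenerate confining set.} Since $H$ is infinite and $E(G)$ is finite, we may pick $h\in H\setminus E(G)$ and set $P=\{h,h^{-1}\}$. As $H$ is normal, $g^{-1}hg\in H$ for every $g\in G$. We need some element of $P\setminus\{1\}$ to lie in $g^{-1}Hg$ for every $g$. But $g^{-1}Hg=H$ and $h\in H$, so $h\in g^{-1}Hg\cap(P\setminus\{1\})$. Hence $P$ is a confining set for $H$. It is non-degenerate because $h\notin E(G)$, and $h^{-1}\notin E(G)$ either, since $E(G)$ is a subgroup. Thus Theorem~\ref{thm:growthcogrowth} applies and gives
\[
\omega(H,S)\ge \omega(G,S)-\tfrac12\,\omega(G/H,\bar S).
\]

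\textbf{Step 2: apply growth tightness.} The corollary to Theorem~\ref{thm:growthtightnessAH} gives $\omega(G/H,\bar S)<\omega(G,S)$. Substituting into the display yields
\[
\omega(H,S)>\omega(G,S)-\tfrac12\omega(G,S)=\tfrac12\omega(G,S).
\]
This inequality is strict because the gap from growth tightness is strict.

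\textbf{Main obstacle.} There is essentially none beyond the cited results. The one point to check is that $\omega(G,S)$ is finite, which holds since $G$ is finitely generated, so the subtraction is legitimate. One could also note $\omega(G,S)>0$ because $G$ is non-elementary and has exponential growth, but the argument does not need this.
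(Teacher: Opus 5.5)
Your proposal is correct and follows the paper's argument. The paper likewise notes that an infinite normal subgroup admits a finite non-degenerate confining set because $E(G)$ is finite, applies the growth--cogrowth inequality of Theorem~\ref{thm:growthcogrowth}, and then substitutes the strict bound $\omega(G/H,\bar S)<\omega(G,S)$ from growth tightness; your explicit choice $P=\{h,h^{-1}\}$ with $h\in H\setminus E(G)$ just spells out the confining-set step.
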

    
    The remainder of this section is to prove Theorem~\ref{thm:growthtightnessAH}.

    \subsection{Projection complex setup} 
     
    Let $G$ act by isometry on a geodesic metric space $(X,d)$ with at least two strongly contracting WPD elements.  As in Section \ref{sec:anchoredlength}, choose the axis system $\f$ of a non-empty  finite set $F$  of strongly contracting WPD elements and let $(\PC,\rho)$ be the projection complex constructed from   $\f$. Then for $K\gg 0$, $G$ admits a non-elementary acylindrical action on  $\PC$ by \cite[Theorem 5.10]{BBFS}.   Let $\po$ denote the  basepoint in $\PC$, which is the axis $\ax(f_0)$ for a fixed choice $f_0\in F$. 
    
    \medskip
    \noindent\textbf{Convention.} 
    In the remainder of this section,  we work with the action on the projection complex  $\PC$ (instead of the space $X$!). Hence, if  $f$  is a loxodromic element on $\PC$,  $\ax(f)=E(f)\po$ and $\proj_{\ax(f)}^\pi$ will refer to the axis   and the $\rho$-diameter of the shortest projection  to $\ax(f)$ respectively. %These are \emph{not} the vertices and projection maps that appear in the definition of $\PC$.
    
    We adapt the insertion argument of \cite{DY24} to the action on the
projection complex. The new input is Theorem~\ref{thm:grow_tight_PC}, which
supplies linearly many insertion positions along standard paths without any
statistically convex-cocompact hypothesis. The confined extension lemma then
allows us to perform insertions at these positions while remaining in the
same coset of the confined subgroup.

    \medskip
    
    We recall the following result proved in \cite[Lemma 5.7]{CGYZ} for the action of $G$ on $\PC$.  See \cite[Lemma 7.7]{DY26A} also. 
	\begin{lem}\label{ExtensionLemConfined}
		Let $P$ be a finite non-degenerate subset in $G$. Then there exist $ \tau_0> 0$ and a finite subset $Q_0\subseteq G$ of independent loxodromic elements on   $\PC$ with the following property:
  
       Let  $H\subseteq G$ be a confined subgroup with a confining subset $P$.  For any $g, h\in G$ and $n\ge 1$, there exist $q\in Q_0^n$  and $p\in P$ such that 
       \begin{enumerate}
           \item 
           $g q p q^{-1} g^{-1}\in H$,
           \item $p\ax(q)\ne \ax(q)$,
           \item each of $[\po,g\po]$, $[\po,p\po]$ and $[\po,h\po]$ has $\tau_0$-bounded projection to $\ax(q)$,        
       \end{enumerate}
       where $Q_0^n=\{q^n:q\in Q_0\}$. 
	\end{lem}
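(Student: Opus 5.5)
The statement to prove is the confined extension lemma, Lemma~\ref{ExtensionLemConfined}, for the acylindrical action of $G$ on $\PC$. My plan follows the scheme of \cite[Lemma 5.7]{CGYZ}. First, use confinement to produce, for every $g$ and every $q$, a pair $(p,q)$ with $gqpq^{-1}g^{-1}\in H$. Then use a finite supply of independent loxodromics to arrange the projection conditions (2) and (3).

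\textbf{Step 1 (confinement).} Since $H$ is confined with confining set $P$, for each $x\in G$ there is $p\in P\setminus\{1\}$ with $x^{-1}Hx\ni p$, that is, $xpx^{-1}\in H$. Taking $x=gq$ gives (1) for \emph{some} $p\in P$ depending on $g$ and $q$. The issue is that $p$ is not under our control, so (2) and (3) must hold simultaneously for all $p\in P$, at least for the $q$ we select.

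\textbf{Step 2 (choice of $Q_0$).} Because $P$ is non-degenerate, each $p\in P$ lies outside $E(G)$. Recall that $E(G)$ is the intersection of $E(f)$ over all loxodromic WPD $f$ (\cite[Theorem 6.14]{DGO}). Hence for each $p$ the set of loxodromic WPD elements $f$ with $p\in E(f)$ is "small". Concretely, I would choose a large finite family $Q_0$ of pairwise independent loxodromics on $\PC$, for instance $k\ge |P|+3$ of them, produced from the non-elementary acylindrical action. I would arrange that for each $p\in P$, $p\notin E(q)$ for all but at most one $q\in Q_0$. This can be done by conjugating a fixed loxodromic $f$ by suitable elements, since $p$ normalizes only finitely many of the $E(f)$-conjugates in a suitably chosen independent family. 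Then $p\,\ax(q)\neq\ax(q)$ for all $q$ outside a set of size at most $|P|$.

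\textbf{Step 3 (projections).} Apply the standard extension-lemma argument (Lemma~\ref{extend3}, or its version for acylindrical hyperbolic actions). For the bounded projection property of the system $\{x\ax(q)\}$ with constant $\tau_0$, any given geodesic $[\po,y\po]$ has large projection to at most two of three pairwise independent axes through $\po$. Applying this to the three geodesics $[\po,g\po]$, $[\po,h\po]$ and $[\po,p\po]$ for the finitely many $p\in P$ excludes at most $2+2|P|$ elements of $Q_0$. The $[\po,p\po]$ bounds are uniform since $P$ is finite. Passing from $q$ to $q^n$ does not change $\ax(q)=E(q)\po$ or $E(q)$, so everything holds for $Q_0^n$.

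\textbf{Combining the steps.} Choose $|Q_0|>3|P|+2$. The bad $q$ in Step 3 depend only on $g$, $h$ and $P$, not on $p$. Among the remaining $q$, at least one avoids the Step 2 exclusions for every $p\in P$. For that $q$, Step 1 gives $p$, and (2) and (3) hold automatically.

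I expect Step 2 to be the main obstacle, namely building a finite independent family that a non-degenerate $p$ cannot simultaneously stabilize. I would first try to use the fact that the fixed-axis set of $p$ is bounded when $p\notin E(G)$, as in DGO's proof of the structure of $E(G)$, and pick conjugates $x_iqx_i^{-1}$ with axes far apart in $\PC$.
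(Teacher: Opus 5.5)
Your overall architecture is the standard one for this lemma (the paper does not prove it; it only cites \cite[Lemma 5.7]{CGYZ}). You fix $q$ first and apply confinement to $x=gq$ to obtain $p$, so conditions (2) and (3) must hold for every $p\in P$ at once. You then handle (3) by the bounded intersection of pairwise independent axes through $\po$. The genuine gap is Step 2, which carries the only real content of the lemma, and the routes you suggest for it fail.

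\begin{itemize}
\item \textbf{Conjugates are not independent.} Conjugates $xfx^{-1}$ of a fixed $f$ are never independent in the sense required here, since a power of one is conjugate to a power of the other. Putting both $f$ and $xfx^{-1}$ into $Q_0$ destroys the bounded projection property of $\mathcal Q=\{g\ax(q)\}$: $\ax(xfx^{-1})$ and $x\ax(f)$ are distinct members at finite Hausdorff distance.
\item \textbf{The bounded-fixed-set heuristic is false.} Take the hyperbolic group $G=(F(a,c)\times\langle p\rangle)*\langle b\rangle$ with $p^2=1$. Here $E(G)=1$, so $P=\{p\}$ is non-degenerate. Yet $p$ centralizes $F(a,c)$, so $p\in E(w)$ for every loxodromic $w\in F(a,c)$, and there are infinitely many pairwise independent such $w$ (e.g.\ $a^Nc$). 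Likewise $p$ fixes the vertices $x\ax(a)$ for all $x\in F(a,c)$, an unbounded subset of $\PC$.
\item Acylindricity does not forbid this, since it only bounds how many elements coarsely fix a far-apart pair, and $\langle p\rangle$ is finite.
\end{itemize}
So ``pick many independent loxodromics and each $p$ lies in few $E(q)$'' is not automatic. Step 2 as written is unproved, and the heuristics you offer for it are false.

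The missing idea is to choose $Q_0$ among loxodromics with the smallest possible elementary closure. For the non-elementary acylindrical action $G\curvearrowright\PC$, there are infinitely many pairwise independent loxodromic elements $h$ with $E(h)=\langle h\rangle\times E(G)$ (see \cite{DGO} and work of Hull). For such $h$, every element of $E(h)\setminus E(G)$ is loxodromic. Hence an elliptic $p\in P$ never lies in $E(h)$, because $p\notin E(G)$. A loxodromic $p$ lies in $E(h)$ only if $E(h)=E(p)$, which happens for at most one member of an independent family. This gives exactly your Step 2; in fact you may simply keep three such $h$ with $P\cap E(h)=\emptyset$. With that input, your Steps 1 and 3 finish the proof.

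A minor point on counting: for $\tau_0$ large, a geodesic issuing from $\po$ has large projection to at most \emph{one} of the pairwise independent axes through $\po$, not two. Also, $[\po,p\po]$ needs no exclusion once $\tau_0\ge\max_{p\in P}\rho(\po,p\po)$ plus a constant. Your exclusion counts are therefore loose but harmless.
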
 
      
    \noindent\textbf{Admissible constants.} 
    Let $Q_0\subseteq G$ be a finite set  of independent loxodromic elements on  $\PC$, and $\tau_0>0$ given by Lemma \ref{ExtensionLemConfined}. Denote  
    $$\mathcal Q=\{g\ax(q): q\in Q_0, g\in G\}$$ the system of loxodromic axes associated to $Q_0$. 

    \medskip
    \noindent  By Morse Lemma, we may  specify the further constants   $\tau_1,L_0$ so that the following holds. 
\begin{enumerate} 

    \nameditem{$\tau_1$}{cst:tau}
    If a geodesic segment has $\tau_0$-projection to $Y\in \mathcal Q$, then any geodesic with endpoints in a $3$-neighborhood of it has $\tau_1$-projection to $Y$. 
    \nameditem{$L_0$}{cst:L} If a geodesic segment of length $\ge L_0$ between $Y_1,Y_2\in \mathcal Q$ has $\tau_1$-projection to $Y_1$ and $Y_2$, then $Y_1\ne Y_2$.
\end{enumerate} 
Let $r=r(\tau_1),L_1=L_1(\tau_1)$ be given by Proposition \ref{admisProp} for $(L_1,\tau_1)$-admissible path. Assume that $$L>\max\{L_0,L_1,4r\}$$
Denote $Q=Q_0^n$ for some large $n\ge 1$ so that $\LS(\po,q\po)>L$ for $q\in Q$. This will not affect the above constants by Lemma \ref{ExtensionLemConfined}.  %Hence,  the tuple $(g, q, p, q^{-1}, h)$ labels an $(L, \tau_0)$-admissible path relative to $\mathcal Q$ in $\PC$.   

We first isolate the consequence of Theorem~\ref{thm:grow_tight_PC} needed
for the insertion argument: generic elements admit linearly many uniformly
separated positions along their standard paths.
    %For completeness,  we recall the necessary matetails from \cite{DY24} and emphasize the main differences.
        
    \subsection{Reduction to linearly recurrent elements} 

     We adapt \cite[Definition 2.21]{DY24} to the projection complex on $\PC$.
     \begin{defn}\label{defn:linearlyrecurrent}

     Let $\varepsilon\in (0,1]$ and $L>0$. An element $g\in G$ is said to be \emph{$(\varepsilon, L)$-linearly recurrent} if there exist  a linearly ordered set of distinct points on the standard path  $\gamma=\f_K[\po,g\po]$ in $\PC$ with cardinality $m+1$, where $m\ge \lfloor \varepsilon\, d_S(1,g)\rfloor$: $$\{\po=x_0<x_1<\cdots <x_{m}=g\po\}$$ and a linearly ordered set of elements on a  geodesic $[1,g]_S$ in $\mathrm{Cay}(G,S)$: $$\{1=g_0<g_1<\cdots<g_{m}=g\}$$ such that  
     \begin{equation}\label{eq:linearlyrecurrent}
     \begin{aligned}
     \LS(g_i \po, x_i)&\le 3, \quad \forall\, 0\le i\le m\\
     \LS(g_i \po, g_j \po)&\ge L,  \quad \forall\,  0\le i \ne j\le m
     \end{aligned}
     \end{equation}  
     Setting $s_i = g_{i-1}^{-1} g_{i}$ for $1\le i\le m$ yields a product decomposition $g = s_1 \cdots s_m$.

    \end{defn}
    \begin{rem}
    This differs from  \cite[Definition 2.21]{DY24} in choosing $M=3$ in (\ref{eq:linearlyrecurrent}) and in that $\gamma$ is a standard path. Note that $s_i$ are not   generators in general. We say that $g = s_1 \cdots s_m$ is a \textit{$3$-almost geodesic decomposition} if $g_i\po$ lies in the $3$-neighborhood of $\f_K[\po,g\po]$ (\cite[Definition 2.20]{DY24}).     
    \end{rem}

    Let $\mathcal{LR}(\varepsilon, L)$ denote the set of $(\varepsilon, L)$-linearly recurrent elements in $G$. An immediate consequence of Theorem~\ref{thm:grow_tight_PC} is as follows.
    %Note that if $\varepsilon_1 \ge \varepsilon_2$, $L_1 \ge L_2$, then $\mathcal{LR}(\varepsilon_1, L_1) \subseteq \mathcal{LR}(\varepsilon_2, L_2)$.

    \begin{lem}\label{lem:LRset_growth_tight}
    For any  $L\ge 8$, there exists  $\varepsilon=\varepsilon(L)>0$  so that $\mathcal{LR}(\varepsilon, L)$ is exponentially generic.    
    \end{lem}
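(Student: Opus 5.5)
The plan is to deduce the lemma from Theorem~\ref{thm:grow_tight_PC} together with the short-decomposition construction of Lemma~\ref{lem:split_path_into_pieces}, now spacing the anchor points $L$-apart instead of $5$-apart.

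\textbf{Step 1: reduce to a linear-displacement condition.} Let $\varepsilon_0>0$ be the constant of Theorem~\ref{thm:grow_tight_PC}. Then the set
\[
A=\{g:\LS(\po,g\po)\le\varepsilon_0 d_S(1,g)\}
\]
is growth tight, so its complement is exponentially generic. It therefore suffices to show that every $g\notin A$ is $(\varepsilon,L)$-linearly recurrent, where $\varepsilon:=\varepsilon_0/(2L+20)$. If needed, we also discard finitely many short elements; this does not affect exponential genericity.

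\textbf{Step 2: choose the anchor points.} Fix $g\notin A$ and a word geodesic $\gamma=[1,g]_S$. Set $n=\LS(\po,g\po)\ge\varepsilon_0 d_S(1,g)$ and list the standard path as
\[
\f_K[\po,g\po]=\{Y_0=\po<Y_1<\cdots<Y_n=g\po\}.
\]
Let $D:=L+10$, let $m:=\lfloor n/D\rfloor-1$, and put $x_i:=Y_{Di}$ for $0\le i\le m$. We then replace the last point by $x_m:=g\po$; the gap between $Y_{D(m-1)}$ and $g\po$ is still at least $D$. Since $m\ge n/D-2$, we get $m\ge\lfloor\varepsilon d_S(1,g)\rfloor$ once $d_S(1,g)$ is large, by the choice of $\varepsilon$.

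\textbf{Step 3: choose the points on the geodesic.} By Lemma~\ref{lem:word_is_PC_path}, $\gamma\po$ is a path in $\PC$. By the bottleneck property (Lemma~\ref{lem:bottleneck_in_PC}), for each interior $i$ we may pick $g_i\in\gamma$ with $\LS(g_i\po,x_i)\le3$; we take $g_0=1$ and $g_m=g$. To get these points in the correct order along $\gamma$, we repeat the argument in the proof of Lemma~\ref{lem:split_path_into_pieces}: we take $g_i$ to be the first such point. Corollary~\ref{cor:sub_segment_std} and Lemma~\ref{lem:common_prefix} show that $Y_{D(i-1)}\in\f_K(\po,g_i\po)$. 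The bottleneck property along the prefix $[1,g_i]_\gamma$ then forces $g_{i-1}$ to appear before $g_i$. The points are distinct because their projections are far apart, which is shown in the next step.

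\textbf{Step 4: the separation bound (main obstacle).} We must show $\LS(g_i\po,g_j\po)\ge L$ for $i\neq j$. Fix $i<j$. The standard path from $x_i$ to $x_j$ has length at least $D(j-i)\ge D$ by Corollary~\ref{cor:sub_segment_std}. Apply Lemma~\ref{lem:common_mid} to $\f_K(x_i,x_j)$ with $d=3$: it shows that all but at most $8$ of these vertices lie in $\f_K(g_i\po,g_j\po)$. Hence
\[
\LS(g_i\po,g_j\po)\ge D-9\ge L.
\]
Boundary cases need some care, namely $g_0=1$ and $g_m=g$, where the points coincide exactly with $x_0$ and $x_m$. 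There the single-sided version, Lemma~\ref{lem:common_prefix}, suffices. This bookkeeping, and making sure the hypotheses $d\le n-2$ of those lemmas hold with the chosen $D$, is the only delicate part; the rest is routine. Altogether this shows that every element of $G\setminus A$ of large word length lies in $\mathcal{LR}(\varepsilon,L)$, so $\mathcal{LR}(\varepsilon,L)$ is exponentially generic.
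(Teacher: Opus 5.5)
Your proposal is correct and follows essentially the same route as the paper. Both arguments take the exponentially generic set where $\LS(\po,g\po)\ge\varepsilon_0 d_S(1,g)$ from Theorem~\ref{thm:grow_tight_PC}, pick evenly spaced vertices on $\f_K[\po,g\po]$, choose ordered shadows $g_i\in[1,g]_S$ via the bottleneck property, and get separation from a triangle-inequality estimate. The differences are cosmetic: you space by $L+10$ and invoke Lemma~\ref{lem:common_mid}, where the paper takes midpoints of segments of length $2L+8$ and applies the weak triangle inequality twice; and you use the first-point ordering from Lemma~\ref{lem:split_path_into_pieces}, where the paper chooses each $g_{i+1}$ on the remaining subpath $[g_i,g]_S$.
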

    \begin{proof}
    By Theorem~\ref{thm:grow_tight_PC}, there exists $\varepsilon_0>0$ so that the set $A=\{g\in G: \LS(\po,g\po)\ge \varepsilon_0 d_S(1,g)\}$ is exponentially generic. Below is an adaptation of the argument in Lemma \ref{lem:split_path_into_pieces}.
    
    Let $L\ge 8$ be an integer and $m=\lfloor\LS(\po,g\po)/(2L+8)\rfloor$. Given $g\in A$, we subdivide the standard path $\f_K[\po,g\po]$ into segments of length $\ge 2L+8$; that is,   division points $y_0:=\po, y_{m}:=g\po$ and  $y_i$ satisfy  $\LS(y_i,y_{i+1})\ge 2L+8$ for $0\le i< m$. If $m= 1$, there is nothing to do. If $m\ge 2$, choose the middle point $x_i$ of the standard path from $y_{i-1}$ to $y_{i}$ for $1\le i < m$.  By the bottleneck property of Lemma \ref{lem:bottleneck_in_PC}, there exists $g_1\in [1,g]_S$ so that $\LS(g_1\po, x_1)\le 3$. By Lemma \ref{lem:common_prefix}, since $\LS(g_1\po,x_1)\le 3$ and $\LS(x_1,x_2)=2L+8\ge 6$,  $x_2$ also lies on the standard path from $g_1\po$ to $g\po$. Then Lemma \ref{lem:bottleneck_in_PC}  provides $g_2$ on $[g_1,g]_\gamma$ so that $\LS(g_2\po, x_2)\le 3$. Inductively we choose $g_{i+1}$  on $[g_i,g]_\gamma$ for $i+1< m$ so that $\LS(g_{i+1}\po, x_{i+1})\le 3$, and set $g_m=g,x_m=g\po$. Applying twice the weak triangle inequality for $\LS$ gives $\LS(g_i\po,g_j\po)\ge \LS(x_i,x_j)-\LS(x_i,g_i\po)-\LS(x_j,g_j\po)-2 \ge L$.
    
    To conclude, set $\varepsilon=\varepsilon_0/3L$ and then $m\ge \varepsilon d_S(1,g)$, so $\{x_i: 0\le i\le m\}$ and $\{g_0:0\le i\le m\}$ verify the linearly recurrent conditions of $g\in A$.    
    \end{proof}

 Let $H$ be a confined subgroup of $G$ with $P$ as a confining subset. Let $[G/H]\subseteq G$ denote any section of the natural projection $$G\to G/H=\{Hg: g\in G\}$$ 
 which picks exactly one element $g$ from each $Hg$ so that $d_S(1,g)=d_S(1,Hg)$. Thus,  $$\omega(G/H,\bar S)= \omega([G/H],S).$$ Here $\omega(G/H,\bar S)$ is the growth rate of the quotient metric on $G/H$ (or the combinatorial metric of the Schreier graph $G/H$).   
The following theorem is the technical core of this section. It shows that
the linearly recurrent representatives in any fixed Schreier section
have a uniform growth gap, and it immediately implies
Theorem~\ref{thm:growthtightnessAH}.
    \begin{thm}\label{AGroTig}
    Fix $L>\max\{L_0,L_1,4r\}$. For any $0<\varepsilon\le 1$,  $$A:=[G/H]\cap \mathcal{LR}(\varepsilon, L)$$ is growth tight in $G$. Moreover, the gap $(\omega_G-\omega_A)>0$ depends only on $\varepsilon$ and $P$ (but \emph{not} on $H$).
    \end{thm}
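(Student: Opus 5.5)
The plan is an insertion argument modelled on Section~\ref{SecShortPC} and on \cite{DY24}, this time performed on the projection complex $\PC$ with the confined extension lemma. Fix $n$ and put $A_n=A\cap S_n$. For $g\in A_n$ we fix the linearly recurrent data of Definition~\ref{defn:linearlyrecurrent}: points $x_0<\cdots<x_m$ on $\f_K[\po,g\po]$ and elements $1=g_0<\cdots<g_m=g$ on a word geodesic, with $m\ge\lfloor\varepsilon n\rfloor$. This gives a decomposition $g=s_1\cdots s_m$. Choose a subset $J\subseteq\{1,\dots,m-1\}$ with $|J|=k:=\lfloor\eta m\rfloor$, for a small $\eta>0$ to be fixed later. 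At each $j\in J$ we insert the word $q_jp_jq_j^{-1}$. Here $q_j\in Q$ and $p_j\in P$ come from Lemma~\ref{ExtensionLemConfined}, applied to $g_j$ and the next piece $s_{j+1}$. We define
\[
\Psi(g,J)=s_1\cdots s_{j_1}(q_{j_1}p_{j_1}q_{j_1}^{-1})s_{j_1+1}\cdots .
\]
Conclusion (1) of Lemma~\ref{ExtensionLemConfined} says each inserted factor, conjugated by its prefix, lies in $H$. Inserting the factors one at a time from right to left, we get $\Psi(g,J)\in Hg$. Each insertion costs at most a uniform constant $\ell$ in word length, so $\Psi(g,J)\in B_{n+k\ell}$.

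The key step is to bound the multiplicity of the map $(g,J)\mapsto\Psi(g,J)$. By conclusions (2),(3) of Lemma~\ref{ExtensionLemConfined} and the choice of the constants \ref{cst:tau} and \ref{cst:L}, the sequence of pieces $(g_{j_1},q,p,q^{-1},\dots)$ labels an $(L,\tau_1)$-admissible path in $\PC$. Its contracting subsets are the axes $x\ax(q)$ and $xqp\ax(q)$ attached to each insertion. We want two things. First, the image $\Psi(g,J)$ recovers the set of its insertion axes along any geodesic to $\Psi(g,J)\po$, by Proposition~\ref{admisProp}. Second, since $g$ is a shortest coset representative and the relation $\LS(g_i\po,g_j\po)\ge L>4r$ separates consecutive positions, these axes occur at distinct, well-separated places.

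The counting is done with a fixed image. We must recover $(g,J)$ from $w=\Psi(g,J)$, and for this we use that $g\in[G/H]$ is determined by the coset $Hw$. Hence $g$ is determined by $w$ up to the section, and only $J$ needs to be recovered. Even better, distinct $J$ yield distinct $w$: the fellow travel property locates the insertion axes up to the bounded choices coming from $Q$ and $P$, and the separation $L>4r$ prevents two insertion patterns from producing the same admissible path. So, for a fixed $g$, the number of $J$ with the same image is at most $(|Q||P|)^{k}$. Across different $g$ the images lie in different cosets, so they are disjoint.

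Altogether this gives
\[
|A_n|\binom{m-1}{k}\le (|Q||P|)^k\,|B_{n+k\ell}\cap(\text{union of the relevant cosets})|.
\]
The coset-preserving property then yields a factor $\binom{m-1}{k}$ of \emph{distinct elements of $G$} per $g$, lying in a ball $B_{n+k\ell}$. That is, $|A_n|\binom{\varepsilon n}{\eta\varepsilon n}\le |B_n|\,e^{\omega\ell\eta\varepsilon n}(|Q||P|)^{\eta\varepsilon n}$. The binomial grows like $e^{\varepsilon n H(\eta)}$ with entropy $H(\eta)\approx\eta\log(1/\eta)$. This beats $\eta(\omega\ell+\log|Q||P|)$ once $\eta$ is small enough in terms of $P,S$ and the fixed constants, and it gives a gap depending only on $\varepsilon,P$.

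The main obstacle will be the injectivity claim: we must show that different $J$ for the same $g$ give different elements, and that the images from different $g$ do not collide in a way that breaks the count. The constants $\tau_0$ and $Q_0$ are independent of $H$, so uniformity in $H$ should come for free. The subtle part is the claim that the admissible path's axes reproduce $J$. To prove it, I would first compare $\f_K[\po,w\po]$ with the insertion axes, using Lemma~\ref{lem:CA_in_std_path} transported to $\PC$. Then I would use the condition $\LS(g_i\po,g_j\po)\ge L$ to show that the positions of consecutive axes along the standard path are separated by more than $2r$.
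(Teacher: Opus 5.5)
Your overall architecture is the paper's: insert $q_jp_jq_j^{-1}$ at the recurrence positions, stay in $Hg$ by conclusion (1) of Lemma~\ref{ExtensionLemConfined}, get disjointness across $g\in[G/H]$ from distinct cosets, and count. Using fixed-size subsets with an entropy bound is an acceptable substitute for the all-subsets criterion of Lemma~\ref{CritGapLem}. However, the step you flag as the crux, injectivity of $J\mapsto\Psi(g,J)$, is not proved, and the mechanism you sketch does not reach the actual difficulty.

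Suppose $\Psi(g,J)=\Psi(g,J')=w$, and let $k$ be the first index lying in exactly one of the two sets, say $k\in J\setminus J'$. Both patterns share a prefix $\tilde g_k$. The axis $Y:=\tilde g_k\ax(q_k)$ is a contracting subset of $\gamma_g(J)$, so Proposition~\ref{admisProp} gives $\proj_Y^\pi(\po,w\po)>L-2r$. Here $L$ is the length of the $q$-piece, $\LS(\po,q\po)>L$, not the recurrence separation $\LS(g_i\po,g_j\po)\ge L$.

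The real task is to show this projection is small when computed from $J'$. But $Y$ is not a contracting subset of $\gamma_g(J')$, so fellow travelling of $\gamma_g(J')$ says nothing about $Y$. Separation of consecutive recurrence positions is irrelevant here. Nor can one ``read off the insertion axes'' along a geodesic to $w\po$: such a geodesic fellow-travels many translates of $\ax(q)$ that are not insertion axes. Lemma~\ref{lem:CA_in_std_path} does not transport either, since the $\mathcal Q$-axes are quasi-lines in $\PC$, not vertices of standard paths.

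The missing idea is the conjugation identity. With $t=\tilde g_kq_kp_kq_k^{-1}\tilde g_k^{-1}$ one has $\Psi(g,J'\cup\{k\})=t\,\Psi(g,J')$. The path $\gamma_g(J'\cup\{k\})$ is admissible, with consecutive axes $Y$ and $tY$, which are distinct by conclusion (2). Hence the projection of its endpoint to $tY$ is $r$-close to $t\tilde g_k\po$. Translating by $t^{-1}$, the projection of $w\po=\Psi(g,J')\po$ to $Y$ is $r$-close to $\tilde g_k\po$, and so is that of $\po$. Therefore $\proj_Y^\pi(\po,w\po)\le 2r<L-2r$, a contradiction.

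This argument needs $(q_k,p_k)$ to be fixed independently of the index set, and it needs $\gamma_g(I)$ to be admissible for \emph{every} $I$. Your choice of $(q_j,p_j)$ from $g_j$ and the single next piece $s_{j+1}$ is weaker than the paper's choice $h=g_j^{-1}g$. For non-adjacent insertion indices $j_1<j_2$, the transition geodesic is a translate of $[g_{j_1}\po,g_{j_2}\po]$. Lemma~\ref{ExtensionLemConfined} then does not directly bound its projection to $\ax(q_{j_1})$. You would need an extra quasi-tree argument with recalibrated constants, or simply take $h=g_j^{-1}g$.

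Finally, your multiplicity factor $(|Q||P|)^k$ has no source, since $q_j,p_j$ are determined by $g$ and $j$. Once the first-difference argument is in place, you get exact injectivity.
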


    \begin{proof}[Proof of Theorem~\ref{thm:growthtightnessAH}]
     For $L$ given by Theorem \ref{AGroTig} and $\varepsilon=\varepsilon(L)$ by Lemma \ref{lem:LRset_growth_tight}, $[G/H]$ is the union of two growth tight subsets. It follows that $$\omega(G/H,\bar S)=\omega([G/H],S)<\omega(G,S)$$ with a gap depending on $P$ but not on $H$. The theorem is proved.  
    \end{proof}

  \subsection{Proof of Theorem \ref{AGroTig}}\label{ConsMap}
  We first define an insertion map similar to that in \textsection \ref{def:insert_map}.  
  \medskip
  
  \noindent\textbf{Defining the insertion map}
 Let $g=s_1s_2\cdots s_m\in A$ be a linearly recurrent decomposition where $m \ge \lfloor\varepsilon d_S(1,g)\rfloor$, as in Definition \ref{defn:linearlyrecurrent}.  Recall $g_0=1$, and $g_i=s_1\cdots s_i$ for $1\le i\le m$, and for $0\le i< j\le m$,  
\[
g_i^{-1}g_j = s_{i+1}s_{i+2}\cdots s_j.
\]
By Lemma~\ref{ExtensionLemConfined}, for each $i$ with $0\le i\le m-1$ there exist $q_i\in Q$ and $p_i\in P$ such that $g_{i}q_ip_iq_i^{-1}g_{i}^{-1}\in H$ and 
\begin{equation}
\tag{$\ddagger$}\label{eq:gkconjugates}
\begin{array}{lr}
p_i\ax(q_i)\ne \ax(q_i) , \text{ and }  [\po,g_i\po],[\po,p_i\po],[\po,g_i^{-1}g_m\po] \text{ have } \tau_0 \text{–bounded projection to}\,\ax(q_i).  & 
\end{array}
\end{equation}
  
Note that $m$ may depend on the specific element $g$.  Let $\mathcal P_m:=\mathcal P(\{0,1,\cdots,m-1\})$ be the power set. 
%Recall first that   $\str$ denotes the space of  strings with length $m$ over $\{0,1\}$. Equivalently, $\str$  could be understood as the power set  over $\{0,1,2,\cdots, m-1\}$.  We refer to \textsection \ref{subsec:notation} for the detail.
For each recurrence position $g_i$, Lemma~\ref{ExtensionLemConfined}
provides $q_i\in Q$ and $p_i\in P$ such that
\[
g_iq_ip_iq_i^{-1}g_i^{-1}\in H.
\]
Thus, inserting $q_ip_iq_i^{-1}$ at the position $g_i$ changes the resulting
element by left multiplication by an element of $H$ and therefore preserves
its right coset. For a subset $I$ of the recurrence positions, the map
$\Phi_g(I)$ performs precisely the insertions indexed by $I$. Precisely,

\begin{defn}\label{def:extMap}
Let $I\in\mathcal P_m$ be non-empty, enumerated in increasing order as
\[
I=\{i_1<i_2<\dots<i_\alpha\}, \qquad \alpha=|I|.
\]
%and set $i_{\alpha+1}=m+1$. 
Define the map $\Phi_g: \mathcal P_m \to G$ by
\[
\Phi_g(I)
= g_{i_1}(q_{i_1}p_{i_1}q_{i_1}^{-1})\, g_{i_1}^{-1}g_{i_2}(q_{i_2}p_{i_2}q_{i_2}^{-1})
\cdots
(q_{i_\alpha}p_{i_\alpha}q_{i_\alpha}^{-1})\, g_{i_\alpha}^{-1}g
\]  
and $\Phi_g(\varnothing) = g$.
\end{defn}

\begin{lem}\label{lem:criterion12}
For any $g\in A$, the map $\Phi_g: \mathcal  P_m\to G$ satisfies the following properties:
\begin{enumerate}[label=(\roman*)]
    \item $\operatorname{Im}(\Phi_g) \subseteq Hg$;
    \item For every non-empty $I\in \mathcal  P_m$, the element $\Phi_g(I)$ labels an $(L_1,\tau_1)$-admissible path  $\gamma_g(I)$  in $\PC$.
\end{enumerate} 
\end{lem}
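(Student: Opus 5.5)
Item (i) is a direct telescoping computation, and I will do it first. Set $c_k:=g_{i_k}q_{i_k}p_{i_k}q_{i_k}^{-1}g_{i_k}^{-1}\in H$ for $1\le k\le\alpha$; each $c_k$ lies in $H$ by the choice of $q_{i_k},p_{i_k}$ via Lemma~\ref{ExtensionLemConfined}. Rewriting the product in Definition~\ref{def:extMap}, with the convention $g_{i_{k}}^{-1}g_{i_{k+1}}$ between consecutive insertions, gives
\[
\Phi_g(I)=c_1c_2\cdots c_\alpha\, g .
\]
Here we use $g_{i_1}\cdot g_{i_1}^{-1}g_{i_2}=g_{i_2}$ and so on. Hence $\Phi_g(I)\in Hg$. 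The case $I=\varnothing$ is trivial.

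For item (ii), I view $\Phi_g(I)$ as labelling the concatenation in $\PC$ of geodesics with successive labels
\[
g_{i_1},\ q_{i_1},\ p_{i_1},\ q_{i_1}^{-1},\ g_{i_1}^{-1}g_{i_2},\ q_{i_2},\ \dots,\ q_{i_\alpha}^{-1},\ g_{i_\alpha}^{-1}g .
\]
The contracting pieces are the geodesics labelled $q_{i_k}$ and $q_{i_k}^{-1}$. They lie in the translates $x\ax(q_{i_k})$ and $xq_{i_k}p_{i_k}\ax(q_{i_k})$ of $\mathcal Q$, where $x$ is the prefix. Each has length $\ge\LS(\po,q\po)>L\ge L_1$ after taking the power $Q=Q_0^n$, which gives (LL). The transition pieces are the geodesics labelled by $g_{i_1}$, $p_{i_k}$, $g_{i_k}^{-1}g_{i_{k+1}}$, and $g_{i_\alpha}^{-1}g$. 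The key point for (BP) is that in the linearly recurrent decomposition the consecutive positions come from the same geodesic $[1,g]_S$ and satisfy $\LS(g_i\po,x_i)\le3$. So the segment from $g_{i_k}\po$ to $g_{i_{k+1}}\po$ lies within a $3$-neighborhood of a subsegment of the standard path $\f_K[g_{i_k}\po,g\po]$, translated. By \eqref{eq:gkconjugates}, $[\po,g_{i_k}^{-1}g\po]$ has $\tau_0$-bounded projection to $\ax(q_{i_k})$. Condition~\ref{cst:tau} then upgrades this to $\tau_1$-bounded projection for the geodesic $[\po,g_{i_k}^{-1}g_{i_{k+1}}\po]$ to $\ax(q_{i_k})$. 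Symmetrically, it upgrades $[\po,g_{i_{k}}\po]$, and hence the incoming segment $[g_{i_{k-1}}\po,g_{i_k}\po]$ after translating, since its endpoints are near the geodesic $[\po,g_{i_k}\po]$ by the same recurrence structure. The middle piece $p_{i_k}$ has $\tau_0$-bounded projection to $\ax(q_{i_k})$ directly. By equivariance ($p\mapsto q p$ translated, and $q^{-1}$ reversing orientation), it also has bounded projection to the axis at the outgoing side $q_{i_k}p_{i_k}\ax(q_{i_k})$.

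It remains to check that adjacent contracting subsets are distinct. The two axes around $p_{i_k}$ are $x\ax(q)$ and $xqp\,q^{-1}\cdot q\ax(q)$, i.e.\ $xq\ax(q)=x\ax(q)$ versus $xqp\ax(q)$. These are distinct exactly because $p_{i_k}\ax(q_{i_k})\neq\ax(q_{i_k})$. Across a transition labelled $g_{i_k}^{-1}g_{i_{k+1}}$, I use \ref{cst:L}. We have $\LS(g_{i_k}\po,g_{i_{k+1}}\po)\ge L>L_0$, and the segment has $\tau_1$-projection to both neighbouring axes, so these axes differ. Assembling these local conditions via Remark~\ref{ConcatenationAdmPath} gives an $(L_1,\tau_1)$-admissible path $\gamma_g(I)$.

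The main obstacle is the (BP) verification for the transition segments between non-consecutive chosen positions $g_{i_k},g_{i_{k+1}}$. There, \eqref{eq:gkconjugates} only controls $[\po,g_{i_k}^{-1}g_m\po]$ and $[\po,g_{i_k}\po]$, not the intermediate segment directly. I would handle this through the quasi-tree geometry of $\PC$. By Lemma~\ref{lem:common_prefix} and the bottleneck property, $g_{i_{k+1}}\po$ is within distance $3$ of the standard path from $g_{i_k}\po$ to $g\po$. Hence the geodesic from $g_{i_k}\po$ to $g_{i_{k+1}}\po$ lies in a uniform neighborhood of an initial piece of a geodesic whose projection is $\tau_0$-bounded, which is exactly what \ref{cst:tau} is designed to absorb.
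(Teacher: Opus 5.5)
Your proof is essentially the paper's. Part (i) is the unrolled form of the paper's one-insertion identity $\Phi_g(\{k\}\cup I)=g_k(q_kp_kq_k^{-1})g_k^{-1}\Phi_g(I)$. Part (ii) follows the same sketch: you transfer the bounded projections of \eqref{eq:gkconjugates} to the transition segments $[g_{i_k}\po,g_{i_{k+1}}\po]$ using the recurrence condition $\LS(g_i\po,x_i)\le 3$ and \ref{cst:tau}, and you separate consecutive axes by \ref{cst:L} and by $p_{i_k}\ax(q_{i_k})\neq\ax(q_{i_k})$.

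One correction: the projection bound of the $p_{i_k}$-segment to the outgoing axis $q_{i_k}p_{i_k}\ax(q_{i_k})$ does not follow by equivariance. It amounts to a bound for $[\po,p_{i_k}^{-1}\po]$ on $\ax(q_{i_k})$, which \eqref{eq:gkconjugates} does not provide. It holds instead because $P$ is finite, so $\rho(\po,p\po)$, and hence the projection of $[\po,p\po]$ to any axis of $\mathcal Q$, is uniformly bounded (enlarge $\tau_1$ if needed).
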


\begin{proof}
We briefly recall the proof of~(\textit{i}) from \cite[Lemma~4.4]{DY24}, as it provides the key idea behind Lemma~\ref{lem:criterion3}. Suppose that
\[
J=\{k\}\cup I=\{k<i_1<i_2<\cdots<i_\alpha\},
\]
so that \(I\) and \(J\) first differ at the index \(k\). If
\[
\Phi_g(I)\in Hg \quad\Longrightarrow\quad \Phi_g(J)\in Hg,
\]
then the general case follows by modifying the index set one insertion at a time. Indeed,
\begin{align}\label{eq:conjugatemove}
\Phi_g(J)=g_k(q_kp_kq_k^{-1})g_k^{-1}\Phi_g(I),
\end{align}
and the conjugating element \(g_k(q_kp_kq_k^{-1})g_k^{-1}\) belongs to \(H\), so right cosets are preserved.

Property~(\textit{ii}) is established in \cite[Lemma~4.8]{DY24}. We only explain the additional point relevant to the present setting and sketch the argument.

Unlike the proof of Lemma~\ref{lem:(gI)labeledpath}, the construction of \(\Phi_g\) here is more restrictive: the elements \(q_k\) and \(p_k\) must be chosen \emph{a priori}, independently of the insertion index \(k\in I\); see the \(\tau_0\)-bounded projection condition in~\eqref{eq:gkconjugates}. Consequently, the essential task is to verify that the geodesic $[g_{i_1}\po,g_{i_2}\po]$
has uniformly bounded projection to the adjacent (appropriately translated) axes \(\ax(q_{i_1})\) and \(\ax(q_{i_2})\).

To see this, observe that the endpoints of \([g_{i_1}\po,g_{i_2}\po]\) are \(3\)-close to the corresponding endpoints of the geodesics \([g_{i_1}\po,g\po]\) and \([\po,g_{i_2}\po]\); see~\eqref{eq:linearlyrecurrent}. By the defining property of the constant in~\ref{cst:tau}, together with~\eqref{eq:gkconjugates}, these two geodesics have \(\tau_1\)-bounded projection to the corresponding translates of \(\ax(q_{i_1})\) and \(\ax(q_{i_2})\), respectively. The same therefore holds for \([g_{i_1}\po,g_{i_2}\po]\). Moreover, these two translated axes are distinct by the choice of the constant in~\ref{cst:L}. Hence all conditions for an \((L_1,\tau_1)\)-admissible path are satisfied by \(\Phi_g(I)\).

We refer the interested reader to \cite[Lemma~4.8]{DY24} for the complete proof.
\end{proof}

\begin{lem}\label{lem:criterion3}
For any $g\in A$, the map $\Phi_g: \mathcal P_m\to G$  is injective. 
\end{lem}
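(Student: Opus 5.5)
We prove injectivity by contradiction: suppose $I\ne J$ in $\mathcal P_m$ with $\Phi_g(I)=\Phi_g(J)$. Let $k$ be the first index where $I$ and $J$ differ; after relabeling, $k\in J\setminus I$. Both words share the prefix of insertions at indices $<k$, so we cancel it. Concretely, write $x:=\Phi_g(I\cap[0,k))$-prefix up to position $g_k$, namely the common element
\[
x = g_{i_1}(q_{i_1}p_{i_1}q_{i_1}^{-1})g_{i_1}^{-1}g_{i_2}\cdots (q_{i_\beta}p_{i_\beta}q_{i_\beta}^{-1})g_{i_\beta}^{-1}g_k ,
\]
where $i_1<\dots<i_\beta<k$ are the common indices. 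Left-multiplying by $x^{-1}$ reduces to comparing two elements $w_I,w_J$ with $w_J$ beginning with $q_kp_kq_k^{-1}$ followed by $g_k^{-1}g_{j}\cdots$, and $w_I$ beginning with $g_k^{-1}g_{i}\cdots$ for the next index $i>k$ of $I$ (or $g_k^{-1}g$ if none).

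By Lemma~\ref{lem:criterion12}(ii), both $\Phi_g(I)$ and $\Phi_g(J)$ label $(L_1,\tau_1)$-admissible paths in $\PC$, and the common prefix is the same admissible path up to the vertex $x\po$. In $\gamma_g(J)$, right after $x\po$ comes the long segment along $x\ax(q_k)$ from $x\po$ to $xq_k\po$ (of length $>L$), then the piece $xq_kp_k\ax(q_k)$ traversed backward. By Proposition~\ref{admisProp}, the $r$-fellow travel property applies to any geodesic from $\po$ to the common endpoint $\Phi_g(I)\po=\Phi_g(J)\po$, and in particular the shortest projection of the endpoint to $x\ax(q_k)$ is $r$-close to $xq_k\po$, while its projection of $\po$ is $r$-close to $x\po$. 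Hence the projection of $[\po,\Phi_g(J)\po]$ to $x\ax(q_k)$ has diameter at least $L-2r>2r$.

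On the other hand, in $\gamma_g(I)$ the path passes through $x\po$ and continues with the geodesic $[x\po, xg_k^{-1}g_i\po]$, which by \eqref{eq:gkconjugates} and the choice of $\tau_1$ in \ref{cst:tau} has $\tau_1$-bounded projection to $x\ax(q_k)$; the rest of $\gamma_g(I)$ lies beyond, with subsequent axes $x g_k^{-1}g_i\ax(q_i)$ distinct from $x\ax(q_k)$ by \ref{cst:L}. Using the admissible structure of $\gamma_g(I)$ (viewed as the path from $x\po$ onward), the whole of $[x\po,\Phi_g(I)\po]$ should project to $x\ax(q_k)$ with diameter bounded by a constant depending only on $\tau_1$, $r$; and the prefix before $x\po$ also projects near $x\po$ by the same bound coming from the $\tau_0$-projection of $[\po,g_k\po]$. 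So the projection of $\Phi_g(I)\po$ to $x\ax(q_k)$ is within a uniform constant of $x\po$, contradicting the previous paragraph once $L$ exceeds that constant plus $4r$ (this is where $L>\max\{L_0,L_1,4r\}$, and $\LS(\po,q\po)>L$, enter).

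The main obstacle is the second bound: showing rigorously that $x\ax(q_k)$ is not one of the axes of the admissible path $\gamma_g(I)$ and that the endpoint of $\gamma_g(I)$ projects boundedly onto it. I would handle it via the bounded intersection/projection property of $\mathcal Q$ plus the contracting property of the admissible path $\gamma_g(I)$: a transitional segment with $\tau_1$-bounded projection, followed by long pieces on distinct axes, cannot have large projection to $x\ax(q_k)$ (otherwise, by Lemma~\ref{BigFive}(3), $\gamma_g(I)$ would fellow-travel $x\ax(q_k)$ for length $>L_0$, forcing equality with an axis of $\gamma_g(I)$, which \ref{cst:L} excludes since $p_i\ax(q_i)\ne\ax(q_i)$ separates consecutive pieces).
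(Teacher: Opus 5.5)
Your reduction (first differing index $k\in J\setminus I$, common prefix $x$, and the lower bound $\proj^\pi_Y(\po,\Phi_g(J)\po)>L-2r$ for $Y:=x\ax(q_k)$ from the admissible path $\gamma_g(J)$) is exactly the paper's first step. The gap is in the complementary upper bound for $\Phi_g(I)\po$. This is the real content of the lemma, and your sketch does not establish it.

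First, $\gamma_g(I)$ does not pass through $x\po$. Since $k\notin I$, after the last common block $\gamma_g(I)$ follows a single geodesic, labelled by the product of the $s_j$ up to the next index $i$, straight to $xg_k^{-1}g_i\po$. So the claims that the two paths share a prefix ``up to the vertex $x\po$'', and that $\gamma_g(I)$ ``continues with $[x\po,xg_k^{-1}g_i\po]$'', are false. Consequently $Y$ is not a contracting piece of $\gamma_g(I)$ anywhere, and Proposition~\ref{admisProp} says nothing about $\pi_Y(\Phi_g(I)\po)$.

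Second, your fallback argument does not close the gap. A large projection onto $Y$ gives, via Lemma~\ref{BigFive}, only proximity of a geodesic to $Y$. Even if $\gamma_g(I)$ stays near $Y$ for a long stretch, that stretch may lie on a long transitional segment whose projection to $Y$ is uncontrolled, so nothing forces $Y$ to be an axis of $\gamma_g(I)$. Moreover, \ref{cst:L} only distinguishes the two axes at the ends of a single segment that has $\tau_1$-projection to both. It cannot rule out $Y$ coinciding with a non-adjacent axis.

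The missing idea is to avoid analysing $\gamma_g(I)$ and use the group action instead. Put $t:=xq_kp_kq_k^{-1}x^{-1}$.
\begin{itemize}
\item By \eqref{eq:conjugatemove}, $\Phi_g(I\cup\{k\})=t\,\Phi_g(I)$.
\item $tY=xq_kp_k\ax(q_k)$ is the second axis of the $k$-th inserted block of the admissible path $\gamma_g(I\cup\{k\})$ (Lemma~\ref{lem:criterion12}(ii)).
\item Proposition~\ref{admisProp} puts $\pi_{tY}(t\Phi_g(I)\po)$ within $r$ of the exit point $xq_kp_kq_k^{-1}\po=t\,x\po$.
\item Applying $t^{-1}$, $\pi_Y(\Phi_g(I)\po)$ is within $r$ of $x\po$. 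You already showed $\pi_Y(\po)$ is within $r$ of $x\po$.
\end{itemize}
Hence $\proj^\pi_Y(\po,\Phi_g(I)\po)\le 2r<L-2r$ once $L>4r$.

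If you prefer to keep your direct picture, it can be repaired the same way. Definition~\ref{AdmDef} allows $p_0$ to be trivial, so consider the path made of a degenerate piece of $Y$ at $x\po$, then $[x\po,xg_k^{-1}g_i\po]$, then the tail of $\gamma_g(I)$. This path is precisely the $t^{-1}$-translate of the tail of $\gamma_g(I\cup\{k\})$ after its $tY$-piece, so it is admissible with $Y$ as its first axis, and Proposition~\ref{admisProp} applies. The bounded projections and the distinctness of $Y$ from the next axis are then inherited from the admissibility of $\gamma_g(I\cup\{k\})$, not from a separate fellow-travelling argument.
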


\begin{proof} 
This is an abridged version of \cite[Lemma~4.10]{DY24}, which we include for completeness.

Assume, toward a contradiction, that \(I\neq I'\) but
\[
\Phi_g(I)=\Phi_g(I')=:h.
\]
Let \(k\) be the smallest integer lying in exactly one of \(I\) and \(I'\); that is, \(k\) is the first position where the insertion of \(q_kp_kq_k^{-1}\) differs. Without loss of generality, assume that \(k\in I\) and \(k\notin I'\).

Let \(i_1<i_2<\cdots<i_l\) be the common indices of \(I\) and \(I'\) that are smaller than \(k\). Define
\[
\tilde g_k
=
g_{i_1}(q_{i_1}p_{i_1}q_{i_1}^{-1})g_{i_1}^{-1}
g_{i_2}(q_{i_2}p_{i_2}q_{i_2}^{-1})
\cdots
(q_{i_l}p_{i_l}q_{i_l}^{-1})g_{i_l}^{-1}g_k,
\]
and set \(\tilde g_k=g_k\) if there are no such common indices. Let
\[
t=\tilde g_kq_kp_kq_k^{-1}\tilde g_k^{-1},
\]
and define $Y=\tilde g_k\ax(q_k),\;
Y'=tY.$
We shall derive contradictory estimates for
\(\proj_Y^\pi(\po,h\po)\).

\medskip

\noindent\textbf{(1)}
By Lemma~\ref{lem:criterion12}, \(\gamma_g(I)\) is an \((L_1,\tau_1)\)-admissible path. The corresponding consecutive contracting subsets are precisely \(Y\) and \(Y'=\tilde g_kq_kp_k\ax(q_k)\). By Proposition~\ref{admisProp}, the projection of \(\po\) to \(Y\) is \(r\)-close to \(\tilde g_k\po\), while the projection of \(\Phi_g(I)\po\) to \(Y\) is \(r\)-close to \(\tilde g_kq_k\po\). Hence
\[
\proj_Y^\pi(\po,h\po)
=
\proj_Y^\pi\!\bigl(\po,\Phi_g(I)\po\bigr)
>
L-2r.
\]

\medskip

\noindent\textbf{(2)}
Now insert the index \(k\) into \(I'\) by setting
$J:=I'\cup\{k\},$
and let \(h':=\Phi_g(J)\). By~\eqref{eq:conjugatemove},
\[
h'
=
\tilde g_kq_kp_kq_k^{-1}\tilde g_k^{-1}\Phi_g(I')
=
t\,\Phi_g(I').
\]
Applying Proposition~\ref{admisProp} to the admissible path \(\gamma_g(J)\), the projection of \(h'\po\) to \(Y'=tY\) is \(r\)-close to
\(\tilde g_kq_kp_kq_k^{-1}\po\). Translating back by \(t^{-1}\), the projection of \(\Phi_g(I')\po\) to \(Y\) is therefore \(r\)-close to \(\tilde g_k\po\). Consequently,
\[
\proj_Y^\pi(\po,h\po)
=
\proj_Y^\pi\!\bigl(\po,\Phi_g(I')\po\bigr)
\le 2r.
\]

If \(L>4r\), these two estimates contradict the assumption
\(h=\Phi_g(I)=\Phi_g(I')\). Therefore, \(\Phi_g\) is injective.
\end{proof}

The growth gap is a consquence of the following criterion in \cite{DY24}.

\begin{lem}\label{CritGapLem}
Let $A\subseteq G$ be a subset, $Q$ be a  finite non-empty set, $\varepsilon\in (0,1)$, and $M>0$. Suppose the following conditions hold:

\begin{enumerate}
\item
 Each $g\in A$ admits an $M$-almost geodesic product decomposition $g=s_1\cdots s_m$ with $m\ge \varepsilon d_S(1,g)$ (where $m$ may depend on $g$).
 \item
  For every $g\in A$, the map $\Phi_g: \mathcal P_m\to G$ described in Definition \ref{def:extMap} is  injective where $m$ is the length of the decomposition of $g$ given in (1).  
  \item
The images of the maps $\Phi_g$ are pairwise disjoint: for distinct $g, g'\in A$, we have $\Phi_g(\mathcal P_m)\cap \Phi_{g'}(\mathcal P_{m'})=\emptyset$, where $m,m'$ are the lengths of the decomposition associated to $g, g'$ respectively.
\end{enumerate} Then $\omega_G >\omega_A$. Moreover, the gap $\omega_G - \omega_A$ depends only on $\varepsilon, M, \omega_G$ and $\max\{\LS(\po,q\po): q\in Q\}$ (but not on $A$ itself).
 
\end{lem}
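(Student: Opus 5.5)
The plan is the standard injection-and-count argument from \cite{DY24}. Set $\mathcal A_n:=A\cap S_n$ and $D:=\max\{d_S(1,q)+d_S(1,p):q\in Q,\,p\in P\}$, the maximal word length of an inserted block $qpq^{-1}$ (measured through the fixed finite sets $Q$ and $P$). The idea is to compare $|\mathcal A_n|\cdot 2^{\varepsilon n}$ with the size of a slightly larger ball. The domain $\bigsqcup_{g\in\mathcal A_n}\mathcal P_{m(g)}$ has cardinality at least $|\mathcal A_n|\cdot 2^{\lfloor\varepsilon n\rfloor}$, since $m(g)\ge\varepsilon d_S(1,g)$ by (1). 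By (2) and (3), the combined map $(g,I)\mapsto\Phi_g(I)$ is injective on this disjoint union.

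Next I bound the length of images. For $I=\{i_1<\dots<i_\alpha\}$, $\Phi_g(I)$ is the word obtained from $g=s_1\cdots s_m$ by inserting blocks $q_ip_iq_i^{-1}$ at the positions $g_i$. Hence
\[d_S(1,\Phi_g(I))\le \sum_j d_S(g_{i_j},g_{i_{j+1}})+\alpha D.\]
Because the decomposition is $M$-almost geodesic, with the $g_i$ lying on a word geodesic as in Definition~\ref{defn:linearlyrecurrent}, the first sum is at most $d_S(1,g)+O(1)$. This gives $d_S(1,\Phi_g(I))\le n+|I|D$. Splitting by $k=|I|$, the number of pairs $(g,I)$ with $|I|=k$ is $\sum_g\binom{m(g)}{k}$, and all of their images lie in $B_{n+kD}$.

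The counting step goes as follows. Fix a small proportion $\eta\in(0,\varepsilon)$ and restrict to $|I|=\lfloor\eta n\rfloor$. Then
\[|\mathcal A_n|\binom{\lfloor\varepsilon n\rfloor}{\lfloor\eta n\rfloor}\le |B_{n+\eta nD}|\le C_\omega e^{\omega(1+\eta D)n}\]
for any $\omega>\omega_G$. Stirling gives $\binom{\varepsilon n}{\eta n}\ge e^{(\varepsilon H(\eta/\varepsilon)-o(1))n}$, where $H$ is the binary entropy. This yields
\[\omega_A\le\omega_G(1+\eta D)-\varepsilon H(\eta/\varepsilon).\]
Since $H(x)/x\to\infty$ as $x\to0$, I choose $\eta$ small so that $\varepsilon H(\eta/\varepsilon)>2\omega_G\eta D$. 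This produces a gap of at least $\omega_G\eta D$, depending only on $\varepsilon$, $D$, and $\omega_G$. Here $D$ is controlled by the constants in $Q,P$ (and by $\max\LS(\po,q\po)$ via the coarsely Lipschitz comparison), and $M$ enters only through the additive $O(1)$.

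The main obstacle is the length bound for the images. Specifically, I need to confirm that inserting blocks at the anchor points $g_i$ costs only $D$ per insertion, even though the $s_i$ are not generators. This needs the $g_i$ to lie on an actual word geodesic, which holds for linearly recurrent elements. If they lie only within bounded distance of one, I would absorb a per-insertion error of order $M$ into $D$; this is presumably why the gap is allowed to depend on $M$.
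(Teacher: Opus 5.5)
Your core count is correct, and it is the same injection-and-count mechanism the criterion rests on; the paper gives no proof of its own and defers to \cite{DY24}. Fixing $|I|=\lfloor\eta n\rfloor$ and applying Stirling works. A slightly cleaner equivalent is to weight all of $\mathcal P_m$ at once. Since $d_S(1,\Phi_g(I))\le d_S(1,g)+|I|D$ and $m\ge \varepsilon\, d_S(1,g)$, injectivity on the disjoint union gives, for every $s>\omega_G$,
\[
\sum_{g\in A}e^{-s\,d_S(1,g)}\bigl(1+e^{-sD}\bigr)^{\varepsilon\, d_S(1,g)}\;\le\;\sum_{x\in G}e^{-s\,d_S(1,x)}<\infty .
\]
Hence $\omega_A\le\omega_G-\varepsilon\log\bigl(1+e^{-\omega_G D}\bigr)$, an explicit gap with no optimisation over $\eta$. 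One small correction: the inserted block $qpq^{-1}$ has word length up to $2d_S(1,q)+d_S(1,p)$, so your $D$ must be defined with $2d_S(1,q)$.

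The step that fails is your reconciliation with the printed dependence. You say $D$ is controlled by $\max\{\LS(\po,q\po)\}$ ``via the coarsely Lipschitz comparison'', but that comparison runs the other way. Lemma~\ref{lem:word_is_PC_path} gives $\LS(\po,q\po)\le 2\,d_S(1,q)$. Because $G$ does not act properly on $\PC$, a bound on $\LS(\po,q\po)$ gives no bound on $d_S(1,q)$, and none at all on $d_S(1,p)$ for $p\in P$.

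Your count sees only word lengths, so what you have actually proved is a gap depending on $\varepsilon$, $\omega_G$ and $D=\max_{q\in Q,\,p\in P}d_S(1,qpq^{-1})$. The constant $M$ does not enter at all when the $g_i$ lie on a word geodesic, since the telescoping sum is then exactly $d_S(1,g)$. It enters only as an extra $2M$ per insertion if the $g_i$ are merely $M$-close, in $d_S$ and in order, to one.

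That is the form you should state. The dependence on $\LS(\po,q\po)$ appears to be carried over from an orbit-metric setting, where an insertion costs $d(o,qpq^{-1}o)$; a word-metric count cannot deliver it. The word-length version is also all that Theorem~\ref{AGroTig} needs, since $Q$ and $P$ are fixed by $P$ and $S$ independently of $H$.

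You were right to flag hypothesis (1). In this paper, ``$M$-almost geodesic'' means that $g_i\po$ is $M$-close to $\f_K[\po,g\po]$ in $\PC$, which controls nothing in $d_S$. Your length bound needs the $g_i$ to be ordered points on a word geodesic $[1,g]_S$. Definition~\ref{defn:linearlyrecurrent} provides this for the set $A$ of Theorem~\ref{AGroTig}. Make it an explicit hypothesis rather than deducing it from (1).
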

By Lemma~\ref{lem:criterion12}, every element of
$\Phi_g(\mathcal P_m)$ lies in the coset $Hg$, while
Lemma~\ref{lem:criterion3} shows that $\Phi_g$ is injective. Thus each
$g\in A$ produces $2^m$ distinct elements in its coset. Moreover, the images
associated with distinct representatives in $[G/H]$ are disjoint because
they lie in distinct cosets. Hence all the hypotheses of
Lemma~\ref{CritGapLem} are satisfied, and Theorem~\ref{AGroTig} follows.
This completes the proof of Theorem~\ref{thm:growthtightnessAH}.

	\bibliography{bibliography}
	\bibliographystyle{alpha}
\end{document}